\documentclass[12pt]{amsart}

\usepackage{hyperref}
\usepackage{cleveref}
\usepackage{amssymb}
\usepackage{amsmath}
\usepackage{amsthm}
\usepackage{graphicx}
\usepackage{enumerate}
\usepackage{tikz}
\usetikzlibrary{arrows}
\usetikzlibrary{arrows.meta}
\tikzset{>={Latex[width=1.2mm,length=1.7mm]}}
\usepackage{tabularx}
\usepackage{mathrsfs}

\usepackage{amssymb, latexsym}
\usepackage{graphicx}
\usepackage{xfrac}
\usepackage{verbatim}
\usepackage{mathtools}
\usepackage{listings}
\usepackage{xurl}
\DeclarePairedDelimiter\ceil{\lceil}{\rceil}
\DeclarePairedDelimiter\floor{\lfloor}{\rfloor}

\DeclareSymbolFont{boldoperators}{OT1}{cmr}{bx}{n}
\SetSymbolFont{boldoperators}{bold}{OT1}{cmr}{bx}{n}
\edef\bar{\unexpanded{\protect\mathaccentV{bar}}\number\symboldoperators16}

\newtheorem{thm}{Theorem}[section]
\newtheorem{prop}[thm]{Proposition}
\newtheorem{cor}[thm]{Corollary}
\newtheorem{lem}[thm]{Lemma}
\newtheorem{obs}[thm]{Observation}
\newtheorem{conj}[thm]{Conjecture}
\newtheorem{prob}[thm]{Problem}
\theoremstyle{definition}
\newtheorem{defn}[thm]{Definition}%[section]
\newtheorem{alg}[thm]{Algorithm}

\numberwithin{equation}{section}

\newcommand{\sn}{\mathfrak{S}_n}

\newcommand{\mfs}[1]{\mathfrak{S}_{#1}}

\newcommand{\ol}[1]{\overline{#1}}

\newcommand{\A}{\mathcal{A}}
\newcommand{\D}{\mathcal{D}}
\newcommand{\U}{\mathcal{U}}

\newcommand{\imm}[1]{\mathrm{Imm}_{#1}}

\newcommand{\inv}{\textsc{inv}}

\newcommand{\defeq}{:=}

\newcommand{\sgn}{\mathrm{sgn}}

\newcommand{\snk}{\mathrm{snk}}

\newcommand{\ins}[1]{\mathrm{ins}_{#1}}
\newcommand{\inss}[1]{\mathrm{inss}_{#1}}

\newcommand{\tr}{{\negthickspace \top \negthickspace}}
\newcommand{\ntnsp}{\negthinspace}
\newcommand{\ntksp}{\negthickspace}

\newcommand{\bp}{\begin{prob}}
\newcommand{\ep}{\end{prob}}

\newcommand{\mat}[1]{\mathrm{Mat}_{#1 \times #1}}

\DeclareMathOperator{\perm}{per}

\newcommand{\sort}{\mathrm{sort}}

\newcommand{\permmon}[2]{#1_{1,#2_1} \ntnsp\cdots #1_{n,#2_n}}

\newcommand{\ssm}{\smallsetminus}

\newcommand{\precdot}{\prec \negthickspace \negthinspace \cdot\ }
\newcommand{\net}[1]{\mathcal F_{#1}}

\newcommand{\vertex}[1]{#1}

\newcommand{\F}{\mathcal F}

\begin{document}
\author{Sihong Pan, Mark Skandera and Jiayuan Wang}
\title[Permanental Inequalities, Unit Interval Orders, and Bruhat-Increasing Maps]{From Unit Interval Orders to Totally Nonnegative Matrices: Permanental Inequalities and Bruhat-Increasing Bijections}

\bibliographystyle{dart}

\date{\today}

\begin{abstract}
Given an $n\times n$ matrix $A=(a_{i,j})$ and subsets $I,J$ of
$[1,n]\defeq\{1,\dotsc,n\}$, let
$A_{I,J}=(a_{i,j})_{i\in I,j\in J}$ denote the $(I,J)$-submatrix of $A$.
We prove that, for every totally nonnegative matrix $A$ and every
$h=1,\dotsc,n-1$,
\begin{equation}
\perm(A_{[1,h],[1,h]})
\perm(A_{[h+1,n],[h+1,n]})
\;\geq\;
\perm(A_{[1,n]\cap 2\mathbb Z,[1,n]\cap 2\mathbb Z})
\perm(A_{[1,n]\ssm 2\mathbb Z,[1,n]\ssm 2\mathbb Z}).
\tag{$*$}\label{eq:absmain}
\end{equation}
In particular, these inequalities hold for the $0$-$1$ totally
nonnegative matrices arising as antiadjacency matrices of unit interval
orders.

Our proof is based on explicit Bruhat-order comparisons between two
distinguished families of permutations. Let
$\mathfrak A_n$ be the Young subgroup
\begin{equation*}
\mathfrak S_{1,\dotsc,\floor{\frac{n}{2}}}
\times
\mathfrak S_{\floor{\frac{n}{2}}+1,\dotsc,n},
\end{equation*}
and let $\mathfrak B_n$ be the set of parity-alternating permutations
\begin{equation*}
\{w=w_1\cdots w_n\in\mathfrak S_n
\mid i \text{ and } w_i \text{ have the same parity}\}.
\end{equation*}
For the balanced case $h=\floor*{\frac{n}{2}}$, we construct a
bijection
$f_n:\mathfrak A_n\rightarrow\mathfrak B_n$
satisfying
$w\leq f_n(w)$ in Bruhat order.
We then extend this construction to arbitrary $h$ by means of maps
$g_{n,h}$ satisfying the corresponding Bruhat-order comparison.
These maps yield the inequalities~\eqref{eq:absmain} for all totally
nonnegative matrices.
\end{abstract}

\maketitle

\section{Introduction}\label{s:intro}

Given an $n \times n$ matrix $A = (a_{i,j})$
and subsets $I,J \subseteq [n] \defeq \{1,\dotsc,n\}$,
define the submatrix $A_{I,J} = (a_{i,j})_{i \in I, j \in J}$ of $A$,
and call $\det(A_{I,J})$ a {\em minor} of $A$.
We call $A \in \mat n(\mathbb R)$ {\em totally nonnegative}
if each of its minors is nonnegative.

It is known that the defining inequalities $\det(A_{I,J}) \geq 0$
of totally nonnegative matrices imply many other polynomial
inequalities in the matrix entries.
Some of these involve {\em immanants} of $A$ and its submatrices,
i.e., linear combinations of the $n!$ monomials
$\{ \permmon aw \,|\, w \in \sn \}$.
In particular for any
function $f: \sn \rightarrow \mathbb R$, we define
the {\em $f$-immanant of $A$}~\cite{StanPos} to be the polynomial
\begin{equation}\label{eq:immdef}
  \imm f(A) \defeq \sum_{w \in \sn} f(w) \permmon aw
\end{equation}
generalizing the permanent and determinant
\begin{equation*}
  \perm(A) = \sum_{w \in \sn} \permmon aw,
  \qquad
  \det(A) = \sum_{w \in \sn} \sgn(w) \permmon aw.
  \end{equation*}

In particular, the study of polynomial inequalities satisfied
by the entries of Hermitian positive semidefinite matrices 
led to the following results for each totally nonnegative
$n \times n$ matrix $A$.
(See \cite[\S 1]{SkanSoskinBJArx} for more details.)
Koteljanskii~\cite{KotelProp} and Marcus~\cite{marcus1963perm}
showed that
we have 
  \begin{align}
  \det(A) &\leq a_{1,1}\cdots a_{n,n},\label{eq:kotel1}\\
  \perm(A) &\geq a_{1,1}\cdots a_{n,n}.\label{eq:marcus}
  \end{align}
  Fan~(see \cite{carlson1967}) and Lieb~\cite{LiebPerm}
  showed that for $I\subset [n]$ and its complement $\Bar I \defeq [n] \ssm I$
  we have
  \begin{align}
    \det(A) &\;\leq\; \det(A_{I,I})\det(A_{\Bar I,\Bar I}), \label{eq:carlson}\\
    \perm(A) &\;\geq\; \perm(A_{I,I})\perm(A_{\Bar I,\Bar I}).\label{eq:lieb1} 
\end{align}  
  Stembridge~\cite{StemImm} and Johnson~\cite[p.\,1088]{StemConj}
  showed that for each $\sn$-character $\chi$ and the identity element
  $e \in \sn$
  we have
  \begin{align}
    \det(A) &\;\leq\; \frac{\imm\chi(A)}{\chi(e)},\label{eq:stemchar}\\
\perm(A) &\;\geq\; \frac{\imm\chi(A)}{\chi(e)}.\label{eq:johnsonchar}
\end{align}

Inequalities (\ref{eq:kotel1}) -- (\ref{eq:johnsonchar}) suggest that
determinantal and permanental inequalities for totally nonnegative matrices
correspond bijectively,
with inequality directions reversed.
However, this is not in general true.
For instance,
Koteljanskii \cite{KotelProp}
showed that for all $I,J \subset [n]$ and
for each totally nonnegative matrix $A$, we have 
\begin{equation}\label{eq:kotel2}
  \det(A_{I\cup J, I\cup J})\det(A_{I\cap J, I\cap J}) \;\leq\;
  \det(A_{I,I}) \det(A_{J,J}),
\end{equation}
but the second author and Soskin~\cite[\S 6]{SkanSoskinBJArx}
showed the corresponding (reversed) inequality of permanents
not to hold in general.

It would therefore be interesting to understand which inequalities
change direction when determinants are replaced by permanents.
A natural starting point for such an investigation
is the second author's result~\cite[Thm.\,3.2]{SkanIneq}
which extends 
the inequalities (\ref{eq:carlson}) by providing an {\em upper bound}
for all products $\det(A_{I,I}) \det(A_{\Bar I, \Bar I})$:
  \begin{equation*}
    \det(A_{I,I}) \det(A_{\Bar I,\Bar I}) \;\leq\;
    \det(A_{[n] \cap 2\mathbb Z, [n] \cap 2\mathbb Z})
    \det(A_{[n] \ssm 2\mathbb Z, [n] \ssm 2\mathbb Z})
  \end{equation*}
  whenever $A$ is TNN. 
  It is natural to ask whether we can similarly extend the
  inequalities (\ref{eq:lieb1}) with a {\em lower bound} for all
  products $\perm(A_{I,I}) \perm(A_{\Bar I, \Bar I})$:
\begin{equation}\label{eq:evenoddintro}
    \perm(A_{I,I}) \perm(A_{\Bar I, \Bar I}) \;\geq\;
    \perm(A_{[n] \cap 2\mathbb Z, [n] \cap 2\mathbb Z})
    \perm(A_{[n] \ssm 2\mathbb Z, [n] \ssm 2\mathbb Z}).
  \end{equation}

In this paper, we prove that~\eqref{eq:evenoddintro} holds for every
totally nonnegative matrix $A$ whenever $I$ is an initial interval
$[h]$, with $h=1,\dotsc,n-1$. Thus, for every totally nonnegative
$n\times n$ matrix $A$ and every $h\in[n-1]$, we have
\begin{equation}\label{eq:intromain}
  \perm(A_{[1,h],[1,h]})
  \perm(A_{[h+1,n],[h+1,n]})
  \;\geq\;
  \perm(A_{[n]\cap 2\mathbb Z,[n]\cap 2\mathbb Z})
  \perm(A_{[n]\ssm 2\mathbb Z,[n]\ssm 2\mathbb Z}).
\end{equation}
On the other hand, we show that~\eqref{eq:evenoddintro} does not hold
for arbitrary subsets $I\subseteq[n]$.

In Section~\ref{s:fourcatalan}, we discuss families of combinatorial
objects called unit interval orders and descending star networks,
together with the totally nonnegative matrices associated to them.
In Section~\ref{s:permpathfam}, we give combinatorial interpretations
of permanents of these matrices.
In Section~\ref{s:merge}, we define a family of maps on descending
star networks and describe their effects on the permanents of the
corresponding matrices.
Using these constructions, we prove~\eqref{eq:intromain} for all
$h\in[n-1]$ when $A$ is a $0$-$1$ totally nonnegative matrix arising
from a unit interval order.
In Section~\ref{s:A gen on J}, we give an extension of this result.

The arguments in Sections~\ref{s:A gen on all}--\ref{s:all h} are largely independent of the unit interval order constructions developed in the preceding sections and may be read separately. In Section~\ref{s:A gen on all}, we reformulate the problem for
arbitrary totally nonnegative matrices in terms of Bruhat-order
comparisons between appropriate families of permutations.
In Section~\ref{s:proof of balance case}, we prove the balanced case
$h=\floor*{\frac{n}{2}}$ for all totally nonnegative matrices
\footnote{During the final preparation of this paper, we became aware
of an
\href{https://the-omega-institute.github.io/trureturing-mdbook/Problems/psw-bruhat-increasing-bijection.html}
{independent proof} formalized in Lean.
We thank Wenlin Zhang for bringing this work to our attention.}.
In Section~\ref{s:all h}, we extend this result to arbitrary
$h\in[n-1]$, thereby establishing~\eqref{eq:intromain} in full generality.
We present further open questions in Section~\ref{s:open}.

\section{Four families of sets counted by Catalan numbers}\label{s:fourcatalan}

A simple subset of $n \times n$ totally nonnegative matrices is the set of
$0$-$1$
matrices which are weakly increasing in columns, weakly decreasing in rows,
and in which all entries on or below the diagonal are $1$.
Call this set $\A_n$.
For example, $\A_4$ consists of
the fourteen matrices
\begin{equation}\label{eq:14aamatrices}
  \begin{gathered}
  \begin{bmatrix}
    1 & 1 & 1 & 1\\
    1 & 1 & 1 & 1\\
    1 & 1 & 1 & 1\\
    1 & 1 & 1 & 1
  \end{bmatrix}\ntksp,
  \quad
  \begin{bmatrix}
    1 & 1 & 1 & 0\\
    1 & 1 & 1 & 1\\
    1 & 1 & 1 & 1\\
    1 & 1 & 1 & 1
  \end{bmatrix}\ntksp,
  \quad
  \begin{bmatrix}
    1 & 1 & 0 & 0\\
    1 & 1 & 1 & 1\\
    1 & 1 & 1 & 1\\
    1 & 1 & 1 & 1
  \end{bmatrix}\ntksp,
  \quad
  \begin{bmatrix}
    1 & 1 & 1 & 0\\
    1 & 1 & 1 & 0\\
    1 & 1 & 1 & 1\\
    1 & 1 & 1 & 1
  \end{bmatrix}\ntksp,
  \quad
  \begin{bmatrix}
    1 & 0 & 0 & 0\\
    1 & 1 & 1 & 1\\
    1 & 1 & 1 & 1\\
    1 & 1 & 1 & 1
  \end{bmatrix}\ntksp,
  \\
  \begin{bmatrix}
    1 & 1 & 1 & 0\\
    1 & 1 & 1 & 0\\
    1 & 1 & 1 & 0\\
    1 & 1 & 1 & 1
  \end{bmatrix}\ntksp,
  \quad
  \begin{bmatrix}
    1 & 1 & 0 & 0\\
    1 & 1 & 1 & 0\\
    1 & 1 & 1 & 1\\
    1 & 1 & 1 & 1
  \end{bmatrix}\ntksp,
  \quad
  \begin{bmatrix}
    1 & 0 & 0 & 0\\
    1 & 1 & 0 & 0\\
    1 & 1 & 1 & 1\\
    1 & 1 & 1 & 1
  \end{bmatrix}\ntksp,
  \quad
  \begin{bmatrix}
    1 & 0 & 0 & 0\\
    1 & 1 & 1 & 0\\
    1 & 1 & 1 & 0\\
    1 & 1 & 1 & 1
  \end{bmatrix}\ntksp,
  \quad
  \begin{bmatrix}
    1 & 1 & 0 & 0\\
    1 & 1 & 0 & 0\\
    1 & 1 & 1 & 0\\
    1 & 1 & 1 & 1
  \end{bmatrix}\ntksp,
  \\
  \begin{bmatrix}
    1 & 1 & 0 & 0\\
    1 & 1 & 0 & 0\\
    1 & 1 & 1 & 1\\
    1 & 1 & 1 & 1
  \end{bmatrix}\ntksp,
  \quad
  \begin{bmatrix}
    1 & 0 & 0 & 0\\
    1 & 1 & 1 & 0\\
    1 & 1 & 1 & 1\\
    1 & 1 & 1 & 1
  \end{bmatrix}\ntksp,
  \quad
  \begin{bmatrix}
    1 & 1 & 0 & 0\\
    1 & 1 & 1 & 0\\
    1 & 1 & 1 & 0\\
    1 & 1 & 1 & 1
  \end{bmatrix}\ntksp,
  \quad
  \begin{bmatrix}
    1 & 0 & 0 & 0\\
    1 & 1 & 0 & 0\\
    1 & 1 & 1 & 0\\
    1 & 1 & 1 & 1
  \end{bmatrix}\ntksp.
  \end{gathered}
\end{equation}
For $A \in \A_n$, the boundary between entries of $A$ equal to $1$ and $0$
can be viewed as a $2n$-step {\em Dyck path},
consisting of $n$ horizontal
and $n$ vertical steps, none below the diagonal,
from the upper left corner of the
matrix to the lower right corner. 
For example, we can draw an $8$-step Dyck path on a matrix in $\A_4$. 

\begin{center}
\tikzset{every picture/.style={line width=0.75pt}} %set default line width to 0.75pt        

\begin{tikzpicture}[x=0.75pt,y=0.75pt,yscale=-.9,xscale=.9]

\draw  [dash pattern={on 0.84pt off 2.51pt}]  (280,80) -- (280,200) ;
\draw  [dash pattern={on 0.84pt off 2.51pt}][line width=0.75]  (120,40) -- (280,40) -- (280,200) -- (120,200) -- cycle ;
\draw [color={rgb, 255:red, 208; green, 2; blue, 27 }  ,draw opacity=1 ]   (120,40) -- (160,40) -- (160,80) -- (240,80) -- (240,160) -- (280,160) -- (280,200) ;

\draw (138,52.4) node [anchor=north west][inner sep=0.75pt]  [color={rgb, 255:red, 0; green, 0; blue, 0 }  ,opacity=1 ]  {$1$};
\draw (218,52.4) node [anchor=north west][inner sep=0.75pt]  [color={rgb, 255:red, 0; green, 0; blue, 0 }  ,opacity=1 ]  {$0$};
\draw (258,53.4) node [anchor=north west][inner sep=0.75pt]  [color={rgb, 255:red, 0; green, 0; blue, 0 }  ,opacity=1 ]  {$0$};
\draw (178,52.4) node [anchor=north west][inner sep=0.75pt]  [color={rgb, 255:red, 0; green, 0; blue, 0 }  ,opacity=1 ]  {$0$};
\draw (138,93.4) node [anchor=north west][inner sep=0.75pt]  [color={rgb, 255:red, 0; green, 0; blue, 0 }  ,opacity=1 ]  {$1$};
\draw (218,93.4) node [anchor=north west][inner sep=0.75pt]  [color={rgb, 255:red, 0; green, 0; blue, 0 }  ,opacity=1 ]  {$1$};
\draw (258,92.4) node [anchor=north west][inner sep=0.75pt]  [color={rgb, 255:red, 0; green, 0; blue, 0 }  ,opacity=1 ]  {$0$};
\draw (178,93.4) node [anchor=north west][inner sep=0.75pt]  [color={rgb, 255:red, 0; green, 0; blue, 0 }  ,opacity=1 ]  {$1$};
\draw (138,132.4) node [anchor=north west][inner sep=0.75pt]  [color={rgb, 255:red, 0; green, 0; blue, 0 }  ,opacity=1 ]  {$1$};
\draw (218,132.4) node [anchor=north west][inner sep=0.75pt]  [color={rgb, 255:red, 0; green, 0; blue, 0 }  ,opacity=1 ]  {$1$};
\draw (258,133.4) node [anchor=north west][inner sep=0.75pt]  [color={rgb, 255:red, 0; green, 0; blue, 0 }  ,opacity=1 ]  {$0$};
\draw (178,132.4) node [anchor=north west][inner sep=0.75pt]  [color={rgb, 255:red, 0; green, 0; blue, 0 }  ,opacity=1 ]  {$1$};
\draw (138,172.4) node [anchor=north west][inner sep=0.75pt]  [color={rgb, 255:red, 0; green, 0; blue, 0 }  ,opacity=1 ]  {$1$};
\draw (218,172.4) node [anchor=north west][inner sep=0.75pt]  [color={rgb, 255:red, 0; green, 0; blue, 0 }  ,opacity=1 ]  {$1$};
\draw (258,173.4) node [anchor=north west][inner sep=0.75pt]  [color={rgb, 255:red, 0; green, 0; blue, 0 }  ,opacity=1 ]  {$1$};
\draw (178,172.4) node [anchor=north west][inner sep=0.75pt]  [color={rgb, 255:red, 0; green, 0; blue, 0 }  ,opacity=1 ]  {$1$};

\end{tikzpicture}
\end{center}
The bijection between $\A_n$ and $2n$-step Dyck paths
implies that the cardinality of $\A_n$ is the $n$th Catalan number,
$\tfrac1{n+1}\tbinom{2n}{n}$.
(See \cite[Exercise 6.19 (i)]{StanEC2}.)

\begin{obs}\label{o:antiadjTNN}
  Each matrix in $\A_n$ 
  is totally nonnegative.
\end{obs}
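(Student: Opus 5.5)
To prove Observation~\ref{o:antiadjTNN}, we show directly that every square submatrix $B = A_{I,J}$ of a matrix $A \in \A_n$ has nonnegative determinant. The only structure we need is this: $A$ is a $0$-$1$ matrix whose rows are weakly decreasing and whose columns are weakly increasing. Both properties pass to submatrices $A_{I,J}$, because $I$ and $J$ are listed in increasing order. So each row of $B$ has the form $(1,\dotsc,1,0,\dotsc,0)$, with $r_i$ leading ones. Because the columns weakly increase, $r_1 \le r_2 \le \cdots \le r_k$, where $k = |I| = |J|$.

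We now compute $\det(B)$ from this description.

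\emph{Repeated row lengths.} Suppose two rows have the same length, $r_i = r_{i'}$ for some $i \neq i'$. Then those rows are identical, so $\det(B) = 0$.

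\emph{Distinct row lengths.} Otherwise $0 \le r_1 < r_2 < \cdots < r_k \le k$. If $r_1 = 0$, then the first row is zero and $\det(B) = 0$. If $r_1 \ge 1$, then the $k$ distinct values lie in $\{1,\dotsc,k\}$, which forces $r_i = i$ for every $i$. In that case $B$ is the lower unitriangular matrix with all entries on and below the diagonal equal to $1$, and $\det(B) = 1$.

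Hence every minor of $A$ lies in $\{0,1\}$, and in particular is nonnegative, so $A$ is totally nonnegative.

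The one point requiring care is checking that the monotonicity hypotheses survive passage to an arbitrary submatrix $A_{I,J}$, which is what lets the row-length argument apply to every minor. This holds because restricting to an increasing subsequence of rows and columns preserves both weak monotonicity conditions. No further obstacle remains.
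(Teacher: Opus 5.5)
Your proof is correct and follows the same route as the paper: repeated rows force $\det(B)=0$, and otherwise $B$ is the lower unitriangular all-ones matrix with $\det(B)=1$. Your separate case $r_1=0$ is a small improvement. The paper's one-line ``otherwise unitriangular'' overlooks submatrices such as $\begin{bmatrix}0&0\\1&0\end{bmatrix}$ (rows $\{1,2\}$, columns $\{2,3\}$ of the antiadjacency matrix of the $3$-element chain), which have distinct rows and columns but a zero row.
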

\begin{proof}
  Fix $A \in \A_n$ and consider a square submatrix $B$ of $A$.
  If $B$ has a pair of equal columns or rows, then
  $\det(B) = 0$.  Otherwise, $B$ is unitriangular with
  $\det(B) = 1$.
\end{proof}

Elements of $\A_n$ correspond bijectively to $n$-element posets
called {\em unit interval orders}.
Call this set $\U_n$.
(See \cite[p.\,33]{Fish}, \cite[Exercise 6.19 (ddd)]{StanEC2}, \cite[\S 8.2]{Trott}.)
These are the posets in which no induced four-element subposet
is isomorphic to either of the posets
\begin{equation*}
  \mathbf 3 + \mathbf 1 \;=\;
\begin{tikzpicture}[scale=.50,baseline=-3]
\draw[fill] (0,1) circle (1.2mm); %\node at (0,1.5) {$\scriptstyle{4}$};
\draw[fill] (0,0) circle (1.2mm); %\node at (-.8,0) {$\scriptstyle{2}$};
\draw[fill] (.8,0) circle (1.2mm); %\node at (.8,0) {$\scriptstyle{3}$};
\draw[fill] (0,-1) circle (1.2mm); %\node at (0,-1.5) {$\scriptstyle{1}$};
\draw[-] (0,-1) -- (0,0) -- (0,1);
\end{tikzpicture}\; ,
  \qquad
  \mathbf 2 + \mathbf 2 \;=\;
  \begin{tikzpicture}[scale=.50,baseline=-3]
\draw[fill] (0,.5) circle (1.2mm); %\node at (0,1) {$\scriptstyle{3}$};
\draw[fill] (0,-.5) circle (1.2mm); %\node at (0,-1) {$\scriptstyle{1}$};
\draw[fill] (.8,-.5) circle (1.2mm); %\node at (0.8,-1) {$\scriptstyle{2}$};
\draw[fill] (.8,.5) circle (1.2mm); %\node at (.8,1) {$\scriptstyle{4}$};
\draw[-] (0,.5) -- (0,-.5);
\draw[-] (.8,.5) -- (.8,-.5);
\end{tikzpicture}\; .
\end{equation*}
The bijection $A \mapsto P(A)$ is given by
declaring $i \leq_{P(A)} j$ if and only if $a_{i,j} = 0$.
Each matrix $A$ is said to be the {\em antiadjacency matrix} of $P(A)$.
For example, the fourteen
matrices (\ref{eq:14aamatrices})
are antiadjacency matrices of the unit interval orders
\begin{equation}\label{eq:14uios}
\begin{tikzpicture}[scale=.50,baseline=0]
\draw[fill] (0,0) circle (1.2mm); \node at (0.,.5) {$\scriptstyle{1}$};
\draw[fill] (.6,0) circle (1.2mm); \node at (0.6,.5) {$\scriptstyle{2}$};
\draw[fill] (1.2,0) circle (1.2mm); \node at (1.2,.5) {$\scriptstyle{3}$};
\draw[fill] (1.8,0) circle (1.2mm); \node at (1.8,.5) {$\scriptstyle{4}$};
\end{tikzpicture}
\
\begin{tikzpicture}[scale=.50,baseline=0]
\draw[fill] (0,.5) circle (1.2mm); \node at (0,1) {$\scriptstyle{4}$};
\draw[fill] (0,-.5) circle (1.2mm); \node at (0,-1) {$\scriptstyle{1}$};
\draw[fill] (.6,0) circle (1.2mm); \node at (0.6,.5) {$\scriptstyle{2}$};
\draw[fill] (1.2,0) circle (1.2mm); \node at (1.2,.5) {$\scriptstyle{3}$};
\draw[-] (0,.5) -- (0,-.5);
\end{tikzpicture}
\
\begin{tikzpicture}[scale=.50,baseline=0]
\draw[fill] (-.3,.5) circle (1.2mm); \node at (-.3,1) {$\scriptstyle{3}$};
\draw[fill] (.3,.5) circle (1.2mm); \node at (.3,1) {$\scriptstyle{4}$};
\draw[fill] (0,-.5) circle (1.2mm); \node at (0,-1) {$\scriptstyle{1}$};
\draw[fill] (.8,0) circle (1.2mm); \node at (0.8,.5) {$\scriptstyle{2}$};
\draw[-] (-.3,.5) -- (0,-.5);
\draw[-] (.3,.5) -- (0,-.5);
\end{tikzpicture}
\
\begin{tikzpicture}[scale=.50,baseline=0]
\draw[fill] (-.3,-.5) circle (1.2mm); \node at (-.3,-1) {$\scriptstyle{1}$};
\draw[fill] (.3,-.5) circle (1.2mm); \node at (.3,-1) {$\scriptstyle{2}$};
\draw[fill] (0,.5) circle (1.2mm); \node at (0,1) {$\scriptstyle{4}$};
\draw[fill] (.8,0) circle (1.2mm); \node at (0.8,.5) {$\scriptstyle{3}$};
\draw[-] (-.3,-.5) -- (0,.5);
\draw[-] (.3,-.5) -- (0,.5);
\end{tikzpicture}
\
\begin{tikzpicture}[scale=.50,baseline=0]
\draw[fill] (-.5,.5) circle (1.2mm); \node at (-.5,1) {$\scriptstyle{2}$};
\draw[fill] (0,.5) circle (1.2mm); \node at (0,1) {$\scriptstyle{3}$};
\draw[fill] (.5,.5) circle (1.2mm); \node at (0.5,1) {$\scriptstyle{4}$};
\draw[fill] (0,-.5) circle (1.2mm); \node at (0,-1) {$\scriptstyle{1}$};
\draw[-] (0,-.5) -- (-.5,.5);
\draw[-] (0,-.5) -- (0,.5);
\draw[-] (0,-.5) -- (.5,.5);
\end{tikzpicture}
\
\begin{tikzpicture}[scale=.50,baseline=0]
\draw[fill] (-.5,-.5) circle (1.2mm); \node at (-.5,-1) {$\scriptstyle{1}$};
\draw[fill] (0,-.5) circle (1.2mm); \node at (0,-1) {$\scriptstyle{2}$};
\draw[fill] (.5,-.5) circle (1.2mm); \node at (0.5,-1) {$\scriptstyle{3}$};
\draw[fill] (0,.5) circle (1.2mm); \node at (0,1) {$\scriptstyle{4}$};
\draw[-] (0,.5) -- (-.5,-.5);
\draw[-] (0,.5) -- (0,-.5);
\draw[-] (0,.5) -- (.5,-.5);
\end{tikzpicture}
\ \,
\begin{tikzpicture}[scale=.50,baseline=0]
\draw[fill] (0,.5) circle (1.2mm); \node at (0,1) {$\scriptstyle{3}$};
\draw[fill] (0,-.5) circle (1.2mm); \node at (0,-1) {$\scriptstyle{1}$};
\draw[fill] (.8,-.5) circle (1.2mm); \node at (0.8,-1) {$\scriptstyle{2}$};
\draw[fill] (.8,.5) circle (1.2mm); \node at (.8,1) {$\scriptstyle{4}$};
\draw[-] (0,.5) -- (0,-.5);
\draw[-] (.8,.5) -- (.8,-.5);
\draw[-] (0,-.5) -- (.8,.5);
\end{tikzpicture}
\
\begin{tikzpicture}[scale=.50,baseline=0]
\draw[fill] (-.3,1) circle (1.2mm); \node at (-.3,1.5) {$\scriptstyle{3}$};
\draw[fill] (.3,1) circle (1.2mm); \node at (.3,1.5) {$\scriptstyle{4}$};
\draw[fill] (0,0) circle (1.2mm); \node at (-.5,0) {$\scriptstyle{2}$};
\draw[fill] (0,-1) circle (1.2mm); \node at (0,-1.5) {$\scriptstyle{1}$};
\draw[-] (0,-1) -- (0,0) -- (-.3,1);
\draw[-]           (0,0) -- (.3,1);
\end{tikzpicture}
\
\begin{tikzpicture}[scale=.50,baseline=0]
\draw[fill] (0,1) circle (1.2mm); \node at (0,1.5) {$\scriptstyle{4}$};
\draw[fill] (-.3,0) circle (1.2mm); \node at (-.8,0) {$\scriptstyle{2}$};
\draw[fill] (.3,0) circle (1.2mm); \node at (.8,0) {$\scriptstyle{3}$};
\draw[fill] (0,-1) circle (1.2mm); \node at (0,-1.5) {$\scriptstyle{1}$};
\draw[-] (0,-1) -- (-.3,0) -- (0,1);
\draw[-] (0,-1) -- (.3,0) -- (0,1);
\end{tikzpicture}
\
\begin{tikzpicture}[scale=.50,baseline=0]
\draw[fill] (-.3,-1) circle (1.2mm); \node at (-.3,-1.5) {$\scriptstyle{1}$};
\draw[fill] (.3,-1) circle (1.2mm); \node at (.3,-1.5) {$\scriptstyle{2}$};
\draw[fill] (0,0) circle (1.2mm); \node at (.5,0) {$\scriptstyle{3}$};
\draw[fill] (0,1) circle (1.2mm); \node at (0,1.5) {$\scriptstyle{4}$};
\draw[-] (0,1) -- (0,0) -- (-.3,-1);
\draw[-]           (0,0) -- (.3,-1);
\end{tikzpicture}
\
\begin{tikzpicture}[scale=.50,baseline=0]
\draw[fill] (0,.5) circle (1.2mm); \node at (0,1) {$\scriptstyle{3}$};
\draw[fill] (0,-.5) circle (1.2mm); \node at (0,-1) {$\scriptstyle{1}$};
\draw[fill] (.8,-.5) circle (1.2mm); \node at (0.8,-1) {$\scriptstyle{2}$};
\draw[fill] (.8,.5) circle (1.2mm); \node at (.8,1) {$\scriptstyle{4}$};
\draw[-] (0,.5) -- (0,-.5);
\draw[-] (.8,.5) -- (.8,-.5);
\draw[-] (0,-.5) -- (.8,.5);
\draw[-] (0,.5) -- (.8,-.5);
\end{tikzpicture}
\
\begin{tikzpicture}[scale=.50,baseline=0]
\draw[fill] (.3,-1) circle (1.2mm); \node at (.3,-1.5) {$\scriptstyle{1}$};
\draw[fill] (0,0) circle (1.2mm); \node at (-.5,0) {$\scriptstyle{2}$};
\draw[fill] (0,1) circle (1.2mm); \node at (0,1.5) {$\scriptstyle{4}$};
\draw[fill] (.6,.5) circle (1.2mm); \node at (.6,1) {$\scriptstyle{3}$};
\draw[-] (0,1) -- (0,0) -- (.3,-1);
\draw[-]           (.6,.5) -- (.3,-1);
\end{tikzpicture}
\
\begin{tikzpicture}[scale=.50,baseline=0]
\draw[fill] (.3,1) circle (1.2mm); \node at (.3,1.5) {$\scriptstyle{4}$};
\draw[fill] (0,0) circle (1.2mm); \node at (-.5,0) {$\scriptstyle{3}$};
\draw[fill] (0,-1) circle (1.2mm); \node at (0,-1.5) {$\scriptstyle{1}$};
\draw[fill] (.6,-.5) circle (1.2mm); \node at (.6,-1) {$\scriptstyle{2}$};
\draw[-] (0,-1) -- (0,0) -- (.3,1);
\draw[-]           (.6,-.5) -- (.3,1);
\end{tikzpicture}
\
\begin{tikzpicture}[scale=.50,baseline=0]
\draw[fill] (0,1.5) circle (1.2mm); \node at (-.5,1.5) {$\scriptstyle{4}$};
\draw[fill] (0,.5) circle (1.2mm); \node at (-.5,.5) {$\scriptstyle{3}$};
\draw[fill] (0,-.5) circle (1.2mm); \node at (-.5,-.5) {$\scriptstyle{2}$};
\draw[fill] (0,-1.5) circle (1.2mm); \node at (-.5,-1.5) {$\scriptstyle{1}$};
\draw[-] (0,1.5) -- (0,.5) -- (0,-.5) -- (0,-1.5);
\end{tikzpicture}\; ,
\end{equation}
respectively.
Labels, shown here to indicate the correspondence between poset elements
and matrix rows and columns, can be computed canonically from an unlabeled
unit interval order:
\begin{alg}\label{a:arlabeling}
  Given unlabeled unit interval order $P \in \U_n$, do
  \begin{enumerate}
  \item For each element $y \in P$, compute
  $\beta(y) \defeq
  \# \{ x \in P \,|\, x \leq_P y \} - \# \{ z \in P \,|\, z \geq_P y \}$.
\item Label the poset elements $1,\dotsc,n$
so that we have
  $\beta(1) \leq \cdots \leq \beta(n)$.
\end{enumerate}
\end{alg}

Entries of antiadjacency matrices which correspond to peaks in
the Dyck paths mentioned before Observation~\ref{o:antiadjTNN}
can be interpreted in terms of antichains the corresponding posets.
\begin{prop}\label{p:peak}
  Let $A$ be the antiadjacency matrix of unit interval order $P$,
  labeled as in Algorithm~\ref{a:arlabeling}.  Then an interval $[c,d]$
  is a maximal antichain in $P$ if and only if $a_{c,d} = 1$ and this entry is
  \begin{enumerate}
  \item the rightmost nonzero entry in row $c$ of $A$,
  \item the first nonzero entry in column $d$ of $A$.
  \end{enumerate}
\end{prop}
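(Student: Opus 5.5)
The plan is to translate everything into the staircase shape of $A$. With the labeling of Algorithm~\ref{a:arlabeling}, the matrix $A$ lies in $\A_n$. So for each row $c$ there is an index $r(c)\ge c$ such that $a_{c,j}=1$ exactly when $j\le r(c)$. Here $r$ is weakly increasing, and $r(c)$ is the rightmost nonzero entry of row $c$. Likewise, for each column $d$ there is an index $t(d)\le d$ such that $a_{i,d}=1$ exactly when $i\ge t(d)$. Here $t$ is weakly increasing, and $t(d)$ is the first nonzero entry of column $d$. Since $i\le_P j$ if and only if $a_{i,j}=0$, two elements $i<j$ are incomparable exactly when $j\le r(i)$, equivalently $i\ge t(j)$. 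One also checks from $a_{i,j}=1$ for $i\ge j$ that $i\le_P j$ forces $i<j$. In particular all comparabilities go upward in the labeling, and $a_{i,j}=0$ is the only way two labels can be comparable.

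The first step is to show that every antichain is contained in an interval antichain, and to characterize interval antichains. If $c<d$ are incomparable, then $d\le r(c)$. Monotonicity of $r$ then gives $j\le r(c)\le r(i)$ for all $c\le i<j\le d$, so $[c,d]$ is an antichain. Conversely, $[c,d]$ is an antichain if and only if $d\le r(c)$, equivalently $a_{c,d}=1$, equivalently $c\ge t(d)$. Hence any antichain $S$ with minimum $c$ and maximum $d$ lies in the antichain $[c,d]$. It follows that the maximal antichains are exactly the maximal intervals $[c,d]$ with $a_{c,d}=1$.

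The second step is maximality. The interval $[c,d]$ extends to $[c,d+1]$ exactly when $a_{c,d+1}=1$. By the first step it extends to $[c-1,d]$ exactly when $a_{c-1,d}=1$. Any antichain strictly containing $[c,d]$ contains some element $e>d$ or $e<c$. By the first step, the interval $[c,e]$ or $[e,d]$ is then an antichain, and so is the one-step extension, by monotonicity. Thus $[c,d]$ is a maximal antichain if and only if
\begin{equation*}
a_{c,d}=1,\qquad a_{c,d+1}=0 \ (\text{or } d=n),\qquad a_{c-1,d}=0 \ (\text{or } c=1).
\end{equation*}
The second condition is exactly condition (1), since rows are weakly decreasing. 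The third is exactly condition (2), since columns are weakly increasing.

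The only delicate point is the role of the labeling. We need $P$'s antiadjacency matrix in these labels to lie in $\A_n$, which holds by the stated bijection with Algorithm~\ref{a:arlabeling}'s canonical labels. We also need every maximal antichain to be an interval of labels. The latter is the monotonicity argument in the first step, which I expect to be the main thing to write carefully.
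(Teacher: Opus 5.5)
Your proof is correct and follows essentially the same route as the paper: both translate maximality of $[c,d]$ into the vanishing of the entries to the right of $a_{c,d}$ in row $c$ and above it in column $d$, using the staircase shape of $A\in\A_n$. Your version is more careful in two ways. It makes explicit the monotonicity facts the paper uses tacitly, namely that $a_{c,d}=1$ makes all of $[c,d]$ an antichain, and that an element $e>d$ incomparable to $c$ (or $e<c$ incomparable to $d$) can be adjoined to $[c,d]$. It also shows that every maximal antichain is an interval, which Algorithm~\ref{a:PtoF} presupposes.
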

\begin{proof}
  
  ($\Rightarrow$) We assume that the interval $[c,d]$ is a maximal antichain in $P$. Then, $c$ and $d$ are incomparable in $P$, hence $a_{c,d} = 1$. Also, since $[c,d]$ is a maximal, $c$ and $d+j$ are comparable for all $j>0$, hence, $[c,d]$ is the rightmost nonzero entry in row $c$ of $A$. Similarly, $c-i$ and $d$ are comparable for all $i>0$, hence $a_{c,d}$ is the first nonzero entry in column $d$ of $A$. 

  ($\Leftarrow$) Next, we assume that $a_{c,d} = 1$ and assume (1) and (2). By the definition of $A$, $c$ is incomparable to $c+1,c+2,\cdots d$. Therefore, $[c,d]$ is an antichain in $P$. Since $a_{c,d}$ is the rightmost nonzero entry in row $c$ of $A$, $c$ is comparable to $d+1$, $d+2$, etc. Similarly, since $a_{c,d}$ is the first nonzero entry in column $d$ of $A$, $d$ is comparable to $c-1$, $c-2$, etc. Therefore, $[c,d]$ is a maximal and by the labeling from Algorithm~\ref{a:arlabeling}, it is a maximal antichain in $P$. 
\end{proof}

We will show in Section~\ref{s:main}
that matrices in $\A_n$ satisfy the inequalities (\ref{eq:evenoddintro}).
To do this, we relate unit interval orders and their antiadjacency matrices
to a
second family of combinatorical objects counted by Catalan numbers
and to a second family of totally nonnegative matrices.

The definition of this second family of combinatorial objects is
a bit more intricate than that of unit interval orders.
Define a {\em planar network of order $n$}
to be a directed, planar,
acyclic
multigraph
which can be embedded in a disc so that
$2n$ boundary vertices can be labeled clockwise as
{\em source} $1, \dotsc,$ {\em source} $n$,
{\em sink} $n, \dotsc,$ {\em sink} $1$.
We will assume all sources to have indegree $0$ and outdegree $1$,
and all sinks to have indegree $1$ and outdegree $0$.
Let $\net n$ denote the set of such networks.

For each subinterval $[a,b]$ of $[n]$ we define a {\em simple star network}
$F_{[a,b]} \in \net n$ by
\begin{enumerate}
\item Sources $1,\dotsc,n$ lie on a vertical line to the left;
  sinks $1,\dotsc,n$ lie on a vertical line to the right.
  Both are labeled from bottom to top.
\item An interior vertex lies between the sources and sinks.
\item For $i = 1, \dotsc, a-1$ and $i = b+1, \dotsc, n$, a directed edge
  begins at source $i$ and terminates at sink $i$.
\item For $i = a, \dotsc, b$, a directed edge begins at source $i$ and
  terminates at the interior vertex, and another directed edge begins
  at the interior vertex and terminates at sink $i$.
\end{enumerate}
For zero- and one-element subintervals
we define the trivial network $F_{\emptyset} = F_{[1,1]} = \cdots = F_{[n,n]}$
to have no interior vertex, and $n$ horizontal edges, each from source $i$
to sink $i$, for $i = 1,\dotsc,n$.
For example,
the element $F_{\smash{[2,4]}} \in \net 4$ is
\begin{equation*}
\begin{tikzpicture}[scale=.5,baseline=15]
\node at (-2.5,2.5) {$\scriptstyle{ \mathrm{source}\;4}$};
\node at (-2.5,1.5) {$\scriptstyle{ \mathrm{source}\;3}$};
\node at (-2.5,0.5) {$\scriptstyle{ \mathrm{source}\;2}$};
\node at (-2.5,-0.5) {$\scriptstyle{ \mathrm{source}\;1}$};  
\node at (2.25,2.5) {$\scriptstyle{ \mathrm{sink}\;4}$};
\node at (2.25,1.5) {$\scriptstyle{ \mathrm{sink}\;3}$};
\node at (2.25,0.5) {$\scriptstyle{ \mathrm{sink}\;2}$};
\node at (2.25,-0.5) {$\scriptstyle{ \mathrm{sink}\;1}$};  
\draw[->,>=stealth'] (-1,2.5) -- (-.2,1.65);
\draw[->,>=stealth'] (-1,1.5) -- (-.2,1.5);
\draw[->,>=stealth'] (-1,0.5) -- (-.2,1.35);
\draw[->,>=stealth'] (0,1.5) -- (.8,2.35);
\draw[->,>=stealth'] (0,1.5) -- (.8,1.5);
\draw[->,>=stealth'] (0,1.5) -- (.8,0.65);
\draw[->,>=stealth'] (-1,-0.5) -- (.8,-0.5);
\node at (-1,2.5) {$\bullet$}; 
\node at (-1,1.5) {$\bullet$}; 
\node at (-1,0.5) {$\bullet$}; 
\node at (-1,-0.5) {$\bullet$};
\node at (1,2.5) {$\bullet$}; 
\node at (1,1.5) {$\bullet$}; 
\node at (1,0.5) {$\bullet$}; 
\node at (1,-0.5) {$\bullet$};
\node at (0,1.5) {$\bullet$};
\end{tikzpicture}
\ .
\end{equation*}  
For economy, we will omit edge orientations and vertices from
drawings of planar networks.
Thus we will draw the
seven simple star networks in $\net 4$ as
\begin{equation}\label{eq:simplestarnets}
\begin{gathered}
   \begin{tikzpicture}[scale=.5,baseline=-20]
\node at (-.4,0) {$\scriptstyle 4$};
\node at (-.4,-1) {$\scriptstyle 3$};
\node at (-.4,-2) {$\scriptstyle{2}$};
\node at (-.4,-3) {$\scriptstyle 1$};  
\node at (1.4,0) {$\scriptstyle 4$};
\node at (1.4,-1) {$\scriptstyle 3$};
\node at (1.4,-2) {$\scriptstyle{2}$};
\node at (1.4,-3) {$\scriptstyle 1$};  
\draw[-] (0,0) -- (1,-3);
\draw[-] (0,-1) -- (1,-2);
\draw[-] (0,-2) -- (1,-1);
\draw[-] (0,-3) -- (1,0);
\end{tikzpicture}%,
,\\
\phantom{\sum}\ntksp F_{[1,4]}\phantom{\sum}
\end{gathered}
\begin{gathered}
\begin{tikzpicture}[scale=.5,baseline=-20]
\node at (-.4,0) {$\scriptstyle 4$};
\node at (-.4,-1) {$\scriptstyle 3$};
\node at (-.4,-2) {$\scriptstyle{2}$};
\node at (-.4,-3) {$\scriptstyle 1$};  
\node at (1.4,0) {$\scriptstyle 4$};
\node at (1.4,-1) {$\scriptstyle 3$};
\node at (1.4,-2) {$\scriptstyle{2}$};
\node at (1.4,-3) {$\scriptstyle 1$};  
\draw[-] (0,0) -- (1,-2);
\draw[-] (0,-1) -- (1,-1);
\draw[-] (0,-2) -- (1,0);
\draw[-] (0,-3) -- (1,-3);
\end{tikzpicture},\\
   \phantom{\sum}\ntksp F_{[2,4]}\phantom{\sum}
 \end{gathered}
\begin{gathered}
\begin{tikzpicture}[scale=.5,baseline=-20]
\node at (-.4,0) {$\scriptstyle 4$};
\node at (-.4,-1) {$\scriptstyle 3$};
\node at (-.4,-2) {$\scriptstyle{2}$};
\node at (-.4,-3) {$\scriptstyle 1$};  
\node at (1.4,0) {$\scriptstyle 4$};
\node at (1.4,-1) {$\scriptstyle 3$};
\node at (1.4,-2) {$\scriptstyle{2}$};
\node at (1.4,-3) {$\scriptstyle 1$};  
\draw[-] (0,0) -- (1,0);
\draw[-] (0,-1) -- (1,-3);
\draw[-] (0,-2) -- (1,-2);
\draw[-] (0,-3) -- (1,-1);
\end{tikzpicture},\\
   \phantom{\sum}\ntksp F_{[1,3]}\phantom{\sum}
 \end{gathered}
\begin{gathered}
\begin{tikzpicture}[scale=.5,baseline=-20]
\node at (-.4,0) {$\scriptstyle 4$};
\node at (-.4,-1) {$\scriptstyle 3$};
\node at (-.4,-2) {$\scriptstyle{2}$};
\node at (-.4,-3) {$\scriptstyle 1$};  
\node at (1.4,0) {$\scriptstyle 4$};
\node at (1.4,-1) {$\scriptstyle 3$};
\node at (1.4,-2) {$\scriptstyle{2}$};
\node at (1.4,-3) {$\scriptstyle 1$};  
\draw[-] (0,0) -- (1,-1);
\draw[-] (0,-1) -- (1,0);
\draw[-] (0,-2) -- (1,-2);
\draw[-] (0,-3) -- (1,-3);
\end{tikzpicture},\\
   \phantom{\sum}\ntksp F_{[3,4]}\phantom{\sum}
 \end{gathered}
\begin{gathered}
\begin{tikzpicture}[scale=.5,baseline=-20]
\node at (-.4,0) {$\scriptstyle 4$};
\node at (-.4,-1) {$\scriptstyle 3$};
\node at (-.4,-2) {$\scriptstyle{2}$};
\node at (-.4,-3) {$\scriptstyle 1$};  
\node at (1.4,0) {$\scriptstyle 4$};
\node at (1.4,-1) {$\scriptstyle 3$};
\node at (1.4,-2) {$\scriptstyle{2}$};
\node at (1.4,-3) {$\scriptstyle 1$};  
\draw[-] (0,0) -- (1,0);
\draw[-] (0,-1) -- (1,-2);
\draw[-] (0,-2) -- (1,-1);
\draw[-] (0,-3) -- (1,-3);
\end{tikzpicture},\\
   \phantom{\sum}\ntksp F_{[2,3]}\phantom{\sum}
 \end{gathered}
\begin{gathered}
\begin{tikzpicture}[scale=.5,baseline=-20]
\node at (-.4,0) {$\scriptstyle 4$};
\node at (-.4,-1) {$\scriptstyle 3$};
\node at (-.4,-2) {$\scriptstyle{2}$};
\node at (-.4,-3) {$\scriptstyle 1$};  
\node at (1.4,0) {$\scriptstyle 4$};
\node at (1.4,-1) {$\scriptstyle 3$};
\node at (1.4,-2) {$\scriptstyle{2}$};
\node at (1.4,-3) {$\scriptstyle 1$};  
\draw[-] (0,0) -- (1,0);
\draw[-] (0,-1) -- (1,-1);
\draw[-] (0,-2) -- (1,-3);
\draw[-] (0,-3) -- (1,-2);
\end{tikzpicture},\\
   \phantom{\sum}\ntksp F_{[1,2]}\phantom{\sum}
 \end{gathered}
\
\begin{gathered}
\begin{tikzpicture}[scale=.5,baseline=-20]
\node at (-.4,0) {$\scriptstyle 4$};
\node at (-.4,-1) {$\scriptstyle 3$};
\node at (-.4,-2) {$\scriptstyle{2}$};
\node at (-.4,-3) {$\scriptstyle 1$};  
\node at (1.4,0) {$\scriptstyle 4$};
\node at (1.4,-1) {$\scriptstyle 3$};
\node at (1.4,-2) {$\scriptstyle{2}$};
\node at (1.4,-3) {$\scriptstyle 1$};  
\draw[-] (0,0) -- (1,0);
\draw[-] (0,-1) -- (1,-1);
\draw[-] (0,-2) -- (1,-2);
\draw[-] (0,-3) -- (1,-3);
\end{tikzpicture},\\
\phantom{\sum}\ntksp F_\emptyset \phantom{\sum}
 \end{gathered}
\end{equation}
where $F_\emptyset = F_{[1,1]} = F_{[2,2]} = F_{[3,3]} = F_{[4,4]}$.
When there is no danger of confusion, we will omit source and sink labels
as well. 

The set $\net n$ is generated by simple star networks and
two types of concatenation operations.
Given networks $E, F \in \net n$,
define
the (ordinary) concatenation $E \circ F$
as follows.  For $i = 1,\dotsc, n$, do
\begin{enumerate}
\item remove sink $i$ of $E$ and source $i$ of $F$,
\item merge each edge $(\vertex x, \text{sink }i)$ in $E$ with each edge
  $( \text{source }i, \vertex y )$ in $F$
  to form a single edge $(\vertex x, \vertex y)$ in $E \circ F$.
\end{enumerate}
Sometimes in a concatenation $E \circ F$,
there may exist vertices $\vertex x$ in $E$, $\vertex y$ in $F$
with more than one edge incident upon both.  Let
$m(\vertex x, \vertex y)$ be this number.
Define the {\em condensed concatenation}
$E \bullet F$ to be the
subdigraph of $E \circ F$ obtained
by removing, for all such pairs $(\vertex x, \vertex y)$,
all but one of the $m(\vertex x, \vertex y)$ edges incident upon both.
For example, in $\net 4$ we have the isomorphic graphs
\begin{equation}\label{eq:allbutone1}
F_{[3,4]} \circ F_{[2,3]} =
\begin{tikzpicture}[scale=.5,baseline=0]
\node at (-.4,1.5) {$\scriptstyle 4$};
\node at (-.4,0.5) {$\scriptstyle 3$};
\node at (-.4,-0.5) {$\scriptstyle{2}$};
\node at (-.4,-1.5) {$\scriptstyle{1}$};  
\node at (2.4,1.5) {$\scriptstyle 4$};
\node at (2.4,0.5) {$\scriptstyle 3$};
\node at (2.4,-0.5) {$\scriptstyle{2}$};
\node at (2.4,-1.5) {$\scriptstyle{1}$};  
\draw[-] (0,1.5) -- (1,0.5) -- (2,-0.5);
\draw[-] (0,0.5) -- (1,1.5) -- (2,1.5);
\draw[-] (0,-0.5) -- (1,-0.5) -- (2,0.5);
\draw[-] (0,-1.5) -- (2,-1.5);
\end{tikzpicture}
\;\cong\;
F_{[3,4]} \bullet F_{[2,3]},
\end{equation}
and the nonisomorphic graphs
\begin{equation}\label{eq:allbutone2}
  F_{[2,4]} \circ F_{[1,3]} =
\begin{tikzpicture}[scale=.5,baseline=0]
\node at (.6,1.5) {$\scriptstyle 4$};
\node at (.6,0.5) {$\scriptstyle 3$};
\node at (.6,-0.5) {$\scriptstyle{2}$};
\node at (.6,-1.5) {$\scriptstyle{1}$};  
\node at (3.4,1.5) {$\scriptstyle 4$};
\node at (3.4,0.5) {$\scriptstyle 3$};
\node at (3.4,-0.5) {$\scriptstyle{2}$};
\node at (3.4,-1.5) {$\scriptstyle{1}$};  
\draw[-] %(0,1.5) --
(1,1.5) -- (2,-0.5) -- (3,-0.5);
\draw[-] %(0,0.5) --
(1,-1.5) -- (2,-1.5) -- (3,0.5);
\draw[-] %(0,-0.5) --
(1,-0.5) -- (2,1.5) -- (3,1.5);
\draw[-] %(0,-1.5) --
(1,0.5) -- (2,0.5) -- (3,-1.5);
\end{tikzpicture}
\; \not \cong \;
F_{[2,4]} \bullet F_{[1,3]} =
\begin{tikzpicture}[scale=.5,baseline=0]
\node at (.6,1.5) {$\scriptstyle 4$};
\node at (.6,0.5) {$\scriptstyle 3$};
\node at (.6,-0.5) {$\scriptstyle{2}$};
\node at (.6,-1.5) {$\scriptstyle{1}$};  
\node at (3.4,1.5) {$\scriptstyle 4$};
\node at (3.4,0.5) {$\scriptstyle 3$};
\node at (3.4,-0.5) {$\scriptstyle{2}$};
\node at (3.4,-1.5) {$\scriptstyle{1}$};  
\draw[-] %(0,1.5) --
(1,1.5) -- (1.5,0.5) -- (2.5,-0.5) -- (3,-0.5);
\draw[-] %(0,0.5) --
(1,-1.5) -- (2,-1.5) -- (3,0.5);
\draw[-] %(0,-0.5) --
(1,-0.5) -- (2,1.5) -- (3,1.5);
\draw[-] %(0,-1.5) --
(1,0.5) -- (1.5,0.5) -- (2.5,-0.5) -- (3,-1.5);
\end{tikzpicture}\; .
\end{equation}

\begin{defn}\label{d:dsn}
  Define a {\em descending star network} to be a
  condensed concatenation of the form
  \begin{equation}\label{eq:dsn}
    F = F_{[c_1,d_1]} \bullet \cdots \bullet F_{[c_t,d_t]}, \\
\end{equation}
  with
    intervals $[c_1,d_1], \dotsc, [c_t,d_t]$
    distinct, pairwise nonnesting,
    and satisfying $c_1 > \cdots > c_t$ (and $d_1 > \cdots > d_t$).
\end{defn}
\noindent
Let $\D_n$ be the set of all descending star networks of order $n$.
For example,
the $14$ descending star networks in $\D_4$ are
\begin{equation}\label{eq:xfigures2}
\raisebox{-6mm}{
  \includegraphics[height=13mm]{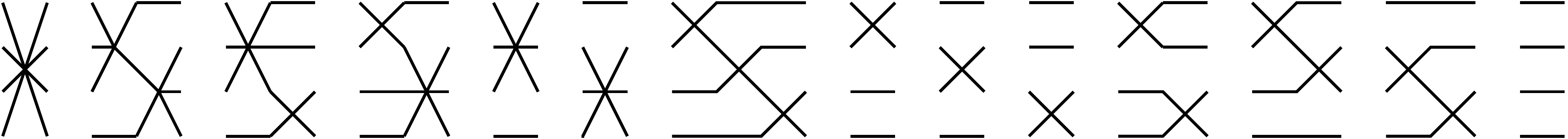}}\ .
\end{equation}
In the notation of
Definition~\ref{d:dsn},
these are
$F_{[1,4]}$,
$F_{[2,4]} \bullet F_{[1,3]}$,
$F_{[2,4]} \bullet F_{[1,2]}$,
$F_{[3,4]} \bullet F_{[1,3]}$,
etc.
By \cite[Thm.\,3.6]{CHSSkanEKL}, \cite[Thm.\,3.5, Lem\,5.3]{SkanNNDCB},
the number of descending star networks of order $n$
is $\tfrac1{n+1} \tbinom{2n}n$.
Explicit bijections $P \mapsto F(P)$ and $F \mapsto P(F)$ between
$\U_n$ and $\D_n$ are as follows. (See also~\cite{CHSSkanEhatKL}.)  
\begin{alg}\label{a:PtoF}
  Given $P \in \U_n$,
  \begin{enumerate}
  \item Label $P$ as in Algorithm~\ref{a:arlabeling}.
  \item Let $[c_1,d_1], \dotsc, [c_t,d_t]$ be the sequence of maximal
    antichains in $P$, ordered so that $c_1 > \cdots > c_t$.
  \item Define $F = F(P) = F_{[c_1,d_1]} \bullet \cdots \bullet F_{[c_t,d_t]}$.
  \end{enumerate}
\end{alg}
\begin{alg}\label{a:FtoP}
  Given $F \in \D_n$,
  \begin{enumerate}
  \item For $i = 1,\dotsc, n$, define $\pi_i$ to be the unique path
    in $F$ from source $i$ to sink $i$.
    \item Give elements of $P = P(F)$ the labels $1, \dotsc, n$.
    \item For all $i,j \in [n]$ with $i < j$ in $\mathbb Z$, declare
      $i <_P j$
      if
      $\pi_i$ does not intersect $\pi_j$ in $F$.
  \end{enumerate}
\end{alg}
Algorithms \ref{a:PtoF} and \ref{a:FtoP} invert one another.
The reader can check that Algorithm~\ref{a:PtoF}
maps the unit interval orders in (\ref{eq:14uios})
to the descending star networks in (\ref{eq:xfigures2}), respectively,
and that Algorithm~\ref{a:FtoP} inverts this.

Given a planar network $F$ of order $n$, define the {\em path matrix}
$B = (b_{i,j})$ of $F$
by
\begin{equation}\label{eq:pathmatrixdef}
  b_{i,j} = \text{number of paths in $F$ from source $i$ to sink $j$}.
\end{equation}
Let $\mathcal P_n$ be the set of $\tfrac 1{n+1}\tbinom{2n}n$ path matrices of
planar networks in $\mathcal D_n$.
For example, the set $\mathcal P_4$ of path matrices of
descending star networks in $\mathcal D_4$ 
(\ref{eq:xfigures2}) is
\begin{equation}\label{eq:14pathmatrices}
  \begin{gathered}
  \begin{bmatrix}
    1 & 1 & 1 & 1\\
    1 & 1 & 1 & 1\\
    1 & 1 & 1 & 1\\
    1 & 1 & 1 & 1
  \end{bmatrix}\ntksp,
  \quad
  \begin{bmatrix}
    1 & 1 & 1 & 0\\
    1 & 1 & 1 & 1\\
    1 & 1 & 1 & 1\\
    1 & 1 & 1 & 1
  \end{bmatrix}\ntksp,
  \quad
  \begin{bmatrix}
    1 & 1 & 0 & 0\\
    1 & 1 & 1 & 1\\
    1 & 1 & 1 & 1\\
    1 & 1 & 1 & 1
  \end{bmatrix}\ntksp,
  \quad
  \begin{bmatrix}
    1 & 1 & 1 & 0\\
    1 & 1 & 1 & 0\\
    1 & 1 & 1 & 1\\
    1 & 1 & 1 & 1
  \end{bmatrix}\ntksp,
  \quad
  \begin{bmatrix}
    1 & 0 & 0 & 0\\
    0 & 1 & 1 & 1\\
    0 & 1 & 1 & 1\\
    0 & 1 & 1 & 1
  \end{bmatrix}\ntksp,
  \\
  \begin{bmatrix}
    1 & 1 & 1 & 0\\
    1 & 1 & 1 & 0\\
    1 & 1 & 1 & 0\\
    0 & 0 & 0 & 1
  \end{bmatrix}\ntksp,
  \quad
  \begin{bmatrix}
    1 & 1 & 0 & 0\\
    1 & 1 & 1 & 0\\
    1 & 1 & 1 & 1\\
    1 & 1 & 1 & 1
  \end{bmatrix}\ntksp,
  \quad
  \begin{bmatrix}
    1 & 0 & 0 & 0\\
    1 & 1 & 0 & 0\\
    1 & 1 & 1 & 1\\
    1 & 1 & 1 & 1
  \end{bmatrix}\ntksp,
  \quad
  \begin{bmatrix}
    1 & 0 & 0 & 0\\
    0 & 1 & 1 & 0\\
    0 & 1 & 1 & 0\\
    0 & 0 & 0 & 1
  \end{bmatrix}\ntksp,
  \quad
  \begin{bmatrix}
    1 & 1 & 0 & 0\\
    1 & 1 & 0 & 0\\
    0 & 0 & 1 & 0\\
    0 & 0 & 0 & 1
  \end{bmatrix}\ntksp,
  \\
  \begin{bmatrix}
    1 & 1 & 0 & 0\\
    1 & 1 & 0 & 0\\
    0 & 0 & 1 & 1\\
    0 & 0 & 1 & 1
  \end{bmatrix}\ntksp,
  \quad
  \begin{bmatrix}
    1 & 0 & 0 & 0\\
    0 & 1 & 1 & 0\\
    0 & 1 & 1 & 1\\
    0 & 1 & 1 & 1
  \end{bmatrix}\ntksp,
  \quad
  \begin{bmatrix}
    1 & 1 & 0 & 0\\
    1 & 1 & 1 & 0\\
    1 & 1 & 1 & 0\\
    0 & 0 & 0 & 1
  \end{bmatrix}\ntksp,
  \quad
  \begin{bmatrix}
    1 & 0 & 0 & 0\\
    0 & 1 & 0 & 0\\
    0 & 0 & 1 & 0\\
    0 & 0 & 0 & 1
  \end{bmatrix}\ntksp.
  \end{gathered}
\end{equation}

By Definition~\ref{d:dsn}, we may describe the entries of
the path matrix of a descending star network
in terms of its defining intervals.
\begin{lem}\label{l:dsn}
  Let
  descending star network
  $F = F_{[c_1,d_1]} \bullet \cdots \bullet F_{[c_t,d_t]}$ as in (\ref{eq:dsn})
  have path matrix $B = (b_{i,j})$.
  Then each entry of $B$ is $0$ or $1$.  In particular,
  for $i \leq j$, we have
  \begin{equation}\label{eq:biji<j}
    b_{i,j} = \begin{cases}
      1 &\text{if some interval $[c_m,d_m]$ contains both $i$ and $j$},\\
      0 &\text{otherwise};
    \end{cases}
  \end{equation}
  for $i > j$ we have
  \begin{equation}\label{eq:biji>j}
    b_{i,j} = \begin{cases}
      1 &\text{if $b_{j,j+1} = b_{j+1,j+2} = \cdots = b_{i-1,i} = 1$},\\
      0 &\text{otherwise}.%\phantom{\bigg(\sum^\sum\bigg)}
    \end{cases}
  \end{equation}
\end{lem}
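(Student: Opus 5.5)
Since the networks are drawn with sources on the left, sinks on the right and all edges directed left to right, every path from source $i$ to sink $j$ in $F=F_{[c_1,d_1]}\bullet\cdots\bullet F_{[c_t,d_t]}$ passes through the simple star factors from left to right. Label the factors $1,\dots,t$ in order. A path is then described by the sequence of horizontal levels $i=\ell_0,\ell_1,\dots,\ell_t=j$ it occupies between consecutive factors. In factor $m$ it either stays at its level $\ell_{m-1}=\ell_m$, when that level is outside $[c_m,d_m]$, or it enters the interior vertex and leaves at any level of $[c_m,d_m]$, when $\ell_{m-1}\in[c_m,d_m]$.

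Condensation only removes parallel edges between the same pair of vertices $(x,y)$. So paths in $F$ correspond to sequences of \emph{vertices}, not level sequences. Two level sequences visiting the same interior vertices in the same order give the same path, because the parallel edges have been identified. Hence $b_{i,j}$ counts the distinct sets of interior vertices (star centers) that can be visited along some admissible level sequence from $i$ to $j$. The plan is to show this count is at most one, and to characterize when it is one.

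\textbf{Levels along a path.} Between star centers, a path moves along horizontal wires. Consider a path leaving center $m$ at level $\ell\in[c_m,d_m]$. It next enters center $m'>m$ if and only if $m'$ is the first index after $m$ with $\ell\in[c_{m'},d_{m'}]$. Since $c_1>\cdots>c_t$ and $d_1>\cdots>d_t$, the intervals containing $\ell$ form a consecutive block of indices. So once the path is at center $m$, the next center it can reach is $m+1$ (at a level in $[c_m,d_m]\cap[c_{m+1},d_{m+1}]$), or it never reaches another center. The centers visited therefore form a consecutive run $m,m+1,\dots,m'$, and the first one $m$ is forced: it is the first interval containing $i$. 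The path then ends at sink $j$ by leaving center $m'$ at level $j$ and meeting no later interval containing $j$.

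For a given $i$ and $j$, the only freedom is $m'$, and $m'$ is determined as the last interval containing $j$, provided that interval does not come before $m$. This gives $b_{i,j}\in\{0,1\}$. Moreover, $b_{i,j}=1$ exactly when the first interval containing $i$ (or no interval, in which case $i=j$ is required) precedes or equals the last interval containing $j$, with all consecutive intervals between them overlapping. The overlap condition is automatic once $i$ and $j$ lie in the relevant intervals, because consecutive intervals are nonnesting and the descending order makes each one overlap the next whenever a common level is needed. Some care is needed here: I would verify that a jump from center $m$ to center $m+1$ requires $[c_m,d_m]\cap[c_{m+1},d_{m+1}]\neq\emptyset$, which is exactly what makes the run connected.

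\textbf{Case $i\le j$.} The intervals decrease down the word, so if $m\le m'$ with $i\in[c_m,d_m]$ and $j\in[c_{m'},d_{m'}]$, then $c_{m'}\le c_m\le i\le j\le d_{m'}$. Thus a single interval, $[c_{m'},d_{m'}]$, contains both $i$ and $j$. Conversely, if some interval contains both, the first interval containing $i$ comes weakly before it and the last interval containing $j$ comes weakly after it, so a path exists. This proves (\ref{eq:biji<j}).

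\textbf{Case $i>j$.} Going downward from $i$ to $j$ is possible only by chaining centers. By the description above, a path exists if and only if the levels $i,i-1,\dots,j$ can be bridged by successive intervals, each appearing later than the previous one. I would show this is equivalent to each adjacent pair $\{k,k+1\}$ with $j\le k<i$ lying in a common interval, i.e.\ $b_{k,k+1}=1$ by the previous case. Necessity holds because the path must cross from level $k+1$ to level $k$ inside some star, and that star's interval contains both $k$ and $k+1$. Sufficiency holds because the intervals containing $\{k,k+1\}$ occur at weakly later positions as $k$ decreases (descending order), so they chain in the correct left-to-right order. This proves (\ref{eq:biji>j}).

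The main obstacle is making the path-uniqueness step rigorous with respect to condensation, that is, checking that distinct level sequences through the same centers really collapse to one path. The cleanest way to handle this is to argue directly on the vertex sequence, as above.
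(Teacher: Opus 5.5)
Your proposal is correct and follows the paper's route. That route is uniqueness of paths from the star structure, then (\ref{eq:biji<j}) via a single interval containing both $i$ and $j$, and (\ref{eq:biji>j}) via chains of consecutively overlapping intervals. You also supply two things the paper only asserts. Your observation that the intervals containing a fixed level have consecutive indices, so that the centers form a chain, is what justifies the paper's one-line uniqueness claim. Your sufficiency argument gives the converse direction of (\ref{eq:biji>j}), which the paper leaves implicit.

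One sentence in your proposal is wrong. The remark that the overlap condition is automatic fails in general. In $F_{[5,6]}\bullet F_{[1,2]}$ the first interval containing $5$ precedes the last interval containing $1$, yet $b_{5,1}=0$. This does not break your proof: in the two sufficiency arguments where you implicitly rely on it, the overlaps do hold. Each chosen interval shares a level with the next, and the block property puts that level in every interval between them.

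There is also a caveat that you and the paper both omit. If $i$ lies in no interval, then $b_{i,i}=1$ via the direct edge. So (\ref{eq:biji<j}) at $i=j$ requires the intervals to cover $[n]$, as the maximal antichains of Algorithm~\ref{a:PtoF} do (singletons included).
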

\begin{proof}
  The structure of a descending star network prevents the existence of
  more than one path from any vertex to any other vertex.
  Then
  Equation (\ref{eq:biji<j}) follows immediately from Definition~\ref{d:dsn}
  and
  (\ref{eq:pathmatrixdef}).

  To see the validity of (\ref{eq:biji>j}), observe that
  by the above definitions we have $b_{i,j} = 1$ with $i > j$ if and only if
  some indices $m \leq m'$ of intervals satisfy
  $i \in [c_m,d_m]$,
  $j \in [c_{m'},d_{m'}]$
  and
  \begin{equation}\label{eq:intersectingintervals}
  [c_k,d_k] \cap [c_{k+1},d_{k+1}] \neq \emptyset
  \text{ for $k = m,\dotsc,m'-1$}.
  \end{equation}
  For each such interval $[c_k,d_k]$ there are paths in $F$ from each
  source $c_k,\dotsc,d_k$ to each sink $c_k,\dotsc,d_k$.  In particular
  we have $b_{c_k,c_k+1} = b_{c_{k+1},d_{k+2}} = \cdots = b_{d_k-1,d_k} = 1$.
  Then since the intervals overlap consecutively,
  with the first interval containing $i$
  and the last interval containing $j$, we have the right-hand side of
  (\ref{eq:biji>j}).
\end{proof}

Some, but not all, of the matrices (\ref{eq:14pathmatrices}) 
appear in
(\ref{eq:14aamatrices}).
The precise connection between $\mathcal A_n$
and $\mathcal P_n$
can be easily described using \emph{block structures} of these matrices
and {\em parabolic subgroups} of $\sn$.
Let $\alpha = (\alpha_1,\dotsc,\alpha_r)$
be an integer composition of $n$,
i.e., a sequence of positive integers satisfying
$\alpha_1 + \cdots + \alpha_r = n$, and define the (ordered) set partition
$(I_1, \dotsc, I_r)$ of $[n]$ 
 into intervals
 of lengths $\alpha_1,\dotsc,\alpha_r$ by
\begin{equation}\label{eq:alphaintervals}
  I_1 = [1,\alpha_1], \quad I_2 = [\alpha_1+1,\alpha_1+\alpha_2],\quad \dotsc, \quad
  I_r = [n-\alpha_r, n].
\end{equation} 
For any $n\times n$ matrix $A$, we can write it in terms of \emph{block submatrices} $A_{I_i, I_j}$ by
\begin{equation}\label{eq:block}
  A = (A_{I_i,I_j}) =
  \begin{bmatrix}
    A_{I_1, I_1} & A_{I_1,I_2} & \cdots & A_{I_1,I_r} \\
    A_{I_2, I_1} & A_{I_2,I_2} & \cdots & A_{I_2,I_r} \\
    \vdots     & \vdots    &        & \vdots   \\
    A_{I_r, I_1} & A_{I_r,I_2} & \cdots & A_{I_r,I_r}
  \end{bmatrix}.
\end{equation}
If $A_{I_i, I_j} =0$ whenever $i<j$, we say $A$ is
{\em $\alpha$-block lower-triangular};
if $A$ and $A^\tr$ are both
$\alpha$-block lower-triangular, we say that $A$ is
{\em $\alpha$-block diagonal}.
It is clear that if $\beta$ is a composition of $n$ with
$\alpha$ refining $\beta$, then
any $\alpha$-block lower-triangular (diagonal) matrix
is also $\beta$-block lower-triangular (diagonal).
In particular, for any square matrix $A$ there is a
finest composition $\alpha = \alpha(A)$ of $n$ for which $A$ is $\alpha$-block
lower-triangular.
This composition is determined by the $0$-entries of $A$ located
just above the diagonal, with the additional property that all entries above
and to the right of these are also $0$.
\begin{obs}\label{eq:superdiagonal0}
  Let $A= (a_{i,j})$ be an $n \times n$ $0$-$1$ matrix 
  satisfying $a_{1,j} \leq \cdots \leq a_{j,j}$ for all $j$,
  and $a_{i,i} \geq \cdots \geq a_{i,n}$ for all $i$.
  Define the set
  $K = \{ k \,|\, a_{k,k+1} = 0 \} = \{k_1,\dotsc,k_{r-1}\}$
  with $k_1 < \cdots < k_{r-1}$, and define $\alpha = \alpha(A)$ as above.
  Then we have
\begin{equation}\label{eq:alphaK}
  \alpha(A) = (\alpha_1,\dotsc,\alpha_r) =
  (k_1, k_2 - k_1, \dotsc, k_{r-1} - k_{r-2}, n - k_{r-1}).
\end{equation}
\end{obs}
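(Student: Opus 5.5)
We must show that the finest composition $\alpha(A)$ for which $A$ is block lower-triangular has block boundaries exactly at the positions $k \in K$, i.e.\ at those $k$ with $a_{k,k+1}=0$.

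First recall what a block boundary means. A composition with partial sums $s_1<\cdots<s_{r-1}$ makes $A$ block lower-triangular if and only if, for each $s=s_m$, the upper-right submatrix $A_{[1,s],[s+1,n]}$ is zero. To see this, note that the union of the blocks $A_{I_i,I_j}$ with $i<j$ is exactly the union of these rectangles, one for each partial sum. Moreover, the condition for a cut at $s$ does not depend on which other cuts are present. Hence the set of admissible cut positions is
\[S=\{s\in[n-1] : A_{[1,s],[s+1,n]}=0\},\]
and every composition whose partial sums lie in $S$ is admissible. The finest one therefore uses all of $S$, so $\alpha(A)$ is the composition whose partial sums are the elements of $S$. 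It remains to show $S=K$.

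For $S\subseteq K$: if $s\in S$, then $(s,s+1)$ lies in the zero rectangle, so $a_{s,s+1}=0$ and $s\in K$.

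For $K\subseteq S$: take $k\in K$ and any entry $a_{i,j}$ with $i\le k<j$. We have $i\le k<k+1\le j$.
\begin{itemize}
\item The column hypothesis $a_{1,k+1}\le\cdots\le a_{k+1,k+1}$ gives $a_{i,k+1}\le a_{k,k+1}=0$.
\item The row hypothesis $a_{i,i}\ge\cdots\ge a_{i,n}$ then gives $a_{i,j}\le a_{i,k+1}=0$, since $i<k+1\le j$.
\end{itemize}
Because $A$ is a $0$-$1$ matrix, $a_{i,j}=0$. So the whole rectangle vanishes and $k\in S$.

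Therefore $S=K=\{k_1<\cdots<k_{r-1}\}$, and the block lengths are the successive differences $k_1,\,k_2-k_1,\,\dots,\,n-k_{r-1}$, which is (\ref{eq:alphaK}).

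The main care point is the first step: making precise that "finest" means using all admissible cuts, and that the admissibility of a cut is independent of the other cuts. The monotonicity step is routine once the row and column index ranges are checked to apply.
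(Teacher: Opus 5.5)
Your proof is correct and follows the paper's reasoning. The paper states this as an observation without a formal proof, justifying it only by the remark that $\alpha(A)$ is determined by the zero entries just above the diagonal, all of whose entries above and to the right also vanish; that is exactly your monotonicity step $K\subseteq S$. Your description of the admissible cuts as the set $S$, with each cut admissible independently of the others, also settles the existence and uniqueness of the finest composition, which the paper asserts without argument.
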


 Define $\mfs \alpha$ to be the subgroup of $\sn$ whose elements
 have 
 one-line notations of the form $w^{(i)} \circ \cdots \circ w^{(r)}$,
 where $w^{(j)}$ is a permutation of the letters $I_j$ and $\circ$ denotes
 concatenation of permutations.  Such a subgroup is called {\em parabolic}.
 (See, e.g., \cite{BBCoxeter} or \cite{Hum}.)
 It is known that the determinants of two block lower-triangular matrices
 are equal if their diagonal blocks are the same.
 More generally, we have the following.
\begin{prop}\label{p:blocklt}
  Let $A = (A_{I_i,I_j})$ be an $n \times n$ block lower-triangular matrix with 
  intervals $(I_1,\dotsc,I_r)$ and composition $\alpha = (\alpha_1,\dotsc,\alpha_r)$ as in (\ref{eq:alphaintervals}), 
  and suppose that for some $w \in \sn$ the product $\permmon aw$
  is nonzero.  Then we have $w \in \mfs \alpha$.
\end{prop}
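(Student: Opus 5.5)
We must show that if $a_{1,w_1}\cdots a_{n,w_n}\neq 0$ and $A$ is $\alpha$-block lower-triangular, then $w$ maps each interval $I_j$ onto itself. The plan is a counting argument on the initial unions of blocks. For each $k=1,\dotsc,r$, set
\[
J_k \defeq I_1\cup\cdots\cup I_k=[1,\alpha_1+\cdots+\alpha_k].
\]
We will show that $w(J_k)\subseteq J_k$ for every $k$. Since $w$ is a bijection and $J_k$ is finite, this forces $w(J_k)=J_k$. Then $w(I_k)=w(J_k)\ssm w(J_{k-1})=J_k\ssm J_{k-1}=I_k$, which says precisely that the one-line notation of $w$ is a concatenation $w^{(1)}\circ\cdots\circ w^{(r)}$ with $w^{(j)}$ a permutation of $I_j$, i.e., $w\in\mfs\alpha$.

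To get $w(J_k)\subseteq J_k$, take $i\in J_k$, say $i\in I_p$ with $p\leq k$. Since the monomial is nonzero, $a_{i,w_i}\neq 0$. Suppose $w_i\in I_q$. The block $A_{I_p,I_q}$ then has a nonzero entry, so by the definition of $\alpha$-block lower-triangularity we cannot have $p<q$. Hence $q\leq p\leq k$, and so $w_i\in J_k$.

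Equivalently, one could argue by induction on $r$ using the top-left block. Rows in $I_1$ have nonzero entries only in columns of $I_1$, so $w$ maps $I_1$ into $I_1$, hence onto $I_1$. One would then delete these rows and columns and apply the same argument to the remaining block lower-triangular matrix.

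There is no real obstacle here. The only point needing care is the step from ``into'' to ``onto'', which uses finiteness together with the injectivity of $w$. One should also note that the statement uses only the zero pattern above the block diagonal, so no total nonnegativity is needed.
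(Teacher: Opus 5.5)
Your proof is correct and rests on the same observation as the paper's: a nonzero factor $a_{i,w_i}$ with $i\in I_p$, $w_i\in I_q$ forces $q\leq p$, combined with the bijectivity of $w$. The paper argues by contradiction from the smallest block $I_j$ that $w$ fails to preserve. You instead prove $w(J_k)\subseteq J_k$ directly for the initial unions $J_k$ and then use finiteness and injectivity. These are the same argument in two packagings.
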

\begin{proof}
  Let $j$ be the smallest index for which the letters in positions $I_j$
  of $w$ do not all belong to $I_j$.
  Choose any index $k \in I_j$ such that
  we have $w_k \not\in I_j$.
  We cannot have $w_k \in I_1 \cup \cdots \cup I_{j-1}$,
  because all of these letters appear in the first
  $\alpha_1 + \cdots + \alpha_{j-1}$ positions of $w$.  Thus $w_k$ belongs
  to $I_{j+1} \cup \cdots \cup I_r$.
  But then the block lower triangularity of $A$ implies that $a_{k,w_k} = 0$,
  contradicting our assumption that $\permmon aw$ is nonzero.
  \end{proof}

\begin{cor}\label{c:AB}
  Let $A = (A_{I_i,I_j})$ be $\alpha$-block lower-triangular,
  and let  $B = (B_{I_i,I_j})$ be the $\alpha$-block diagonal matrix satisfying
  \begin{equation}\label{eq:diagonal}
    B_{I_i,I_j} = \begin{cases}
      A_{I_i,I_j} &\text{if $i = j$},\\
      0 &\text{otherwise}.
    \end{cases}
  \end{equation}
  Then for every permutation $w=w_1w_2\cdots w_n \in \sn$, we have
  \begin{equation}\label{eq:awbw}
    \permmon aw = \permmon bw.
  \end{equation}
\end{cor}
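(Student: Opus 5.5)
We prove the equality by splitting into two cases, according to whether $w$ lies in the parabolic subgroup $\mfs\alpha$, and we compare the two products factor by factor.

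\emph{Case $w \notin \mfs \alpha$.} By Proposition~\ref{p:blocklt}, since $A$ is $\alpha$-block lower-triangular, the product $\permmon aw$ must be zero. The matrix $B$ is $\alpha$-block diagonal, and in particular also $\alpha$-block lower-triangular. Applying Proposition~\ref{p:blocklt} to $B$ shows that $\permmon bw$ is zero as well. So both sides of (\ref{eq:awbw}) vanish.

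\emph{Case $w \in \mfs\alpha$.} By the definition of the parabolic subgroup, for each $k \in [n]$ the index $k$ and the letter $w_k$ lie in the same interval $I_j$. Then the entry $a_{k,w_k}$ lies in the diagonal block $A_{I_j,I_j}$. By (\ref{eq:diagonal}) this block equals $B_{I_j,I_j}$, so $a_{k,w_k} = b_{k,w_k}$ for every $k$. Multiplying over $k = 1,\dotsc,n$ gives (\ref{eq:awbw}).

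The only point requiring any care is the justification in the first case that $B$ inherits block lower-triangularity. This is immediate, because its off-diagonal blocks are zero by construction. Alternatively, one can repeat the argument of Proposition~\ref{p:blocklt} directly: some $k$ has $w_k$ in a later block, so the corresponding entry of $B$ vanishes. With that remark, no real obstacle remains; the corollary is a direct consequence of Proposition~\ref{p:blocklt}.
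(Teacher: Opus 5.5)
Your proof is correct and is essentially the argument the paper intends. The paper gives no separate proof and presents the statement as an immediate corollary of Proposition~\ref{p:blocklt}: both products vanish for $w \notin \mfs\alpha$, and for $w \in \mfs\alpha$ every factor $a_{k,w_k}$ lies in a diagonal block, where $A$ and $B$ agree.
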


Now we can precisely state the relationship between $\mathcal A_n$
and $\mathcal P_n$.
Define the map
\begin{equation}\label{eq:psi}
  \begin{aligned}
    \psi: \mathcal A_n &\rightarrow \mathcal P_n\\
    A &\mapsto A_{I_1,I_1} \oplus \cdots \oplus A_{I_r,I_r},
  \end{aligned}
\end{equation}
where $(I_1,\dotsc,I_r)$ is the set partition associated 
to the composition $\alpha(A)$ as in (\ref{eq:alphaintervals}).

\begin{prop}\label{p:P A F psiA}
  Given unit interval order $P \in \mathcal U_n$
  with antiadjacency matrix $A$,  
  the descending star network $F = F(P) \in \mathcal D_n$
  has path matrix $\psi(A)$.
\end{prop}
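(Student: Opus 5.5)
We will prove Proposition~\ref{p:P A F psiA} by comparing entries directly: we check that the path matrix $B$ of $F=F(P)$ agrees with $\psi(A)$ above the diagonal, on the diagonal, and below it.

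\emph{Upper triangle.} Label $P$ as in Algorithm~\ref{a:arlabeling}, and let $[c_1,d_1],\dotsc,[c_t,d_t]$ be its maximal antichains, so that $F=F_{[c_1,d_1]}\bullet\cdots\bullet F_{[c_t,d_t]}$. Fix $i\le j$. By (\ref{eq:biji<j}) of Lemma~\ref{l:dsn}, $b_{i,j}=1$ exactly when some maximal antichain interval contains both $i$ and $j$. We claim this happens exactly when $a_{i,j}=1$.
\begin{itemize}
\item If $i,j$ lie in a common antichain, they are incomparable, so $a_{i,j}=1$.
\item Conversely, suppose $a_{i,j}=1$. Since rows of $A$ are weakly decreasing and columns weakly increasing, $a_{k,l}=1$ for all $i\le k\le l\le j$. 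So $[i,j]$ is an antichain, and we enlarge it to a maximal interval antichain.
\end{itemize}
Maximal antichains in a unit interval order labeled as in Algorithm~\ref{a:arlabeling} are intervals; this is the content behind Proposition~\ref{p:peak}. Concretely, take the entry $a_{c,d}$ obtained by moving right in row $i$ and then up in the column to the Dyck-path peak. Proposition~\ref{p:peak} identifies $[c,d]$ as a maximal antichain with $c\le i\le j\le d$. Hence $b_{i,j}=a_{i,j}$ for $i\le j$.

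\emph{Diagonal blocks of $\psi(A)$.} Since $\psi(A)$ agrees with $A$ inside the diagonal blocks, the upper triangle already matches on those blocks. Above the diagonal but outside the blocks, $\psi(A)$ is $0$, and $A$ is also $0$ there by block lower-triangularity (Observation~\ref{eq:superdiagonal0}). So the upper triangle matches everywhere.

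\emph{Lower triangle.} Fix $i>j$. Every entry of $A$ below the diagonal is $1$. In $\psi(A)$, such an entry is $1$ exactly when $i$ and $j$ lie in the same block $I_m$. By Observation~\ref{eq:superdiagonal0}, this holds iff $a_{j,j+1}=\cdots=a_{i-1,i}=1$, since the block boundaries are precisely the $k$ with $a_{k,k+1}=0$. By the upper-triangle case, these superdiagonal entries equal $b_{j,j+1},\dotsc,b_{i-1,i}$. Then (\ref{eq:biji>j}) shows that $b_{i,j}$ is $1$ under exactly the same condition. So $b_{i,j}=\psi(A)_{i,j}$.

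\emph{Expected main obstacle.} The delicate step is the converse in the upper triangle: showing that the antichain $[i,j]$ extends to one of the listed maximal antichains $[c_m,d_m]$. This needs the Algorithm~\ref{a:arlabeling} labeling, so that incomparability sets are intervals and Proposition~\ref{p:peak} applies at the relevant Dyck-path peak. We will handle it by choosing $d$ as the rightmost $1$ in row $i$ and $c$ as the first $1$ in column $d$. Monotonicity then gives $c\le i$ and $d\ge j$, and the peak conditions (1) and (2) of Proposition~\ref{p:peak} hold.
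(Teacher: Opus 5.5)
Your proof is correct and takes essentially the same route as the paper: an entrywise comparison of $B$ with $\psi(A)$, using maximal antichains and (\ref{eq:biji<j}) for $i\le j$, and the block structure and (\ref{eq:biji>j}) for $i>j$. Your version is slightly tighter in two places. First, you explicitly construct, via Proposition~\ref{p:peak}, an interval maximal antichain $[c,d]\supseteq[i,j]$, which the paper takes for granted. Second, you settle the lower triangle by feeding the superdiagonal identities $b_{k,k+1}=a_{k,k+1}$ directly into (\ref{eq:biji>j}), instead of re-deriving that formula from chains of overlapping intervals as the paper does.
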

\begin{proof}
  Let $P$, $F$, $A$ be as above, $B = (b_{i,j})$ be the path matrix of $F$,
  and $B' = (b'_{i,j}) = \psi(A)$.

  Consider $i \leq j$.  We have $b'_{i,j} = a_{i,j}$.
  If $b'_{i,j} = a_{i,j} = 1$, then $i$, $j$ form an antichain in $P$.
  Let $[c,d]$ be a maximal antichain containing both.
  By Algorithm~\ref{a:PtoF}, the star network $F$ contains $F_{[c,d]}$
  as a factor.  By Lemma~\ref{l:dsn},
  there is a path from source $i$ to sink $j$
  in $F$ and thus we have $b_{i,j} = 1$.
  If on the other hand $b'_{i,j} = a_{i,j} = 0$,
  then we have $i <_P j$.
  Thus no antichain of $P$ contains both $i$ and $j$, and
  by Algorithm~\ref{a:PtoF}, no interval $[c,d]$ corresponding to a
  factor $F_{[c,d]}$ of $F$ contains both $i$ and $j$.
  By Lemma~\ref{l:dsn},
  there is no path in $F$ from source $i$ to sink $j$, and we have $b_{i,j} = 0$.

  Now consider $i > j$.  Let $\alpha(A) = \alpha(B')$ be the
  finest partition of $n$ such that $A$ is $\alpha$-block lower-triangular
  and $B'$ is $\alpha$-block diagonal.
  Define the set partition $(I_1,\dotsc,I_t)$ associated to $\alpha$
  as in (\ref{eq:alphaintervals}).
  If $b'_{i,j} = 1$ then some block $I_m = [k_{m-1}+1,k_m]$ of this set partition
  contains both $i$ and $j$. Then \begin{equation*}
      I_m = [c_{{p_1}},d_{p_1}] \cup [c_{p_1+1},d_{p_1+1}] \cup \cdots \cup [c_{q_1},d_{q_1}], 
  \end{equation*} for some $p_1,q_1 \in[n]$, where $[c_s,d_s] \cap [c_{s+1},d_{s+1}] \neq \emptyset$, and $s = p_1 ,p_1+1 , \dotsc, q_1-1$. Then, by Lemma ~\ref{l:dsn}, there is a path from source $i$ to sink $j$. Hence, $b_{i,j} = 1$. If on the other hand $b'_{i,j} = 0$, then there are two distinct blocks $I_{x}$ and $I_{y}$ with $y<x$ such that $i\in I_x$ and $j\in I_y$. Then, \begin{gather*}
      I_x  = [c_{p_2},d_{p_2}] \cup \cdots \cup [c_{q_2},d_{q_2}]  \\
      I_y  = [c_{p_3},d_{p_3}] \cup \cdots \cup [c_{q_3},d_{q_3}], 
  \end{gather*} for some $p_2,q_2,p_3,q_3 \in [n]$. Since $I_x$ and $I_y$ are distinct blocks, we have $[c_{q_3},d_{q_3}] \cap [c_{q_2},d_{q_2}] = \emptyset$. By Lemma ~\ref{l:dsn}, there is no path from source $i$ to sink $j$. Hence, $b_{i,j} = 0$.

\end{proof}

\section{Permanents, majorization, multinomial coefficients, and path families}\label{s:permpathfam}

Inequalities of the form 
\begin{equation}\label{eq:LK}
\perm(A_{L,L})\perm(A_{\ol L,\ol L}) \geq \perm(A_{K,K})\perm(A_{\ol K,\ol  K})
\end{equation} 
which hold for all $n\times n$ antiadjacency matrices $A \in \A_n$ (Section~\ref{s:fourcatalan}) define a partial order on set partitions $L|\ol L$ of $[1,n]$ into at most two blocks:  we define $K|\ol K \leq L|\ol L$ if \eqref{eq:LK} holds for matrices $A \in \A_n$.
For example, when $n=4$, one may evaluate the $2^{4-1} = 8$ products of complementary permanents at the $C_4 = 14$ matrices in $\A_4$ to obtain the Hasse diagram
\begin{center}
    \begin{tikzpicture}
  \node (a) at (0,-0.75) {$13|24$};
  \node (b) at (2,-0.1) {$14|23$};
  \node (c) at (-1,1) {$134|2$};
  \node (d) at (1,1) {$124|3$};
  \node (e) at (-2,2) {$234|1$};
  \node (f) at (0,2) {$12|34$};
  \node (g) at (2,2) {$123|4$};
  \node (h) at (0,3) {$1234$};
  
  \draw (a) -- (c) -- (e) -- (h)
  (a) -- (d) -- (g) -- (h)
  (b) -- (e)
  (a) -- (f) -- (h)
  (b) -- (g);

\end{tikzpicture}.\end{center} 
Contrary to our intuition \eqref{eq:evenoddintro}, we see that the odd-even partition $13|24$ is not the unique minimal element of this poset.
Similarly, when $n=5$, one may evaluate the $2^{5-1} = 16$ products of complementary permanents at the $C_5 = 42$ matrices in $\A_5$ to obtain the Hasse diagram
\begin{center}
    \begin{tikzpicture}[scale=0.7]
  \node (a) at (0,0) {$135|24$};
  \node (c) at (-2.05,1.75) {$124|35$};
  \node (d) at (2.05,1.75) {$13|245$};
  \node (b) at (-5,3) {$14|235$};
  \node (e) at (5,3) {$134|25$};
  \node (f) at (-2.25,4) {$125|34$};
  \node (h) at (2.25,4) {$145|23$};
  \node (g) at (0,4) {$15|234$};
  \node (i) at (-5,5) {$1235|4$};
  \node (j) at (5,5) {$1345|2$};
  \node (k) at (-4,7.5) {$1234|5$};
  \node (l) at (-2,7.5) {$123|45$};
  \node (m) at (0,7.5) {$1245|3$};
  \node (n) at (2,7.5) {$12|345$};
  \node (o) at (4,7.5) {$1|2345$};
  \node (p) at (0,9) {$12345$};

  \draw (a) -- (c) (a) -- (d) (c) -- (i) (d) -- (j) (b) -- (i) -- (k) -- (p) (e) -- (j) -- (o) -- (p) (p) -- (k) 
  (p) -- (l) (p) -- (m) (p) -- (n) (p) -- (o) (f) -- (k) (f) -- (n) (g) -- (k) (g) -- (o) (h) -- (l) (h) -- (o);
  \draw (k) .. controls (3.5,5.5) .. (e);
  \draw (o) .. controls (-3.5,5.5) .. (b);
  \draw (l) .. controls (-4,5) .. (c);
  \draw (n) .. controls (4,5) .. (d);
  \draw (m) -- (c) (m) -- (d); 
\end{tikzpicture}. \end{center}
Again, the odd-even partition $135|24$ is not the unique minimal element of the poset.
In both of the above posets, the odd-even set partitions are lower bounds for
sets of set partitions consisting of complementary subintervals.
For $n=4$ these are $1234$, $123|4$, $12|34$, $1|234$.
For $n=5$ they are $12345$, $1234|5$, $123|45$, $12|345$, $1|2345$.

The above facts generalize to all $n$, and we will state the generalization in 
Theorem~\ref{t:main}. 
The proof of this theorem relies upon
planar networks and the following
facts concerning majorization and products of factorials.

It is well known that the greatest coefficients in the expansion of 
$(x_1 + \cdots + x_k)^n$ are those of the monomials 
$x_1^{a_1} \cdots x_k^{a_k}$ having
exponents as close to one another as possible.
\begin{lem}\label{l:multiincrement}
    Let $(a_1,\dotsc, a_k)$ be a weak composition of $n$ and suppose that for some indices $h$, $\ell$ we have $a_\ell - a_h \geq 2$.
    Let $(\hat a_1, \dotsc, \hat a_k)$ be the composition obtained from
    $(a_1,\dotsc,a_k)$ by defining
    \begin{equation*}
        \hat a_i = \begin{cases}
            a_i + 1 &\text{if $i = h$},\\
            a_i -1 &\text{if $i = \ell$},\\
            a_i &\text{otherwise}.
        \end{cases}
    \end{equation*}
    Then we have
    \begin{equation*}
        {n \choose a_1,\dotsc,a_k} \leq {n \choose \hat a_1,\dotsc, \hat a_k }.
    \end{equation*}
\end{lem}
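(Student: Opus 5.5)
The plan is to compare the two multinomial coefficients directly by taking their ratio. Write
\[
{n \choose a_1,\dotsc,a_k} = \frac{n!}{a_1!\cdots a_k!}.
\]
The compositions $(a_1,\dotsc,a_k)$ and $(\hat a_1,\dotsc,\hat a_k)$ agree in every coordinate except $h$ and $\ell$, and they have the same sum $n$. Hence in the quotient all factors cancel except those coming from indices $h$ and $\ell$:
\[
\frac{{n \choose \hat a_1,\dotsc,\hat a_k}}{{n \choose a_1,\dotsc,a_k}}
= \frac{a_h!\,a_\ell!}{(a_h+1)!\,(a_\ell-1)!}
= \frac{a_\ell}{a_h+1}.
\]

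Next I will use the hypothesis $a_\ell - a_h \geq 2$. It gives $a_\ell \geq a_h + 2 > a_h + 1$, so the ratio above is strictly greater than $1$, which proves the inequality. In fact the inequality is strict.

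Two small points need checking first, but neither is a real obstacle.

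\begin{itemize}
\item The new sequence $(\hat a_1,\dotsc,\hat a_k)$ must again be a weak composition of $n$, so that the multinomial coefficient is defined. The entries still sum to $n$. Also $\hat a_\ell = a_\ell - 1 \geq a_h + 1 \geq 1$, so every entry is nonnegative.
\item The indices $h$ and $\ell$ must be distinct. This follows from $a_\ell - a_h \geq 2$.
\end{itemize}

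The whole argument is this one cancellation. It also shows that repeatedly evening out exponents increases multinomial coefficients, which is presumably how the lemma will be used in later majorization arguments.
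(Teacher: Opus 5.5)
Your proposal is correct and is essentially the paper's argument: the paper also reduces the comparison to the factor $\tfrac{a_\ell}{a_h+1}\geq 1$ relating $\tfrac{n!}{a_1!\cdots a_k!}$ to $\tfrac{n!}{\hat a_1!\cdots \hat a_k!}$. Your checks that $(\hat a_1,\dotsc,\hat a_k)$ is a valid weak composition and that $h\neq\ell$, together with the remark that the inequality is strict, are correct details that the paper leaves implicit.
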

\begin{proof}
    Since $\tfrac{a_\ell}{a_h+1} \geq 1$,
    the left-hand side is at most
    \begin{equation*}
        \frac{a_\ell}{a_h+1} \cdot \frac{n!}{a_1!a_2!\cdots a_k!} = \frac{n!}{\hat a_1!\cdots \hat a_k!} = {n \choose \hat a_1, \dotsc, \hat a_k}. 
    \end{equation*}
\end{proof}

\begin{cor}\label{c:max central multi}
    Let $(a_1,\dotsc,a_{k-1})$, $(b_1,\dotsc,b_{k})$ be weak compositions of $n$
    whose components satisfy $|a_i-a_j| \leq 1$ and $|b_i-b_j| \leq 1$ for all $i, j$. Then we have
    \begin{equation}\label{eq:abcompositions}
        a_1! \cdots a_{k-1}! \geq b_1!\cdots b_{k}!.
    \end{equation}
\end{cor}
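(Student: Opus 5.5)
The plan is to compare the two products of factorials through multinomial coefficients, using Lemma~\ref{l:multiincrement} to show that a balanced composition maximizes the multinomial coefficient. First, pad $(a_1,\dotsc,a_{k-1})$ with a zero part to obtain the weak composition $a' = (a_1,\dotsc,a_{k-1},0)$ of $n$ into $k$ parts. Since $0! = 1$, we have
\begin{equation*}
{n \choose a_1,\dotsc,a_{k-1},0} = \frac{n!}{a_1!\cdots a_{k-1}!}, \qquad {n \choose b_1,\dotsc,b_k} = \frac{n!}{b_1!\cdots b_k!}.
\end{equation*}
Thus (\ref{eq:abcompositions}) is equivalent to
\begin{equation*}
{n \choose a_1,\dotsc,a_{k-1},0} \;\leq\; {n \choose b_1,\dotsc,b_k}.
\end{equation*}

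Next, starting from $a'$, repeatedly apply the move of Lemma~\ref{l:multiincrement} whenever some pair of parts satisfies $a_\ell - a_h \geq 2$. Each move weakly increases the multinomial coefficient. Each move also decreases the quantity $\sum_i a_i^2$ by $2(a_\ell - a_h) - 2 \geq 2$, and this quantity is a nonnegative integer, so the process terminates. It stops exactly at a weak composition $c = (c_1,\dotsc,c_k)$ of $n$ with $|c_i - c_j| \leq 1$ for all $i,j$, and we have ${n \choose a'} \leq {n \choose c}$.

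Finally, check that any two balanced weak compositions of $n$ into $k$ parts have the same multiset of parts. Write $n = qk + r$ with $0 \leq r < k$. If all parts lie in $\{m, m+1\}$ and $s$ of them equal $m+1$ with $0 \leq s < k$, then $n = mk + s$, so $m = q$ and $s = r$. Hence $c$ is a rearrangement of $b$, and since multinomial coefficients are symmetric in their lower entries, ${n \choose c} = {n \choose b}$. This gives the displayed inequality and therefore (\ref{eq:abcompositions}).

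The only step needing care is termination of the balancing process together with this uniqueness of the balanced multiset; both reduce to the short computations above. The degenerate case $k = 1$, where there are no $a_i$, is immediate because both sides then equal $n!$ only if $n = 0$; I would treat the statement as intended for $k \geq 2$, or note that the padding argument covers it trivially.
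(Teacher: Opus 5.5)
Your proof is correct and follows the paper's own argument: pad $(a_1,\dotsc,a_{k-1})$ with a zero part and apply Lemma~\ref{l:multiincrement} repeatedly until you reach $(b_1,\dotsc,b_k)$. The paper leaves two details implicit, and you supply both: termination via the decreasing quantity $\sum_i a_i^2$, and uniqueness of the balanced multiset. Your version also handles the paper's separate case $\max_i a_i \leq 1$ automatically, with no moves needed.
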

\begin{proof}
If $\max(a_1,\dotsc,a_{k-1}) \geq 2$ then
we may apply Lemma~\ref{l:multiincrement} repeatedly to the weak composition $(a_1,\dotsc, a_{k-1},0)$ to obtain
the weak composition $(b_1,\dotsc,b_k)$.
On the other hand, if $\max(a_1,\dotsc,a_{k-1}) \leq 1$, then we also have $\max(b_1,\dotsc,b_k) \leq 1$ and \eqref{eq:abcompositions} holds with equality.
\end{proof}

Given a composition $(a_1,\dotsc,a_k)$ of $n$ satisfying $|a_i - a_j| \leq 1$ for all $i$, $j$, let us say that it has {\em nearly equal parts}. For example, $(3,2,2)$ is a composition of $7$ into $3$ nearly equal parts. We can see that the composition is unique up to reordering.

Let $x$ and $y$ be partitions of $n$, we say $x$ is (weakly) majorized by $y$, written $x \preceq y$, if \begin{equation*}\sum_{i=1}^{k} x_i \leq \sum_{i=1}^k y_i \quad \text{for all $k = 1,\dotsc,n$.}\end{equation*}
It is well known that the partitions of $n$ under the majorization order form a lattice. We also define $\lambda \precdot \mu$ if $\lambda \prec \mu$ and there is no $\tau$ such that $\lambda \prec \tau \prec \mu$. 
The following lemma is due to Hardy, Littlewood and Pólya~\cite{MOA2011}, and is a famous result in majorization theory. 
\begin{lem}\label{l:majorize}
Let $\lambda, \mu$ be distinct partitions of $n$. If $\lambda \precdot \mu$ then there are indices $i < j$ such that we have
$\lambda_i = \mu_i - 1$ and $\lambda_j = \mu_j + 1$, and all other parts of $\lambda$ and $\mu$ are equal.  
\end{lem}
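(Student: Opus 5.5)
Let $\lambda \precdot \mu$ be distinct partitions of $n$. The plan is to compare $\lambda$ and $\mu$ at the first and last positions where they differ, build a partition $\tau$ that moves a single unit between those two positions, and then use the covering relation to force $\tau=\lambda$.

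\emph{Setting up partial sums.} Pad both partitions with zeros to length $n$, and write $S_k(x)=\sum_{i\le k}x_i$. Let $i$ be the first index with $\lambda_i\neq\mu_i$. Since $S_{i-1}(\lambda)=S_{i-1}(\mu)$ and $S_i(\lambda)\le S_i(\mu)$, we get $\lambda_i<\mu_i$. Let $j$ be the last index with $\lambda_j\ne\mu_j$. Since $S_n(\lambda)=S_n(\mu)=n$, the tails after $j$ agree, so $S_{j-1}(\lambda)\le S_{j-1}(\mu)$ forces $\lambda_j>\mu_j$. Hence $i<j$.

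\emph{Constructing $\tau$.} Define $\tau$ from $\mu$ by replacing $\mu_i$ with $\mu_i-1$ and $\mu_j$ with $\mu_j+1$. We must check that $\tau$ is still a partition.
\begin{itemize}
\item At position $i$: for $i>1$ we have $\mu_i-1\ge\lambda_i\ge\lambda_{i+1}$. We need $\mu_i-1\ge\mu_{i+1}$. If instead $\mu_{i+1}=\mu_i$, replace $i$ by the largest index $i'$ with $\mu_{i'}=\mu_i$.
\item At position $j$: symmetrically, replace $j$ by the smallest index $j'$ with $\mu_{j'}=\mu_j$.
\end{itemize}
Decreasing $\mu$ at the end of its block of equal parts, and increasing it at the start of its block, keeps the sequence weakly decreasing.

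The main obstacle is verifying that these adjusted indices still satisfy $\lambda\preceq\tau$. For $k$ in $[i',j'-1]$, we have $S_k(\tau)=S_k(\mu)-1$. So we need $S_k(\lambda)<S_k(\mu)$ strictly for those $k$.

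\emph{Strictness of the partial sums.} On the block $[i,i']$, $\mu$ is constant, so $\mu_k=\mu_i>\lambda_i\ge\lambda_k$. Thus the deficit $S_k(\mu)-S_k(\lambda)$ is at least $1$ and grows across this block. Symmetrically, the deficit is at least $1$ on $[j',j-1]$, using $\lambda_k\ge\lambda_j>\mu_j=\mu_k$. For $k$ between $i'$ and $j'$, suppose the deficit hits $0$ at some $k$ with $i\le k<j$. Then $\lambda$ and $\mu$ agree in partial sum at $k$. In that case define $\tau'$ by moving one unit only within the first segment $[1,k]$, choosing $i$ and the last index $\le k$ where $\lambda$ exceeds $\mu$ (such an index exists since $\lambda\ne\mu$ on $[1,k]$). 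This gives an intermediate partition unless $\lambda$ and $\mu$ differ only within one such segment. Choosing $i,j$ within a minimal segment where every interior deficit is positive, the construction gives $\lambda\preceq\tau\prec\mu$.

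\emph{Conclusion.} Since $\lambda\precdot\mu$, we must have $\tau=\lambda$. Then $\lambda$ and $\mu$ differ exactly at the chosen pair of indices, by $-1$ at the smaller index and $+1$ at the larger. This is the claimed form.
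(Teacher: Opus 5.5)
The paper does not prove this lemma; it attributes it to Hardy, Littlewood and P\'olya via Marshall--Olkin--Arnold. Your direct transfer argument is therefore a genuinely different, self-contained route, and it is correct in substance. You compare $\lambda$ and $\mu$ at the first and last differing indices, move one unit of $\mu$ from the end of the block containing $\mu_i$ to the start of the block containing $\mu_j$, and use $\lambda\precdot\mu$ to force $\tau=\lambda$; that is exactly the right mechanism. The citation buys brevity. Your version buys independence from an external result for Proposition~\ref{p:majorfracineq}.

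The paragraph on a vanishing deficit $D_k=S_k(\mu)-S_k(\lambda)$ with $i\le k<j$ needs tightening. As written, the $\tau'$ detour is circular: proving $\lambda\preceq\tau'$ requires the same strict positivity of $D$ that you are trying to establish. Your final sentence does repair it, but you should say why. Restrict to a segment between consecutive zeros $k_0<k_1$ of $D$, and take $i,j$ to be the first and last differences in $(k_0,k_1]$. Then $D_{i-1}=D_j=0$ still gives $\lambda_i<\mu_i$ and $\lambda_j>\mu_j$.

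A cleaner fix is to show such a zero cannot occur in a cover. Suppose $D_k=0$ with $i\le k<j$. Then $D_{k-1},D_{k+1}\ge 0$ give $\lambda_k\ge\mu_k\ge\mu_{k+1}\ge\lambda_{k+1}$, so $\nu=(\lambda_1,\dots,\lambda_k,\mu_{k+1},\mu_{k+2},\dots)$ is a partition of $n$. Its partial sums are $S_m(\lambda)$ for $m\le k$ and $S_m(\mu)$ for $m\ge k$. Hence $\lambda\preceq\nu\preceq\mu$, with $\nu_i=\lambda_i\ne\mu_i$ and $\nu_j=\mu_j\ne\lambda_j$, contradicting $\lambda\precdot\mu$. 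It follows that $D\ge 1$ on $[i,j-1]\supseteq[i',j'-1]$, and your block-by-block estimates become unnecessary.

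You should also record explicitly that $\mu_i\ge\lambda_i+1\ge\lambda_j+1\ge\mu_j+2$. This is what guarantees $i'<j'$. It also keeps $\tau$ weakly decreasing in the adjacent case $j'=i'+1$, which your remark about decreasing at the end of a block and increasing at the start of a block does not cover.
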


The (weak) majorization order gives rise to a factorial inequality for its parts. More generally, we have the following.

\begin{prop}\label{p:majorfracineq}
    Let partitions $\lambda = (\lambda_1,\dotsc,\lambda_s)$, $\mu = (\mu_1,\dotsc, \mu_r)$ of $n$ satisfy $\lambda \preceq \mu$.
    Then we have $\lambda_1! \cdots \lambda_s! \leq \mu_1! \cdots \mu_r!$.
\end{prop}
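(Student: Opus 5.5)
We would argue by induction along a maximal chain in the majorization lattice from $\lambda$ up to $\mu$. First, pad both partitions with zero parts so that they have a common length $N=\max(r,s)$. This does not change either product of factorials, since $0!=1$, and it does not change the majorization relation. Because the partitions of $n$ under majorization form a finite lattice, the relation $\lambda\preceq\mu$ gives a saturated chain
\begin{equation*}
\lambda=\nu^{(0)}\precdot\nu^{(1)}\precdot\cdots\precdot\nu^{(m)}=\mu .
\end{equation*}
It therefore suffices to show that the product of factorials of the parts weakly increases along each covering step $\nu\precdot\nu'$. The case $\lambda=\mu$ is trivial.

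For a single covering step we apply Lemma~\ref{l:majorize}. It gives indices $i<j$ with $\nu_i=\nu'_i-1$ and $\nu_j=\nu'_j+1$, and all other parts are equal. Since $\nu'$ is a partition and $i<j$, we have $\nu'_i\ge\nu'_j$. Hence
\begin{equation*}
\frac{\nu'_1!\cdots\nu'_N!}{\nu_1!\cdots\nu_N!}=\frac{\nu'_i!\,\nu'_j!}{(\nu'_i-1)!\,(\nu'_j+1)!}=\frac{\nu'_i}{\nu'_j+1}.
\end{equation*}
This ratio is at least $1$ exactly when $\nu'_i\ge\nu'_j+1$. Note that $\nu_i=\nu'_i-1\ge\nu_j=\nu'_j+1$, since $\nu$ is also a partition and $i<j$. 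So $\nu'_i\ge\nu'_j+2$, and the ratio exceeds $1$. This is precisely the move in Lemma~\ref{l:multiincrement}, read in reverse: it transfers a unit from the larger part to the smaller part when they differ by at least $2$. Multiplying the inequalities along the chain gives $\lambda_1!\cdots\lambda_s!\le\mu_1!\cdots\mu_r!$.

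The main point needing care is the passage from $\lambda\preceq\mu$ to a chain of covers whose steps all have the form described in Lemma~\ref{l:majorize}. The zero-padding must be handled carefully so that the lemma applies to partitions of equal length. This is consistent, because padding embeds everything in the finite lattice of partitions of $n$, where every comparable pair is joined by a saturated chain. If one prefers to avoid the lattice statement, there is a direct alternative. Take the least index $k$ with $\lambda_k\ne\mu_k$; then $\lambda_k<\mu_k$. Take the largest index $\ell$ with $\lambda_\ell\ne\mu_\ell$; then $\lambda_\ell>\mu_\ell$. Move one unit in $\mu$ from the last position in the block of parts equal to $\mu_k$ to the first position in the block of parts equal to $\mu_\ell$. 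This produces a partition $\mu'$ with $\lambda\preceq\mu'\prec\mu$ and $\mu'!\le\mu!$. Inducting on $\sum_k|\lambda_k-\mu_k|$ then completes the proof.
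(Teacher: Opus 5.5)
Your main argument is correct and is essentially the paper's proof. Both use a saturated chain from $\lambda$ to $\mu$, apply Lemma~\ref{l:majorize} to each cover, and then use the two-part exchange of Lemma~\ref{l:multiincrement}. You carry out that exchange as the explicit ratio $\nu'_i/(\nu'_j+1)$, and you also supply the check $\nu'_i\ge\nu'_j+2$, which the paper leaves implicit.

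The optional direct alternative in your last paragraph, however, is wrong as stated. Choosing $\ell$ as the \emph{largest} index with $\lambda_\ell\ne\mu_\ell$ ignores the possibility that the partial sums of $\lambda$ and $\mu$ coincide at an intermediate index. Moving a unit of $\mu$ across such an index destroys $\lambda\preceq\mu'$.

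For instance, take $\lambda=(4,4,2,2)\preceq\mu=(5,3,3,1)$, with partial sums $4,8,10,12$ versus $5,8,11,12$. Your recipe gives $k=1$, $\ell=4$ and $\mu'=(4,3,3,2)$, whose partial sums are $4,7,10,12$. So $\lambda\not\preceq\mu'$. Even the bound you need at this step fails: $4!\,4!\,2!\,2!=2304>1728=4!\,3!\,3!\,2!$.

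The repair is to take $\ell$ to be the \emph{smallest} index greater than $k$ with $\lambda_\ell>\mu_\ell$. Then the argument goes through:
\begin{itemize}
\item Since $\lambda_i\le\mu_i$ for $k<i<\ell$, we get $\sum_{i\le m}\lambda_i<\sum_{i\le m}\mu_i$ for every $m\in[k,\ell-1]$.
\item The transfer goes from the last position $p$ of the block of $\mu_k$ to the first position $q$ of the block of $\mu_\ell$. It lowers the partial sums of $\mu$ only at indices $m\in[p,q-1]\subseteq[k,\ell-1]$, where there is room, so $\lambda\preceq\mu'$.
\item The chain $\mu_k>\lambda_k\ge\lambda_\ell>\mu_\ell$ still gives $\mu_k-\mu_\ell\ge 2$. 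Hence $\mu'$ is a partition, and its product of factorials is at most that of $\mu$.
\end{itemize}
Since you offered this alternative only as an option, your main proof is unaffected.
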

\begin{proof}
    By Lemma~\ref{l:majorize}, the majorization order contains a saturated chain
    \begin{equation*}
        \lambda = \lambda^{(0)} \precdot \lambda^{(1)} \precdot \cdots \precdot \lambda^{(k)} = \mu
        \end{equation*}
        in which consecutive partitions differ in exactly two parts. 
    By $k$ applications of Lemma~\ref{l:multiincrement}, we therefore have 
    \begin{equation*}
        {n\choose \lambda_1,\dotsc,\lambda_s} \geq {n\choose \mu_1,\dotsc,\mu_r},
    \end{equation*} 
    which is equivalent to the claimed inequality.
\end{proof}

We now have an immediate well-known consequence. 

\begin{prop}\label{p:factorial inequality}
    For $a, b \in \mathbb N$ with
    $a+b=m$, we have 
    \begin{equation}\label{eq:factorial inequality}
        a! \cdot b! \geq \floor*{\frac{m}{2}}! \cdot \ceil*{\frac{m}{2}}!
    \end{equation}
\end{prop}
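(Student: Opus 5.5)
The plan is to read \eqref{eq:factorial inequality} as a special case of Proposition~\ref{p:majorfracineq}. Since the statement is symmetric in $a$ and $b$, we may assume $a \geq b$. Then $\mu = (a,b)$ is a partition of $m$, with a zero part dropped if $b=0$. Let $\lambda = (\ceil*{\frac m2}, \floor*{\frac m2})$, again dropping a zero part when $m \leq 1$.

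The key step is to check that $\lambda \preceq \mu$. The partial sums agree from the second one on, since both equal $m$. For the first partial sum, $a \geq b$ and $a+b=m$ force $a \geq \frac m2$. Because $a$ is an integer, this gives $a \geq \ceil*{\frac m2}$. So $\lambda_1 \leq \mu_1$ and majorization holds.

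Proposition~\ref{p:majorfracineq} then yields $\ceil*{\frac m2}!\,\floor*{\frac m2}! \leq a!\,b!$, which is exactly the claim. Zero parts contribute a factor $0! = 1$, so dropping or keeping them does not affect the products.

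As a sanity check that avoids majorization, one can argue directly. If $a - b \geq 2$, replacing $(a,b)$ by $(a-1,b+1)$ multiplies the product by $\frac{b+1}{a} \leq 1$. This is Lemma~\ref{l:multiincrement} with $k=2$. Iterating until the parts differ by at most $1$ reaches $(\ceil*{\frac m2},\floor*{\frac m2})$, and each step only decreases the product. There is no real obstacle here; the only point needing care is the bookkeeping of zero parts and the integrality step $a \geq \ceil*{\frac m2}$.
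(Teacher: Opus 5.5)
Your proposal is correct and takes the same route as the paper: order the two parts, note that $(a,b)$ majorizes $(\ceil*{\frac m2},\floor*{\frac m2})$, and apply Proposition~\ref{p:majorfracineq}. Your direct check via Lemma~\ref{l:multiincrement} is just the two-part case of the saturated-chain argument inside that proposition, so it confirms the result rather than giving a different proof.
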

\begin{proof}
    Without loss of generality, we assume $a\leq b$. If $a = \floor*{\frac{m}{2}}$, then there is nothing to prove. If $a \neq \floor*{\frac{m}{2}}$, then the partition $(b,a)$ majorizes $(\ceil{\tfrac{m}{2}},\floor{\tfrac{m}{2}})$. Then~\eqref{eq:factorial inequality} follows from Proposition~\ref{p:majorfracineq}.

\end{proof}

A function $f: \mathbb{R} \rightarrow \mathbb{R}$ is convex if for all $0\leq t \leq1$ and distinct $x_1,x_2 \in \mathbb{R}$, 
\begin{equation}f(tx_1+(1-t)x_2) \leq tf(x_1) + (1-t)f(x_2).
\end{equation}
In calculus, we know that the ``slope" of a convex function is increasing as $x$ increases. In the discrete case, if $f$ is convex, then the difference $d(x) \defeq f(x+1) - f(x)$ increases as $x$ increases.  

In addition to the well-known characterizations of the majorization order, we have the following characterization 
due to Karamata~\cite[p.~3--13]{MOA2011}
which involves
convex functions.

\begin{lem}(Karamata's Inequality)\label{l:Karamata}
    Given partitions $x=(x_1,\dotsc,x_n)$,
    $y=(y_1,\dotsc,y_n)$,
    we have $y \preceq x$ if and only if for every convex function $f$, we have 
    \begin{equation}\label{eq:Karamata}
        \sum_{i=1}^n f(x_i) \geq \sum_{i=1}^n f(y_i). 
    \end{equation}
\end{lem}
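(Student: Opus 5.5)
We prove Karamata's inequality (Lemma~\ref{l:Karamata}) in both directions, writing $x=(x_1\ge\cdots\ge x_n)$ and $y=(y_1\ge\cdots\ge y_n)$, padded with zeros if necessary. Since both are partitions of the same integer, we also have $\sum_i x_i=\sum_i y_i$.

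For the forward direction, assume $y\preceq x$ and let $f$ be convex. The plan is to use Abel summation together with slopes of chords. Define
\begin{equation*}
c_i=\frac{f(x_i)-f(y_i)}{x_i-y_i}
\end{equation*}
when $x_i\neq y_i$. When $x_i=y_i$, let $c_i$ be a one-sided derivative of $f$ at $x_i$; any value between the left and right derivatives will do. By convexity, the chord slope $c_i$ is monotone in each endpoint. Since $x_i\ge x_{i+1}$ and $y_i\ge y_{i+1}$, this gives $c_i\ge c_{i+1}$. This monotonicity of chord slopes in both endpoints is the one place where genuine care is needed, especially in the degenerate cases with $x_i=y_i$. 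It is also the step I expect to be the main obstacle.

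Next let $X_k=\sum_{i\le k}x_i$ and $Y_k=\sum_{i\le k}y_i$, with $X_0=Y_0=0$. Summation by parts gives
\begin{equation*}
\sum_{i=1}^n\bigl(f(x_i)-f(y_i)\bigr)=\sum_{i=1}^n c_i(x_i-y_i)=\sum_{k=1}^{n-1}(c_k-c_{k+1})(X_k-Y_k)+c_n(X_n-Y_n).
\end{equation*}
Each term $(c_k-c_{k+1})(X_k-Y_k)$ is nonnegative, because $c_k\ge c_{k+1}$ and $X_k\ge Y_k$ by majorization. The last term vanishes since $X_n=Y_n$. This yields \eqref{eq:Karamata}.

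For the converse, assume \eqref{eq:Karamata} holds for every convex $f$. First take $f(t)=t$ and $f(t)=-t$; this gives $X_n=Y_n$. Then, for each $k$, test the convex function $f(t)=\max(t-x_k,0)$. Its sum over $x$ is
\begin{equation*}
\sum_i(x_i-x_k)^+=\sum_{i\le k}(x_i-x_k)=X_k-kx_k,
\end{equation*}
where the middle equality uses that the $x_i$ are sorted. Its sum over $y$ satisfies
\begin{equation*}
\sum_i(y_i-x_k)^+\ge\sum_{i\le k}(y_i-x_k)=Y_k-kx_k,
\end{equation*}
since $(y_i-x_k)^+\ge y_i-x_k$ for $i\le k$ and all other terms are nonnegative. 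Combining these with \eqref{eq:Karamata} gives $X_k-kx_k\ge Y_k-kx_k$, that is, $X_k\ge Y_k$ for every $k$. Hence $y\preceq x$.
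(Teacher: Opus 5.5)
The paper gives no proof of this lemma; it only cites Marshall--Olkin--Arnold~\cite{MOA2011}. Your argument is the classical self-contained one. The forward direction uses Abel summation against chord slopes, and the converse tests $\pm t$ and the hinges $t\mapsto(t-x_k)^+$. The converse is correct as written.

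The forward direction needs one repair, exactly at the step you flagged. The claim that for $x_i=y_i$ ``any value between the left and right derivatives will do'' is false. Suppose two consecutive indices are degenerate at the same point, $x_i=y_i=x_{i+1}=y_{i+1}=t$, and $f'_{-}(t)<f'_{+}(t)$. Choosing $c_i=f'_{-}(t)$ and $c_{i+1}=f'_{+}(t)$ gives $c_i<c_{i+1}$, so $(c_i-c_{i+1})(X_i-Y_i)<0$ whenever $X_i>Y_i$.

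Concretely, take $f(t)=|t-2|$, $x=(5,2,2,1)$ and $y=(4,2,2,2)$. This choice makes the $k=2$ term of your Abel sum equal to $-2$. The total is still $2$, because $\sum_i c_i(x_i-y_i)$ does not depend on the choices at degenerate indices, but your termwise argument no longer applies.

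The fix is to use a fixed rule, say $c_i=f'_{+}(x_i)$ whenever $x_i=y_i$. Set $s(a,b)=\frac{f(a)-f(b)}{a-b}$ for $a\neq b$ and $s(a,a)=f'_{+}(a)$. The bounds $s(a,b)\le f'_{-}(b)\le f'_{+}(b)$ for $a<b$ and $s(a,b)\ge f'_{+}(b)$ for $a>b$ hold. Together with the usual monotonicity of chord slopes off the diagonal, they show that $s$ is symmetric and nondecreasing in each variable on all of $\mathbb R^2$. Hence $c_i=s(x_i,y_i)\ge s(x_{i+1},y_i)\ge s(x_{i+1},y_{i+1})=c_{i+1}$ in every case, including when the intermediate pair $(x_{i+1},y_i)$ is itself degenerate.

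With that repair, your route gives more than the bare citation:
\begin{itemize}
\item It is self-contained.
\item Its forward direction works verbatim for $f$ convex only on an open interval containing all the $x_i,y_i$. This is what the paper actually needs when it applies the lemma to $t\mapsto t!=\Gamma(t+1)$, which is not defined on all of $\mathbb R$ and so is not literally covered by the paper's definition of convexity.
\item Its converse shows that the convex-function inequality only has to be checked for $\pm t$ and the hinge functions $(t-c)^+$.
\end{itemize}
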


One can see that the factorial function (equivalently, the Gamma function) is convex. Therefore, Proposition~\ref{p:majorfracineq} is an immediate consequence of Karamata’s inequality.

Consider the Karamata sums~\eqref{eq:Karamata}, applied to the partition $\rho(n,p)$ of $n$ into $p$ nearly equal parts. For any function $f: \mathbb R \rightarrow \mathbb R$, define
\begin{equation}\label{eq:Gdef}
        G_f(n,p) \defeq \sum_{i=1}^{p} f(\rho(n,p)_i).
    \end{equation} 
    It is straightforward to show that the difference of two such sums for fixed $p$ and consecutive values of $n$ can be expressed as follows: 

\begin{lem}\label{l:G function 1}
    Fix $p \leq n$ and any function $f: \mathbb R \rightarrow \mathbb R$. 
    Then, we have 
    \begin{equation}\label{eq:G function 1}
        G_f(n+1,p) - G_f(n,p) = f(\floor{\tfrac{n}{p}}+1) - f(\floor{\tfrac{n}{p}}).
    \end{equation}
\end{lem}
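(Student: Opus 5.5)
The plan is to compare the two partitions $\rho(n,p)$ and $\rho(n+1,p)$ part by part. Both partitions have exactly $p$ parts, so the difference $G_f(n+1,p)-G_f(n,p)$ reduces to the contribution of the few parts that change. First I will make $\rho(n,p)$ explicit via the division algorithm. Write $n = qp + r$ with $q = \floor{\tfrac{n}{p}}$ and $0 \leq r < p$. A composition of $n$ into $p$ nearly equal parts takes only the values $q$ and $q+1$. Its parts sum to $n$, so exactly $r$ parts equal $q+1$ and $p-r$ parts equal $q$, and up to reordering
\begin{equation*}
\rho(n,p) = (\underbrace{q+1,\dotsc,q+1}_{r},\underbrace{q,\dotsc,q}_{p-r}).
\end{equation*}
Since $G_f$ is a symmetric sum over the parts, the ordering does not matter. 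Hence
\begin{equation*}
G_f(n,p) = r\,f(q+1) + (p-r)\,f(q).
\end{equation*}

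Next I will describe $\rho(n+1,p)$ in the same terms, splitting into two cases according to whether $r \leq p-2$ or $r = p-1$. If $r \leq p-2$, then $n+1 = qp + (r+1)$ with $0 \leq r+1 < p$. So $\rho(n+1,p)$ has $r+1$ parts equal to $q+1$ and $p-r-1$ parts equal to $q$. This amounts to raising a single part of $\rho(n,p)$ from $q$ to $q+1$. The difference of the two sums is therefore exactly $f(q+1)-f(q)$. If instead $r = p-1$, then $n+1 = (q+1)p$, so $\rho(n+1,p)$ consists of $p$ parts all equal to $q+1$. Again only one part has changed, the single part equal to $q$ becoming $q+1$, and the difference is $f(q+1)-f(q)$ once more.

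In both cases we obtain $f(\floor{\tfrac np}+1)-f(\floor{\tfrac np})$, which is \eqref{eq:G function 1}. The hypothesis $p\leq n$ is used only to guarantee that $\rho(n,p)$ is a genuine composition with positive parts, since $q\ge 1$. The formula itself would hold verbatim for weak compositions too.

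There is no real obstacle here. The only step needing care is the boundary case $r=p-1$, where the quotient increases. In that case one must check that the changed part goes from $q$ to $q+1$ rather than from $q+1$ to $q+2$. This is immediate from the explicit description above.
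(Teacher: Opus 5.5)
Your proof is correct and follows the same route as the paper: write $n=qp+r$, note that $\rho(n,p)$ has $r$ parts equal to $q+1$ and $p-r$ parts equal to $q$, and observe that passing to $n+1$ raises exactly one part from $q$ to $q+1$. Your separate check of the boundary case $r=p-1$, where the quotient increases, is slightly more careful than the paper, which handles that case only implicitly.
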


\begin{proof}
    Write $n=pq +r$, then the partition $\rho(n,p)$ has $p-r$ parts of size $\floor{\tfrac{n}{p}} = q$ and $r$ parts of size $\floor{\tfrac{n}{p}}+1 = q+1$. Therefore, \begin{equation*}
        G_f(n,p) = (p-r)f(q) + r f(q+1). 
    \end{equation*} Note that to move a partition of $n$ into $p$ nearly equal parts to a partition of $n+1$ into $p$ nearly equal parts, we just pick one part that was of size $q$ and increase it to $q+1$. Therefore, \begin{equation*}
        G_f(n+1,p) - G_f(n,p) = f(q+1) - f(q). 
        \end{equation*}
\end{proof}

For example, the sums $G_f(8,3)$, $G_f(7,3)$ satisfy 
\begin{equation}
    \begin{gathered}
        \rho(8,3) = (3,3,2), \quad \rho(7,3) = (3,2,2), \\
        G_f(8,3) = f(3) + f(3) + f(2), \quad G_f(7,3) = f(3) + f(2) + f(2) \\
        \floor{\tfrac{7}{3}} = 2, \quad \floor{\tfrac{7}{3}} +1 = 3, \\
        G_f(8,3) - G_f(7,3) = f(3) - f(2). 
    \end{gathered}
\end{equation}

If the function $f$ is convex, then we can relate differences~\eqref{eq:G function 1} by the following inequality. 

\begin{lem}\label{l:G function 2}
    Fix $p \leq n$ and a convex function $f: \mathbb R \rightarrow \mathbb R$. Then, we have \begin{equation}\label{eq:Delta n increasing}
        G_f(n+1,p) - G_f(n,p) \geq G_f(n+1,p+1)- G_f(n,p+1).
    \end{equation}
\end{lem}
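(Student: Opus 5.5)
By Lemma~\ref{l:G function 1}, applied once with $p$ parts and once with $p+1$ parts, the two sides of \eqref{eq:Delta n increasing} are
\begin{equation*}
G_f(n+1,p) - G_f(n,p) = f(\floor{\tfrac{n}{p}}+1) - f(\floor{\tfrac{n}{p}}),
\qquad
G_f(n+1,p+1) - G_f(n,p+1) = f(\floor{\tfrac{n}{p+1}}+1) - f(\floor{\tfrac{n}{p+1}}).
\end{equation*}
Lemma~\ref{l:G function 1} needs $p \leq n$ in the first case and $p+1 \leq n$ in the second. If $p = n$, the partition $\rho(n,n+1)$ has a zero part and the lemma's proof still applies verbatim with $q = 0$. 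So both identities hold throughout the range.

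The inequality therefore reduces to comparing the discrete forward difference $d(x) \defeq f(x+1) - f(x)$ at the two integer points $x_1 = \floor{n/p}$ and $x_2 = \floor{n/(p+1)}$. Since $p < p+1$, we have $n/p \geq n/(p+1)$, and the floor function is monotone, so $x_1 \geq x_2$.

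It remains to show that $d$ is weakly increasing on the integers when $f$ is convex. This is the remark made just before Karamata's inequality. To make it rigorous, write $x+1$ as the midpoint of $x$ and $x+2$; convexity gives $f(x+1) \leq \tfrac12 f(x) + \tfrac12 f(x+2)$, which is exactly $d(x) \leq d(x+1)$. Chaining this from $x_2$ up to $x_1$ gives $d(x_1) \geq d(x_2)$, which is \eqref{eq:Delta n increasing}.

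The only step needing care is the boundary case $p = n$, where $\rho(n,p+1)$ contains a part equal to $0$. There I will check directly that adding one to a $0$ part gives difference $f(1) - f(0)$, consistent with $\floor{n/(n+1)} = 0$. Everything else is immediate.
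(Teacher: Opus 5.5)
Your proposal is correct and follows the paper's proof. Both reduce via Lemma~\ref{l:G function 1} to comparing the forward differences $f(x+1)-f(x)$ at $\floor{n/p}\geq\floor{n/(p+1)}$, and then use that these differences are nondecreasing for convex $f$. Your midpoint-convexity justification of that monotonicity, and your check of the boundary case $p=n$ (where $\rho(n,p+1)$ has a zero part), fill in details the paper leaves implicit without changing the argument.
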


\begin{proof}
    By Lemma~\ref{l:G function 1}, the above inequality is equivalent to the nonnegativity of 
    \begin{equation}\label{eq:G into f}
        \left[f(\floor{\tfrac{n}{p}} +1) - f(\floor{\tfrac{n}{p}})\right] - \left[f(\floor{\tfrac{n}{p+1}} +1) - f(\floor{\tfrac{n}{p+1}})\right]. 
    \end{equation}
    Next, consider the difference $D(\tau) \defeq f(\tau +1) - f(\tau)$. Since $f$ is convex, $D(\tau)$ is nondecreasing (as $\tau$ increases). Since $\floor{\tfrac{n}{p}} \geq \floor{\tfrac{n}{p+1}}$, we have \begin{equation*}
        D(\floor{\tfrac{n}{p}}) - D(\floor{\tfrac{n}{p+1}}) \geq 0, 
    \end{equation*}which implies~\eqref{eq:G into f} is nonnegative. 
\end{proof}

For example, the differences 
\begin{equation*}
    \begin{gathered}
        G_f(8,3) - G_f(7,3) = f(3) - f(2) \\ 
        G_f(8,4) - G_f(7,4) = f(\floor{\tfrac{7}{4}}+1) -f(\floor{\tfrac{7}{4}}) = f(2) - f(1) \\ 
    \end{gathered}
\end{equation*}satisfy 
\begin{equation*}
    G_f(8,3) - G_f(7,3) \geq G_f(8,4) - G_f(7,4)
\end{equation*} since the convexity of $f$ guarantees that 
\begin{equation*}
    f(3) - f(2) \geq f(2) - f(1). 
\end{equation*}

Define $\sort(\lambda,\mu)$ to be the partition obtained from listing the parts of $\lambda$ and $\mu$ (as a multiset) in weakly decreasing order. 
Somewhat surprisingly, Lemmas~\ref{l:Karamata} - \ref{l:G function 2} imply that sorting certain
pairs of partitions into nearly equal parts
produces two new partitions which are related by majorization.

\begin{prop}\label{p:nearly equal parts maj order}
    Fix $m \geq k > s$ and let $\kappa$ be the partition of $m$ into $s$ nearly equal parts and $\mu$ be the partition of $k$ into $s$ nearly equal parts. Let $\lambda$ be the partition of $m$ into $s+1$ nearly equal parts and $\nu$ be the partition of $k$ into $s+1$ nearly equal parts. 
    Define the partitions 
    \begin{equation}\label{eq:xydef}
    x\defeq \sort( 
    \kappa, \nu), \qquad y \defeq \sort(\lambda,\mu)
    \end{equation}
    of $m+k$ into $2s+1$ parts.  Then we have $y \preceq x$.
\end{prop}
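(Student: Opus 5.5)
We use Karamata's inequality (Lemma~\ref{l:Karamata}). Both $x$ and $y$ are partitions of $m+k$ into exactly $2s+1$ parts: $\kappa,\mu$ have $s$ parts and $\lambda,\nu$ have $s+1$ parts. So it suffices to show that for every convex $f$,
\begin{equation*}
\sum_i f(x_i) \;\geq\; \sum_i f(y_i),
\end{equation*}
that is,
\begin{equation*}
G_f(m,s) + G_f(k,s+1) \;\geq\; G_f(m,s+1) + G_f(k,s),
\end{equation*}
in the notation of \eqref{eq:Gdef}, since sorting does not change a sum of $f$-values. Rearranged, the claim becomes
\begin{equation*}
G_f(m,s) - G_f(k,s) \;\geq\; G_f(m,s+1) - G_f(k,s+1).
\end{equation*}

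The plan is to telescope in the first argument. Write
\begin{equation*}
G_f(m,p) - G_f(k,p) = \sum_{n=k}^{m-1} \bigl(G_f(n+1,p) - G_f(n,p)\bigr)
\end{equation*}
for $p = s$ and $p = s+1$. Then compare the two sums term by term, using Lemma~\ref{l:G function 2} with $p = s$. For each $n$ with $k \leq n \leq m-1$, that lemma gives
\begin{equation*}
G_f(n+1,s) - G_f(n,s) \;\geq\; G_f(n+1,s+1) - G_f(n,s+1).
\end{equation*}
Summing these inequalities yields the rearranged claim, and Karamata then gives $y \preceq x$. If $m = k$ both sides are zero and the conclusion is trivial.

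The main obstacle is checking the hypotheses of Lemma~\ref{l:G function 2} along the whole telescoping range. That lemma requires $p \leq n$, applied with $p = s$, though its proof (via Lemma~\ref{l:G function 1}) really needs $\rho(n,s+1)$ to be defined as a partition into $s+1$ positive parts. Since $n \geq k > s$, we have $n \geq s+1$, so $\rho(n,s)$ and $\rho(n,s+1)$ are genuine partitions into the stated numbers of nearly equal parts. Likewise $m \geq k \geq s+1$ ensures $\kappa,\lambda,\mu,\nu$ all have positive parts, so $x$ and $y$ really have $2s+1$ parts. This is exactly where the hypothesis $m \geq k > s$ is used.

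A secondary point is that Lemma~\ref{l:Karamata} is stated for partitions of equal length. Here both $x$ and $y$ have $2s+1$ parts and sum to $m+k$, so it applies directly. Alternatively one can pad with zeros and note that $f$ is only evaluated at nonnegative integers.
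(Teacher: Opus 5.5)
Your proposal is correct and is essentially the paper's proof. Both reduce, via Karamata's inequality (Lemma~\ref{l:Karamata}), to showing $G_f(m,s)+G_f(k,s+1)\geq G_f(k,s)+G_f(m,s+1)$ and derive this from Lemma~\ref{l:G function 2}; you write out the telescoping over $n=k,\dotsc,m-1$ and check $n\geq s+1$ throughout, which the paper compresses into the phrase ``since $m\geq k$.''
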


\begin{proof}
    Let $\rho(n, p)$ denote the partition of $n$ into $p$ nearly equal parts as before. Then, 
\begin{gather*}
    \kappa = \rho(m, s),\ \mu = \rho(k, s) \\
    \lambda = \rho(m, s+1),\ \nu = \rho(k, s+1).
\end{gather*}

We claim that for every convex function $f$, we have
\begin{equation}\label{eq:Karamata2}
    \sum_{i=1}^{2s+1} f(y_i) \leq \sum_{i=1}^{2s+1} f(x_i).
\end{equation}

To see this, define $G_f(n,p)$ as in Lemma~\ref{l:G function 1}. 
By Lemma~\ref{l:G function 2}, we have 
\begin{equation*}
    G_f(n+1,p) - G_f(n+1,p+1) \geq G_f(n,p)- G_f(n,p+1).
\end{equation*} Since $m\geq k$, we have 
\begin{equation*}
    G_f(m,s) - G_f(m,s+1) \geq G_f(k,s) - G_f(k,s+1),
\end{equation*}
i.e.,
\begin{equation*}
    G_f(m,s) + G_f(k,s+1)  \geq G_f(k,s) + G_f(m,s+1).
\end{equation*}
By the definitions of $G_f$ \eqref{eq:Gdef} and $x$, $y$ \eqref{eq:xydef},
we have \begin{gather*}
    G_f(m,s) + G_f(k,s+1) = \sum_{i=1}^{2s+1} f(x_i), \quad
    G_f(k,s) + G_f(m,s+1) = \sum_{i=1}^{2s+1} f(y_i). 
\end{gather*} Hence, \eqref{eq:Karamata2} is true. By Lemma~\ref{l:Karamata}, we have proved this proposition.

\end{proof}

By Proposition~\ref{p:majorfracineq}, the majorization of one partition by another implies a factorial inequality for the parts of those two partitions.  Applying this to the partitions $x$, $y$ in \eqref{eq:xydef}, we obtain the following.  

\begin{cor}\label{c:multi factorial inequality}
    Fix $s,m,k \in \mathbb{N}$ with $s < k\leq m$. Let $(k_1,\dotsc,k_{s})$, $(k'_1,\dotsc, k'_{s+1})$ be the partitions of $k$ into $s$ nearly equal parts and $s+1$ nearly equal parts respectively. Let $(m_1,\dotsc,m_{s})$, $(m'_1,\dotsc, m'_{s+1}) $ the partitions of $m$ into $s$ nearly equal parts and $s+1$ nearly equal parts respectively. Then we have \begin{equation}\label{eq:multi factorial inequality}
        \frac{m_1! \cdots m_{s}!}{k_1! \cdots k_{s}!} \geq \frac{m'_1! \cdots m'_{s+1}!}{k'_1! \cdots k'_{s+1}!}.
    \end{equation}
\end{cor}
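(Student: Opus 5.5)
The inequality \eqref{eq:multi factorial inequality} should follow by clearing denominators and applying Proposition~\ref{p:majorfracineq} to the two partitions produced by Proposition~\ref{p:nearly equal parts maj order}. All factorials are positive, so multiplying both sides by $k_1!\cdots k_s!\,k'_1!\cdots k'_{s+1}!$ turns \eqref{eq:multi factorial inequality} into the equivalent statement
\begin{equation*}
m_1!\cdots m_s!\,k'_1!\cdots k'_{s+1}! \;\geq\; m'_1!\cdots m'_{s+1}!\,k_1!\cdots k_s!.
\end{equation*}

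Now set, in the notation of Proposition~\ref{p:nearly equal parts maj order},
\begin{equation*}
\kappa=(m_1,\dotsc,m_s),\quad \mu=(k_1,\dotsc,k_s),\quad \lambda=(m'_1,\dotsc,m'_{s+1}),\quad \nu=(k'_1,\dotsc,k'_{s+1}).
\end{equation*}
The hypotheses $m\geq k>s$ match those of that proposition exactly. The left-hand side above is the product of the factorials of the parts of $x=\sort(\kappa,\nu)$. The right-hand side is the corresponding product for $y=\sort(\lambda,\mu)$. Sorting a multiset of parts does not change the product of their factorials, so this identification is immediate.

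Proposition~\ref{p:nearly equal parts maj order} gives $y\preceq x$, both being partitions of $m+k$. Proposition~\ref{p:majorfracineq} then yields $\prod_i y_i!\leq\prod_i x_i!$, which is precisely the displayed inequality. Dividing back by the positive denominators gives \eqref{eq:multi factorial inequality}.

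The only point needing care is bookkeeping with zero parts. Because $k>s$ and $m\geq k$, all parts of $\kappa$ and $\mu$ are positive, but parts of $\lambda$ or $\nu$ could be zero when $k=s+1$ fails to be large. Zero parts contribute $0!=1$ to the products, so they are harmless in the factorial products. For the majorization step, padding partitions with zeros is also compatible with Proposition~\ref{p:majorfracineq}, since it is proved via Karamata's inequality applied to equal-length vectors. Beyond this, no real obstacle is expected, since the corollary is essentially a restatement of the previous two propositions.
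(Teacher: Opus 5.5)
Your proposal is correct and is exactly the paper's argument: you clear denominators, identify the two products with the factorial products of the parts of $x=\sort(\kappa,\nu)$ and $y=\sort(\lambda,\mu)$, invoke Proposition~\ref{p:nearly equal parts maj order} to get $y\preceq x$, and conclude with Proposition~\ref{p:majorfracineq}. Your caveat about zero parts is unnecessary: $k>s$ gives $k\geq s+1$, and $m\geq k$, so every part of $\nu$ and of $\lambda$ is at least $\lfloor k/(s+1)\rfloor\geq 1$.
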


\begin{proof}
    The result follows immediately from Proposition~\ref{p:majorfracineq} and Proposition~\ref{p:nearly equal parts maj order}.  
\end{proof}

For example, let $m=11$, $k=7$, and $s=3$. Then, we have 
\begin{align*}
    m_1&=4 & m_2 &=4 & m_3&=3 &k'_1&=2 & k'_1 &= 2 & k'_3&=2 & k'_4&=1 \\
    m'_1&=3 & m'_2 &=3 & m'_3&=3 & m'_4&=2 &k_1&=3 & k_2&=2 & k_3 &= 2 .\\
    \end{align*}
One can verify that \begin{gather*}
    \sort(m_1,m_2,m_3,k'_1,k'_2,k'_3,k'_4) = (4,4,3,2,2,2,1), \\
    \sort(m'_1,m'_2,m'_3,m'_4,k_1,k_2,k_3) = (3,3,3,3,2,2,2), \text{ and} \\ 
    (3,3,3,3,2,2,2)\preceq(4,4,3,2,2,2,1), \\
    144 = \frac{3!4!4!}{2!2!3!} \geq \frac{2!3!3!3!}{1!2!2!2!} = 54. 
\end{gather*}

The next proposition is a special case of Corollary~\ref{c:multi factorial inequality}. 

\begin{prop}\label{p:max factorial}
    For $m,k\in \mathbb N$ with $m \geq k$, we have
    \begin{equation*}
    \frac{\floor{\frac{m}{2}}!\ceil{\frac{m}{2}}!}{\floor{\frac{k}{2}}!\ceil{\frac{k}{2}}!} \leq \frac{m!}{k!}.
    \end{equation*}
\end{prop}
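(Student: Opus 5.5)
This is the case $s=1$ of Corollary~\ref{c:multi factorial inequality}. With $s=1$, the partition of $m$ into one nearly equal part is $(m)$, and the partition of $m$ into two nearly equal parts is $(\ceil{\tfrac m2},\floor{\tfrac m2})$. The same holds for $k$. Substituting these into \eqref{eq:multi factorial inequality} gives
\begin{equation*}
  \frac{m!}{k!} \;\geq\; \frac{\ceil{\frac m2}!\,\floor{\frac m2}!}{\ceil{\frac k2}!\,\floor{\frac k2}!},
\end{equation*}
which is exactly the claimed inequality.

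The only obstacle is the hypothesis $s<k$ of the corollary, which here reads $k\geq 2$. I would check the small cases directly.

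\begin{enumerate}
  \item If $k\in\{0,1\}$, the denominator on the left equals $1=k!$. The inequality then reduces to $\floor{\tfrac m2}!\,\ceil{\tfrac m2}!\leq m!$. This holds because $m!=\binom{m}{\floor{m/2}}\,\floor{\tfrac m2}!\,\ceil{\tfrac m2}!$ and the binomial coefficient is at least $1$.
  \item If $m=k$, both sides equal $1$. This is consistent with the general case and needs no separate treatment.
\end{enumerate}

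Alternatively, I could avoid the corollary and argue directly by induction on $m$ with $k$ fixed. Going from $m$ to $m+1$ multiplies the right side by $m+1$. It multiplies the left side by $\floor{\tfrac m2}+1\leq m+1$, since raising $m$ by one increases exactly one of $\floor{\tfrac m2}$, $\ceil{\tfrac m2}$ by one, namely the part equal to $\floor{\tfrac m2}$. The base case $m=k$ is an equality.
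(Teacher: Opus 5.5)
Your main argument is correct and is the same as the paper's proof, which simply specializes Corollary~\ref{c:multi factorial inequality} to $s=1$. Your separate check of $k\in\{0,1\}$ is a worthwhile addition, because that corollary requires $s<k$ and the paper does not address those cases. Your alternative induction on $m$ is also valid and self-contained: at each step the left side gains a factor $\floor{\frac{m}{2}}+1\le m+1$.
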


\begin{proof}
    This is a special case of Corollary~\ref{c:multi factorial inequality} as $s=1$. 
\end{proof}

We next prove a stronger version of the previous proposition. One can see that this is no longer a special case of Corollary~\ref{c:multi factorial inequality}. 

\begin{prop}\label{p:max factorial2}
    For $m,k\in \mathbb N$ with $m \geq k$, we have 
    \begin{equation}\label{eq:mk2}
        \frac{\floor{\frac{m}{2}}!\ceil{\frac{m}{2}}!}{\floor{\frac{k}{2}}!\ceil{\frac{k}{2}}!} \leq \frac{(m-1)!}{(k-1)!}.
    \end{equation}
\end{prop}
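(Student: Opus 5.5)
The plan is to reduce \eqref{eq:mk2} to a one-step inequality and then telescope, rather than going through the majorization machinery of Corollary~\ref{c:multi factorial inequality}. Throughout we take $k \geq 1$, so that $(k-1)!$ is defined. For $n \in \mathbb N$ write
\begin{equation*}
  \Phi(n) \defeq \floor*{\tfrac{n}{2}}!\,\ceil*{\tfrac{n}{2}}!,
\end{equation*}
so that the left-hand side of \eqref{eq:mk2} is $\Phi(m)/\Phi(k)$. Both sides of \eqref{eq:mk2} are then telescoping products:
\begin{equation*}
  \frac{\Phi(m)}{\Phi(k)} = \prod_{n=k}^{m-1} \frac{\Phi(n+1)}{\Phi(n)},
  \qquad
  \frac{(m-1)!}{(k-1)!} = \prod_{n=k}^{m-1} n .
\end{equation*}
It therefore suffices to show that $\Phi(n+1)/\Phi(n) \leq n$ for every $n \geq 1$. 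If $m = k$, both sides equal $1$.

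The key step is to compute the ratio $\Phi(n+1)/\Phi(n)$ exactly. Passing from $n$ to $n+1$, the pair $\bigl(\floor{\tfrac n2},\ceil{\tfrac n2}\bigr)$ becomes $\bigl(\floor{\tfrac{n+1}2},\ceil{\tfrac{n+1}2}\bigr)$. As a multiset, exactly one part changes: a part equal to $\floor{\tfrac n2}$ increases to $\floor{\tfrac n2}+1$. This is the $p=2$ case of the observation in the proof of Lemma~\ref{l:G function 1}. Consequently
\begin{equation*}
  \frac{\Phi(n+1)}{\Phi(n)} = \floor*{\tfrac{n}{2}} + 1 .
\end{equation*}
We will check this directly in the two parity cases. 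For $n = 2q$ the parts go from $(q,q)$ to $(q,q+1)$. For $n = 2q+1$ they go from $(q,q+1)$ to $(q+1,q+1)$. In both cases the ratio is $q+1$.

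It remains to verify $\floor{\tfrac n2} + 1 \leq n$ for all $n \geq 1$. This is immediate: it holds with equality at $n = 1, 2$, and for $n \geq 3$ we have $\floor{\tfrac n2} + 1 \leq \tfrac n2 + 1 < n$. Since all factors in the telescoping products are positive, multiplying these factorwise inequalities over $n = k, \dotsc, m-1$ gives \eqref{eq:mk2}.

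There is no real obstacle here. The only points needing care are the parity bookkeeping in the ratio $\Phi(n+1)/\Phi(n)$ and the hypothesis $k \geq 1$, which makes $(k-1)!$ meaningful and puts every factor $n$ in the range $n \geq 1$ where the one-step bound holds. The one-step bound is sharp at $n = 1, 2$, which also explains why this stronger statement is not a special case of Corollary~\ref{c:multi factorial inequality}.
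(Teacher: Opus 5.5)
Your proof is correct, and it takes a different route from the paper's.

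The paper clears denominators by multiplying \eqref{eq:mk2} by $(m+k-1)!$. This turns the statement into the trinomial comparison $\binom{m+k-1}{m-1,\,\ceil{k/2},\,\floor{k/2}}\le\binom{m+k-1}{k-1,\,\ceil{m/2},\,\floor{m/2}}$. It then passes through $m-k$ intermediate trinomials, each time moving one unit from the first part to the smaller of the other two and invoking Lemma~\ref{l:multiincrement}.

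You instead telescope both sides over $n=k,\dotsc,m-1$ and compare factor by factor, using the exact ratio $\Phi(n+1)/\Phi(n)=\floor{n/2}+1\le n$. This is shorter and avoids multinomial coefficients altogether. It is also the more robust argument. The $i$-th step of the paper's chain multiplies by $(m-1-i)/(\floor{(k+i)/2}+1)$. In effect it pairs the factors of $(m-1)!/(k-1)!$ with the factors $\floor{n/2}+1$ in the reverse of your order. The last of these ratios is $k/(\floor{(m-1)/2}+1)$, which is less than $1$ whenever $m\ge 2k+1$; for $m=3$, $k=1$ the chain reads $3,6,3$. So the paper's chain of inequalities fails termwise, even though its two ends compare correctly. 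Your pairing $n\leftrightarrow\floor{n/2}+1$ is exactly what makes every step valid.

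You also state explicitly the hypothesis $k\ge 1$, which the statement tacitly needs for $(k-1)!$ to be defined.
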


\begin{proof}
    If $m=k$, \eqref{eq:mk2} is trivially true. Suppose therefore that $k<m$. 
    Multiplying both sides of \eqref{eq:mk2} by $(m+k-1)!$, we obtain the equivalent inequality
    \begin{equation}\label{eq:mk2equi}
        {{m+k-1}\choose {m-1, \ceil{\frac{k}{2}}, \floor{\frac{k}{2}}} } \leq { {m+k-1}\choose {k-1, \ceil{\frac{m}{2}}, \floor{\frac{m}{2}}}}. 
    \end{equation}
    By Lemma~\ref{l:multiincrement}, we have \begin{equation*}\begin{split}
    {{m+k-1}\choose {m-1, \ceil{\frac{k}{2}}, \floor{\frac{k}{2}}} } &\leq { {m+k-1}\choose {m-2, \ceil{\frac{k+1}{2}}, \floor{\frac{k+1}{2}}}} \leq \cdots \\ 
   \cdots &\leq {{m+k-1}\choose {m-1 - (m-k), \ceil{\frac{k+(m-k)}{2}}, \floor{\frac{k+(m-k)}{2}}} },
\end{split}\end{equation*} since for $i=0,1,\dotsc, m-k$, we have $\floor{\frac{k+i}{2}}, \ceil{\frac{k+i}{2}} \leq k+i \leq m+i-1$ and \begin{equation*}
    \bigg(\floor*{\frac{k+1}{2}}, \ceil*{\frac{k+i}{2}}\bigg) = \begin{cases}
        (\floor{\frac{k+i}{2}}, \ceil{\frac{k+i-1}{2}}+1) \ \text{when $k + i$ is even,} \\ 
        (\ceil{\frac{k+i-1}{2}}, \floor{\frac{k+i-1}{2}}+1) \ \text{when $k + i$ is odd.}
    \end{cases} 
\end{equation*}
\end{proof}

For example, let $m=11$ and $k=7$. Then we have 
\begin{gather*}
    m-1 = 10,\quad   k-1 = 6, \\ 
    \floor{\tfrac{m}{2}} = 5, \quad \ceil{\tfrac{m}{2}} = 6, \quad
    \floor{\tfrac{k}{2}} = 3, \quad \ceil{\tfrac{k}{2}} = 4. 
\end{gather*}
One can verify that \begin{equation*}
    \frac{(m-1)!}{(k-1)!} = \frac{10!}{6!} = 5040 \geq \frac{\floor{\tfrac{m}{2}}!\ceil{\tfrac{m}{2}}!}{\floor{\tfrac{k}{2}}!\ceil{\tfrac{k}{2}}!} = \frac{5!6!}{3!4!} = 600.
\end{equation*}

It would be interesting to describe the analogous poset defined by inequalities~\eqref{eq:LK} holding for \emph{all} totally nonnegative matrices $A$ (not just those in $\mathcal{A}_n$). This problem appears to be more difficult than its determinantal analog solved in~\cite{FGJMult}. (See also~\cite{SkanIneq}.) We therefore restrict our attention to the set $\mathcal{A}_n$. Indeed other total nonnegativity results are more easily proved for $A\in\mathcal{A}_n$, then for $A$ totally nonnegative. For instance, compare the proofs~\cite[Thm. 2]{GashInc} and~\cite[Cor.3.2]{StemImm} of the inequalities $\imm{\chi}(A)\geq 0$ which hold for $A\in\mathcal{A}_n$, and $A$ totally nonnegative, respectively.
To study this poset induced by~\eqref{eq:LK}, we look more closely at the planar networks defined in Section~\ref{s:fourcatalan} and their connection to permanents. 

Let $\pi=(\pi_1,\dotsc,\pi_n)$ be a sequence of source-to-sink paths in a
planar network $F \in \F_n$.
We call $\pi$ a \textit{path family} if there exists a permutation
$w=w_1\cdots w_n\in \sn$ such that $\pi_i$
is a path from source $i$ to sink $w_i$.
In this case, we say that $\pi$ has \emph{type} $w$.
Further, we say that the path family \emph{covers $F$}
if it contains every edge exactly once. 

If $A \in \mat n$ is the path matrix of $F \in \F_n$, then some
immanants (\ref{eq:immdef}) of $A$
can be interpreted in terms of path families in $R$.  For instance
the simple immanant $\permmon aw$ equals the number of
path families of type $w$ in $F$.

Let $F$ be a planar network of order $n$ and let $I, J$ be subsets of $[n]$ with $|I|=|J|$. Let $\Pi_{I,J}(F)$ denote the set of path families from sources indexed by $I$ to sinks indexed by $J$. For economy, let $\Pi(F) = \Pi_{[n],[n]}(F)$.

The following interpretation of
the determinant is
due to Karlin and McGregor~\cite{KMG}
and usually attributed to Lindstrom~\cite{LinVrep}.

\begin{lem}\label{l:llem}
  We have that $\det(A)$ equals the number of path families $\pi$ in $F$
  in which $\pi_1,\dotsc,\pi_n$ are pairwise nonintersecting.
\end{lem}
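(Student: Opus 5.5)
We prove the Lindström--Gessel--Viennot statement by a sign-reversing involution. Here $A$ is the path matrix of the planar network $F$, and $F$ is acyclic with sources and sinks on the boundary of a disc in the stated clockwise order. First expand the determinant:
\begin{equation*}
\det(A)=\sum_{w\in\sn}\sgn(w)\,\permmon aw .
\end{equation*}
As noted above, $\permmon aw$ is the number of path families of type $w$, since such a family is an independent choice of one path $\pi_i$ from source $i$ to sink $w_i$ for each $i$. Hence
\begin{equation*}
\det(A)=\sum_{\pi\in\Pi(F)}\sgn(\mathrm{type}(\pi)).
\end{equation*}
It then suffices to prove two facts:
\begin{enumerate}
\item the intersecting families cancel in sign;
\item every pairwise nonintersecting family has type $e$.
\end{enumerate}

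For the cancellation, fix linear orders on the indices and on the vertices of $F$. Given an intersecting family $\pi$, let $i$ be the smallest index such that $\pi_i$ meets some other path. Let $v$ be the first vertex along $\pi_i$ lying on another path, and let $j$ be the smallest index with $j\neq i$ and $v\in\pi_j$. Build $\pi'$ by exchanging the tails of $\pi_i$ and $\pi_j$ after $v$. Its type is $w\circ(i\,j)$, so the sign flips.

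The key check is that the choice of $(i,v,j)$ is the same for $\pi'$, which makes the map an involution. This holds because of three facts:
\begin{enumerate}
\item the set of vertices covered by the family is unchanged;
\item the initial segment of $\pi_i$ up to $v$ is unchanged;
\item acyclicity makes ``the first vertex along the path'' well defined.
\end{enumerate}
Since the involution is fixed-point free on intersecting families, their contributions cancel.

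The main obstacle is showing that a nonintersecting family must have type $e$; this is where planarity and the boundary ordering enter. Suppose $w\neq e$. Then there are indices $i<j$ with $w_i>w_j$. The disc boundary is ordered source $1,\dots,$ source $n$, sink $n,\dots,$ sink $1$. Along it, the endpoints of $\pi_i$ (source $i$, sink $w_i$) and of $\pi_j$ (source $j$, sink $w_j$) therefore interleave. The path $\pi_i$ is a curve across the disc joining two boundary points, so it separates the disc into two regions. The endpoints of $\pi_j$ lie in different regions, so by the Jordan curve theorem, applied to the planar embedding, $\pi_j$ must cross $\pi_i$. Because the embedding is planar, that crossing occurs at a common vertex. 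Hence nonintersecting families all have type $e$ and sign $+1$. Combined with the cancellation, $\det(A)$ equals the number of nonintersecting path families, as claimed.
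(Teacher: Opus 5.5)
The paper gives no proof of this lemma; it only cites Karlin--McGregor and Lindstr\"om. Your argument is the standard Lindstr\"om--Gessel--Viennot proof, the discrete form of Karlin--McGregor's relabeling of two particles at their first coincidence, and it is correct. Swapping tails at the first shared vertex of the lowest-indexed intersecting path is a sign-reversing involution on intersecting families. The interleaving of the endpoint pairs of $\pi_i$ and $\pi_j$ along the boundary forces every nonintersecting family to have type $e$.

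Three details deserve an explicit line when you write it up:
\begin{enumerate}
\item Minimality of $i$ survives the swap because $\pi'_m=\pi_m$ for $m\neq i,j$ and $V(\pi'_i)\cup V(\pi'_j)=V(\pi_i)\cup V(\pi_j)$. An unchanged total vertex set is not quite the invariant you need.
\item Acyclicity is what stops a vertex of $\pi_i$ preceding $v$ from reappearing on the tail of $\pi_i$ that $\pi'_j$ inherits. That is why $v$ remains the first shared vertex of $\pi'_i$.
\item The separation step tacitly assumes that only the $2n$ labeled vertices lie on the boundary circle. Then $\pi_i$ is a simple arc meeting the circle only at its endpoints.
\end{enumerate}
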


\begin{cor}\label{c:llem}
  For $I, J \subseteq [n]$ with $|I| = |J| = k$, we have
  \begin{equation*}
    \det(A_{I,J}) = \# \{ \pi = (\pi_1, \dotsc, \pi_k) \in \Pi_{I,J}(F)
    \,|\, \pi_1,\dotsc,\pi_k \text{ pairwise nonintersecting } \}.
  \end{equation*}
\end{cor}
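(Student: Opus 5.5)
The corollary should follow by applying Lemma~\ref{l:llem} to a planar network obtained from $F$ by deleting sources and sinks. Let $I = \{i_1 < \cdots < i_k\}$ and $J = \{j_1 < \cdots < j_k\}$. Define $F'$ to be the network obtained from $F$ as follows. Delete the sources not indexed by $I$, together with their unique outgoing edges. Delete the sinks not indexed by $J$, together with their unique incoming edges. Then relabel the remaining sources $i_1,\dotsc,i_k$ as $1,\dotsc,k$ and the remaining sinks $j_1,\dotsc,j_k$ as $1,\dotsc,k$.

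First check that $F'$ is a planar network of order $k$. The graph is still planar, directed and acyclic, since it is a subgraph of $F$. The clockwise boundary order of the surviving sources and sinks is inherited from that of $F$, so it is still source $1,\dots,$ source $k$, sink $k,\dots,$ sink $1$. The degree conditions on sources and sinks are also preserved.

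Next, compare path matrices. A path in $F$ from source $i_a$ to sink $j_b$ cannot pass through any other source or sink, because sources have indegree $0$ and sinks have outdegree $0$. So it uses none of the deleted edges, and it is exactly a path in $F'$ from source $a$ to sink $b$. Hence the path matrix of $F'$ is $A_{I,J}$. For the same reason, path families in $\Pi_{I,J}(F)$ correspond bijectively to path families in $\Pi(F')$, and this bijection preserves the property of being pairwise nonintersecting.

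Applying Lemma~\ref{l:llem} to $F'$ now gives $\det(A_{I,J})$ as the number of pairwise nonintersecting families in $\Pi(F')$, and therefore in $\Pi_{I,J}(F)$. The only real point to check is that the relabeled network still satisfies the boundary-order convention, which is what Lemma~\ref{l:llem} requires to count nonintersecting families with sign $+1$. The remaining steps are routine bookkeeping.
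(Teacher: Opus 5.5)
Your proposal is correct, and it is the standard restriction-to-a-subnetwork argument that the paper leaves implicit: Corollary~\ref{c:llem} is stated without proof, as an immediate consequence of Lemma~\ref{l:llem}. One small correction is needed. The degree conditions are not always preserved. A kept source whose unique edge runs directly to a deleted sink, as happens in $F_\emptyset$, ends up with outdegree $0$, and symmetrically a kept sink can end up with indegree $0$. In that case the corresponding row (or column) of $A_{I,J}$ is zero and $\Pi_{I,J}(F)=\emptyset$, so both sides vanish. Alternatively, note that Lindstr\"om's lemma does not actually require the degree convention.
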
  

We have the following combinatorial interpretation of the permanent using path families.

\begin{obs}\label{o:perlem}
  Let $A$ be the path matrix of planar network $F \in \F_n$.
  Then $\perm(A)$ is the number of path families in $F$.
  \end{obs}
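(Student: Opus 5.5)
The plan is to expand the permanent by its definition and interpret each term combinatorially. By definition,
\begin{equation*}
  \perm(A) = \sum_{w \in \sn} \permmon aw ,
\end{equation*}
so it suffices to show that for each fixed $w \in \sn$ the product $\permmon aw$ equals the number of path families of type $w$ in $F$. Summing over $w$ then counts every path family exactly once, because each path family has a unique type. Indeed, a path family $\pi=(\pi_1,\dotsc,\pi_n)$ determines the permutation $w$ with $\pi_i$ ending at sink $w_i$.

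Fix $w$. A path family of type $w$ is exactly an $n$-tuple $(\pi_1,\dotsc,\pi_n)$ in which $\pi_i$ is an arbitrary path from source $i$ to sink $w_i$. There is no constraint linking different components: intersections are allowed, and the type already forces the sinks to be distinct. The set of such families is therefore the Cartesian product of the sets of paths from source $i$ to sink $w_i$, for $i=1,\dotsc,n$. By the definition (\ref{eq:pathmatrixdef}) of the path matrix, the $i$th factor has cardinality $a_{i,w_i}$. Hence the number of path families of type $w$ is $a_{1,w_1}\cdots a_{n,w_n}$, which is the remark made just before Lemma~\ref{l:llem}.

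Summing over $w\in\sn$ gives $\perm(A)=\#\Pi(F)$, since $\Pi(F)$ is the disjoint union over $w$ of the families of type $w$. The only point needing care is that this decomposition is disjoint and exhaustive, and this follows from the uniqueness of the type. The same argument applied to submatrices yields $\perm(A_{I,J})=\#\Pi_{I,J}(F)$ for $|I|=|J|$, which is the form needed later for the products of complementary permanents. I expect no real obstacle. The contrast with Lemma~\ref{l:llem} is simply that no sign-reversing involution is needed here, because all terms of the permanent carry coefficient $+1$.
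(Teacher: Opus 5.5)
Your proposal is correct and follows the same argument the paper relies on. The paper gives no separate proof of this observation; it rests on the preceding remark that $a_{1,w_1}\cdots a_{n,w_n}$ counts the path families of type $w$, so summing over $w\in\mathfrak S_n$ counts every path family once, since each family has a unique type.
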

\begin{cor}\label{c:perlem}
    For $I, J \subseteq [n]$ with $|I| = |J| = k$, we have
    $\perm(A_{I,J}) = |\Pi_{I,J}(F)|$.
\end{cor}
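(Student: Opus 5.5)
The statement to prove is Corollary~\ref{c:perlem}: for $I,J\subseteq[n]$ with $|I|=|J|=k$ we have $\perm(A_{I,J})=|\Pi_{I,J}(F)|$. I will derive it directly from the definition of the permanent and the path-matrix definition~\eqref{eq:pathmatrixdef}, essentially repeating the argument behind Observation~\ref{o:perlem} on a submatrix.

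First I fix the indexing. Write $I=\{i_1<\cdots<i_k\}$ and $J=\{j_1<\cdots<j_k\}$, so that $A_{I,J}$ is the $k\times k$ matrix whose $(p,q)$ entry is $a_{i_p,j_q}$. Expanding the permanent gives
\begin{equation*}
\perm(A_{I,J})=\sum_{u\in\mfs k} a_{i_1,j_{u_1}}\cdots a_{i_k,j_{u_k}} .
\end{equation*}
By~\eqref{eq:pathmatrixdef}, each factor $a_{i_p,j_{u_p}}$ is the number of paths in $F$ from source $i_p$ to sink $j_{u_p}$. A product of such counts is therefore the number of $k$-tuples $(\pi_1,\dotsc,\pi_k)$ in which each $\pi_p$ is any path from source $i_p$ to sink $j_{u_p}$. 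No disjointness or covering condition is imposed, since the definition of a path family only prescribes endpoints.

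Next I identify the index set of the sum with $\Pi_{I,J}(F)$, taking $\Pi_{I,J}(F)$ to mean sequences of paths indexed by the sources in $I$ whose sinks are exactly $J$, in some bijective order. Every element of $\Pi_{I,J}(F)$ determines a unique bijection $u$, namely $\pi_p$ ends at sink $j_{u_p}$. Conversely, every pair consisting of $u$ and a compatible tuple of paths is an element of $\Pi_{I,J}(F)$. So $\Pi_{I,J}(F)$ is the disjoint union over $u\in\mfs k$ of the sets counted by the individual monomials, and summing gives the claim. The case $I=J=[n]$ recovers Observation~\ref{o:perlem}.

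The only step needing care is this bookkeeping: making explicit the convention that $\Pi_{I,J}(F)$ is indexed by $I$ with its "type" a bijection $I\to J$, and checking that distinct bijections give disjoint sets, so nothing is double counted. Disjointness is clear because the type can be read off from the sinks of the paths. There is no analytic obstacle.
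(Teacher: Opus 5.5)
Your proof is correct and follows the paper's reasoning. The paper gives no separate proof: it relies on the earlier remark that each monomial $a_{1,w_1}\cdots a_{n,w_n}$ counts the path families of type $w$, and sums over types to get Observation~\ref{o:perlem} and this corollary. Your expansion of $\perm(A_{I,J})$, with bijections $I\to J$ as types and the check that distinct types give disjoint sets, is exactly that argument applied to the submatrix.
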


For example, the planar network $F$
\begin{center}
    \begin{tikzpicture}[scale=.7,baseline=0]
\node at (.6,1.5) {$\scriptstyle 4$};
\node at (.6,0.5) {$\scriptstyle 3$};
\node at (.6,-0.5) {$\scriptstyle{2}$};
\node at (.6,-1.5) {$\scriptstyle{1}$};  
\node at (3.4,1.5) {$\scriptstyle 4$};
\node at (3.4,0.5) {$\scriptstyle 3$};
\node at (3.4,-0.5) {$\scriptstyle{2}$};
\node at (3.4,-1.5) {$\scriptstyle{1}$};  
\draw[-] %(0,1.5) --
(1,1.5) -- (1.5,0.5) -- (2.5,-0.5) -- (3,-0.5);
\draw[-] %(0,0.5) --
(1,-1.5) -- (2,-1.5) -- (3,0.5);
\draw[-] %(0,-0.5) --
(1,-0.5) -- (2,1.5) -- (3,1.5);
\draw[-] %(0,-1.5) --
(1,0.5) -- (1.5,0.5) -- (2.5,-0.5) -- (3,-1.5);
\end{tikzpicture}
\end{center}
has path matrix
\begin{equation*}
A = \begin{bmatrix}
1 & 1 & 1 & 0 \\
1 & 1 & 1 & 1 \\ 
1 & 1 & 1 & 1 \\
1 & 1 & 1 & 1
\end{bmatrix}.
\end{equation*}
We see that $\perm(A_{\{1,3\},\{2,3\}}) = 2$. (Note that we define $\perm(A_{\emptyset,\emptyset}) = 1$.) The $2$ path families in $\Pi_{\{1,3\},\{2,3\}}(F)$ are 
\begin{center}
\begin{tikzpicture}[scale=.65,baseline=0]
\node at (.6,1.5) {$\scriptstyle 4$};
\node at (.6,0.5) {$\scriptstyle 3$};
\node at (.6,-0.5) {$\scriptstyle{2}$};
\node at (.6,-1.5) {$\scriptstyle{1}$};  
\node at (3.4,1.5) {$\scriptstyle 4$};
\node at (3.4,0.5) {$\scriptstyle 3$};
\node at (3.4,-0.5) {$\scriptstyle{2}$};
\node at (3.4,-1.5) {$\scriptstyle{1}$};  
\draw[-] %(0,1.5) --
(1,1.5) -- (1.5,0.5) -- (2.5,-0.5);
\draw[-, very thick, color=blue] (2.5,-0.5) -- (3,-0.5);
\draw[-, very thick, color=red] %(0,0.5) --
(1,-1.5) -- (2,-1.5) -- (3,0.5);
\draw[-] %(0,-0.5) --
(1,-0.5) -- (2,1.5) -- (3,1.5);
\draw[-, very thick, color=blue] %(0,-1.5) --
(1,0.5) -- (1.5,0.5) -- (2.5,-0.5);
\draw[-] (2.5,-0.5) -- (3,-1.5);
\end{tikzpicture} and
\begin{tikzpicture}[scale=.65,baseline=0]
\node at (.6,1.5) {$\scriptstyle 4$};
\node at (.6,0.5) {$\scriptstyle 3$};
\node at (.6,-0.5) {$\scriptstyle{2}$};
\node at (.6,-1.5) {$\scriptstyle{1}$};  
\node at (3.4,1.5) {$\scriptstyle 4$};
\node at (3.4,0.5) {$\scriptstyle 3$};
\node at (3.4,-0.5) {$\scriptstyle{2}$};
\node at (3.4,-1.5) {$\scriptstyle{1}$};  
\draw[-] %(0,1.5) --
(1,1.5) -- (1.5,0.5) -- (2.5,-0.5);
\draw[-, very thick, color=red] (2.5,-0.5) -- (3,-0.5);
\draw[-, very thick, color=red] %(0,0.5) --
(1,-1.5) -- (2,-1.5) --(2.5,-0.5);
\draw[-, very thick, color=blue] (2.5,-0.5) -- (3,0.5);
\draw[-] %(0,-0.5) --
(1,-0.5) -- (2,1.5) -- (3,1.5);
\draw[-, very thick, color=blue] %(0,-1.5) --
(1,0.5) -- (1.5,0.5) -- (2.5,-0.5);
\draw[-] (2.5,-0.5) -- (3,-1.5);
\end{tikzpicture}.
\end{center}

There is a simple formula which counts path families covering a descending star network. 

\begin{lem}\label{eq:counting number of path families}
Given descending star network $F = F_{[c_1,d_1]} \bullet \cdots \bullet F_{[c_r,d_r]}$, define numbers $m_1, \dotsc, m_r, \, k_1,\dotsc, k_{r-1}$ by

\begin{equation}\label{equ: size of stars}
    \begin{gathered}
        m_i=d_i-c_i+1, \qquad \qquad \;\\
        k_i=\max\{d_{i+1}-c_i+1,0\}.
    \end{gathered}
\end{equation}
Then we have 
\begin{equation}\label{equ: size}
    |\Pi_{[n],[n]}(F)| = \frac{m_1 !\cdots m_{r-1} !}{  k_{1} !\cdots k_{r-1}!} \cdot m_r!.
\end{equation}
\end{lem}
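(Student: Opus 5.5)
We count path families by induction on the number $r$ of factors, tracing each path through the stars in the order $F_{[c_1,d_1]},\dotsc,F_{[c_r,d_r]}$ from sources to sinks. Because the network has a unique path between any two vertices (the proof of Lemma~\ref{l:dsn}), a path family covering $F$ is determined by its type $w$, so it suffices to count types. Equivalently, a family is determined by, at each star $F_{[c_i,d_i]}$, the bijection recording which incoming strand leaves along which outgoing edge. The central vertex of $F_{[c_i,d_i]}$ receives exactly $m_i$ strands and emits $m_i$ strands, so a priori there are $m_i!$ choices at each star and $m_1!\cdots m_r!$ assignments in all.

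The overcount comes from condensation. If consecutive intervals overlap in $k_i = d_{i+1}-c_i+1>0$ positions, the $k_i$ parallel edges joining the centre of star $i$ to the centre of star $i+1$ are merged into a single edge. Since $c_1>\cdots>c_r$, $d_1>\cdots>d_r$ and the intervals are pairwise nonnesting, the overlap $[c_i,d_{i+1}]$ is exactly the set of strands routed from centre $i$ directly to centre $i+1$. Nonconsecutive stars share no such direct edges: any strand in $[c_i,d_i]\cap[c_j,d_j]$ with $j>i+1$ also lies in $[c_{i+1},d_{i+1}]$ and so passes through centre $i+1$. I would make this precise by an induction on $r$. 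Write $F = F'\bullet F_{[c_r,d_r]}$ with $F'=F_{[c_1,d_1]}\bullet\cdots\bullet F_{[c_{r-1},d_{r-1}]}$. A path family in $F$ is then a path family in $F'$ followed by a bijection at the last star. The family in $F'$ sends $k_{r-1}$ strands into the merged edge, and these strands become indistinguishable in $F$. They are then redistributed at centre $r$ by a bijection of $m_r$ objects.

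Concretely, in $F'$ the $m_{r-1}$ strands at centre $r-1$ go to $m_{r-1}$ outgoing edges, $k_{r-1}$ of which are now the single merged edge. Orderings within that block are forgotten, which divides by $k_{r-1}!$. At centre $r$ there are then $m_r!$ ways to match incoming strands (in $F$ these are the $k_{r-1}$ merged strands plus $m_r-k_{r-1}$ fresh sources) with the $m_r$ outgoing edges. Thus $|\Pi(F)| = |\Pi(F')|\cdot m_r!/(k_{r-1}!\, m_{r-1}!)\cdot m_{r-1}!$. Since the inductive value of $|\Pi(F')|$ ends in the factor $m_{r-1}!$, we replace that factor by $m_{r-1}!/k_{r-1}!$ and append $m_r!$, which gives \eqref{equ: size}. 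When $k_{r-1}=0$ the stars are disjoint, $F$ is a product of the two pieces, and the formula is multiplicative with $0!=1$. The base case $r=1$ is $m_1!$, since every bijection through a single centre is realized.

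The main obstacle is justifying the $k_{r-1}!$ division rigorously. We must show that the map from (family in $F'$, bijection at the last star) to families in $F$ is exactly $k_{r-1}!$-to-one. This amounts to checking that paths are determined by their endpoints in $F$: the merged edge carries $k_{r-1}$ indistinguishable passages, so permuting which of the strands entering it came from which source changes nothing once the type is fixed downstream. I would argue this directly via uniqueness of paths between vertices, counting types $w$ as pairs (bijection at star $r-1$ up to the block $[c_{r-1},d_r]$, bijection at star $r$).
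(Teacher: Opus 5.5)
Your proposal is correct and follows essentially the same route as the paper: induction on $r$ via $F=F'\bullet F_{[c_r,d_r]}$, counting $|\Pi(F')|\cdot m_r!$ families in the uncondensed concatenation $F'\circ F_{[c_r,d_r]}$, and dividing by $k_{r-1}!$ because collapsing the $k_{r-1}$ parallel edges between the last two centres makes the projection exactly $k_{r-1}!$-to-one. The paper simply asserts this fiber count and the surjectivity as clear, and your argument makes it more explicit, though for surjectivity you should still add the short flow-conservation count at the centres, which shows that every family in $F$ sends exactly $k_{r-1}$ paths through the merged edge.
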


\begin{proof}
If $F$ is a simple star network $F=F_{[c_1, d_1]}$. Then $|\Pi_{[n],[n]}(F)|$ is clearly $(d_1-c_1+1)!$. Now suppose \eqref{equ: size} holds for descending star networks with $1, \ldots, r-1$ factors and consider $F$ with $r$ factors. Let $F'=F_{[c_1,d_1]} \bullet \cdots \bullet F_{[c_{r-1},d_{r-1}]}$, $F=F'\bullet F_{[c_r, d_r]}$ and consider $\Pi(F'\circ F_{[c_r, d_r]})$. Clearly $|\Pi(F'\circ F_{[c_r, d_r]})|=|\Pi(F')|\cdot |\Pi(F_{[c_r,d_r]})|=\frac{m_1!\cdots m_{r-1}!}{k_1 !\cdots k_{r-2} !}\cdot m_r!$. Now recall that $F'\bullet F_{[c_r, d_r]}$ is obtained from $F'\circ F_{[c_r, d_r]}$ by removing all but one of the $k_{r-1}$ edges from central vertices $z_{r-1}$ to $z_r$. Call this edge $e$. This gives a surjective map $\pi\mapsto \widehat{\pi}$ from $\Pi(F\circ F_{[c_r, d_r]}$ to $\Pi(F\bullet F_{[c_r, d_r]})$ by $\widehat{\pi}=\pi_i \text{ with its $z_{r-1}$ to $z_r$ edge replaced by $e$}$. It is clear that the preimage of any path family $\widehat{\pi}\in\Pi(F'\bullet F_{[c_r, d_r]})$ has cardinality $k_{r-1}!$. Thus we have the claimed formula for $|\Pi(F'\bullet F_{[c_r, d_r]})|$.   % This is straightforward using induction. 
\end{proof}

For example, when $F=F_{[8,10]}\bullet F_{[3,8]}\bullet F_{[1,5]}$, paths in $\Pi_{[10],[10]}(F)$ are the paths covering the edges in 

\begin{center}
    \begin{tikzpicture}
    \draw (0.5,4) -- (3.5,2.25) -- (6,1); 
    \draw (0,4.5) -- (0.5,4); 
    \draw (0,4) -- (0.5,4); 
    \draw (0,3.5) -- (0.5,4); 
    \draw (0,3) -- (1,3) -- (3.5,2.25); 
    \draw (0,2.5) -- (1,2.5) -- (3.5,2.25); 
    \draw (0.5,4) -- (1,4.5) -- (7,4.5); 
    \draw (0.5,4) -- (1,4) -- (7,4);

    \draw (0,1.5) -- (2.5,1.5) -- (3.5,2.25);
    \draw (0,1) -- (2.5,1) -- (3.5,2.25);
    \draw (0,2) -- (2.5,2) -- (3.5,2.25);
    \draw (3.5,2.25) -- (4.5,2.5) -- (7,2.5);
    \draw (3.5,2.25) -- (4.5,3) -- (7,3);
    \draw (3.5,2.25) -- (4.5,3.5) -- (7,3.5);

    \draw (0,0.5) -- (5,0.5) -- (6,1); 
    \draw (0,0) -- (5,0) -- (6,1);
    \draw (6,1) -- (7,0);
    \draw (6,1) -- (7,0.5);
    \draw (6,1) -- (7,1);
    \draw (6,1) -- (7,1.5);
    \draw (6,1) -- (7,2);

    \node[scale=0.75] at (-0.5,0) {1};
    \node[scale=0.75] at (-0.5,1) {3};
    \node[scale=0.75] at (-0.5,2) {5};
    \node[scale=0.75] at (-0.5,3) {7};
    \node[scale=0.75] at (-0.5,4) {9};
    \node[scale=0.75] at (-0.5,0.5) {2};
    \node[scale=0.75] at (-0.5,1.5) {4};
    \node[scale=0.75] at (-0.5,2.5) {6};
    \node[scale=0.75] at (-0.5,3.5) {8};
    \node[scale=0.75] at (-0.5,4.5) {10};
    
    \node[scale=0.75] at (7.5,0) {1};
    \node[scale=0.75] at (7.5,1) {3};
    \node[scale=0.75] at (7.5,2) {5};
    \node[scale=0.75] at (7.5,3) {7};
    \node[scale=0.75] at (7.5,4) {9};
    \node[scale=0.75] at (7.5,0.5) {2};
    \node[scale=0.75] at (7.5,1.5) {4};
    \node[scale=0.75] at (7.5,2.5) {6};
    \node[scale=0.75] at (7.5,3.5) {8};
    \node[scale=0.75] at (7.5,4.5) {10};
    \end{tikzpicture}
\end{center}

By \eqref{equ: size}, we see that $|\Pi_{[10],[10]}(F) = \frac{3!6!5!}{1!3!}$.

Now consider the variation of \eqref{equ: size} which counts paths in $\Pi_{[n],[n]}(F)$ having sources and sinks with only odd (only even) indices. For example, when $F=F_{[8,10]}\bullet F_{[3,8]}\bullet F_{[1,5]}$, these are the paths covering the solid and dotted (dashed and dotted) edges in 

\begin{center}
\begin{tikzpicture}
    \draw[dotted] (0.5,4) -- (3.5,2.25) -- (6,1); 
    \draw[dashed] (0,4.5) -- (0.5,4); 
    \draw (0,4) -- (0.5,4); 
    \draw[dashed] (0,3.5) -- (0.5,4); 
    \draw (0,3) -- (1,3) -- (3.5,2.25); 
    \draw[dashed] (0,2.5) -- (1,2.5) -- (3.5,2.25); 
    \draw[dashed] (0.5,4) -- (1,4.5) -- (7,4.5); 
    \draw (0.5,4) -- (1,4) -- (7,4);

    \draw[dashed] (0,1.5) -- (2.5,1.5) -- (3.5,2.25);
    \draw (0,1) -- (2.5,1) -- (3.5,2.25);
    \draw (0,2) -- (2.5,2) -- (3.5,2.25);
    \draw[dashed] (3.5,2.25) -- (4.5,2.5) -- (7,2.5);
    \draw (3.5,2.25) -- (4.5,3) -- (7,3);
    \draw[dashed] (3.5,2.25) -- (4.5,3.5) -- (7,3.5);

    \draw[dashed] (0,0.5) -- (5,0.5) -- (6,1); 
    \draw (0,0) -- (5,0) -- (6,1);
    \draw (6,1) -- (7,0);
    \draw[dashed] (6,1) -- (7,0.5);
    \draw (6,1) -- (7,1);
    \draw[dashed] (6,1) -- (7,1.5);
    \draw (6,1) -- (7,2);

    \node[scale=0.75] at (-0.5,0) {1};
    \node[scale=0.75] at (-0.5,1) {3};
    \node[scale=0.75] at (-0.5,2) {5};
    \node[scale=0.75] at (-0.5,3) {7};
    \node[scale=0.75] at (-0.5,4) {9};
    \node[scale=0.75] at (-0.5,0.5) {2};
    \node[scale=0.75] at (-0.5,1.5) {4};
    \node[scale=0.75] at (-0.5,2.5) {6};
    \node[scale=0.75] at (-0.5,3.5) {8};
    \node[scale=0.75] at (-0.5,4.5) {10};
    
    \node[scale=0.75] at (7.5,0) {1};
    \node[scale=0.75] at (7.5,1) {3};
    \node[scale=0.75] at (7.5,2) {5};
    \node[scale=0.75] at (7.5,3) {7};
    \node[scale=0.75] at (7.5,4) {9};
    \node[scale=0.75] at (7.5,0.5) {2};
    \node[scale=0.75] at (7.5,1.5) {4};
    \node[scale=0.75] at (7.5,2.5) {6};
    \node[scale=0.75] at (7.5,3.5) {8};
    \node[scale=0.75] at (7.5,4.5) {10};
    \end{tikzpicture}
\end{center}

We see that $|\Pi_{\{1,3,\dotsc,9\},\{1,3,\dotsc,9\}}(F)| = \frac{1!3!3!}{0!2!}$ and $|\Pi_{\{2,4,\dotsc,10\},\{2,4,\dotsc,10\}}(F)|=\frac{2!3!2!}{1!1!}$.

More generally, we have the following.

\begin{lem}\label{eq:counting odd-evens} Let $F = F_{[c_1,d_1]} \bullet \cdots \bullet F_{[c_{r+1},d_{r+1}]}$ be a descending star network with $m_i$, $k_i$ defined as \eqref{equ: size of stars}. Define
\begin{equation*}
\begin{gathered}
I \defeq 2\mathbb Z \cap [n], \qquad \Bar I \defeq [n] \ssm I.
\end{gathered}
\end{equation*}
Then we have
\begin{equation}\label{eq:evenoddfraction}
    |\Pi_{I,I}(F)| \cdot |\Pi_{\Bar I, \Bar I}(F)| = \frac{\lfloor \frac{m_1}{2}\rfloor!\lceil \frac{m_1}{2}\rceil!\cdots \lfloor \frac{m_{r+1}}{2}\rfloor!\lceil\frac{m_{r+1}}{2}\rceil!}{\lfloor \frac{k_1}{2}\rfloor!\lceil \frac{k_1}{2}\rceil!\cdots \lfloor \frac{k_{r}}{2}\rfloor!\lceil\frac{k_{r}}{2}\rceil!}.
\end{equation}
\end{lem}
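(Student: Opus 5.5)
The plan is to reduce the statement to two separate applications of Lemma~\ref{eq:counting number of path families}, one for the odd-indexed and one for the even-indexed sources and sinks.

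\textbf{Odd part as a star network.} Path families in $\Pi_{\Bar I,\Bar I}(F)$ use only edges incident to odd sources and odd sinks. The restriction of $F$ to these sources and sinks is itself a descending star network on the odd indices (relabel $1,3,5,\dotsc$ as $1,2,3,\dotsc$). Its factors are the stars $F_{[c_i,d_i]}$, each keeping only its odd-indexed source and sink edges. The interval $[c_i,d_i]$ of length $m_i$ contains either $\lceil m_i/2\rceil$ or $\lfloor m_i/2\rfloor$ odd integers, according to the parity of $c_i$. The overlap $[c_i,d_{i+1}]$ of consecutive intervals has length $k_i$, so it contains $\lceil k_i/2\rceil$ or $\lfloor k_i/2\rfloor$ odd integers, according to the parity of $c_i$. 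Applying the argument of Lemma~\ref{eq:counting number of path families} (the surjection from the ordinary concatenation to the condensed one, whose fibers have size $k!$ for the overlap count $k$) to this restricted network gives
\[
|\Pi_{\Bar I,\Bar I}(F)|=\frac{\prod_i o(m_i)!}{\prod_i o(k_i)!},
\]
where $o(\cdot)$ denotes the number of odd integers in the corresponding interval. Two degenerate cases need attention. If a restricted star has at most one odd index, it degenerates to a single edge. If an overlap contains no odd index, then $k=0$ and the two stars are not joined in the odd network, so the count factorizes and $0!=1$ keeps the formula valid. The same reasoning shows that the only paths are those passing through consecutive star centers and overlapping edges.

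\textbf{Even part and combining.} The same argument gives $|\Pi_{I,I}(F)|=\prod_i e(m_i)!/\prod_i e(k_i)!$, where $e(\cdot)$ counts the even integers in each interval. For each interval, $\{o,e\}=\{\lfloor \ell/2\rfloor,\lceil \ell/2\rceil\}$ with $\ell$ the interval's length. Multiplying the two products therefore yields exactly \eqref{eq:evenoddfraction}. The parity bookkeeping drops out because only the unordered pair $\{o,e\}$ matters.

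\textbf{Main obstacle.} The delicate step is justifying that restricting the condensed network to one parity class is again a condensed descending-star concatenation whose overlap multiplicities are exactly the parity counts of $[c_i,d_{i+1}]$. In $F\circ F'$, the edges between consecutive centers $z_i$ and $z_{i+1}$ correspond to the indices in the overlap. In $F$ these edges are condensed to a single edge $e$, which both parity classes share. I would therefore redo the counting directly rather than invoke the lemma verbatim. Fix the odd sources and sinks and count the assignments of paths through the sequence of stars. Each path either exits its star to a sink or passes along the single edge $e$ to the next star. The numbers of paths traversing $e$ are forced to equal the overlap parity counts $o(k_i)$. Counting then proceeds inductively on $r$, as in the proof of Lemma~\ref{eq:counting number of path families}: the ordinary concatenation contributes $\prod_i o(m_i)!$, and each condensed edge divides by $o(k_i)!$.
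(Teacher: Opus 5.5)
Your proposal is correct and is essentially the paper's argument: the paper inducts on the number of stars, concatenating a parity-preserving family of $F'$ with one of the last star and dividing by the fiber size $\lfloor k_r/2\rfloor!\lceil k_r/2\rceil!$ of the collapse of the overlap edges, which is your direct count with the odd factor $o(k)!$ and the even factor $e(k)!$ handled jointly rather than separately. Your flow observation that exactly $o(k_i)$ odd paths must traverse each condensed edge (including $0$ when the overlap has no odd index) is the right justification for the step you flagged, and you should rely on it rather than on your first paragraph's literal claim, since the odd restriction of $F$ need not be a genuine descending star network (e.g.\ $F_{[3,6]}\bullet F_{[2,5]}$ restricts both stars to the same odd set $\{3,5\}$).
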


\begin{proof}
Observe that the product on the left-hand side of (\ref{eq:evenoddfraction}) equals the cardinality of path families $\pi = (\pi_1,\dotsc,\pi_n)$ covering $F$ such that
for all $i$, the number
$\snk(\pi_i)$ has the same parity as $i$.  Call this set $\Pi_{\text{par}}(F)$ so that $|\Pi_{I,I}(F)| \cdot |\Pi_{\Bar I, \Bar I}(F)| = |\Pi_{\text{par}}(F)|$.

Observe that when $F$ is the simple star $F_{[c_1, d_1]}$, we have $|\Pi_{\text{par}}(F)|=\lfloor{\frac{m_1}{2}}\rfloor!\lceil \frac{m_1}{2}\rceil!$ and \eqref{eq:evenoddfraction} is true.

    Now suppose that the equation is true for descending star networks which are 
    concatenations of at most $r$ stars, and consider a concatenation 
    \begin{equation}\label{eq:Fs+2}
        F = F_{[c_1,d_1]} \bullet \cdots \bullet F_{[c_{r+1},d_{r+1}]}
    \end{equation}
    of $r+1$ stars. Define $F' = F_{[c_1,d_1]} \bullet \cdots \bullet F_{[c_{r},d_{r}]}$ so that we have $F = F' \bullet F_{[c_{r+1},d_{r+1}]}$.  

Clearly each element of $\Pi_{\text{par}}(F)$ can be obtained by
\begin{enumerate}
    \item concatenating an element of $\Pi_{\text{par}}(F')$ with an element of $\Pi_{\text{par}}(F_{[c_{r+1},d_{r+1}]})$ in $F' \circ F_{[c_{r+1}, d_{r+1}]}$, 
\item collapsing the $k_{r}$ edges from the central vertex of $F_{[c_{r},d_{r}]}$ to $F_{[c_{r+1},d_{r+1}]}$ into
a single edge $(z_r, z_{r+1})$ of $F$ which is covered by the $k_{r}$ corresponding path segments.
\end{enumerate}
Now observe that for any permutation $w \in \sn$,
the number of path families of type $w$ constructed in step $(1)$ above
equals $\lfloor \tfrac{k_{r}}2 \rfloor !
    \lceil \tfrac{k_{r}}2 \rceil !$
times the number of path families ($0$ or $1$) of type $w$ in $\Pi_{\text{par}}(F)$ after the collapsing done in step $(2)$.
It follows that we have

\begin{align*}
    |\Pi_{\text{par}}(F)|&=\frac{1}{\lfloor \frac{k_r}{2}\rfloor!\lceil \frac{k_r}{2}\rceil!}|\Pi_{\text{par}}(F'\circ F_{[c_{r+1},d_{r+1}]})|\\
    & =\frac{1}{\lfloor \frac{k_r}{2}\rfloor!\lceil \frac{k_r}{2}\rceil!}|\Pi_{\text{par}(F')}| |\Pi_{\text{par}}(F_{[c_{r+1},d_{r+1}]})|\\
    & =\frac{1}{\lfloor \frac{k_r}{2}\rfloor!\lceil \frac{k_r}{2}\rceil!}\frac{\lfloor \frac{m_1}{2}\rfloor!\lceil \frac{m_1}{2}\rceil!\cdots \lfloor \frac{m_r}{2}\rfloor!\lceil\frac{m_r}{2}\rceil!}{\lfloor \frac{k_1}{2}\rfloor!\lceil \frac{k_1}{2}\rceil!\cdots \lfloor \frac{k_{r-1}}{2}\rfloor!\lceil\frac{k_{r-1}}{2}\rceil!}\cdot \floor*{ \frac{m_{r+1}}{2}} !\ceil*{ \frac{m_{r+1}}{2}}!
\end{align*}

as desired.

\end{proof}

We keep using the definitions of $m_i$ and $k_i$ as in Lemma \ref{eq:counting odd-evens}.   Let $h \in [1,n]$, then define sets
  \begin{equation}\label{IJs}
    \begin{alignedat}{2}
      J &= [h], &\qquad I &= [n] \cap 2\mathbb Z,\\
      \ol J &= [h+1,n], &\qquad \ol I &= [n] \ssm 2\mathbb Z.
    \end{alignedat}
    \end{equation}

Now consider the variation of \eqref{equ: size} which counts paths in $\Pi_{[h],[h]}(F)$ when $F$ is a descending star network of order $n$ and $h<n$. For example, when $F=F_{[8,10]}\bullet F_{[3,8]}\bullet F_{[1,5]}$ and $h=6$, these are the paths covering the solid edges in 

\begin{center}
    \begin{tikzpicture}
    \draw[dashed] (0.5,4) -- (3.5,2.25);
    \draw (3.5,2.25) -- (6,1);
    
    \draw[dashed] (0,4.5) -- (0.5,4); 
    \draw[dashed] (0,4) -- (0.5,4); 
    \draw[dashed] (0,3.5) -- (0.5,4); 
    \draw[dashed] (0,3) -- (1,3) -- (3.5,2.25); 
    \draw (0,2.5) -- (1,2.5) -- (3.5,2.25); 
    \draw[dashed] (0.5,4) -- (1,4.5) -- (7,4.5); 
    \draw[dashed] (0.5,4) -- (1,4) -- (7,4);

    \draw (0,1.5) -- (2.5,1.5) -- (3.5,2.25);
    \draw (0,1) -- (2.5,1) -- (3.5,2.25);
    \draw (0,2) -- (2.5,2) -- (3.5,2.25);
    \draw (3.5,2.25) -- (4.5,2.5) -- (7,2.5);
    \draw[dashed] (3.5,2.25) -- (4.5,3) -- (7,3);
    \draw[dashed] (3.5,2.25) -- (4.5,3.5) -- (7,3.5);

    \draw (0,0.5) -- (5,0.5) -- (6,1); 
    \draw (0,0) -- (5,0) -- (6,1);
    \draw (6,1) -- (7,0);
    \draw (6,1) -- (7,0.5);
    \draw (6,1) -- (7,1);
    \draw (6,1) -- (7,1.5);
    \draw (6,1) -- (7,2);

    \node[scale=0.75] at (-0.5,0) {1};
    \node[scale=0.75] at (-0.5,1) {3};
    \node[scale=0.75] at (-0.5,2) {5};
    \node[scale=0.75] at (-0.5,3) {7};
    \node[scale=0.75] at (-0.5,4) {9};
    \node[scale=0.75] at (-0.5,0.5) {2};
    \node[scale=0.75] at (-0.5,1.5) {4};
    \node[scale=0.75] at (-0.5,2.5) {6};
    \node[scale=0.75] at (-0.5,3.5) {8};
    \node[scale=0.75] at (-0.5,4.5) {10};
    
    \node[scale=0.75] at (7.5,0) {1};
    \node[scale=0.75] at (7.5,1) {3};
    \node[scale=0.75] at (7.5,2) {5};
    \node[scale=0.75] at (7.5,3) {7};
    \node[scale=0.75] at (7.5,4) {9};
    \node[scale=0.75] at (7.5,0.5) {2};
    \node[scale=0.75] at (7.5,1.5) {4};
    \node[scale=0.75] at (7.5,2.5) {6};
    \node[scale=0.75] at (7.5,3.5) {8};
    \node[scale=0.75] at (7.5,4.5) {10};
    \end{tikzpicture}
\end{center}

We see that $|\Pi_{[6,6]}(F)|=\frac{(6-3+1)!}{3!} \cdot 5!$.

More generally, we have the following.

\begin{lem}\label{eq:counting top-bots}
    Fix $h\in[n]$ and let $F=F_{[c_1,d_1]} \bullet \cdots \bullet F_{[c_r,d_r]}$ be a descending star network such that $F_{[c_p,d_p]}$ is the only star with $[c_p,d_p] \cap [h,h+1] \neq \emptyset$. Then 
\begin{equation*}
        | \Pi_{[h],[h]}(F) | \cdot | \Pi_{[h+1,n],[h+1,n]}(F) | = \frac{(h-c_p+1)!(d_p-h)!m_1! \cdots m_{p-1}!m_{p+1}!\cdots m_r!}{k_1! \cdots k_{p-1}! k_{p+1}! \cdots k_{r-1}!}.
    \end{equation*}

\end{lem}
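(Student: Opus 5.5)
The plan is to view the product $|\Pi_{[h],[h]}(F)|\cdot|\Pi_{[h+1,n],[h+1,n]}(F)|$ as the number of path families $\pi=(\pi_1,\dotsc,\pi_n)$ covering $F$ in which $\pi_i$ ends at a sink in $[h]$ exactly when $i\in[h]$. Call this set $\Pi_{h}(F)$. This is the same device used in the proof of Lemma~\ref{eq:counting odd-evens}. A family in $\Pi_{[h],[h]}(F)$ and a family in $\Pi_{[h+1,n],[h+1,n]}(F)$ together form such a covering family, and conversely. We then count $\Pi_h(F)$ by induction on the number $r$ of stars, following the proofs of Lemma~\ref{eq:counting number of path families} and Lemma~\ref{eq:counting odd-evens}.

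\emph{Base case ($r=1$).} Here $p=1$ and $F=F_{[c_1,d_1]}$. The paths through the central vertex are indexed by $[c_1,d_1]$. Those with sources in $[c_1,h]$ must go to sinks in $[c_1,h]$, and those with sources in $[h+1,d_1]$ must go to sinks in $[h+1,d_1]$. This gives $(h-c_1+1)!\,(d_1-h)!$ families, as claimed.

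\emph{Inductive step.} Write $F=F'\bullet F_{[c_{r},d_{r}]}$ as before and first count in $F'\circ F_{[c_r,d_r]}$. The key observation is this. Any star $F_{[c_q,d_q]}$ with $q\neq p$ has its interval entirely on one side of the cut between $h$ and $h+1$. At such a star the constraint ``source side $=$ sink side'' is automatically satisfied and imposes nothing locally. Only the star $p$ straddles the cut, since its interval meets $[h,h+1]$; at that star the $m_p$ paths through the center split into $h-c_p+1$ ``low'' paths and $d_p-h$ ``high'' paths, and the count there is $(h-c_p+1)!\,(d_p-h)!$ instead of $m_p!$.

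Next we handle the collapse of the $k_{r-1}$ parallel edges between consecutive centers $z_{r-1}$ and $z_r$. The overlap $[c_{r-1},d_{r-1}]\cap[c_r,d_r]$ lies on a single side of the cut whenever neither star is the straddling one. In that case, as in Lemma~\ref{eq:counting number of path families}, each family in $\Pi_h(F)$ has exactly $k_{r-1}!$ preimages, and we divide by $k_{r-1}!$.

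The factor $k_{p-1}$ and $k_p$ (overlaps adjacent to star $p$) are the main obstacle. The formula contains every $k_q$ with $q\neq p$ except that $k_{p-1}$ is present while $k_p$ is absent. So I must show two things:
\begin{enumerate}
\item When the collapsed edge is the one entering or leaving star $p$ on one side, the overlap lies on one side of the cut, giving the full $k!$ preimage factor.
\item On the other side the overlap is empty under the hypothesis, so $k_p=0$ and $k_p!=1$.
\end{enumerate}
Concretely, I would argue from the hypothesis that only star $p$ meets $[h,h+1]$: star $p+1$ then lies strictly below $h$, so the overlap of stars $p$ and $p+1$ either lies in $[c_p,h-1]$ (entirely low) or is empty. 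Similarly star $p-1$ lies above $h+1$, so its overlap with star $p$ is entirely high.

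In each case the paths through the collapsed edge all carry the same side label. Permuting them among themselves therefore preserves membership in $\Pi_h$, so the preimage count is $k!$. I would double-check the indexing convention against the example $F_{[8,10]}\bullet F_{[3,8]}\bullet F_{[1,5]}$ with $h=6$, where the answer $\tfrac{4!}{3!}\cdot 5!$ involves $(h-c_p+1)!=4!$, no $(d_p-h)!$ factor surviving beyond $2!$, and $k_1=1$. The bookkeeping of which $k$'s survive is the step I expect to need care. If the general formula as printed disagrees with that example, I would adjust the statement of the inductive invariant to ``divide by $k_q!$ for every nonempty overlap'', since the overlap argument above is side-independent.
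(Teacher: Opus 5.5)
\textbf{The gap is your item (2), and the statement it targets is false.} Item (2) is exactly the step you need to reach the displayed formula, and it fails. The hypothesis forces star $p+1$ into $[1,h-1]$ and star $p-1$ into $[h+2,n]$. So the overlap $[c_p,d_{p+1}]$ lies entirely on the low side, as you say yourself in the next paragraph, but nothing makes it empty.

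The paper's own example already shows this. For $F=F_{[8,10]}\bullet F_{[3,8]}\bullet F_{[1,5]}$ and $h=6$ we have $p=2$ and $k_2=|[3,5]|=3$; the $3!$ in $\frac{4!}{3!}\cdot 5!$ is precisely $k_p!$.

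Consequently the identity as printed cannot be proved, because it is false. In this example $|\Pi_{[6],[6]}(F)|=480$ and $|\Pi_{[7,10],[7,10]}(F)|=\frac{3!}{1!}\cdot 2!=12$, so the left side is $5760$. The printed right side is $4!\,2!\,3!\,5!/1!=34560=3!\cdot 5760$. No re-indexing rescues it either: for $F_{[7,10]}\bullet F_{[2,8]}\bullet F_{[1,3]}$ with $h=5$ we have $k_1=k_2=2$, and omitting either one from the denominator gives twice the true value $72\cdot 72$.

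The correct right side has the full product $k_1!\cdots k_{r-1}!$ in the denominator. That is what the paper's proof actually computes. It gives $|\Pi_{[h],[h]}(F)|=\frac{(h-c_p+1)!}{k_p!}\frac{m_{p+1}!}{k_{p+1}!}\cdots\frac{m_{r-1}!}{k_{r-1}!}\,m_r!$ and $|\Pi_{[h+1,n],[h+1,n]}(F)|=\frac{m_1!}{k_1!}\cdots\frac{m_{p-1}!}{k_{p-1}!}\,(d_p-h)!$. Their product carries $k_p!$, so the paper's proof disagrees with its own displayed statement.

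\textbf{What to do instead.} Drop (2) and commit to your fallback. Your observation that \emph{every} overlap, including the two adjacent to star $p$, lies on a single side of the cut is correct, and with it your induction proves the corrected identity. Two details need writing out.
\begin{enumerate}
\item The side-preserving count in $F'\circ F_{[c_r,d_r]}$ factors as a product. If $p<r$, star $r$ lies in $[1,h-1]$, so a path preserves sides if and only if its $F'$-portion does. If $p=r$, then $F'$ lies in $[h+2,n]$, so only the last star imposes a constraint.
\item When $p=r$, the factor for $F'$ comes from Lemma~\ref{eq:counting number of path families}, not from your inductive hypothesis.
\end{enumerate}

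\textbf{Comparison with the paper.} Your route is a single combined induction modeled on Lemma~\ref{eq:counting odd-evens}. The paper instead counts the two factors separately. Low sources meet only stars $p,\dots,r$, with star $p$ cut down to $[c_p,h]$; its overlap with star $p+1$ still has size $k_p$. High sources meet only stars $1,\dots,p$, with star $p$ cut down to $[h+1,d_p]$; its overlap with star $p-1$ still has size $k_{p-1}$. The paper then applies Lemma~\ref{eq:counting number of path families} to each piece. That route is shorter and needs no factorization check; yours treats the product in the same framework as the odd--even count.
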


\begin{proof}
Observe that the number $|\Pi_{[h],[h]}(F)|$ is $\frac{(h-c_p+1)!}{k_p!}\frac{m_{p+1}!}{k_{p+1}!}\cdots\frac{m_{r-1}!}{k_{r-1}!} \cdot m_r!$. 
Similarly, the number $|\Pi_{[h+1,n],[h+1,n]}(F)|$ is  $\frac{m_{1}!}{k_{1}!}\cdots\frac{m_{p-1}!}{k_{p-1}!} \cdot (d_p-h)!$.
\end{proof}

\begin{cor}\label{cor: ascending star network}
    In the same situation as Lemma \ref{eq:counting top-bots}, we have \begin{equation*}
        | \Pi_{I,I}(F)  | \cdot |\Pi_{\Bar{I},\Bar{I}}(F)| \leq | \Pi_{J,J}(F) | \cdot | \Pi_{\Bar{J},\Bar{J}}(F)  |.
    \end{equation*}
\end{cor}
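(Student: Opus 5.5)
We compare the two closed formulas directly: Lemma~\ref{eq:counting odd-evens} for the left-hand side and Lemma~\ref{eq:counting top-bots} for the right-hand side. Both sides are products over the stars $F_{[c_i,d_i]}$, with denominators indexed by the overlaps $k_i$. The strategy is therefore to cancel the two expressions factor by factor and reduce everything to the factorial inequalities of Section~\ref{s:permpathfam}.

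First, for every star $i\neq p$, the even-odd side contributes $\floor{\tfrac{m_i}{2}}!\ceil{\tfrac{m_i}{2}}!$, whereas the interval side contributes $m_i!$. For every overlap $k_i$ with $i\neq p-1,p$, the even-odd side divides by $\floor{\tfrac{k_i}{2}}!\ceil{\tfrac{k_i}{2}}!$, whereas the interval side divides by $k_i!$. We pair each numerator factor $m_i$ with an adjacent denominator factor $k_i$ or $k_{i-1}$. This is possible because $k_i\le \min(m_i,m_{i+1})$, since $k_i$ counts the overlap of consecutive intervals. Proposition~\ref{p:max factorial} then gives, for each such pair,
\[
\frac{\floor{\frac{m}{2}}!\ceil{\frac{m}{2}}!}{\floor{\frac{k}{2}}!\ceil{\frac{k}{2}}!}\le \frac{m!}{k!}, \qquad m\ge k.
\]
Concretely, stars $1,\dots,p-1$ are paired with $k_1,\dots,k_{p-1}$, and stars $p+1,\dots,r$ with $k_{p},\dots,k_{r-1}$ shifted by one. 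The last star, $m_r$, is then left alone, and $\floor{\tfrac{m_r}{2}}!\ceil{\tfrac{m_r}{2}}!\le m_r!$ holds trivially. Which of the $k_{p-1},k_p$ gets used where must be checked against the exact formula of Lemma~\ref{eq:counting top-bots}.

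What remains is the central star $p$. On the even-odd side it contributes $\floor{\tfrac{m_p}{2}}!\ceil{\tfrac{m_p}{2}}!$, together with any leftover overlap denominators adjacent to $p$. On the interval side it contributes $(h-c_p+1)!(d_p-h)!$. Here $a=h-c_p+1$ and $b=d_p-h$ satisfy $a+b=m_p$, so Proposition~\ref{p:factorial inequality} gives $\floor{\tfrac{m_p}{2}}!\ceil{\tfrac{m_p}{2}}!\le a!\,b!$.

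The main obstacle is the bookkeeping of the overlap factors $k_{p-1}$ and $k_p$ adjacent to the central star. Lemma~\ref{eq:counting top-bots} omits the factor $k_p!$ from its denominator, but that lemma's proof suggests the intended indexing is slightly off. I would first re-derive the two formulas in parallel to see exactly which $k$'s appear on each side. If an extra overlap denominator survives on the even-odd side next to the central star, I would absorb it into a neighbouring ratio. For that absorption I would use the stronger Proposition~\ref{p:max factorial2}, whose bound $(m-1)!/(k-1)!$ leaves slack. Failing that, I would combine the central star with the overlap using the hypothesis that star $p$ is the only star meeting $[h,h+1]$, which forces the neighbouring overlaps to lie entirely on one side of $h$.

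Multiplying all the pairwise inequalities then yields the corollary.
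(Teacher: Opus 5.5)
Your explicit pairing is correct and is already a complete proof. Write $g(x)=\lfloor x/2\rfloor!\,\lceil x/2\rceil!$. You pair $m_i$ with $k_i$ for $i<p$, and $m_{i+1}$ with $k_i$ for $p\le i\le r-1$; the latter is legitimate since $k_i\le m_{i+1}$. You then bound $g(m_p)\le (h-c_p+1)!\,(d_p-h)!$ by Proposition~\ref{p:factorial inequality}. This uses each of $k_1,\dots,k_{r-1}$ exactly once and leaves only star $p$ unpaired.

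Your remark that $m_r$ is ``left alone'' is therefore a slip: star $r$ is paired with $k_{r-1}$. No overlap denominator survives, so none of your fallbacks (absorption, Proposition~\ref{p:max factorial2}, the one-sidedness argument) is needed. Multiplying the pairwise bounds gives
\[
\frac{(h-c_p+1)!\,(d_p-h)!\,\prod_{i\neq p} m_i!}{k_1!\cdots k_{r-1}!}.
\]
This is exactly $|\Pi_{J,J}(F)|\cdot|\Pi_{\Bar{J},\Bar{J}}(F)|$ as computed in the \emph{proof} of Lemma~\ref{eq:counting top-bots}, with $k_p!$ present. The paper's worked example $|\Pi_{[6],[6]}(F)|=\frac{4!}{3!}\,5!$ confirms that $k_p!$ belongs there. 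Against the lemma's displayed formula, which lacks $k_p!$, your bound is only stronger. The hypothesis that star $p$ alone meets $[h,h+1]$ is used in deriving that count, not in the inequality step.

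The paper pairs differently. It pairs $m_i$ with $k_i$ for every $i\neq p$, $i\le r-1$, bounds both $g(m_p)$ and $g(m_r)$ by Proposition~\ref{p:factorial inequality}, and leaves $g(k_p)$ uncancelled. It then identifies the result with $|\Pi_{J,J}(F)|\cdot|\Pi_{\Bar{J},\Bar{J}}(F)|/g(k_p)$ using the displayed statement of Lemma~\ref{eq:counting top-bots}. That identification relies on the missing $k_p!$. Against the correct count, the paper's chain only bounds the even--odd product by $k_p!/g(k_p)$ times the top--bottom product, which is insufficient once $k_p\ge 2$.

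For example, take $F=F_{[3,6]}\bullet F_{[1,4]}$ and $h=5$. The even--odd product is $16$ and the top--bottom product is $72$. The paper's chain yields only the bound $144$, while your pairing yields exactly $72$. Nor can the unshifted grouping be repaired factor by factor: it would need $g(a+b)/g(k_p)\le a!\,b!/k_p!$, which fails for $a=b=3$, $k_p=2$. So shifting $k_p$ onto star $p+1$ is the essential feature of your route, not a cosmetic change.
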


\begin{proof}

By Lemma \ref{eq:counting odd-evens}, we have 
    \begin{equation}\label{equ: no induction}
        |\Pi_{I,I}(F)  | \cdot | \Pi_{\Bar{I},\Bar{I}}(F)| =  \frac{\lfloor \frac{m_1}{2}\rfloor!\lceil \frac{m_1}{2}\rceil!\cdots \lfloor \frac{m_{r}}{2}\rfloor!\lceil\frac{m_{r}}{2}\rceil!}{\lfloor \frac{k_1}{2}\rfloor!\lceil \frac{k_1}{2}\rceil!\cdots \lfloor \frac{k_{r-1}}{2}\rfloor!\lceil\frac{k_{r-1}}{2}\rceil!}.
    \end{equation}

Now apply Proposition~\ref{p:max factorial} to \begin{equation*}\frac{\floor*{\frac{m_i}{2}}!\ceil*{\frac{m_i}{2}}!}{\floor*{\frac{k_i}{2}}!\ceil*{\frac{k_i}{2}}!}\end{equation*} for $i=1,\ldots, p-1, p+1, \ldots, r-1$ and apply Proposition~\ref{p:factorial inequality} to $\floor*{\frac{m_p}{2}}!\ceil*{\frac{m_p}{2}}!$ and $\floor*{\frac{m_r}{2}}!\ceil*{\frac{m_r}{2}}!$, using the sums $m_p=(h-c_p+1)+(d_p-h)$, $m_r=m_r+0$. We have that \eqref{equ: no induction} is at most
\begin{equation*}
        \frac{(h-c_p+1)!(d_p-h)!}{\floor*{\frac{k_p}{2}}!\ceil*{\frac{k_p}{2}}!} \cdot \frac{m_1!}{k_1!}\cdots \frac{m_{p-1}!}{k_{p-1}!}\frac{m_{p+1}!}{k_{p+1}!}\cdots \frac{m_{r-1}!}{k_{r-1}!}\cdot m_r!.
    \end{equation*}
By Lemma~\ref{eq:counting top-bots}, this equals to \begin{equation*}
        \frac{| \Pi_{J,J}(F) | \cdot | \Pi_{\Bar{J},\Bar{J}}(F)  |}{\floor{\frac{k_p}{2}}!\ceil{\frac{k_p}{2}}!}.
    \end{equation*}
    
\end{proof}

Now consider the variation of \eqref{equ: size} which counts paths in $\Pi_{[n]\setminus\{i\},[n]\setminus\{i\}}(F)$ when $F$ is a descending star network of order $n$. For example, when $F=F_{[8,10]}\bullet F_{[3,8]}\bullet F_{[1,5]}$ and $i =3$, these are the paths covering the solid edges in 

\begin{center}
    \begin{tikzpicture}
    \draw (0.5,4) -- (3.5,2.25) -- (6,1); 
    \draw (0,4.5) -- (0.5,4); 
    \draw (0,4) -- (0.5,4); 
    \draw (0,3.5) -- (0.5,4); 
    \draw (0,3) -- (1,3) -- (3.5,2.25); 
    \draw (0,2.5) -- (1,2.5) -- (3.5,2.25); 
    \draw (0.5,4) -- (1,4.5) -- (7,4.5); 
    \draw (0.5,4) -- (1,4) -- (7,4);

    \draw (0,1.5) -- (2.5,1.5) -- (3.5,2.25);
    \draw[dashed] (0,1) -- (2.5,1) -- (3.5,2.25);
    \draw (0,2) -- (2.5,2) -- (3.5,2.25);
    \draw (3.5,2.25) -- (4.5,2.5) -- (7,2.5);
    \draw (3.5,2.25) -- (4.5,3) -- (7,3);
    \draw (3.5,2.25) -- (4.5,3.5) -- (7,3.5);

    \draw (0,0.5) -- (5,0.5) -- (6,1); 
    \draw (0,0) -- (5,0) -- (6,1);
    \draw (6,1) -- (7,0);
    \draw (6,1) -- (7,0.5);
    \draw[dashed] (6,1) -- (7,1);
    \draw (6,1) -- (7,1.5);
    \draw (6,1) -- (7,2);

    \node[scale=0.75] at (-0.5,0) {1};
    \node[scale=0.75] at (-0.5,1) {3};
    \node[scale=0.75] at (-0.5,2) {5};
    \node[scale=0.75] at (-0.5,3) {7};
    \node[scale=0.75] at (-0.5,4) {9};
    \node[scale=0.75] at (-0.5,0.5) {2};
    \node[scale=0.75] at (-0.5,1.5) {4};
    \node[scale=0.75] at (-0.5,2.5) {6};
    \node[scale=0.75] at (-0.5,3.5) {8};
    \node[scale=0.75] at (-0.5,4.5) {10};
    
    \node[scale=0.75] at (7.5,0) {1};
    \node[scale=0.75] at (7.5,1) {3};
    \node[scale=0.75] at (7.5,2) {5};
    \node[scale=0.75] at (7.5,3) {7};
    \node[scale=0.75] at (7.5,4) {9};
    \node[scale=0.75] at (7.5,0.5) {2};
    \node[scale=0.75] at (7.5,1.5) {4};
    \node[scale=0.75] at (7.5,2.5) {6};
    \node[scale=0.75] at (7.5,3.5) {8};
    \node[scale=0.75] at (7.5,4.5) {10};
    \end{tikzpicture}
\end{center}

We see that $|\Pi_{[n]\setminus\{i\},[n]\setminus\{i\}}(F)| = \frac{3!5!4!}{1!2!}$. 

More generally, we have the following. 

\begin{lem}\label{lem: minusxi}
    Let $F=F_{[c_1,d_1]} \bullet \cdots \bullet F_{[c_r,d_r]}$ be a descending star network with $m_i$, $k_i$ defined as \eqref{equ: size of stars}. Let $\{i\} \in [c_p,d_p] \cap [c_{p+1},d_{p+1}] \cap \cdots \cap [c_q,d_q]$ for some $p ,q \in [1,r]$. Then, we have 

\begin{equation*}
    |\Pi_{[n]\setminus\{i\},[n]\setminus\{i\}}(F)| \cdot |\Pi_{\{i\}, \{i\}}(F)| = \frac{m_1! \cdots m_{p-1}! (m_{p} -1)! \cdots (m_q -1)! m_{q+1}! \cdots m_r!}{k_1! \cdots k_{p-1}! (k_p -1)! \cdots (k_{q-1} -1)! k_q! \cdots k_{r-1}!}.
\end{equation*}

\end{lem}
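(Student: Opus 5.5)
The plan mirrors the proofs of Lemma~\ref{eq:counting number of path families} and Lemma~\ref{eq:counting odd-evens}. First, $|\Pi_{\{i\},\{i\}}(F)| = 1$, since Lemma~\ref{l:dsn} gives $b_{i,i}=1$. So it suffices to count $\Pi_{[n]\ssm\{i\},[n]\ssm\{i\}}(F)$. Equivalently, we count path families $\pi=(\pi_1,\dotsc,\pi_n)$ covering $F$ in which $\pi_i$ is the unique path from source $i$ to sink $i$; call this set $\Pi_{\hat\imath}(F)$. The path from source $i$ to sink $i$ passes through exactly the central vertices $z_p,\dotsc,z_q$ of the stars whose intervals contain $i$. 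It uses one edge from source $i$ into $z_p$, the connecting edges $(z_j,z_{j+1})$ for $p\le j<q$, and one edge from $z_q$ to sink $i$.

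We induct on the number $r$ of stars. For a single star with $i\in[c_1,d_1]$, fixing $\pi_i$ leaves a bijection between the remaining $m_1-1$ inputs and outputs at $z_1$, giving $(m_1-1)!$. If $i$ lies in no interval, the count is simply $m_1!$. Both agree with the formula.

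For the inductive step, write $F = F'\bullet F_{[c_r,d_r]}$ as before. Elements of $\Pi_{\hat\imath}(F)$ arise by concatenating an element of $\Pi_{\hat\imath}(F')$ with an element of $\Pi_{\hat\imath}(F_{[c_r,d_r]})$ inside $F'\circ F_{[c_r,d_r]}$, and then collapsing the $k_{r-1}$ parallel edges from $z_{r-1}$ to $z_r$. The key observation concerns how many of these parallel edges are free. If $i\in[c_{r-1},d_{r-1}]\cap[c_r,d_r]$, exactly one parallel edge (the one labelled $i$) is forced to carry $\pi_i$ on both sides. The remaining $k_{r-1}-1$ path segments can then be permuted arbitrarily without changing the collapsed family. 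So each collapsed family has exactly $(k_{r-1}-1)!$ preimages, rather than the $k_{r-1}!$ of Lemma~\ref{eq:counting number of path families}. If $i$ is not in both intervals, $\pi_i$ does not use the connecting edges, and the preimage count stays $k_{r-1}!$.

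Multiplying by the inductive formula for $F'$ and by the single-star factor for $F_{[c_r,d_r]}$ gives the claimed expression. The star factors are $(m_j-1)!$ exactly for $p\le j\le q$, and the overlap factors are $(k_j-1)!$ exactly for $p\le j\le q-1$. This uses the fact that the stars containing $i$ form a consecutive block $p,\dotsc,q$, because the intervals are nonnesting with decreasing endpoints.

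The main obstacle is justifying the preimage count $(k_{r-1}-1)!$. We must check that a path family in the condensed network determines the multiset of segments through $(z_{r-1},z_r)$, and that the uncondensed lifts correspond exactly to bijections between the $k_{r-1}$ incoming parallel edges and the $k_{r-1}$ outgoing ones that fix $i$. We would argue this as in the proof of Lemma~\ref{eq:counting odd-evens}: for each type $w$, count lifts of the unique collapsed family of type $w$.
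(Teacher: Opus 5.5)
Your proposal is correct, but it is organized differently from the paper's proof. The paper does no new induction. It deletes source $i$, sink $i$ and their incident edges, and asserts that what remains is a descending star network on $n-1$ labels with star sizes $m_j-1$ ($p\le j\le q$) and overlaps $k_j-1$ ($p\le j\le q-1$). It then quotes Lemma~\ref{eq:counting number of path families}. You instead rerun that lemma's induction with $\pi_i$ pinned, the only change being the fiber size $(k_{r-1}-1)!$ when $i$ lies in both of the last two intervals.

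The paper's reduction is shorter and uses the earlier lemma as a black box, but its identification of the deleted graph needs care. After relabeling, intervals can become singletons or nested: e.g.\ $F_{[2,4]}\bullet F_{[1,3]}$ with $i=1$ gives $[1,3]$ and $[1,2]$. Also, if $[c_j,d_j]\cap[c_{j+1},d_{j+1}]=\{i\}$, the edge $(z_j,z_{j+1})$ survives although the relabeled intervals are disjoint, so one must separately note (e.g.\ via Corollary~\ref{c:AB}) that the extra paths do not change the count. Your route handles all of these uniformly; for instance $k_{r-1}=1$ simply contributes $0!=1$.

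To make your argument complete, do two things:
\begin{enumerate}
\item State the inductive claim so it includes the case where $i$ lies in no interval. This is needed for $F'$ when $p=r$, and there it reduces to Lemma~\ref{eq:counting number of path families}.
\item Record why every path family of $F$ sends exactly $k_{r-1}$ paths through $(z_{r-1},z_r)$. The $m_r$ sinks in $[c_r,d_r]$ are fed only by $z_r$, while exactly $m_r-k_{r-1}$ sources enter $z_r$ directly.
\end{enumerate}
The lifts are then exactly the bijections from these $k_{r-1}$ paths to the $k_{r-1}$ parallel edges that send $\pi_i$ to the edge labelled $i$, which gives $(k_{r-1}-1)!$. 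Note that each parallel edge in $F'\circ F_{[c_r,d_r]}$ is a single merged edge, so there is no separate pairing of ``incoming'' and ``outgoing'' edges as your last paragraph suggests.
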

\begin{proof}
    Note that $|\Pi_{\{i\}, \{i\}}(F)| =1$, so we just need to compute $|\Pi_{J,J}(F)|$. Delete source $i$ and sink $i$ from $F$, and delete all edges incident upon source $i$ or sink $i$ from $F$. Then the resulting graph is a new descending star network with $n-1$ sources and $n-1$ sinks. Re-label sources and sinks by from bottom to top by $\{1,\cdots, n-1\}$, we obtain the desired expression by Lemma \ref{eq:counting number of path families}. 
\end{proof}

\section{Merging stars in a star network}\label{s:merge}

We introduce some elementary permanent inequalities that will be useful later.

\begin{prop}\label{prop: obvious perm inequality}
    Let $A$ and $B$ be two $n\times n$ matrices with $0\leq a_{ij} \leq b_{ij}$ for all $i,j$.
    Then $\perm(A) \leq \perm(B)$. In particular, for $L\subseteq [n]$, we have \begin{equation}\perm(A_{L,L})\perm(A_{\Bar{L}\Bar{L}}) \leq \perm(B_{L,L})\perm(B_{\Bar{L}\Bar{L}}).\end{equation}
\end{prop}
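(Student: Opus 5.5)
The plan is to expand the permanent as a sum of monomials and compare the two sums term by term. By \eqref{eq:immdef} with $f \equiv 1$,
\begin{equation*}
  \perm(A) = \sum_{w \in \sn} a_{1,w_1} \cdots a_{n,w_n},
  \qquad
  \perm(B) = \sum_{w \in \sn} b_{1,w_1} \cdots b_{n,w_n}.
\end{equation*}
Since $0 \leq a_{i,j} \leq b_{i,j}$ for all $i,j$, each monomial satisfies $0 \leq a_{1,w_1}\cdots a_{n,w_n} \leq b_{1,w_1}\cdots b_{n,w_n}$. This follows from the elementary fact that products of nonnegative reals are monotone in each factor, which one can check by induction on the number of factors, replacing one $a$-entry by the corresponding $b$-entry at a time. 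Summing over $w \in \sn$ then gives $\perm(A) \leq \perm(B)$.

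For the second claim, fix $L \subseteq [n]$. The submatrices $A_{L,L}$ and $B_{L,L}$ are square of size $|L|$ and again satisfy $0 \leq (A_{L,L})_{i,j} \leq (B_{L,L})_{i,j}$ entrywise. The same holds for $A_{\Bar L,\Bar L}$ and $B_{\Bar L,\Bar L}$. Applying the first part to each pair gives
\begin{equation*}
  0 \leq \perm(A_{L,L}) \leq \perm(B_{L,L}),
  \qquad
  0 \leq \perm(A_{\Bar L,\Bar L}) \leq \perm(B_{\Bar L,\Bar L}),
\end{equation*}
where nonnegativity again holds because every monomial is nonnegative. The convention $\perm(A_{\emptyset,\emptyset}) = 1$ covers the degenerate cases $L = \emptyset$ and $L = [n]$. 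Multiplying two inequalities between nonnegative quantities preserves the direction, so $\perm(A_{L,L})\perm(A_{\Bar L,\Bar L}) \leq \perm(B_{L,L})\perm(B_{\Bar L,\Bar L})$.

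I do not expect a real obstacle. The one point needing care is the nonnegativity of the smaller factors, which is what allows the two inequalities to be multiplied. It follows directly from the hypothesis $a_{i,j} \geq 0$.
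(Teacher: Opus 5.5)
Your proof is correct and follows the paper's argument: compare the permanental expansions monomial by monomial, $a_{1,w_1}\cdots a_{n,w_n} \leq b_{1,w_1}\cdots b_{n,w_n}$ for each $w \in \sn$, and sum. You also spell out the ``in particular'' step, applying this to the principal submatrices and multiplying two inequalities between nonnegative quantities; the paper leaves that step implicit.
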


\begin{proof}
   For all %permutation 
   $w \in \sn$, 
   we have 
   $\permmon aw \leq \permmon bw$, and
   $\perm(A) \leq \perm(B)$.
\end{proof}

Observe that for a general descending star network $F$, there may be multiple stars $F_{[c_p,d_p]}$ with $[c_p, d_p]\cap[h, h+1]\neq \emptyset$. We now define a function that can modify the star network $F$ so that the resulting star network satisfies the conditions in Lemma~\ref{eq:counting top-bots}. For each number $h \in [n]$, define the \emph{$h$-merge function}
$\phi_h: \D_n \rightarrow \D_n$ as follows.

\begin{enumerate}
\item Given $F = F_{[c_1,d_1]} \bullet \cdots \bullet F_{[c_r,d_r]}$, define
  \begin{equation*}
    \begin{aligned}
      p &= \max \{k \,|\, [c_k,d_k] \cap [h,h+1] \neq \emptyset \},\\
      q &= \min \{k \,|\, [c_k,d_k] \cap [h,h+1] \neq \emptyset \}.
    \end{aligned}
  \end{equation*}
\item Define $\phi_h(F) = F_{[c_1,d_1]} \bullet \cdots \bullet F_{[c_{q-1},d_{q-1}]} \bullet F_{[c_p,d_q]} \bullet F_{[c_{p+1},d_{p+1}]} \bullet \cdots \bullet F_{[c_r,d_r]}$.
\end{enumerate}

For example, consider 

\begin{equation*}
    F = F_{[6,10]} \bullet F_{[5,8]} \bullet F_{[3,6]} \bullet F_{[1,3]}=\begin{tikzpicture}[baseline=60]
    \draw (0.5,3.5) -- (2.5,2.75) -- (4.5,1.75) -- (6.5,0.5); 
    \draw (0,4.5) -- (0.5,3.5); 
    \draw (0,4) -- (0.5,3.5); 
    \draw (0,3.5) -- (0.5,3.5); 
    \draw (0,3) -- (0.5,3.5); 
    \draw (0,2.5) -- (0.5,3.5); 
    \draw (0.5,3.5) -- (1,4) -- (7,4);
    \draw (0.5,3.5) -- (1,4.5) -- (7,4.5);
    
    \draw (0,2) -- (2,2) -- (2.5,2.75);
    \draw (2.5,2.75) -- (3,3.5) -- (7,3.5);
    \draw (2.5,2.75) -- (3,3) -- (7,3);
    
    \draw (0,1.5) -- (4,1.5) -- (4.5,1.75);
    \draw (0,1) -- (4,1) -- (4.5,1.75);
    \draw (4.5,1.75) -- (5,2.5) -- (7,2.5);
    \draw (4.5,1.75) -- (5,2) -- (7,2);
    \draw (4.5,1.75) -- (5,1.5) -- (7,1.5);

    \draw (0,0.5) -- (7,0.5); 
    \draw (0,0) -- (6,0) -- (7,1);
    \draw (6.5,0.5) -- (7,0);

    \node[scale=0.75] at (-0.5,0) {1};
    \node[scale=0.75] at (-0.5,1) {3};
    \node[scale=0.75] at (-0.5,2) {5};
    \node[scale=0.75] at (-0.5,3) {7};
    \node[scale=0.75] at (-0.5,4) {9};
    \node[scale=0.75] at (-0.5,0.5) {2};
    \node[scale=0.75] at (-0.5,1.5) {4};
    \node[scale=0.75] at (-0.5,2.5) {6};
    \node[scale=0.75] at (-0.5,3.5) {8};
    \node[scale=0.75] at (-0.5,4.5) {10};
    
    \node[scale=0.75] at (7.5,0) {1};
    \node[scale=0.75] at (7.5,1) {3};
    \node[scale=0.75] at (7.5,2) {5};
    \node[scale=0.75] at (7.5,3) {7};
    \node[scale=0.75] at (7.5,4) {9};
    \node[scale=0.75] at (7.5,0.5) {2};
    \node[scale=0.75] at (7.5,1.5) {4};
    \node[scale=0.75] at (7.5,2.5) {6};
    \node[scale=0.75] at (7.5,3.5) {8};
    \node[scale=0.75] at (7.5,4.5) {10};

\end{tikzpicture}.
\end{equation*}
Observe that we have
    $\phi_1(F) = \phi_9(F) = \phi_{10}(F) = F$,  %\end{equation*}
since $[1,2]$, $[9,10]$, $[10,11]$ each intersects only one of the four given intervals.  On the other hand, we have
\begin{equation*}
    \phi_{4}(F) = F_{[6,10]} \bullet F_{[3,8]} \bullet F_{[1,3]} = \begin{tikzpicture}[baseline=60]
    \draw (0.5,3.5) -- (3.5,2.25) -- (4.5,1) -- (6,1) -- (6.5,0.5); 
    \draw (0,4.5) -- (0.5,3.5); 
    \draw (0,4) -- (0.5,3.5); 
    \draw (0,3.5) -- (0.5,3.5); 
    \draw (0,3) -- (0.5,3.5); 
    \draw (0,2.5) -- (0.5,3.5); 
    \draw (0.5,3.5) -- (1,4.5) -- (7,4.5);
    \draw (0.5,3.5) -- (1,4) -- (7,4);

    \draw (0,1.5) -- (2.5,1.5) -- (3.5,2.25);
    \draw (0,1) -- (2.5,1) -- (3.5,2.25);
    \draw (0,2) -- (2.5,2) -- (3.5,2.25);
    \draw (3.5,2.25) -- (4.5,1.5) -- (7,1.5);
    \draw (3.5,2.25) -- (4.5,2) -- (7,2);
    \draw (3.5,2.25) -- (4.5,2.5) -- (7,2.5);
    \draw (3.5,2.25) -- (4.5,3) -- (7,3);
    \draw (3.5,2.25) -- (4.5,3.5) -- (7,3.5);

    \draw (0,0.5) -- (7,0.5); 
    \draw (0,0) -- (6,0) -- (7,1);
    \draw (6.5,0.5) -- (7,0);

    \node[scale=0.75] at (-0.5,0) {1};
    \node[scale=0.75] at (-0.5,1) {3};
    \node[scale=0.75] at (-0.5,2) {5};
    \node[scale=0.75] at (-0.5,3) {7};
    \node[scale=0.75] at (-0.5,4) {9};
    \node[scale=0.75] at (-0.5,0.5) {2};
    \node[scale=0.75] at (-0.5,1.5) {4};
    \node[scale=0.75] at (-0.5,2.5) {6};
    \node[scale=0.75] at (-0.5,3.5) {8};
    \node[scale=0.75] at (-0.5,4.5) {10};
    
    \node[scale=0.75] at (7.5,0) {1};
    \node[scale=0.75] at (7.5,1) {3};
    \node[scale=0.75] at (7.5,2) {5};
    \node[scale=0.75] at (7.5,3) {7};
    \node[scale=0.75] at (7.5,4) {9};
    \node[scale=0.75] at (7.5,0.5) {2};
    \node[scale=0.75] at (7.5,1.5) {4};
    \node[scale=0.75] at (7.5,2.5) {6};
    \node[scale=0.75] at (7.5,3.5) {8};
    \node[scale=0.75] at (7.5,4.5) {10};

\end{tikzpicture},
\end{equation*}
since the intersection of $[4,5]$ with $[3,6]$ and $[5,8]$ leads to the merging of these two intervals into $[3,8]$.
Similarly, we have
    \begin{gather*}
 \phi_2(F) = \phi_3(F) = F_{[6,10]} \bullet F_{[5,8]} \bullet F_{[1,6]}, \\   
 \phi_5(F) = \phi_6(F) = F_{[3,10]} \bullet F_{[1,3]}, \\
    \phi_7(F) = \phi_8(F) = F_{[5,10]} \bullet F_{[3,6]} \bullet F_{[1,3]}.
\end{gather*}

The path matrices of $F$ and $\phi_h(F)$ have the following permanent inequality. 

\begin{cor}\label{more ones implies greater per}
    Let $F$ be a descending star network of order $n$ and fix $h\in[n]$. 
    Let $A$ be the path matrix of $F$ and $B$ be the path matrix of $\phi_h(F)$. Then for all $L \subseteq [n]$ we have \begin{equation*}
        \perm(A_{L,L})\perm(A_{\Bar{L}\Bar{L}})\leq \perm(B_{L,L})\perm(B_{\Bar{L}\Bar{L}}).
    \end{equation*} 
\end{cor}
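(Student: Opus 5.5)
The plan is to reduce the statement to Proposition~\ref{prop: obvious perm inequality}. It suffices to show that the path matrices satisfy $0\le a_{i,j}\le b_{i,j}$ entrywise. Then every submatrix satisfies the same inequality, so $\perm(A_{L,L})\le\perm(B_{L,L})$ and $\perm(A_{\Bar L,\Bar L})\le\perm(B_{\Bar L,\Bar L})$. Multiplying these two inequalities between nonnegative quantities gives the claim. By Lemma~\ref{l:dsn}, both $A$ and $B$ are $0$-$1$ matrices, provided $\phi_h(F)$ is again a descending star network. So the task is to show that $a_{i,j}=1$ implies $b_{i,j}=1$.

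First I will check that $\phi_h(F)\in\D_n$. The intervals $[c_k,d_k]$ with $q\le k\le p$ are exactly those meeting $[h,h+1]$. Since $c$ and $d$ decrease in $k$, these form a consecutive block. Replacing this block by $[c_p,d_q]$ keeps the left endpoints decreasing ($c_{q-1}>c_q\ge c_p>c_{p+1}$) and the right endpoints decreasing ($d_{q-1}>d_q>d_{p+1}$). For nonnesting, no remaining interval can contain $[c_p,d_q]$, because $c_{q-1}>c_p$ and $d_{p+1}<d_q$. Conversely, $[c_p,d_q]$ might contain some of the outer intervals, for instance $[c_{p+1},d_{p+1}]$ if $c_{p+1}\ge c_p$, but that is impossible since $c_{p+1}<c_p$. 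Symmetrically, it cannot contain $[c_{q-1},d_{q-1}]$ because $d_{q-1}>d_q$. Distinctness follows from the same strict inequalities. So $\phi_h(F)$ is a descending star network, and Lemma~\ref{l:dsn} applies to it.

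Next I compare entries using Lemma~\ref{l:dsn}. For $i\le j$, suppose $a_{i,j}=1$, so some interval $[c_m,d_m]$ of $F$ contains both $i$ and $j$. If $m\notin[q,p]$, that interval survives in $\phi_h(F)$. If $m\in[q,p]$, then $[c_m,d_m]\subseteq[c_p,d_q]$. Either way $b_{i,j}=1$. For $i>j$, equation~\eqref{eq:biji>j} expresses $a_{i,j}$ as the condition that the superdiagonal entries $a_{j,j+1},\dots,a_{i-1,i}$ all equal $1$. These are entries of the first kind, so by the previous case each is at most the corresponding entry of $B$. Hence $a_{i,j}=1$ implies $b_{i,j}=1$.

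I expect the main obstacle to be the careful verification that $\phi_h(F)$ satisfies Definition~\ref{d:dsn}, especially the nonnesting and strict ordering of endpoints. Once that is secured, the entrywise comparison via Lemma~\ref{l:dsn} and Proposition~\ref{prop: obvious perm inequality} is routine.
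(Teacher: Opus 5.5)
Your proposal is correct and follows the paper's proof: both reduce the claim to the entrywise domination $0\le a_{i,j}\le b_{i,j}$ and then invoke Proposition~\ref{prop: obvious perm inequality}. The paper simply asserts that every source-to-sink path in $F$ persists in $\phi_h(F)$. You instead justify this by checking that $\phi_h(F)\in\D_n$ and comparing entries via Lemma~\ref{l:dsn}, which fills in detail the paper omits. The degenerate case where no interval meets $[h,h+1]$, so that $\phi_h(F)=F$, is trivial.
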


\begin{proof}
    Observe that if there is a path from source $i$ to sink $j$ in $F$, then there is also a path from the same source and sink in $\phi_h(F)$. Therefore, $0\leq a_{ij}\leq b_{ij}$ for all $i,j\in[n]$. Then we have the desired inequality by Proposition~\ref{prop: obvious perm inequality}.
 \end{proof}

 When $L=[h]$, Corollary~\ref{more ones implies greater per} has a special case where the inequality becomes an equality.

\begin{prop}\label{p: star network contraction}
  Fix $h \in [n]$, and let $F$ be a descending star network of order $n$
  with path matrix $A$.
  Let $B$ be the path matrix of the descending star network
  $\phi_h(F)$. Then we have 
  \begin{enumerate}[(i)]
  \setlength{\itemsep}{0.7em}
  \item $A_{[h],[h]} = B_{[h],[h]}$ and
    $A_{[h+1,n],[h+1,n]} = B_{[h+1,n],[h+1,n]}$.
    
  \item $|\Pi_{[h],[h]}(F)|\cdot|\Pi_{[h+1,n],[h+1,n]}(F)| = |\Pi_{[h],[h]}(\phi_h(F))|\cdot|\Pi_{[h+1,n],[h+1,n]}(\phi_h(F))|$.
  \end{enumerate}
\end{prop}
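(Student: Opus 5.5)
The plan is to prove (i) entrywise using Lemma~\ref{l:dsn}, and then deduce (ii) from (i) together with Corollary~\ref{c:perlem}.

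\emph{Setting up.} Write $F = F_{[c_1,d_1]} \bullet \cdots \bullet F_{[c_r,d_r]}$. Let $q \le p$ be as in the definition of $\phi_h$, so that $\phi_h(F)$ replaces the consecutive factors $F_{[c_q,d_q]},\dotsc,F_{[c_p,d_p]}$ by the single factor $F_{[c_p,d_q]}$.

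Each interval $[c_k,d_k]$ with $q \le k \le p$ meets $[h,h+1]$. The left endpoints are decreasing and the right endpoints are decreasing. Hence $c_p \le h+1$ and $d_q \ge h$, and consecutive intervals in this range overlap, since each contains $h$ or $h+1$ and the sequence is monotone.

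\emph{Proof of (i).} Fix $i,j \in [h]$ or $i,j \in [h+1,n]$. We must show $a_{i,j} = b_{i,j}$. By Corollary~\ref{more ones implies greater per}'s observation we already have $a_{i,j} \le b_{i,j}$, so it suffices to show that $b_{i,j}=1$ implies $a_{i,j}=1$.

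First take $i \le j$. By \eqref{eq:biji<j}, $b_{i,j}=1$ means that some interval of $\phi_h(F)$ contains both $i$ and $j$. If that interval is an unchanged one, we are done. Otherwise both $i$ and $j$ lie in $[c_p,d_q]$.

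Suppose $i,j \le h$. The key claim is that $[c_p,d_q]\cap[1,h] = [c_p,h]$ is contained in a single original interval, namely $[c_p,d_p]$. This holds because $d_p \ge h$, as $[c_p,d_p]$ meets $[h,h+1]$.

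\textbf{Caveat.} If $d_p = h$ exactly, we still have $[c_p,h]\subseteq[c_p,d_p]$, so the claim survives. If instead $[c_p,d_p]$ meets $[h,h+1]$ only at $h+1$, then $c_p = h+1$, and the pair $i,j\le h$ cannot lie in $[c_p,d_q]$ at all.

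Symmetrically, for $i,j \ge h+1$ we use $[h+1,d_q]\subseteq[c_q,d_q]$, since $c_q \le h+1$.

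Now take $i>j$. By \eqref{eq:biji>j}, $b_{i,j}$ is determined by the superdiagonal entries $b_{k,k+1}$ for $j \le k < i$. All of these pairs $k,k+1$ lie on the same side of $h$ as $i$ and $j$, so the case $i \le j$ already gives $b_{k,k+1}=a_{k,k+1}$, and therefore $b_{i,j}=a_{i,j}$.

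\emph{Proof of (ii).} By Corollary~\ref{c:perlem}, each factor $|\Pi_{L,L}(\cdot)|$ equals the permanent of the corresponding principal submatrix of the path matrix. Statement (i) says these submatrices coincide for $F$ and $\phi_h(F)$ when $L=[h]$ and when $L=[h+1,n]$, so the two products are equal.

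\emph{Main obstacle.} The main difficulty is the boundary bookkeeping in (i). One must check that no original interval strictly between indices $q$ and $p$ contributes a pair on one side of $h$ that $[c_p,d_p]$ or $[c_q,d_q]$ fails to cover. This follows from the monotonicity $c_q\ge\cdots\ge c_p$ and $d_q\ge\cdots\ge d_p$.
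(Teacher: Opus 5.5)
Your proof is correct and follows essentially the paper's argument. You compare the $i\le j$ entries via interval containment: the merged interval $[c_p,d_q]$ meets $[1,h]$ inside $[c_p,d_p]$ and meets $[h+1,n]$ inside $[c_q,d_q]$. You then deduce (ii) from Corollary~\ref{c:perlem}, exactly as the paper does. Your treatment of $i>j$ via the superdiagonal formula of Lemma~\ref{l:dsn} is a cleaner substitute for the paper's rather informal ``same component'' argument.

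One remark in your setup is false and should be dropped: consecutive intervals with indices between $q$ and $p$ need not overlap. For example, in $F_{[3,4]}\bullet F_{[1,2]}$ with $h=2$ the two intervals are disjoint. Nothing in your argument actually depends on this claim.
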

\begin{proof}
(i) First, we show that $A_{[1,h],[1,h]} = B_{[1,h],[1,h]}$. 
  
  Case 1: Let $i > j \in [1,h]$. If $a_{ij} =1$, then, source $i$ and sink $j$ belong to the same component of $F$. It's easy to see that they also belong to the same component of $\phi_h(F)$. Hence, $b_{ij} = 1$. If $a_{ij} = 0$, then source $i$ and sink $j$ do not belong to the same component of $F$. By the definition of $\phi_h$, $i$ and $j$ do not belong to the same component of $\phi_h(F)$. Hence, $b_{ij} = 0$. 

  Case 2: Let $i \leq j \in [1,h]$. If $a_{ij} = 1$, there is a path from source $i$ to sink $j$ in $F$. So, there exists an index $m$ such that $i,j\in [c_m,d_m]$. Since every interval appearing in $\phi_h(F)$ is a union of some intervals of $F$, $i$, $j$ are in an interval of $\phi_h(F)$. Hence, $b_{ij} = 1$. If $a_{ij} = 0$, then no intervals contain both $i$ and $j$. In particular, no intervals contain both $h$ and $i$, then all intervals appearing in $\phi_h(F)$ containing $i$ are equal to intervals containing $i$ in $F$. None of these intervals also contain $j$, so $b_{ij} = 0$. 

  Therefore, $A_{[1,h],[1,h]} = B_{[1,h],[1,h]}$. Similarly, we have $A_{[h+1,n],[h+1,n]} = B_{[h+1,n],[h+1,n]}$.

 \noindent
  (ii) By Observation \ref{o:perlem} and part (i), we have \begin{gather*}
  |\Pi_{[1,h],[1,h]}(F)| = \perm(A_{[1,h],[1,h]})=\perm(B_{[1,h],[1,h]})=|\Pi_{[1,h],[1,h]}(\phi_h(F))|,\\
|\Pi_{[h+1,n],[h+1,n]}(F)| = \perm(A_{[h+1,n],[h+1,n]})=\perm(B_{[h+1,n],[h+1,n]})=|\Pi_{[h+1,n],[h+1,n]}(\phi_h(F))|.
\end{gather*}  \end{proof}

In the previous example $F=F_{[6,10]} \bullet F_{[5,8]}\bullet F_{[3,6]} \bullet F_{[1,3]}$ and $\phi_4(F)=F_{[6,10]} \bullet F_{[3,8]} \bullet F_{[1,3]}$, one can see that \begin{align*}
    |\Pi_{[1,4],[1,4]}(F)| &= \perm(A_{[1,4],[1,4]}) = |\Pi_{[1,4],[1,4]}(\phi_4(F))| = \perm(B_{[1,4],[1,4]}) = 12, \\
    |\Pi_{[5,10],[5,10]}(F)| &= \perm(A_{[5,10],[5,10]}) = |\Pi_{[5,10],[5,10]}(\phi_4(F))| = \perm(B_{[5,10],[5,10]}) = 480, 
\end{align*}where \begin{equation*}
    A=\begin{bmatrix}
1 & 1 & 1 & 0 & 0 & 0 & 0 & 0 & 0 & 0 \\
1 & 1 & 1 & 0 & 0 & 0 & 0 & 0 & 0 & 0 \\
1 & 1 & 1 & 1 & 1 & 1 & 0 & 0 & 0 & 0 \\
1 & 1 & 1 & 1 & 1 & 1 & 0 & 0 & 0 & 0 \\
1 & 1 & 1 & 1 & 1 & 1 & 1 & 1 & 0 & 0 \\
1 & 1 & 1 & 1 & 1 & 1 & 1 & 1 & 1 & 1 \\
1 & 1 & 1 & 1 & 1 & 1 & 1 & 1 & 1 & 1 \\
1 & 1 & 1 & 1 & 1 & 1 & 1 & 1 & 1 & 1 \\
1 & 1 & 1 & 1 & 1 & 1 & 1 & 1 & 1 & 1 \\
1 & 1 & 1 & 1 & 1 & 1 & 1 & 1 & 1 & 1 
\end{bmatrix}, \quad  B=\begin{bmatrix}
1 & 1 & 1 & 0 & 0 & 0 & 0 & 0 & 0 & 0 \\
1 & 1 & 1 & 0 & 0 & 0 & 0 & 0 & 0 & 0 \\
1 & 1 & 1 & 1 & 1 & 1 & \mathbf{1} & \mathbf{1} & 0 & 0 \\
1 & 1 & 1 & 1 & 1 & 1 & \mathbf{1} & \mathbf{1} & 0 & 0 \\
1 & 1 & 1 & 1 & 1 & 1 & 1 & 1 & 0 & 0 \\
1 & 1 & 1 & 1 & 1 & 1 & 1 & 1 & 1 & 1 \\
1 & 1 & 1 & 1 & 1 & 1 & 1 & 1 & 1 & 1 \\
1 & 1 & 1 & 1 & 1 & 1 & 1 & 1 & 1 & 1 \\
1 & 1 & 1 & 1 & 1 & 1 & 1 & 1 & 1 & 1 \\
1 & 1 & 1 & 1 & 1 & 1 & 1 & 1 & 1 & 1 
\end{bmatrix}.
\end{equation*}

\section{Main results}\label{s:main}

Now we can state and prove our main result about products of permanents.
\begin{thm}\label{t:main}
Let $P$ be an $n$-element unit interval order 
labeled as in Algorithm~\ref{a:arlabeling}
and let $A$ be its antiadjacency matrix.
  Fix $h \in [n]$ and define sets
  \begin{equation*}
      I = [h] ,\qquad
      J = [n] \cap 2 \mathbb Z.
  \end{equation*}
  Then we have
  \begin{equation}\label{eq:main}
    \perm(A_{I,I})\perm(A_{\Bar I, \Bar I}) \geq \perm(A_{J,J})\perm(A_{\Bar J, \Bar J}).
  \end{equation}
  \end{thm}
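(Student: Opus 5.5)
The plan is to transfer the problem from the antiadjacency matrix $A$ to the path matrix of the descending star network $F=F(P)$. Then we reduce to a network with a single star straddling $h$, where Corollary~\ref{cor: ascending star network} applies.

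\emph{Step 1: replace $A$ by $\psi(A)$.} By Observation~\ref{eq:superdiagonal0}, $A$ is $\alpha(A)$-block lower-triangular. The principal submatrices $A_{I,I}$, $A_{\Bar I,\Bar I}$, $A_{J,J}$, $A_{\Bar J,\Bar J}$ are then block lower-triangular with respect to the induced compositions, and their diagonal blocks are principal submatrices of the diagonal blocks of $A$. Applying Corollary~\ref{c:AB} (via Proposition~\ref{p:blocklt}) to each submatrix, every permanent in \eqref{eq:main} is unchanged if $A$ is replaced by $\psi(A)$. By Proposition~\ref{p:P A F psiA}, $\psi(A)$ is the path matrix of $F=F(P)\in\D_n$. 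By Corollary~\ref{c:perlem}, \eqref{eq:main} becomes
\begin{equation*}
|\Pi_{[h],[h]}(F)|\cdot|\Pi_{[h+1,n],[h+1,n]}(F)| \;\geq\; |\Pi_{J,J}(F)|\cdot|\Pi_{\Bar J,\Bar J}(F)|.
\end{equation*}
If $h=n$, the left side is $\perm(A)$, and the inequality follows directly, for instance from Proposition~\ref{prop: obvious perm inequality} after zeroing the entries $a_{i,j}$ with $i\not\equiv j \pmod 2$.

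\emph{Step 2: merge.} Let $G=\phi_h(F)$ with path matrix $B$. By Proposition~\ref{p: star network contraction}(ii), the left side is the same for $F$ and $G$. By Corollary~\ref{more ones implies greater per} with $L=J$, the right side for $F$ is at most the right side for $G$. It therefore suffices to prove the inequality for $G$.

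\emph{Step 3: apply Corollary~\ref{cor: ascending star network}.} By construction of $\phi_h$, at most one star of $G$ meets $[h,h+1]$, namely the merged star $F_{[c_p,d_q]}$. Two things need checking here. First, the merged interval must still be nonnesting with its neighbours, so that $G$ really is a descending star network; this is asserted in the definition, but I would verify it. Second, no other interval of $G$ meets $[h,h+1]$; this holds by the maximality and minimality of $p$ and $q$. Corollary~\ref{cor: ascending star network} then gives $|\Pi_{J,J}(G)|\,|\Pi_{\Bar J,\Bar J}(G)|\le|\Pi_{[h],[h]}(G)|\,|\Pi_{[h+1,n],[h+1,n]}(G)|$, which finishes the proof. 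In that corollary, the roles of the letters $I$ and $J$ are swapped relative to the theorem.

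There is one edge case. It is possible that no star meets $[h,h+1]$, i.e.\ $h$ is a block boundary with $a_{h,h+1}=0$. In that case $F$ splits as a direct sum at $h$. The odd/even permanents then factor across the two blocks, and the claim reduces to Lemma~\ref{eq:counting odd-evens} combined with Proposition~\ref{p:majorfracineq} applied on each side, or more simply to the $h=n$ case applied to each block.

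\emph{Main obstacle.} I expect the hardest part to be Step 1's bookkeeping, namely that the even and odd principal submatrices inherit the block lower-triangular structure. This needs care when a block has length one or lies entirely within one parity. I would handle it by noting that a principal submatrix of an $\alpha$-block lower-triangular matrix, indexed by any subset, is block lower-triangular for the restricted partition. A secondary concern is that Corollary~\ref{cor: ascending star network}'s proof silently uses $k_p$-type terms near the merged star. I would double-check its hypotheses, especially the requirement $m_i\ge k_i$ in Proposition~\ref{p:max factorial}, which holds automatically since $k_i$ counts an overlap inside $[c_i,d_i]$.
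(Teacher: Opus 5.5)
Your proposal is correct and is essentially the paper's proof. You pass to the path matrix $\psi(A)$ of $F(P)$, merge with $\phi_h$ so that the left side is unchanged (Proposition~\ref{p: star network contraction}) while the right side can only grow (Corollary~\ref{more ones implies greater per}), and finish with Corollary~\ref{cor: ascending star network}. The only deviation is in the degenerate case: the paper splits on whether some star \emph{contains} $[h,h+1]$ and handles the other case with the factorial formulas and Propositions~\ref{p:factorial inequality} and~\ref{p:max factorial}, whereas your monomial-inclusion argument on the two diagonal blocks is simpler.

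Your suspicion about the $k_p$ terms is justified. Lemma~\ref{eq:counting top-bots} as printed drops $k_p!$ from the denominator, even though its own proof includes it, and this invalidates the last step of the proof of Corollary~\ref{cor: ascending star network}. The corollary itself still holds: pair $k_i$ with $m_i$ for $i<p$ and with $m_{i+1}$ for $i\ge p$ (valid since $m_{i+1}\ge k_i$), exactly as in the proof of Theorem~\ref{t:maintwo}.
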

\begin{proof}
Let 
\begin{equation}\label{eq:Ffactor}
F = F_{[c_1,d_1]} \bullet \cdots \bullet F_{[c_r,d_r]}    
\end{equation}
be the descending star network corresponding to $P$
as in Algorithm~\ref{a:PtoF}
and let $B$ be the path matrix of $F$.
By Proposition~\ref{p:P A F psiA} 
we have $B = \psi(A)$.  
By Corollary~\ref{c:AB}
the inequality (\ref{eq:main}) holds for $A$ if and only if
it holds for $B$.
By Corollary~\ref{c:perlem} we have
\begin{equation}
    \begin{gathered}
      \perm(B_{I,I}) \perm(B_{\Bar I,\Bar I}) = 
      | \Pi_{I,I}(F)| \cdot |\Pi_{\Bar I,\Bar I}(F) |,\\
     \perm(B_{J,J}) \perm(B_{\Bar J,\Bar J}) = 
     |\Pi_{J,J}(F) | \cdot | \Pi_{\Bar J,\Bar J}(F) |,
     \end{gathered}
     \end{equation}
and it suffices to show that we have
$|\Pi_{I,I}(F)|\cdot |\Pi_{\Bar I, \Bar I}(F)| \geq |\Pi_{J,J}(F)|\cdot |\Pi_{\Bar J, \Bar J}(F)|$.

Suppose first that no interval 
$[c_i,d_i]$ in
(\ref{eq:Ffactor})
contains $[h,h+1]$ as a subinterval.
Then each source-to-sink path in $F$ has its
source and sink both in $[h]$ or both in $[h+1,n]$. Thus we have
$|\Pi_{I,I}(F)| \cdot |\Pi_{\Bar I, \Bar I}(F)| = |\Pi_{[n],[n]}(F)|$.
By Lemma~\ref{eq:counting number of path families}
this cardinality is
\begin{equation}\label{eq:mkratio}  
 \frac{m_1! \cdots m_r!}{ k_1! \cdots k_{r-1}!},
\end{equation}
where
    $m_i \defeq \text{size}(F_{[c_i,d_i]})$,
and
$k_i \defeq \text{overlap}(F_{[c_i,d_i]},F_{[c_{i+1},d_{i+1}]})$.
But by Propositions~\ref{p:factorial inequality} and \ref{p:max factorial} this is at least
\begin{equation*}
    \frac{\floor{\tfrac{m_1}{2}}!\ceil{\tfrac{m_1}{2}}! \cdots  \floor{\tfrac{m_r}{2}}!\ceil{\tfrac{m_r}{2}}!}
     {\floor{\tfrac{k_1}{2}}!\ceil{\tfrac{k_1}{2}}! \cdots \floor{\tfrac{k_{r-1}}{2}}!\ceil{\tfrac{k_{r-1}}{2}}!},
     \end{equation*}
     which by Lemma~\ref{eq:counting odd-evens} 
     equals
    $|\Pi_{J,J}(F) | \cdot |\Pi_{\Bar{J},\Bar{J}}(F)|$.

Now suppose that some interval 
$[c_i,d_i]$ in
(\ref{eq:Ffactor})
does contain $[h,h+1]$ as a subinterval.
Then we do not have the factorization above (\ref{eq:mkratio}),
but by Proposition~\ref{p: star network contraction} we have
\begin{equation*}
         |\Pi_{I,I}(F)| \cdot |\Pi_{\bar{I},\Bar{I}}(F)| 
        = |\Pi_{I,I}(\phi_h(F)) | \cdot |\Pi_{\Bar{I},\Bar{I}}(\phi_h(F))|.
        \end{equation*}
 By Corollary~\ref{cor: ascending star network}, this is at least
     $|\Pi_{J,J}(\phi_h(F))| \cdot |\Pi_{\Bar{J},\Bar{J}}(\phi_h(F))|$,
     and by Corollary~\ref{more ones implies greater per}
     this is at least
     $|\Pi_{J,J}(F)| \cdot |\Pi_{\Bar{J},\Bar{J}}(F)|$.
\end{proof}

\begin{thm}\label{t:maintwo}
Let $P$ be an $n$-element unit interval order 
labeled as in Algorithm~\ref{a:arlabeling}
and let $A$ be its antiadjacency matrix. Let $J = [n] \cap 2 \mathbb Z$. Then for any $i \in [n]$, we have 
    \begin{equation}\label{eq:secondmain}
        \perm(A_{[n]\setminus \{i\},[n]\setminus \{i\}})\perm(A_{\{i\},\{i\}}) \geq \perm(A_{J,J})\perm(A_{\Bar J, \Bar J}).
    \end{equation}
\end{thm}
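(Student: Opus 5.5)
We follow the proof of Theorem~\ref{t:main}. Let $F = F_{[c_1,d_1]}\bullet\cdots\bullet F_{[c_r,d_r]}$ be the descending star network of $P$ given by Algorithm~\ref{a:PtoF}, with path matrix $B=\psi(A)$ (Proposition~\ref{p:P A F psiA}). By Corollary~\ref{c:AB}, both sides of \eqref{eq:secondmain} are unchanged when $A$ is replaced by $B$. By Corollary~\ref{c:perlem}, it then suffices to show
\begin{equation*}
|\Pi_{[n]\setminus\{i\},[n]\setminus\{i\}}(F)|\cdot|\Pi_{\{i\},\{i\}}(F)| \;\geq\; |\Pi_{J,J}(F)|\cdot|\Pi_{\Bar J,\Bar J}(F)|.
\end{equation*}

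If $i$ lies in no nontrivial star (i.e.\ in no interval of length $\ge 2$), then deleting $i$ leaves the path family count unchanged. The left side is then $|\Pi_{[n],[n]}(F)| = m_1!\cdots m_r!/(k_1!\cdots k_{r-1}!)$ by Lemma~\ref{eq:counting number of path families}. Applying Proposition~\ref{p:factorial inequality} and Proposition~\ref{p:max factorial} termwise, exactly as in the first case of Theorem~\ref{t:main}, bounds this below by the expression of Lemma~\ref{eq:counting odd-evens}, which is the right side.

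Otherwise let $[c_p,d_p],\dots,[c_q,d_q]$ be the consecutive intervals containing $i$. Lemma~\ref{lem: minusxi} gives the left side as
\begin{equation*}
\frac{m_1!\cdots m_{p-1}!\,(m_p-1)!\cdots(m_q-1)!\,m_{q+1}!\cdots m_r!}{k_1!\cdots k_{p-1}!\,(k_p-1)!\cdots(k_{q-1}-1)!\,k_q!\cdots k_{r-1}!}.
\end{equation*}
We group this into ratios to compare factor by factor with Lemma~\ref{eq:counting odd-evens}. For indices $j<p$ and $j\ge q$ (away from $i$), pair $m_j!/k_j!$ and apply Proposition~\ref{p:max factorial}, noting $k_j\le m_j$. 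For $p\le j<q$, pair $(m_j-1)!/(k_j-1)!$ against $\floor{\tfrac{m_j}{2}}!\ceil{\tfrac{m_j}{2}}!/(\floor{\tfrac{k_j}{2}}!\ceil{\tfrac{k_j}{2}}!)$; this is exactly Proposition~\ref{p:max factorial2}, which is presumably why that proposition was proved. Here $k_j\ge1$ because consecutive intervals containing $i$ overlap. This leaves the final factor $(m_q-1)!$ (or $m_r!$, or a $q$ index shifted appropriately), to be compared with $\floor{\tfrac{m_q}{2}}!\ceil{\tfrac{m_q}{2}}!$. This holds via Proposition~\ref{p:factorial inequality} applied to $m_q-1 = (m_q-1)+0$, together with $(m_q-1)!\ge \floor{\tfrac{m_q}{2}}!\ceil{\tfrac{m_q}{2}}!$ for $m_q\ge 2$. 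The latter follows from $\binom{m_q-1}{\floor{m_q/2}}\ge \ceil{m_q/2}$, which is easy to check directly.

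The main obstacle is the bookkeeping: we must match each of the $r$ numerator factors and $r-1$ denominator factors on the left with those on the right so that each ratio is covered by one of the propositions. Each ratio must have either both entries shifted by $-1$ (handled by Proposition~\ref{p:max factorial2}) or neither shifted (handled by Proposition~\ref{p:max factorial}), with exactly one unpaired shifted numerator $(m_q-1)!$ absorbing the loss. If the indexing puts the unpaired factor at the wrong end (e.g.\ $q<r$), we pair instead $(m_j-1)!/(k_{j-1}-1)!$ in the other direction, or we treat $m_r!$ as unpaired and verify $(m_q-1)!/(k_{q-1}-1)!$ style bounds. We will check that the inequality $k\le m$ needed in each pairing holds, since $k_j\le\min(m_j,m_{j+1})$.
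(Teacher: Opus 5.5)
Your reduction to path families in $F$, the case split, and the tools you name (Lemma~\ref{lem: minusxi}, Propositions~\ref{p:max factorial} and~\ref{p:max factorial2}, a single leftover shifted factorial) are the paper's. In the first case the paper simply notes that every parity-preserving path family already lies in $\Pi_{[n],[n]}(F)$, but your factorial comparison also works.

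The gap is the second-case bookkeeping, which you call the main obstacle and leave unresolved. The matching you actually write, $m_j!/k_j!$ for all $j\ge q$, breaks at $j=q$, which is not ``away from $i$''. The numerator there is $(m_q-1)!$, so you face the mixed ratio $(m_q-1)!/k_q!$. Neither proposition covers it, and it can be too small: for $F=F_{[2,4]}\bullet F_{[1,3]}$ and $i=4$ it equals $2!/2!=1$, against $\floor{\tfrac32}!\ceil{\tfrac32}!/(1!\,1!)=2$. Meanwhile $m_r!$ is what gets left over. Your fallback of leaving $m_r!$ unpaired (when $q<r$) cannot be repaired: the remaining pairs contain $q-p+1$ shifted numerators but only $q-p$ shifted denominators, so a mixed pair is forced. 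Your other fallback reverses the direction only inside the shifted block and does not say how the unshifted factors beyond $q$ are then matched.

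The missing step is to actually use $k_j\le m_{j+1}$, which you noted but never deployed. Pair $m_j!/k_j!$ for $j<p$, $(m_j-1)!/(k_j-1)!$ for $p\le j\le q-1$, and $m_{j+1}!/k_j!$ for $q\le j\le r-1$. Every ratio is then pure, so Propositions~\ref{p:max factorial} and~\ref{p:max factorial2} apply termwise, and the only unmatched factor is $(m_q-1)!$. This is exactly the paper's grouping; the mirror version, leaving $(m_p-1)!$ and pairing $m_j$ with $k_{j-1}$ for every $j>p$, also works.

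For the leftover, Proposition~\ref{p:factorial inequality} applied to $(m_q-1)+0$ compares against the wrong target. The paper writes $(m_q-1)!=(m_q-1)!\,1!$ with $(m_q-1)+1=m_q$, which gives $(m_q-1)!\ge\floor{\tfrac{m_q}{2}}!\ceil{\tfrac{m_q}{2}}!$ at once. Your check $\binom{m_q-1}{\floor{m_q/2}}\ge\ceil{m_q/2}$ proves the same inequality, so that part is fine once the matching is corrected.
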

\begin{proof}
    Let 
\begin{equation*}
F = F_{[c_1,d_1]} \bullet \cdots \bullet F_{[c_r,d_r]}    
\end{equation*}
be the descending star network corresponding to $P$
as in Algorithm~\ref{a:PtoF}
and let $B$ be the path matrix of $F$.
By Proposition~\ref{p:P A F psiA} 
we have $B = \psi(A)$.  
By Corollary~\ref{c:AB}
the inequality (\ref{eq:secondmain}) holds for $A$ if and only if
it holds for $B$. 
By Corollary~\ref{c:perlem} we have
\begin{equation}
    \begin{gathered}
      \perm(B_{[n]\setminus \{i\},[n]\setminus \{i\}}) \perm(B_{ \{i\}, \{i\}}) = 
      | \Pi_{[n]\setminus \{i\},[n]\setminus \{i\}}(F)| \cdot |\Pi_{ \{i\}, \{i\}}(F) |,\\
     \perm(B_{J,J}) \perm(B_{\Bar J,\Bar J}) = 
     |\Pi_{J,J}(F) | \cdot | \Pi_{\Bar J,\Bar J}(F) |,
     \end{gathered}
     \end{equation}
and it suffices to show that we have
\begin{equation}\label{eq:secondmainalt}
    | \Pi_{[n]\setminus \{i\},[n]\setminus \{i\}}(F)| \cdot |\Pi_{ \{i\}, \{i\}}(F) | \geq |\Pi_{J,J}(F)|\cdot |\Pi_{\Bar J, \Bar J}(F)|.
    \end{equation}
    
Suppose that $i \notin [c_p,d_p] $ for all $p\in [1,r]$. Then, in the descending star network $F$, the path from source $i$ to sink $i$ is a simple (directed) edge. Hence, the left hand side of \eqref{eq:secondmainalt} is \begin{equation*}
    | \Pi_{[n]\setminus \{i\},[n]\setminus \{i\}}(F)| = | \Pi_{[n],[n]}(F)|. 
\end{equation*}
Since every path family with odd sources going to odd sinks and even sources going to even sinks belongs to $\Pi_{[n],[n]}(F)$, this is at least 
\begin{equation*}
   |\Pi_{J,J}(F)|\cdot |\Pi_{\Bar J, \Bar J}(F)|,
\end{equation*} and \eqref{eq:secondmainalt} is true.

On the other hand, suppose that $i \in [c_p,d_p] \cap [c_{p+1},d_{p+1}] \cap \cdots \cap [c_q,d_q]$ for some $p ,q \in [1,r]$. Then, by Lemma \ref{lem: minusxi}, the left-hand side of \eqref{eq:secondmainalt} is \begin{equation*}
     \frac{m_1! \cdots m_{p-1}! (m_{p} -1)! \cdots (m_{q-1}-1)! (m_q -1)! m_{q+1}! \cdots m_r!}{k_1! \cdots k_{p-1}! (k_p -1)! \cdots (k_{q-1} -1)! k_q! \cdots k_{r-1}!}.
\end{equation*}
Note that $m_{j} \geq k_{j}$ for $j=1, \dotsc, q-1$ and $m_{j+1} \geq k_{j}$ for $j=q, \dotsc r-1$. By Propositions~\ref{p:max factorial}, \ref{p:max factorial2}, the above fraction is at least 
\begin{equation*}
    \frac{\lfloor \frac{m_1}{2}\rfloor!\lceil \frac{m_1}{2}\rceil!}{\lfloor \frac{k_1}{2}\rfloor!\lceil \frac{k_1}{2}\rceil!} \cdots 
    \frac{\lfloor \frac{m_{q-1}}{2}\rfloor!\lceil \frac{m_{q-1}}{2}\rceil!}{\lfloor \frac{k_{q-1}}{2}\rfloor!\lceil \frac{k_{q-1}}{2}\rceil!} (m_q -1)!
    \frac{\lfloor \frac{m_{q+1}}{2}\rfloor!\lceil \frac{m_{q+1}}{2}\rceil!}{\lfloor \frac{k_{q}}{2}\rfloor!\lceil \frac{k_{q}}{2}\rceil!} \cdots
    \frac{\lfloor \frac{m_{r}}{2}\rfloor!\lceil \frac{m_{r}}{2}\rceil!}{\lfloor \frac{k_{r-1}}{2}\rfloor!\lceil \frac{k_{r-1}}{2}\rceil!}. 
\end{equation*}
Applying Proposition \ref{p:factorial inequality} to $(m_q-1)! = (m_q-1)! \cdot 1!$, we see that
this is at least 
\begin{equation*}
    \frac{\lfloor \frac{m_1}{2}\rfloor!\lceil \frac{m_1}{2}\rceil!\cdots \lfloor \frac{m_{r}}{2}\rfloor!\lceil\frac{m_{r}}{2}\rceil!}{\lfloor \frac{k_1}{2}\rfloor!\lceil \frac{k_1}{2}\rceil!\cdots \lfloor \frac{k_{r-1}}{2}\rfloor!\lceil\frac{k_{r-1}}{2}\rceil!}, 
\end{equation*}which is $|\Pi_{J,J}(F)| \cdot| \Pi_{\Bar J,\Bar J}(F) |$, by Lemma \ref{eq:counting odd-evens}. Again \eqref{eq:secondmainalt} is true.
\end{proof}

One may guess that we can extend Theorem \ref{t:maintwo} to all TNN matrices. However, there is a counterexample when $n=4$ and $i=3$. Let \begin{equation*}
    M(k) = \begin{bmatrix}
        1& 1& 1& 0 \\1& 1+k& 1+k& k \\ 1& 1+k& 1+k& k \\ 0& 1& 1& 1 
    \end{bmatrix}, \ \text{with $k\geq 0$}. 
\end{equation*} Note that $M(k)$ is TNN since it is the path matrix of the weighted planar network \begin{center}
    \begin{tikzpicture}[scale = 0.8]
    \draw (0,0) -- (2,2); 
    \draw (0,1) -- (2,3); 
    \draw (0,2) -- (2,0);
    \draw (0,3) -- (2,1);
    \draw[thick, color = red] (0.5,1.5) -- (1,2);
    \node[scale=0.75] at (0.6,1.9) {\color{red}$k$};

    \node at (-0.3,0) {1};
    \node at (-0.3,1) {2};
    \node at (-0.3,2) {3};
    \node at (-0.3,3) {4};
    \node at (2.3,0) {1};
    \node at (2.3,1) {2};
    \node at (2.3,2) {3};
    \node at (2.3,3) {4};
    \end{tikzpicture}.
\end{center} Then, we have \begin{equation*}
    \perm(M(k)_{124,124})\perm(M(k)_{3,3}) - \perm(M(k)_{13,13})\perm(M(k)_{24,24}) 
    = -k. 
\end{equation*} Therefore, \eqref{eq:secondmain} is not always true for all TNN matrices.

\section{A Generalization on $J$}\label{s:A gen on J}

A natural generalization of a product of two permanents indexed by even or odd integers is a product of $s$ permanents indexed by integers which are equivalent modulo $s$.
Define \begin{equation*}
    [c,d]_{r}^{s} = \{x\in [c,d] : \ x \equiv r \ (\text{mod } s) \}.  
\end{equation*}
For example, $[5,12]_{0}^{3} = \{6, 9,12\}$, $[5,12]_{1}^{3} = \{7, 10\}$, and $[5,12]_{2}^{3} = \{5, 8, 11\}$.
Naturally, the cardinalities of these sets differ by at most one, and sum to $|[5,12]| = 8$.
More generally,
we have the following elementary number theoretic result.
\begin{obs}\label{o:r mod s counting}
    For $0<s\leq n$ and $[c,d]\subseteq [1, n]$, the cardinalities of $[c,d]_{0}^{s}, \dotsc, [c,d]_{s-1}^{s}$ differ by at most one.
    
\end{obs}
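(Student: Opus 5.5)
The plan is to reduce the statement to the structure of residues in a block of consecutive integers. Write the interval as $[c,d]$ with length $m = d-c+1$, and write $m = qs + t$ with $0 \le t < s$ by the division algorithm. The integers $c, c+1, \dotsc, d$ then split into $q$ consecutive blocks of length $s$,
\begin{equation*}
[c, c+s-1],\quad [c+s, c+2s-1],\quad \dotsc,\quad [c+(q-1)s, c+qs-1],
\end{equation*}
followed by a final partial block $[c+qs, d]$ of length $t$.

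Each full block consists of $s$ consecutive integers. It therefore contains exactly one representative of every residue class modulo $s$, and so it contributes exactly one element to each of $[c,d]^s_0, \dotsc, [c,d]^s_{s-1}$. The partial block has $t < s$ consecutive integers, so their residues modulo $s$ are pairwise distinct. It therefore contributes exactly one element to each of $t$ distinct residue classes, namely those of $c+qs, \dotsc, d$, and no elements to the remaining $s-t$ classes.

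Putting these together, each $|[c,d]^s_r|$ equals either $q$ or $q+1$. Hence any two of these cardinalities differ by at most one. Equivalently, the sequence $(|[c,d]^s_0|, \dotsc, |[c,d]^s_{s-1}|)$, after sorting, is the partition $\rho(m,s)$ of $m$ into $s$ nearly equal parts (allowing zero parts when $m<s$). This reformulation is presumably the form in which the observation will be used later, together with the factorial inequalities of Section~\ref{s:permpathfam}.

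There is no real obstacle in this argument. The only points needing care are the edge cases. When $m < s$ we have $q = 0$, so every cardinality is $0$ or $1$. When $t = 0$ all cardinalities equal $q$. The hypothesis $s \le n$ plays no role beyond guaranteeing that $s$ is a positive integer.
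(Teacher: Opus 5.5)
Your proof is correct and follows the paper's argument: write $d-c+1 = qs + t$ with $0 \le t < s$, so that $t$ residue classes receive $q+1$ elements and the remaining $s-t$ receive $q$. You also supply the justification the paper leaves implicit, namely that each full block of $s$ consecutive integers meets every residue class exactly once and the final partial block meets $t$ distinct classes.
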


\begin{proof}
Use integer division to write 
$d-c+1=qs +r$
with $r<s$. Then $r$ intervals have cardinality $q+1$ and $s-r$ intervals have cardinality $q$.
\end{proof}

Observe that $q=\floor*{\frac{d-c+1}{s}}$ and $q+1 = \ceil*{\frac{d-c+1}{s}}$. This leads to the following generalization of Proposition~\ref{p:max factorial}.
\begin{prop}
Fix $s,m,k \in \mathbb{N}$ with $k\leq m$ and define integers $k_0, \dotsc, k_{s-1}, m_0, \dotsc, m_{s-1}$ by $k_i=|[1,k]_i^s|$, $m_i=|[1,m]_i^s|$. These numbers satisfy 

\begin{equation}\label{eq:genfracinq}
        \frac{m!}{k!} \geq \frac{m_0! m_1! \cdots
         m_{s-1}!}{k_0! k_1! \cdots k_{s-1}!}.
    \end{equation}

\end{prop}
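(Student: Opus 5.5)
The plan is to prove \eqref{eq:genfracinq} by induction on $m$, increasing $m$ one step at a time starting from $m=k$. When $m=k$ we have $m_i=k_i$ for every $i$, so both sides of \eqref{eq:genfracinq} equal $1$. It therefore suffices to show that the ratio $\frac{m!}{m_0!\cdots m_{s-1}!}$ is weakly increasing in $m$. Equivalently, we need
\begin{equation*}
\frac{(m+1)!/m!}{\prod_i (m+1)_i!/\prod_i m_i!} \geq 1,
\end{equation*}
where $(m+1)_i \defeq |[1,m+1]_i^s|$. Then the product of these step ratios from $k$ up to $m$ gives
\begin{equation*}
\frac{m!}{k!}\geq \frac{m_0!\cdots m_{s-1}!}{k_0!\cdots k_{s-1}!}.
\end{equation*}

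For the single step, passing from $[1,m]$ to $[1,m+1]$ adds exactly one element, namely $m+1$. This changes only the residue class $r\equiv m+1 \pmod s$, whose count rises from $m_r$ to $m_r+1$. So the step ratio is
\begin{equation*}
\frac{m+1}{m_r+1}.
\end{equation*}
Since $m_r=|[1,m]^s_r|\leq m$, this ratio is at least $1$. (By Observation~\ref{o:r mod s counting}, $m_r\le\ceil{m/s}$, which is even stronger.) Telescoping over $m'=k,k+1,\dotsc,m-1$ yields
\begin{equation*}
\frac{m!}{k!}=\prod_{m'=k}^{m-1}(m'+1)\;\geq\;\prod_{m'=k}^{m-1}\bigl(m'_{r(m')}+1\bigr)=\frac{m_0!\cdots m_{s-1}!}{k_0!\cdots k_{s-1}!},
\end{equation*}
where $r(m')$ is the residue of $m'+1$ modulo $s$. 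The last equality holds because each residue class count increases one unit at a time, so the product of the new values $m'_{r(m')}+1$ over the class $i$ telescopes to $m_i!/k_i!$.

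There is no serious obstacle. The only point needing care is the bookkeeping in the telescoping identity for the right-hand side: one must check that the factors contributed to class $i$ are exactly $k_i+1,\dotsc,m_i$. This holds because the elements of $[k+1,m]$ lying in class $i$ are added in increasing order. One could instead mimic Proposition~\ref{p:max factorial} via Corollary~\ref{c:multi factorial inequality} with the $s=1$ versus $s$-part comparison, but the direct telescoping argument is simpler and I would use it.
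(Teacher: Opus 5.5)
Your proposal is correct and is essentially the paper's argument. The paper rewrites the inequality as $\binom{m}{m_0,\dotsc,m_{s-1}}\ge\binom{k}{k_0,\dotsc,k_{s-1}}$ and gets the one-step monotonicity of the multinomial coefficient from the multinomial Pascal identity (each summand on the right is at most the left-hand side), then removes the elements of $[k+1,m]$ one at a time. You obtain the same one-step monotonicity directly from the explicit ratio $\frac{m+1}{m_r+1}\ge 1$ and telescope.
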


\begin{proof}

The inequality \eqref{eq:genfracinq} is equivalent to 
\begin{equation}\label{eq:multi ineq}{ m \choose {m_0,m_1,\dotsc,m_{s-1}}} \geq {k\choose k_0,k_1,\dotsc,k_{s-1}}.
\end{equation} 
By the multinomial analog of the Pascal's identity, 
\begin{equation}\label{eq:multi Pascal}
{n \choose a_1, \dotsc, a_t}={n-1 \choose a_1-1, a_2, \dotsc, a_t}+ {n-1 \choose a_1, a_2-1, \dotsc, a_t}+\cdots+ {n-1 \choose a_1, \dotsc, a_t-1},
\end{equation} 
we see that the left-hand side of \eqref{eq:multi Pascal} is greater than each term on the right-hand side of \eqref{eq:multi Pascal}. 
It follows that for $b_1, \cdots, b_t\in\mathbb{N}$, with $b_i\leq a_i$ for $i=1, \dotsc, t$ and $b=b_1+\cdots b_t$, we have 
\begin{equation*}
{n \choose a_1, \dotsc, a_t}\geq {n-b \choose a_1-b_1, a_2-b_2, \dotsc, a_t-b_t}.\end{equation*} The inequality \eqref{eq:multi ineq} follows. 
\end{proof}

Now consider the variation of \eqref{equ: size} which counts paths in $\Pi_{[n],[n]}(F)$ having sources and sinks with only indices that are 1 modulo $3$. For example, when $F=F_{[9,13]}\bullet F_{[5,11]}\bullet F_{[3,6]} \bullet F_{[1,3]}$, these are the paths covering the solid and dotted edges in 
\begin{center}
\begin{tikzpicture}[x=0.75pt,y=0.75pt,yscale=-0.55,xscale=0.75]
\draw  [dash pattern={on 4.5pt off 4.5pt}]  (80,517.79) -- (500,517.79) ;
\draw  [dash pattern={on 4.5pt off 4.5pt}]  (500,517.79) -- (540,477.79) ;
\draw  [dash pattern={on 4.5pt off 4.5pt}]  (540,477.79) -- (580,517.79) ;
\draw    (80,477.79) -- (620,477.79) ;
\draw  [dash pattern={on 4.5pt off 4.5pt}]  (620,517.79) -- (580,517.79) ;
\draw    (580,437.79) -- (620,437.79) ;
\draw    (80,437.79) -- (360,437.79) ;
\draw  [dash pattern={on 4.5pt off 4.5pt}]  (80,397.79) -- (360,397.79) ;
\draw    (360,437.79) -- (420,357.79) ;
\draw  [dash pattern={on 4.5pt off 4.5pt}]  (360,397.79) -- (420,357.79) ;
\draw    (420,357.79) -- (540,477.79) ;
\draw  [dash pattern={on 4.5pt off 4.5pt}]  (420,357.79) -- (480,397.79) -- (620,397.79) ;
\draw    (420,357.79) -- (620,357.79) ;
\draw    (420,357.79) -- (480,317.79) -- (620,317.79) ;
\draw    (420,357.79) -- (300,237.79) ;
\draw    (80,357.79) -- (260,357.79) -- (300,237.79) ;
\draw    (80,317.79) -- (240,317.79) -- (300,237.79) ;
\draw  [dash pattern={on 4.5pt off 4.5pt}]  (80,277.79) -- (220,277.79) -- (300,237.79) ;
\draw    (80,237.79) -- (300,237.79) ;
\draw    (180,117.79) -- (140,197.79) -- (80,197.79) ;
\draw  [dash pattern={on 4.5pt off 4.5pt}]  (80,157.79) -- (140,157.79) -- (180,117.79) ;
\draw    (80,117.79) -- (180,117.79) ;
\draw    (80,77.79) -- (140,77.79) -- (180,117.79) ;
\draw  [dash pattern={on 4.5pt off 4.5pt}]  (80,37.79) -- (140,37.79) -- (180,117.79) ;
\draw  [dash pattern={on 4.5pt off 4.5pt}]  (300,237.79) -- (380,277.79) -- (620,277.79) ;
\draw    (300,237.79) -- (620,237.79) ;
\draw    (300,237.79) -- (380,197.79) -- (620,197.79) ;
\draw  [dash pattern={on 4.5pt off 4.5pt}]  (300,237.79) -- (380,157.79) -- (620,157.79) ;
\draw    (300,237.79) -- (380,117.79) -- (620,117.79) ;
\draw    (180,117.79) -- (220,77.79) -- (620,77.79) ;
\draw  [dash pattern={on 4.5pt off 4.5pt}]  (180,117.79) -- (220,37.79) -- (620,37.79) ;
\draw    (540,477.79) -- (580,437.79) ;
\draw  [dash pattern={on 0.84pt off 2.51pt}]  (180,117.79) -- (300,237.79) ;

\draw (51.03,31.99) node [anchor=north west][inner sep=0.75pt]  [rotate=-359] [align=left] {13};
\draw (51.03,71.99) node [anchor=north west][inner sep=0.75pt]  [rotate=-359] [align=left] {12};
\draw (51.69,111.99) node [anchor=north west][inner sep=0.75pt]  [rotate=-359] [align=left] {11};
\draw (51.03,151.99) node [anchor=north west][inner sep=0.75pt]  [rotate=-359] [align=left] {10};
\draw (57.67,192.21) node [anchor=north west][inner sep=0.75pt]  [rotate=-359] [align=left] {9};
\draw (57.67,230.99) node [anchor=north west][inner sep=0.75pt]  [rotate=-359] [align=left] {8};
\draw (57.67,270.99) node [anchor=north west][inner sep=0.75pt]  [rotate=-359] [align=left] {7};
\draw (57.67,310.99) node [anchor=north west][inner sep=0.75pt]  [rotate=-359] [align=left] {6};
\draw (57.67,352.21) node [anchor=north west][inner sep=0.75pt]  [rotate=-359] [align=left] {5};
\draw (57.69,392.19) node [anchor=north west][inner sep=0.75pt]  [rotate=-359] [align=left] {4};
\draw (58.69,430.83) node [anchor=north west][inner sep=0.75pt]  [rotate=-359] [align=left] {3};
\draw (57.69,470.83) node [anchor=north west][inner sep=0.75pt]  [rotate=-359] [align=left] {2};
\draw (58.02,510.84) node [anchor=north west][inner sep=0.75pt]   [align=left] {1};
\draw (630.69,32.33) node [anchor=north west][inner sep=0.75pt]  [rotate=-359] [align=left] {13};
\draw (630.69,72.33) node [anchor=north west][inner sep=0.75pt]  [rotate=-359] [align=left] {12};
\draw (631.34,112.33) node [anchor=north west][inner sep=0.75pt]  [rotate=-359] [align=left] {11};
\draw (630.69,152.33) node [anchor=north west][inner sep=0.75pt]  [rotate=-359] [align=left] {10};
\draw (637.32,192.56) node [anchor=north west][inner sep=0.75pt]  [rotate=-359] [align=left] {9};
\draw (637.32,231.33) node [anchor=north west][inner sep=0.75pt]  [rotate=-359] [align=left] {8};
\draw (637.32,271.33) node [anchor=north west][inner sep=0.75pt]  [rotate=-359] [align=left] {7};
\draw (637.32,311.33) node [anchor=north west][inner sep=0.75pt]  [rotate=-359] [align=left] {6};
\draw (637.32,352.56) node [anchor=north west][inner sep=0.75pt]  [rotate=-359] [align=left] {5};
\draw (637.34,392.54) node [anchor=north west][inner sep=0.75pt]  [rotate=-359] [align=left] {4};
\draw (638.34,431.17) node [anchor=north west][inner sep=0.75pt]  [rotate=-359] [align=left] {3};
\draw (637.34,471.17) node [anchor=north west][inner sep=0.75pt]  [rotate=-359] [align=left] {2};
\draw (637.67,511.19) node [anchor=north west][inner sep=0.75pt]   [align=left] {1};
\end{tikzpicture}.
\end{center}

Then, we see that \begin{equation*} 
    \Pi_{\{1,4,7,10,13\},\{1,4,7,10,13\}}(F) = \frac{2!2!1!1!}{1!0!0!} = 4. 
\end{equation*}   

More generally, we have the following lemma.

\begin{lem}\label{eq:counting r mod s} Let $F = F_{[c_0,d_0]} \bullet \cdots \bullet F_{[c_t,d_t]}$ be a descending star network. Fix $s\in[n]$. For every $r\in[0,s-1]$, we have
\begin{equation}\label{eq:evenoddfractiongen}
    |\Pi_{[1,n]_{r}^{s},[1,n]_{r}^{s}}(F)| = \frac{ |[c_0,d_0]_{r}^{s}|! \cdot |[c_1,d_1]_r^s|! \cdots |[c_t,d_t]_{r}^{s}|!}{|[c_0,d_1]_{r}^{s}|! \cdot |[c_1,d_2]_r^s|! \cdots |[c_{t-1},d_{t}]_r^s|!}.
\end{equation}
\end{lem}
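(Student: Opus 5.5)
Note first that the denominators $|[c_{i-1},d_i]^s_r|$ are the counts of indices congruent to $r$ mod $s$ in the overlap $[c_{i-1},d_{i-1}]\cap[c_i,d_i]=[c_{i-1},d_i]$, with the convention that an empty interval contributes $0!=1$. With this in mind, I would prove the formula by induction on the number $t+1$ of stars, mirroring the proofs of Lemma~\ref{eq:counting number of path families} and Lemma~\ref{eq:counting odd-evens}.

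The point is that $\Pi_{[1,n]^s_r,[1,n]^s_r}(F)$ is simply the parity-constrained count of Lemma~\ref{eq:counting odd-evens} with parity replaced by the residue $r$ mod $s$. Only the paths starting at sources in the residue class $r$ are present, and we require them to end at sinks in the same class.

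\emph{Base case.} For a single star $F_{[c_0,d_0]}$, the paths from sources in $[1,n]^s_r$ outside $[c_0,d_0]$ are forced horizontal edges. The sources in $[c_0,d_0]^s_r$ pass through the central vertex and may be matched bijectively to the sinks in $[c_0,d_0]^s_r$ in any way. This gives $|[c_0,d_0]^s_r|!$ path families.

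\emph{Inductive step.} Write $F=F'\bullet F_{[c_t,d_t]}$ with $F'=F_{[c_0,d_0]}\bullet\cdots\bullet F_{[c_{t-1},d_{t-1}]}$. I would count path families in the uncondensed concatenation $F'\circ F_{[c_t,d_t]}$ whose sources and sinks lie in residue class $r$. Every intermediate index is a sink of $F'$ and a source of $F_{[c_t,d_t]}$. A path family in class $r$ on $F'\circ F_{[c_t,d_t]}$ is exactly a pair consisting of a class-$r$ family on $F'$ and a class-$r$ family on $F_{[c_t,d_t]}$. The only exception would be paths that change residue at the junction, but the junction index equals both the sink of the first part and the source of the second part, so its residue is preserved. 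Hence the count in $F'\circ F_{[c_t,d_t]}$ is the product of the two counts.

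Next I pass to the condensation. The edges from the central vertex $z_{t-1}$ to $z_t$ in $F'\circ F_{[c_t,d_t]}$ are indexed by $j\in[c_{t-1},d_t]$. The class-$r$ families use exactly the edges with $j\in[c_{t-1},d_t]^s_r$, each exactly once, since every such intermediate index is used by exactly one path. Collapsing these edges to a single edge $(z_{t-1},z_t)$ is a surjection onto class-$r$ families of $F$. Its fibers have size $|[c_{t-1},d_t]^s_r|!$, because the path segments through the collapsed edge may be reassigned arbitrarily among the $|[c_{t-1},d_t]^s_r|$ parallel edges.

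\emph{Main obstacle.} This fiber count is the delicate step. One must check that a path family in $F$ records only the pairing of incoming segments at $z_{t-1}$ with outgoing segments at $z_t$. One must also check that distinct choices of intermediate indices in $F'\circ F_{[c_t,d_t]}$ give distinct families with the same image. This holds because in a descending star network paths between two vertices are unique (Lemma~\ref{l:dsn}), so the type of the family determines everything except the intermediate labels. Both sets are therefore determined by types, and each type realized in $F$ lifts in exactly $|[c_{t-1},d_t]^s_r|!$ ways.

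Dividing the product count by this fiber size and applying the inductive hypothesis to $F'$ yields \eqref{eq:evenoddfractiongen}.
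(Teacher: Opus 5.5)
Your route differs from the paper's. The paper restricts $F$ to the sources and sinks in $[1,n]^s_r$, regards the result as a star network with stars $[c_i,d_i]^s_r$, and quotes Lemma~\ref{eq:counting number of path families}. You instead redo the induction of Lemmas~\ref{eq:counting number of path families} and~\ref{eq:counting odd-evens} directly. That route is viable, but one step as you wrote it is false.

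You assert that a class-$r$ path family on $F'\circ F_{[c_t,d_t]}$ is \emph{exactly} a pair of class-$r$ families on $F'$ and on $F_{[c_t,d_t]}$, on the grounds that ``the residue is preserved at the junction.'' Nothing forces the junction index into $[1,n]^s_r$. A path from a class-$r$ source to a class-$r$ sink through the parallel edges from $z_{t-1}$ to $z_t$ may use the edge coming from any index $j\in[c_{t-1},d_t]$, of any residue. In an uncondensed concatenation two paths may even share one such edge.

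For example, take $F_{[2,4]}\circ F_{[1,3]}$ with $s=2$, $r=1$. The path from source $3$ may use the edge coming from index $2$ or the one from index $3$. This gives $4$ odd-class families but only $1\cdot 2=2$ pairs. Collapsing onto the $2$ odd-class families of $F_{[2,4]}\bullet F_{[1,3]}$ therefore has fibres of size $2$, not $|[2,3]^2_1|!=1$. Your final number is right only because you actually multiply counts of pairs.

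So the domain of the collapsing map must be the set of pairs, i.e.\ families in $F'\circ F_{[c_t,d_t]}$ whose junction indices are exactly $[1,n]^s_r$. You must then show that the collapse onto $\Pi_{[1,n]^s_r,[1,n]^s_r}(F)$ is surjective with fibres of size $|[c_{t-1},d_t]^s_r|!$. The missing ingredient is that \emph{every} class-$r$ family in $F$ sends exactly $|[c_{t-1},d_t]^s_r|$ paths along the edge $(z_{t-1},z_t)$:
\begin{enumerate}
\item The sinks in $[c_t,d_t]$ are reachable only from $z_t$, and all out-edges of $z_t$ go to them, so exactly $|[c_t,d_t]^s_r|$ paths pass through $z_t$.
\item The class-$r$ sources in $[c_t,d_t]\ssm[c_{t-1},d_{t-1}]$ lie in no earlier star and enter $z_t$ directly.
\item The remaining $|[c_{t-1},d_t]^s_r|$ paths into $z_t$ must therefore arrive along the collapsed edge.
\end{enumerate}
You must also check that every other path's junction index is its own source or its own sink, hence lies in $[1,n]^s_r$ outside the overlap. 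Only then do the collapsed paths match bijectively with the junction labels in $[c_{t-1},d_t]^s_r$, giving exactly $|[c_{t-1},d_t]^s_r|!$ lifts.

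This count is also where the residue restriction genuinely matters. The edge $(z_{t-1},z_t)$ is present in $F$ even when $[c_{t-1},d_t]^s_r=\emptyset$, and the count is what shows that no class-$r$ family uses it.
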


\begin{proof}
   Observe that $|\Pi_{[1,n]_{r}^{s},[1,n]_{r}^{s}}(F)|$ corresponds to a sub-star network with sources and sinks labeled in $[1,n]_{r}^{s}$. By Lemma \ref{eq:counting number of path families}, we have~\eqref{eq:evenoddfractiongen}. 
\end{proof}

We can now extend Theorem~\ref{t:main} to prove inequalities such as the following for $9\times 9$ matrices:
\begin{equation*}
\begin{split}
\perm(A)
\geq & \perm(A_{13579,13579})\perm(A_{2468,2468}) \\
\geq & \perm(A_{147,147})\perm(A_{258,258})\perm(A_{369,369}) \\
\geq & \perm(A_{159,159})\perm(A_{26,26})\perm(A_{37,37})\perm(A_{48,48}) \geq \cdots \\
\geq & \perm(A_{19,19})a_{22}\cdots a_{88} \\
\geq & a_{11}\cdots a_{99}.
\end{split}
\end{equation*}

\begin{thm}\label{t:rmodsmain}
   Let $A$ be the antiadjacency matrix of an $n$-element unit interval order.

    Then for $s=2,\ldots, n-1$, we have 
    \begin{equation}\label{eq:new inequality}
   \perm(A_{[1,n]_0^s,[1,n]_0^s})\cdots \perm(A_{[1,n]_{s-1}^s,[1,n]_{s-1}^s})\geq \perm(A_{[1,n]_0^{s+1},[1,n]_0^{s+1}})\cdots\perm(A_{[1,n]_s^{s+1},[1,n]_s^{s+1}}).
    \end{equation}
    
\end{thm}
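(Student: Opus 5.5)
The plan follows the proof of Theorem~\ref{t:main}. First reduce from antiadjacency matrices to path matrices of descending star networks. Let $F = F_{[c_1,d_1]}\bullet\cdots\bullet F_{[c_t,d_t]}$ be the descending star network $F(P)$ from Algorithm~\ref{a:PtoF}, and let $B=\psi(A)$ be its path matrix (Proposition~\ref{p:P A F psiA}). Each index set $[1,n]^s_r$ is a union of blocks only in the sense that monomials $\permmon aw$ with $w$ preserving the residue classes are all that appear. So by Corollary~\ref{c:AB}, both sides of \eqref{eq:new inequality} are unchanged when $A$ is replaced by $B$. By Corollary~\ref{c:perlem}, each side then becomes a product of cardinalities $|\Pi_{[1,n]^s_r,[1,n]^s_r}(F)|$.

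Next compute both sides with Lemma~\ref{eq:counting r mod s}. With $m_i = d_i-c_i+1$ and $k_i = \max\{d_{i+1}-c_i+1,0\}$ as in \eqref{equ: size of stars}, the left side equals
\[
\prod_{i}\frac{\prod_{r=0}^{s-1}|[c_i,d_i]^s_r|!}{\prod_{r=0}^{s-1}|[c_i,d_{i+1}]^s_r|!}\cdot \prod_{r}|[c_t,d_t]^s_r|!,
\]
and the right side is the same expression with $s$ replaced by $s+1$. By Observation~\ref{o:r mod s counting}, the multiset $\{|[c,d]^s_r|\}_r$ is exactly the partition $\rho(d-c+1,s)$ of $d-c+1$ into $s$ nearly equal parts. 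Hence the left side is
\[
\prod_{i<t}\frac{\prod_j \rho(m_i,s)_j!}{\prod_j\rho(k_i,s)_j!}\cdot\prod_j\rho(m_t,s)_j!,
\]
and similarly for $s+1$. Note that these counts do not depend on where the intervals start, which is what makes the factorization clean.

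Now compare factor by factor. For each $i<t$, apply Corollary~\ref{c:multi factorial inequality} with $m=m_i$ and $k=k_i$; this is valid since $k_i\le m_i$ because the intervals are nonnesting. That corollary needs $s<k$. When $k_i\le s$, all parts of $\rho(k_i,s)$ and $\rho(k_i,s+1)$ are $0$ or $1$, so both denominators equal $1$. The required inequality then reduces to $\prod\rho(m_i,s)_j!\ge\prod\rho(m_i,s+1)_j!$, which is Corollary~\ref{c:max central multi}. For $k_i=0$ the factor is similarly just that comparison. For the last factor, Corollary~\ref{c:max central multi} again gives $\prod\rho(m_t,s)_j!\ge \prod\rho(m_t,s+1)_j!$. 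Multiplying these inequalities gives \eqref{eq:new inequality}.

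The main obstacle is the boundary case $k_i\le s<m_i$ and the degenerate case $k_i<s$ in Corollary~\ref{c:multi factorial inequality}. The hypothesis $s<k$ there is not just technical: Proposition~\ref{p:nearly equal parts maj order} needs both $\nu$ and $\mu$ to be genuine partitions into $s$ and $s+1$ parts, allowing zeros. I would first check that the Karamata argument of Proposition~\ref{p:nearly equal parts maj order} goes through verbatim for weak partitions with zero parts; Lemmas~\ref{l:G function 1} and \ref{l:G function 2} only use $\lfloor n/p\rfloor$ and convexity of $f(x)=\log\Gamma(x+1)$ on nonnegative integers, so it should. That would remove the restriction $s<k$ and handle every factor uniformly. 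A secondary point to check is that Lemma~\ref{eq:counting r mod s} correctly handles consecutive intervals whose residue-$r$ overlap is empty, where the sub-network splits. This is consistent with the convention $0!=1$.
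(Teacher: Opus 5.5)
Your proposal is correct and follows essentially the same route as the paper's proof: reduce to the path matrix of the descending star network, factor each side star by star via Lemma~\ref{eq:counting r mod s}, compare the non-final star ratios with Corollary~\ref{c:multi factorial inequality}, and compare the final star with Corollary~\ref{c:max central multi}. Your separate treatment of the case $k_i\le s$, where both denominators equal $1$ and Corollary~\ref{c:max central multi} suffices, is a worthwhile addition. The paper applies Corollary~\ref{c:multi factorial inequality} in that case without checking its hypothesis $s<k$.
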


\begin{proof}

    Let $F=F_{[c_0,d_0]} \bullet \cdots \bullet F_{[c_t,d_t]}$ be the descending star network corresponding to $A$, i.e., %$F$ be 
    the descending star network of $\psi(A)$ from Proposition~\ref{p:P A F psiA}. Then by 
    Corollary~\ref{c:perlem}, the $r$th factor on the left-hand-side of \eqref{eq:new inequality} is $|\Pi_{[1,n]_r^s, [1, n]_r^s} (F)|$. By Lemma~\ref{eq:counting r mod s}, this equals $|[c_t, d_t]_r^s|!$ times
\begin{equation}
   \alpha_r:= \frac{ |[c_0,d_0]_{r}^{s}|!}{|[c_0,d_1]_{r}^{s}|!} \cdot \frac{[c_1,d_1]_{r}^{s}|!}{|[c_1,d_2]_r^s|!} \cdots \frac{|[c_{t-1},d_{t-1}]_{r}^{s}|!}{|[c_{t-1},d_{t}]_r^s|!},
\end{equation} i.e., the left-hand-side of \eqref{eq:new inequality} equals
\begin{equation}\label{eq:alphaLHS}
    |[c_t,d_t]_0^s|!\alpha_0\cdots |[c_t, d_t]_{s-1}^s|!\alpha_{s-1}.
\end{equation}
    Now fix $i\in[0,t-1]$ and collect the ratios $\frac{|[c_i, d_i]_r^s|!}{|[c_i,d_{i+1}]_r^s|!}$ for $r=0, \dotsc, s-1$. The product 
    \begin{equation}\label{eq:collect ratios product s}
        \frac{ |[c_i,d_i]_{0}^{s}|!}{|[c_i,d_{i+1}]_{0}^{s}|!} \cdots \frac{|[c_{i},d_{i}]_{s-1}^{s}|!}{|[c_{i},d_{i+1}]_{s-1}^s|!}
    \end{equation}
satisfies 
\begin{equation}\label{eq: cardinality sums s}
\begin{gathered}
|[c_i, d_i]_0^s|+\cdots+|[c_i, d_i]_{s-1}^s|=d_i-c_i+1,  \\ |[c_i, d_{i+1}]_0^s|+\cdots+|[c_i, d_{i+1}]_{s-1}^s|=d_{i+1}-c_i+1,
\end{gathered}
\end{equation}
with terms within each sum differing from one another by at most $1$, and \begin{equation*}d_i-c_i+1\geq d_{i+1}-c_i+1.\end{equation*}

We similarly apply Corollary~\ref{c:perlem} to the $r$th factor on the right-hand-side of \eqref{eq:new inequality} and Lemma~\ref{eq:counting r mod s} to $|\Pi_{[1,n]_r^{s+1},[1,n]_r^{s+1}} (F)|$. This expression equals $|[c_t,d_t]_r^{s+1}|!$ times \begin{equation}
   \beta_r:= \frac{ |[c_0,d_0]_{r}^{s+1}|!}{|[c_0,d_1]_{r}^{s+1}|!} \cdot \frac{[c_1,d_1]_{r}^{s+1}|!}{|[c_1,d_2]_r^{s+1}|!} \cdots \frac{|[c_{t-1},d_{t-1}]_{r}^{s+1}|!}{|[c_{t-1},d_{t}]_r^{s+1}|!},
\end{equation}
 i.e., the right-hand-side of \eqref{eq:new inequality} equals
\begin{equation}\label{eq:betaRHS}
    |[c_t,d_t]_0^{s+1}|!\beta_0\cdots |[c_t, d_t]_{s}^{s+1}|!\beta_{s}.
\end{equation}
 Again fixing $i \in [0,t-1]$ and collecting ratios
 $\frac{|[c_i, d_i]_r^{s+1}|!}{|[c_i,d_{i+1}]_r^{s+1}|!}$ for $r=0, \dotsc, s$, we obtain a product analogous to \eqref{eq:collect ratios product s} in which numerator (denominator) cardinalities differ from one another by at most $1$, and have sums matching those in \eqref{eq: cardinality sums s}. 
By Corollary~\ref{c:multi factorial inequality}, the two products satisfy 
\begin{equation}
    \frac{ |[c_i,d_i]_{0}^{s}|!}{|[c_i,d_{i+1}]_{0}^{s}|!} \cdots \frac{|[c_{i},d_{i}]_{s-1}^{s}|!}{|[c_{i},d_{i+1}]_{s-1}^s|!} \geq \frac{ |[c_i,d_i]_{0}^{s+1}|!}{|[c_i,d_{i+1}]_{0}^{s+1}|!} \cdots \frac{|[c_{i},d_{i}]_{s}^{s+1}|!}{|[c_{i},d_{i+1}]_{s}^{s+1}|!},
\end{equation}
and therefore we have 
\begin{equation}\label{eq:alphabeta}
    \alpha_0\cdots\alpha_{s-1}\geq \beta_0\cdots\beta_s.
\end{equation}

Now consider the cardinalities of the intervals indexed by $t$. By Corollary~\ref{c:max central multi} we have
\begin{equation}\label{eq:t interval products}
    |[c_t, d_t]_0^s|! \cdots |[c_t, d_t]_{s-1}^s|! \geq |[c_t, d_t]_0^{s+1}|! \cdots |[c_t, d_t]_{s}^{s+1}|!. 
\end{equation} 
Combining \eqref{eq:alphabeta} and \eqref{eq:t interval products}, we have that \eqref{eq:alphaLHS} is at least \eqref{eq:betaRHS}, as desired.
\end{proof}

Combine Theorem~\ref{t:rmodsmain} and Theorem~\ref{t:main}, and we have the corollary.  
\begin{cor}
    Let $A$ be an $n\times n$ antiadjacency matrix of a unit interval order. Fix $h \in [n]$. Then, for $s=2,\dotsc,n-1$, we have \begin{equation*}
        \perm(A_{[1,h],[1,h]})\perm(A_{[h+1,n],[h+1,n]}) \geq \perm(A_{[1,n]_0^s,[1,n]_0^s})\cdots \perm(A_{[1,n]_{s-1}^s,[1,n]_{s-1}^s}).
    \end{equation*}
\end{cor}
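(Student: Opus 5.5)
The plan is to chain the two previously established inequalities through the intermediate quantity $\perm(A_{J,J})\perm(A_{\Bar J,\Bar J})$ with $J=[n]\cap 2\mathbb Z$. Theorem~\ref{t:main} gives
\begin{equation*}
\perm(A_{[1,h],[1,h]})\perm(A_{[h+1,n],[h+1,n]}) \geq \perm(A_{J,J})\perm(A_{\Bar J,\Bar J}).
\end{equation*}
The right-hand side is exactly the $s=2$ product $\perm(A_{[1,n]_0^2,[1,n]_0^2})\perm(A_{[1,n]_1^2,[1,n]_1^2})$. Here $[1,n]_0^2=J$ and $[1,n]_1^2=\Bar J$, so the two indexings agree.

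Next, iterate Theorem~\ref{t:rmodsmain} for $s'=2,3,\dotsc,s-1$. This yields the descending chain
\begin{equation*}
\prod_{r=0}^{1}\perm(A_{[1,n]_r^2,[1,n]_r^2}) \geq \prod_{r=0}^{2}\perm(A_{[1,n]_r^3,[1,n]_r^3}) \geq \cdots \geq \prod_{r=0}^{s-1}\perm(A_{[1,n]_r^s,[1,n]_r^s}).
\end{equation*}
Each step uses Theorem~\ref{t:rmodsmain} with $s'\le n-2$. The final product therefore only requires $s\le n-1$, which matches the range in the corollary.

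For $s=2$ the chain is empty, and the result is just Theorem~\ref{t:main}. For $s\ge 3$ the chain has $s-2$ steps. Combining by transitivity gives the claim, since every factor is a nonnegative integer and no multiplication of inequalities is needed.

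The only points needing care are bookkeeping. First, the residue-class notation $[1,n]_r^s$ with $r\in[0,s-1]$ must match $J$ and $\Bar J$ at $s=2$. Second, the case $h=n$ is allowed in Theorem~\ref{t:main}, and there the left side is $\perm(A)$ (with $\perm$ of the empty matrix equal to $1$), so nothing changes. Third, the hypotheses of both theorems must refer to the same labeling of the unit interval order. Theorem~\ref{t:rmodsmain} is stated for antiadjacency matrices of unit interval orders, labeled as in Algorithm~\ref{a:arlabeling}, and so is Theorem~\ref{t:main}. I expect no genuine obstacle beyond checking these index ranges.
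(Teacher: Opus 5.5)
Your proposal is correct and is the paper's argument: the paper derives this corollary by combining Theorem~\ref{t:main} (which is the $s=2$ case, since $[1,n]_0^2$ and $[1,n]_1^2$ are the even and odd indices) with Theorem~\ref{t:rmodsmain} applied for $s'=2,\dotsc,s-1$. Your check that each step uses $s'\le n-2$ and your use of transitivity fill in details the paper leaves implicit.
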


Combine Theorem~\ref{t:rmodsmain} and Theorem~\ref{t:maintwo}, and we have the corollary. 
\begin{cor}
    Let $A$ be an $n\times n$ antiadjacency matrix of a unit interval order. Fix $i \in [n]$. Then, for $s=2,\dotsc,n-1$, we have \begin{equation*}
        \perm(A_{[n]\setminus\{i\},[n]\setminus\{i\}})\perm(A_{\{i\},\{i\}}) \geq \perm(A_{[1,n]_0^s,[1,n]_0^s})\cdots \perm(A_{[1,n]_{s-1}^s,[1,n]_{s-1}^s}).
    \end{equation*}
\end{cor}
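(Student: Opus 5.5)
The statement combines Theorem~\ref{t:rmodsmain} with Theorem~\ref{t:maintwo}, so I would prove it by chaining those two results, just as in the preceding corollary. The chain begins with Theorem~\ref{t:maintwo}, which gives
\begin{equation*}
\perm(A_{[n]\setminus\{i\},[n]\setminus\{i\}})\perm(A_{\{i\},\{i\}}) \geq \perm(A_{J,J})\perm(A_{\Bar J,\Bar J}),
\end{equation*}
where $J=[n]\cap 2\mathbb Z$. Since $J=[1,n]_0^2$ and $\Bar J=[1,n]_1^2$, the right-hand side is exactly the product $\prod_{r=0}^{1}\perm(A_{[1,n]_r^2,[1,n]_r^2})$. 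This is the case $s=2$ of the desired inequality.

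For general $s$, I would set $Q_s \defeq \prod_{r=0}^{s-1}\perm(A_{[1,n]_r^s,[1,n]_r^s})$. Theorem~\ref{t:rmodsmain} gives $Q_{s'}\geq Q_{s'+1}$ for each $s'=2,\dotsc,n-1$. Telescoping these inequalities from $s'=2$ up to $s'=s-1$ yields $Q_2\geq Q_s$ for every $s\in[2,n-1]$. Combining this with the first step gives $\perm(A_{[n]\setminus\{i\},[n]\setminus\{i\}})\perm(A_{\{i\},\{i\}})\geq Q_2\geq Q_s$.

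There is no real obstacle here. The only checks are bookkeeping:
\begin{itemize}
\item The index conventions must match. The residue classes modulo $2$ have to coincide with the even/odd sets of Theorem~\ref{t:maintwo}, and they do: class $0$ is the evens and class $1$ is the odds.
\item The range of $s$ must be compatible. For $s=2$ the chain of applications of Theorem~\ref{t:rmodsmain} is empty. For larger $s$ we only ever use Theorem~\ref{t:rmodsmain} with index at most $s-1\leq n-2$, which lies within its hypothesis range $2,\dotsc,n-1$.
\item Empty residue classes must be harmless. When $s$ is close to $n$ some classes $[1,n]_r^s$ may be small, and the convention $\perm(A_{\emptyset,\emptyset})=1$ handles any empty ones. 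They are in any case already covered by the statement of Theorem~\ref{t:rmodsmain}.
\end{itemize}
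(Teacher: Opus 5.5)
Your proposal is correct and follows the paper's own argument. The paper proves this corollary by combining Theorem~\ref{t:maintwo}, which is the case $s=2$ because $[1,n]_0^2=[n]\cap 2\mathbb Z$ and $[1,n]_1^2=[n]\ssm 2\mathbb Z$, with the chain $Q_2\geq Q_3\geq\cdots\geq Q_s$ given by Theorem~\ref{t:rmodsmain}, exactly as you do.
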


\section{A Generalization to all TNN matrices}\label{s:A gen on all}

In this section, we aim to show that \eqref{eq:evenoddintro} holds for $I = [1,\lfloor \frac{n}{2} \rfloor]$. 

A polynomial $p(x_{11}, x_{12},\cdots, x_{nn})$ in $n^2$ variables is called totally non-negative (TNN) if it satisfies 
\begin{equation}\label{eq:TNNpolydef}
    p(A)\defeq p(a_{11}, a_{12},\cdots, a_{nn}) \geq 0
\end{equation} 
for each $n\times n$ totally non-negative matrix $A=(a_{ij})$. Therefore, \eqref{eq:evenoddintro} is true if and only if the polynomial \begin{equation}\label{TNNpoly}
    \perm(X_{[1,\lfloor \frac{n}{2} \rfloor],[1,\lfloor \frac{n}{2} \rfloor]}) \perm(X_{[\lfloor \frac{n}{2} \rfloor +1,n],[\lfloor \frac{n}{2} \rfloor +1,n]}) - \perm(X_{[n]\cap 2\mathbb Z , [n]\cap 2\mathbb Z})\perm(X_{[n]\setminus 2\mathbb Z , [n]\setminus 2\mathbb Z})
\end{equation} is TNN.

Drake, Gerrish and Skandera~\cite{DGSBruhat} gave a necessary and sufficient condition for a polynomial which is a difference of ``permutation monomials" to be TNN. 

\begin{thm}[Drake, Gerrish, Skandera]\label{pairing test}
            Let $w$ and $u$ be two permutations in $\sn$. Then $w$ is less than or equal to $u$ in the (strong) Bruhat order if and only if the polynomial \[x_{1,w(1)} \cdots x_{n,w(n)} - x_{1,u(1)} \cdots x_{n,u(n)}\] is totally non-negative. 
\end{thm}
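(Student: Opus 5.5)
The plan is to prove the two implications separately. Sufficiency comes from a reduction to single Bruhat covers, each of which produces a $2\times 2$ minor. Necessity comes from an explicit rank-two totally nonnegative test matrix that detects a failure of the standard rank criterion for Bruhat order.

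\emph{Sufficiency.} Suppose $w \leq u$. By the chain property of Bruhat order there is a sequence $w = v^{(0)} < v^{(1)} < \cdots < v^{(k)} = u$. In this sequence each $v^{(m+1)}$ is obtained from $v^{(m)}$ by exchanging the letters in two positions $i<j$ with $v^{(m)}(i) < v^{(m)}(j)$. Writing $x_v \defeq x_{1,v(1)}\cdots x_{n,v(n)}$, the difference $x_w - x_u$ telescopes as $\sum_m (x_{v^{(m)}} - x_{v^{(m+1)}})$. It therefore suffices to treat one such step. If $v' = v\,(i\,j)$ with $i<j$ and $v(i)<v(j)$, then
\begin{equation*}
x_v - x_{v'} = \Bigl(\prod_{\ell \neq i,j} x_{\ell,v(\ell)}\Bigr)\bigl(x_{i,v(i)}x_{j,v(j)} - x_{i,v(j)}x_{j,v(i)}\bigr).
\end{equation*}
The second factor is the minor $\det(X_{\{i,j\},\{v(i),v(j)\}})$, whose rows and columns are listed in increasing order. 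Evaluated at a totally nonnegative $A$, this minor is nonnegative. The first factor is a product of entries, which are $1\times 1$ minors and hence also nonnegative. Each summand is therefore nonnegative, and so is the whole sum.

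\emph{Necessity.} For this direction I would use the well-known rank (tableau) criterion for Bruhat order. For $p,q\in[n]$ define
\begin{equation*}
r_v(p,q) \defeq \#\{\, i \leq p \,:\, v(i) \geq q \,\}.
\end{equation*}
Then $w \leq u$ if and only if $r_w(p,q) \leq r_u(p,q)$ for all $p,q$. This is consistent with the normalization in which $e$ is the minimum, since $r_e(p,q)$ is the smallest value possible.

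Suppose $w \not\leq u$, and choose $(p,q)$ with $r_w(p,q) > r_u(p,q)$. Fix $0<t<1$ and define $A=(a_{ij})$ by $a_{ij} = t$ if $i\leq p$ and $j \geq q$, and $a_{ij}=1$ otherwise. Every row of $A$ is either the all-ones vector or the vector $(1,\dotsc,1,t,\dotsc,t)$, so $A$ has rank at most $2$ and all minors of order at least $3$ vanish. A $2\times2$ minor with rows $i<i'$ and columns $j<j'$ is nonzero only when $i \leq p < i'$ and $j<q\leq j'$. In that case it equals $1\cdot 1 - t\cdot 1 = 1-t > 0$. The $1\times1$ minors are positive, so $A$ is totally nonnegative.

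On the other hand, $x_v(A) = t^{r_v(p,q)}$ for every $v$. Since $t<1$ and $r_w(p,q) > r_u(p,q)$, we get $x_w(A) - x_u(A) < 0$. Hence the difference polynomial is not totally nonnegative.

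The main point requiring care is the exact form and direction of the Bruhat rank criterion. I would verify it against the cover relation used in the sufficiency step: swapping an ascent $v(i)<v(j)$ at positions $i<j$ weakly increases every $r_v(p,q)$. This is what fixes the inequality direction consistently in both halves. The chain property of Bruhat order and the equivalence of the rank criterion I would cite from \cite{BBCoxeter}.
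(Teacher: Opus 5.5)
Your proof is correct. The paper gives no proof of this statement: it is imported from \cite{DGSBruhat} and used as a black box, so there is no internal argument to match.

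Your argument makes the statement self-contained, modulo the rank criterion, which the paper already states as Theorem~\ref{t:rank-matrix}. Your version of that criterion agrees with the paper's, because $\#\{i\le p : v(i)\ge q\} = p - \#\{i\le p : v(i)\le q-1\}$. So your condition $r_w\le r_u$ is the paper's condition $r_w\ge r_u$ at shifted indices. Your check that swapping an ascent weakly increases every $r_v(p,q)$ confirms the orientation.

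The necessity half is sound as written. The matrix with $t$ on the block $[1,p]\times[q,n]$ and $1$ elsewhere has rank at most $2$, every $2\times 2$ minor is $0$ or $1-t$, and $x_v(A)=t^{r_v(p,q)}$.

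The paper only ever uses the sufficiency direction. It passes from $w\le f_n(w)$ (Theorem~\ref{t:Bruhat balance case}) and from $g_{n,h}(u)\le u$ (Proposition~\ref{p:g general}) to the permanental inequalities. Your telescoping proof of that direction shows more than is stated. It writes $x_w-x_u$ as a sum of monomials times $2\times 2$ minors with increasing row and column indices. Hence $x_w(A)\ge x_u(A)$ already holds for every matrix $A$ with nonnegative entries and nonnegative $2\times 2$ minors. Consequently Theorem~\ref{t:general h TNN} holds for that larger class of matrices, which the black-box citation does not reveal.
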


Let $t=\floor{\tfrac{n}{2}}$. Observe that the positive term $\perm(X_{\floor{\frac{n}{2}},\floor{\frac{n}{2}}})\perm(X_{\ceil{\frac{n}{2}},\ceil{\frac{n}{2}}})$ contains monomials \begin{equation*}
x_{1,w_1} \cdots x_{t,w_{t}} x_{t+1,w_{t+1}}\cdots x_{n,w_{n}} \quad \text{such that} 
\end{equation*}  
\begin{align*}
            w_1&\cdots w_t \ \text{permutes} \ \{1,\dotsc,t\}, \\
            w_{t+1}&\cdots w_n \ \text{permutes} \ \{t+1,\dotsc,n\}, 
        \end{align*}
        or, in other words, using the Young subgroup, \begin{equation*}
            w=w_1\cdots w_n \in \mathfrak{S}_{1, \dotsc, t}\times \mathfrak{S}_{t+1, \dotsc, n}. 
        \end{equation*}
For convenience, we set 
\begin{equation}\label{def:An}
    \begin{split}
    &\mathfrak A_n = \mathfrak{S}_{1, \dotsc, t}\times \mathfrak{S}_{t+1, \dotsc, n} \\
    = \{&w \in \sn \ | \ w_1\cdots w_{t} \ \text{permutes} \ \{1,\dotsc,t\} \ \text{and} \ w_{t+1} \cdots w_n \ \text{permutes} \ \{t+1,\dotsc,n\} \}.
\end{split}
\end{equation}
Since $t=\floor{\tfrac{n}{2}}$, it is straightforward to see that $|\mathfrak A_n|=t!(n-t)!= \floor{\tfrac{n}{2}}!\ceil{\tfrac{n}{2}}!$. 

For example, $\mathfrak A_4 = \{1234, 1243, 2134, 2143\}$, and $|\mathfrak A_4| = 4$.  

The negative term $\perm(X_{[n]\cap 2\mathbb{Z},[n]\cap 2\mathbb{Z}})\perm(X_{[n]\setminus 2\mathbb{Z}, [n]\setminus 2\mathbb{Z}})$ contains monomials \begin{equation*}   
x_{1,u_1} \cdots x_{n,u_{n}} \quad \text{such that} \end{equation*}
\begin{align*}
            u_1u_3u_5&\cdots  \ \text{permutes} \ \{1,3,5,\cdots\} = [n]\setminus 2\mathbb{Z}, \\
            u_2u_4u_6&\cdots  \ \text{permutes} \ \{2,4,6,\cdots\} = [n]\cap 2\mathbb{Z}.
        \end{align*}
       
We call the set that consists of $u = u_1\cdots u_n$ satisfying the above conditions $\mathfrak B_n$, i.e., \begin{equation}\label{def:Bn}
    \mathfrak B_n = \{u\in \sn \ | \ u_1u_2\cdots u_n \ \text{takes odd and even integers alternately and $u_1$ is odd} \}
\end{equation}

 In the literature~\cite{TanimotoPAP}, $\mathfrak B_n$ is also referred to as the set of parity alternating permutations 
 \begin{equation*}
 \{w=w_1\dotsc w_n \in \sn \,|\, w_1 \text{ is odd and } w_i+w_{i+1} \text{ is odd for all }i=1,\dotsc n-1\}.
 \end{equation*}
 It is easy to see that $|\mathfrak B_n|=\floor{\tfrac{n}{2}}! \ceil{\tfrac{n}{2}}!$.
Then, the polynomial~\eqref{TNNpoly} has this number 
of positive terms (on the left) and negative terms (on the right). Therefore, if we can find a bijective map $f_n: \mathfrak A_n \rightarrow \mathfrak B_n$, such that $w \leq f_n(w) $ for all $w\in \mathfrak A_n$, by Theorem~\ref{pairing test}, the polynomial~\eqref{TNNpoly} is TNN.

Next, we introduce a few necessary maps that will be helpful. 

For $p, q\in[n]$, define an insertion map $\ins p$ and an insertion-swapping map $\inss q$ (when $n+1-q$ is even) given by
\begin{equation*}
\begin{aligned}
\ins p: \sn &\to \mathfrak{S}_{n+1},\\
w= w_1 \cdots w_p \cdots w_n&\mapsto w_1\cdots w_{p-1}(n+1) w_p \cdots w_n,\\
\text{inss}_q: \sn &\to \mathfrak{S}_{n+1},\\
w= w_1 \cdots w_q \cdots w_n&\mapsto w_1 \cdots w_{q-1} (n+1) w_{q+1}w_{q}w_{q+3}w_{q+2}\cdots w_{n}w_{n-1}.\\
\end{aligned}
\end{equation*}

For example, let $w=216435$. Then, \begin{gather*}
    \ins 3(w) = 2176435, \quad \inss3(w) = 2174653, 
\end{gather*}
but $\inss4(w)$ is not well-defined since $n+1-q=7-4=3$ is not even. 

Next, we introduce a straightforward property of the insertion map.

\begin{lem}
    Let $v,w \in \sn$, and $v \leq w$ in the Bruhat order. If $j \leq i$ as integers, then $\ins i(v) \leq \ins j(w) $. 
\end{lem}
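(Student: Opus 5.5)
We split the statement as $\ins i(v) \leq \ins i(w) \leq \ins j(w)$ and prove each comparison separately. For this we use the rank-matrix (tableau) criterion for Bruhat order. For $u \in \sn$ set
\begin{equation*}
r_u(a,b) = \#\{ k \leq a \,|\, u_k \geq b \}.
\end{equation*}
Then $u \leq u'$ if and only if $r_u(a,b) \leq r_{u'}(a,b)$ for all $a,b$.

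\textbf{First step: $\ins i(v) \leq \ins i(w)$.} Let $u = \ins i(v)$, which lies in $\mathfrak S_{n+1}$. For $b \leq n$ we compare the counts directly.
\begin{itemize}
\item If $a < i$, then $r_u(a,b) = r_v(a,b)$.
\item If $a \geq i$, then $r_u(a,b) = r_v(a-1,b) + 1$, where the extra $1$ comes from the letter $n+1$ sitting at position $i$.
\end{itemize}
For $b = n+1$, the value $r_u(a,n+1)$ is $[a \geq i]$, which depends only on $i$. The same formulas hold for $\ins i(w)$ with $w$ in place of $v$. So the entrywise inequalities $r_v \leq r_w$ transfer immediately to $r_{\ins i(v)} \leq r_{\ins i(w)}$.

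\textbf{Second step: $\ins i(w) \leq \ins j(w)$ for $j \leq i$.} It suffices to treat $j = i-1$ and then induct on $i-j$. Moving the letter $n+1$ from position $i$ to position $i-1$ changes
\begin{equation*}
\cdots w_{i-1}\,(n+1)\,w_i \cdots \quad\text{into}\quad \cdots (n+1)\,w_{i-1}\,w_i \cdots .
\end{equation*}
Here $w_{i-1} < n+1$ and the letter $n+1$ is moved to an earlier position, so $\ins j(w) = \ins i(w)\cdot t$ for a transposition $t$ that creates an inversion. Multiplying by a transposition that creates an inversion goes up in Bruhat order, so $\ins i(w) < \ins{i-1}(w)$. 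Alternatively, in rank terms only $r(i-1,b)$ changes, increasing by $1$ for $b > w_{i-1}$.

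\textbf{Edge cases and conclusion.} The main care needed is in the conventions. We must handle $i = n+1$ (appending $n+1$ at the end) and $i = 1$ (no $w_{i-1}$ to swap with), and check that the rank criterion is stated with the correct orientation ($u_k \geq b$ versus $u_k \leq b$). With those settled, chaining the two steps gives
\begin{equation*}
\ins i(v) \leq \ins i(w) \leq \ins j(w).
\end{equation*}
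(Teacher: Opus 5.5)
Your proposal is correct and follows the paper's route: the paper's entire proof is the chain $\ins i(v) \leq \ins i(w) \leq \ins j(w)$. You additionally justify each link, namely how rank counts transfer under insertion and that moving $n+1$ one position left is right multiplication by an inversion-creating adjacent transposition; your $u_k \geq b$ orientation of the rank criterion is the correct one, and the edge cases $i=1$ and $i=n+1$ are indeed trivial.
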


\begin{proof}
    $\ins i(v) \leq \ins i(w) \leq \ins j(w)$. 
\end{proof}

When $n$ is even, we can build the set $\mathfrak A_{n+1}$ from $\mathfrak A_n$, by applying the insertion map. 

\begin{obs}\label{obs:ins on An}
    Fix $n = 2k$ and let $\mathfrak A_n$ be defined as in~\eqref{def:An}. Then, we have \begin{equation*}
        \mathfrak A_{n+1} = \{\ins {i}(a) \ | \ i\in[k+1,n+1] \ \text{and} \ a\in \mathfrak A_n \}.
    \end{equation*}
\end{obs}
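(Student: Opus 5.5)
We prove the two inclusions separately, and then compare cardinalities. With $n=2k$ we have $t=\floor{\tfrac{n}{2}}=k$. Hence $\mathfrak A_n$ consists of the permutations whose first $k$ letters permute $[1,k]$ and whose last $k$ letters permute $[k+1,2k]$. For $n+1=2k+1$ we have $\floor{\tfrac{n+1}{2}}=k$ as well, so $\mathfrak A_{n+1}$ consists of the permutations whose first $k$ letters permute $[1,k]$ and whose last $k+1$ letters permute $[k+1,2k+1]$. This matching of the split point $t=k$ for $n$ and $n+1$ is what makes the observation work, and it is why $n$ is assumed even.

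For the inclusion ``$\supseteq$'', fix $a\in\mathfrak A_n$ and $i\in[k+1,n+1]$. The map $\ins i$ places the letter $n+1$ in position $i$ and shifts $a_i,\dotsc,a_n$ one position to the right. Since $i\geq k+1$, positions $1,\dotsc,k$ are untouched, so the first $k$ letters of $\ins i(a)$ are $a_1\cdots a_k$, which permute $[1,k]$. The remaining $k+1$ letters are $a_{k+1},\dotsc,a_n$ together with $n+1$, in some order, and these form exactly $[k+1,n+1]$. Hence $\ins i(a)\in\mathfrak A_{n+1}$. (The endpoint $i=n+1$ means appending $n+1$ at the end, which is the natural reading of $\ins{n+1}$.)

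For the inclusion ``$\subseteq$'', fix $w\in\mathfrak A_{n+1}$. The letter $n+1$ lies in $[k+1,n+1]$, so it occupies some position $i\in[k+1,n+1]$. Deleting it gives a permutation $a\in\sn$. The first $k$ letters of $a$ still permute $[1,k]$, and the last $k$ letters permute $[k+1,n]$, so $a\in\mathfrak A_n$, and $\ins i(a)=w$.

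Finally, the map $(i,a)\mapsto\ins i(a)$ is injective, because $i$ is recovered as the position of $n+1$ and $a$ by deleting it. This gives the cardinality check
\begin{equation*}
(k+1)\cdot k!\,k!=k!\,(k+1)!=|\mathfrak A_{n+1}|.
\end{equation*}
This check is not needed for the set equality, but it confirms the description and will be used in the later bijection. There is no real obstacle; the only point to watch is the parity condition, since for odd $n$ the split point would change.
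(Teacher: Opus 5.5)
Your proof is correct and follows the same argument as the paper. Both inclusions rest on the same facts. Inserting $n+1$ at a position $i\geq k+1$ leaves the first $k$ letters untouched. Conversely, in any $w\in\mathfrak A_{n+1}$ the letter $n+1$ occupies a position in $[k+1,n+1]$, so deleting it yields an element of $\mathfrak A_n$. Your extra injectivity and cardinality remark is not part of the paper's proof. It does, however, supply the uniqueness of the pair $(a,p)$ that the paper later relies on in Algorithm~\ref{keyalg}.
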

\begin{proof}
    Let $a\in \mathfrak A_n$. It is easy to see that the first $k$ letters of $\ins i(a)$ with $i\in [k+1,n+1]$ are numbers in $[1,k]$. Therefore, \begin{equation*}
        \mathfrak A_{n+1} \supset \{\ins {i}(a) \ | \ i\in[k+1,n+1] \ \text{and} \ a\in \mathfrak A_n \}.
    \end{equation*}
    
    On the other hand, let $w=w_1\cdots w_{n+1}\in \mathfrak A_{n+1}$ with $w_j = n+1$. Then, $j\in[k+1,n+1]$ by definition. Hence, the permutation $\Bar{w} = \Bar{w}_1\cdots \Bar{w}_n=w_1\cdots w_{j-1}w_{j+1} \cdots w_{n+1} \in \mathfrak A_n$. Thus, $\ins {j}(\Bar{w}) = w$, which gives \begin{equation*}
        \mathfrak A_{n+1} \subset \{\ins {i}(a) \ | \ i\in[k+1,n+1] \ \text{and} \ a\in \mathfrak A_n \}.
    \end{equation*}
\end{proof}

However, this construction of $\mathfrak{A}_{n+1}$ from $\mathfrak{A}_n$ does not hold when $n$ is odd. For example, consider $321654 \in \mathfrak A_{6}$. Removing $6$, we get $32154$. However, $32154$ is not in $\mathfrak A_5$ since $3$ is at position $1$. So we need a new set to build $\mathfrak A_6$, which we now define.

Let $\tilde{t} = \ceil{\frac{n}{2}}$. Define \begin{equation}\label{def:tildeAn}
\begin{split}
    &\tilde{\mathfrak A}_n = \mathfrak{S}_{1, \dotsc, \tilde{t}}\times \mathfrak{S}_{\tilde{t}+1, \dotsc, n} \\
    = \{&w \in \sn \ | \ w_1\cdots w_{\tilde{t}} \ \text{permutes} \ \{1,\dotsc,\tilde{t}\} \ \text{and} \ w_{\tilde{t}+1} \cdots w_n \ \text{permutes} \ \{\tilde{t}+1,\dotsc,n\} \}.
\end{split}
    \end{equation} 
    Now, let's look at the example $321654$ again. Removing $6$, we get $32154$ which is in $\tilde{\mathfrak A_5}$. Therefore, we can build the set $\mathfrak A_{n+1}$ from $\tilde{\mathfrak A}_n$, by applying the insertion maps. 
    
\begin{obs}\label{obs:ins on tildeAn}
    Fix an odd integer $n=2k+1$ and let $\tilde{\mathfrak A}_n$ be defined as in~\eqref{def:tildeAn}. Then, we have \begin{equation*}
        \mathfrak A_{n+1} = \{\ins {i}(\tilde{a}) \ | \ i\in[k+2,n+1] \ \text{and} \ \tilde{a}\in \tilde{\mathfrak A}_n \}.
    \end{equation*} 
\end{obs}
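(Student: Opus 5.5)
We prove the two inclusions directly, following the proof of Observation~\ref{obs:ins on An}. Here $n=2k+1$, so $n+1=2k+2$ and $\floor{\frac{n+1}{2}}=k+1$. Hence $\mathfrak A_{n+1}$ consists of the permutations whose first $k+1$ letters permute $[1,k+1]$ and whose last $k+1$ letters permute $[k+2,2k+2]$. On the other side, $\tilde t=\ceil{\frac{n}{2}}=k+1$, so $\tilde{\mathfrak A}_n$ consists of the permutations of $[n]$ whose first $k+1$ letters permute $[1,k+1]$ and whose last $k$ letters permute $[k+2,2k+1]$.

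For the inclusion $\supseteq$, fix $\tilde a\in\tilde{\mathfrak A}_n$ and $i\in[k+2,n+1]$. The map $\ins i$ places the letter $n+1=2k+2$ in position $i\ge k+2$ and shifts the letters in positions $i,\dotsc,n$ one step to the right. Since $i\ge k+2$, the first $k+1$ positions are unchanged, so they still carry $\tilde a_1\cdots\tilde a_{k+1}$, which permutes $[1,k+1]$. Positions $k+2,\dotsc,n+1$ then carry the letters $\tilde a_{k+2},\dotsc,\tilde a_n$ together with $2k+2$, which together permute $[k+2,2k+2]$. Therefore $\ins i(\tilde a)\in\mathfrak A_{n+1}$.

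For the inclusion $\subseteq$, take $w\in\mathfrak A_{n+1}$ and let $j$ be the position with $w_j=n+1$. Because $n+1\in[k+2,2k+2]$, the definition of $\mathfrak A_{n+1}$ forces $j\in[k+2,n+1]$. Delete this letter to obtain $\bar w=w_1\cdots w_{j-1}w_{j+1}\cdots w_{n+1}\in\mathfrak S_n$. Its first $k+1$ letters are $w_1,\dotsc,w_{k+1}$, which permute $[1,k+1]$. Its remaining $k$ letters are $[k+2,2k+2]\ssm\{2k+2\}=[k+2,n]$, so $\bar w\in\tilde{\mathfrak A}_n$. Finally, $\ins j(\bar w)=w$ by construction.

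There is no real obstacle here. The only point needing care is the index bookkeeping: $\floor{\frac{n+1}{2}}=\ceil{\frac{n}{2}}=k+1$ when $n$ is odd, and this equality is exactly why $\tilde{\mathfrak A}_n$, rather than $\mathfrak A_n$, is the correct starting set. As a sanity check, the counts agree: $(k+1)\cdot(k+1)!\,k!=(k+1)!^2=|\mathfrak A_{n+1}|$. This shows that the map $(i,\tilde a)\mapsto\ins i(\tilde a)$ is in fact a bijection, since distinct pairs give distinct images (the position of $n+1$ recovers $i$, and deleting it recovers $\tilde a$).
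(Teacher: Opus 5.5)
Your proof is correct and takes the same route as the paper, which says only that the argument mirrors Observation~\ref{obs:ins on An}: two inclusions, using the unchanged first $k+1$ letters and deletion of the letter $n+1$, with the bookkeeping $\floor{\frac{n+1}{2}}=\ceil{\frac{n}{2}}=k+1$ handled correctly. Your closing remark that the pair $(i,\tilde a)$ is recovered uniquely is a useful addition, since Algorithm~\ref{keyalg} relies on exactly that uniqueness.
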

\begin{proof}
    The proof is similar to the proof of Observation~\ref{obs:ins on An}. 
\end{proof}

Similar to $\mathfrak A_n$, we can build $\mathfrak B_{n+1}$ from $\mathfrak B_{n}$ by applying insertion-swapping maps. 

\begin{obs}\label{obs:ins on Bn}
    Let $\mathfrak B_n$ be defined as in~\eqref{def:Bn}. Then, we have \begin{equation*}
        \mathfrak B_{n+1} =\begin{cases}
            \{\inss{j}(b) \ | \ j\in[1,n+1]\setminus 2\mathbb{Z} \ \text{and} \ b\in \mathfrak B_n \} \quad\text{when $n$ is even,}\\
            \{\inss{j}(b) \ | \ j\in[1,n+1]\cap 2\mathbb{Z} \ \text{and} \ b\in \mathfrak B_n \} \quad \text{when $n$ is odd}. 
        \end{cases}
    \end{equation*}
\end{obs}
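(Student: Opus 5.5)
The plan is to prove both inclusions directly, mirroring the proof of Observation~\ref{obs:ins on An}. The first step is to record how $\inss q$ moves letters. Write $u=\inss q(b)$ for $b\in\mathfrak S_n$ with $n+1-q$ even. Then $u_i=b_i$ for $i<q$, $u_q=n+1$, and for each $m\ge 0$ with $q+2m+1\le n+1$ we have
\begin{equation*}
u_{q+2m+1}=b_{q+2m+1},\qquad u_{q+2m+2}=b_{q+2m}.
\end{equation*}
Here the last pair is read as $u_n=b_n$ and $u_{n+1}=b_{n-1}$, matching the tail $w_nw_{n-1}$ in the definition. In words, the letters of $b$ in positions congruent to $q+1$ mod $2$ stay in place. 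The letters in positions congruent to $q$ mod $2$, from position $q$ on, shift right by exactly two.

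The key consequence is that every letter of $b$ either stays in its position or moves by an even amount. So $u_i\equiv i \pmod 2$ for all $i\neq q$ whenever $b\in\mathfrak B_n$. The remaining position $q$ holds $n+1$, which has the right parity exactly when $q\equiv n+1\pmod 2$. If $n$ is even this means $q$ is odd, and if $n$ is odd it means $q$ is even. In both cases $n+1-q$ is then even, so $\inss q$ is defined. This gives the inclusion $\supseteq$ in both cases of the statement.

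For $\subseteq$, take $u\in\mathfrak B_{n+1}$ and let $j$ be the position with $u_j=n+1$. Parity forces $j\equiv n+1\pmod 2$, which is the required parity class for $j$, and again $n+1-j$ is even. Define $b\in\mathfrak S_n$ by inverting the displayed rule:
\begin{equation*}
b_i=u_i \ (i<j),\qquad b_{j+2m+1}=u_{j+2m+1},\qquad b_{j+2m}=u_{j+2m+2}.
\end{equation*}
The indices $j+2m$ and $j+2m+1$ exhaust $[j,n]$ because $n-j+1$ is even. Every letter again moves by an even amount, so $b_i\equiv i\pmod 2$ and $b\in\mathfrak B_n$. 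By construction $\inss j(b)=u$.

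The main obstacle is only the index bookkeeping in the first step. I need to check that the final transposed pair $w_nw_{n-1}$ fits the general pattern and that the parity of $n+1-q$ makes the pairs tile $[q,n]$ exactly. Once that is verified, both directions are immediate parity checks. As a sanity check, $(j,b)\mapsto\inss j(b)$ is injective, since $j$ is the position of $n+1$ and $b$ is recovered uniquely. The count of admissible $j$ times $|\mathfrak B_n|$ matches $|\mathfrak B_{n+1}|=\floor{\tfrac{n+1}{2}}!\ceil{\tfrac{n+1}{2}}!$.
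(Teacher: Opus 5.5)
Your proposal is correct and follows the same route as the paper. You prove $\supseteq$ by a parity check on $\inss{j}(b)$, and you prove $\subseteq$ by locating $n+1$ in $u$ and undoing the adjacent swaps, which is exactly the paper's $\bar u = u_1\cdots u_{i-1}u_{i+2}u_{i+1}u_{i+4}u_{i+3}\cdots u_{n+1}u_n$. The only differences are cosmetic: you treat both parities of $n$ at once via $j\equiv n+1 \pmod 2$ and make explicit the ``each letter moves by $0$ or $2$ positions'' reasoning, whereas the paper writes out the even case and calls the odd case similar.
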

\begin{proof}
    We will give a proof for when $n$ is even. 

    Let $n$ be even. Since $j\in [1,n+1] \setminus 2\mathbb{Z}$, $n+1-j$ is even. So $\inss j(b)$ is well-defined. 

    Now, since $n$ is even, and $j$ is odd, by the definition of the insertion-swapping function $\inss j$, $\inss j(b)$ has odd numbers at odd positions and even numbers at even positions. Therefore, \begin{equation*}
        \mathfrak B_{n+1} \supset \{\inss {j}(b) \ | \ j\in[1,n+1]\setminus 2\mathbb{Z} \ \text{and} \ b\in \mathfrak B_n \}.
    \end{equation*}

    On the other hand, let $u = u_1\cdots u_{n+1} \in \mathfrak B_{n+1}$ with $u_i = n+1$. Then, $i$ and $n+1$ are both odd. Then we have, \begin{equation*}
        \Bar{u} = \Bar{u}_1\cdots \Bar{u}_n=u_1\cdots u_{i-1} u_{i+2} u_{i+1} u_{i+4}u_{i+3} \cdots u_{n+1}u_{n} \in \mathfrak B_n.
    \end{equation*} Thus, $\inss {i}(\Bar{u}) = u $, which gives \begin{equation*}
        \mathfrak B_{n+1} \subset \{\inss {j}(b) \ | \ j\in[1,n+1]\setminus 2\mathbb{Z} \ \text{and} \ b\in \mathfrak B_n \}.
    \end{equation*}

For odd $n$, the proof is similar. \end{proof}

Let $w\in \sn$, one can also see that the \emph{inversion} 
\begin{equation}\label{eq:invdef}
\inv (w) \defeq |\{(w_i,w_j) \ | \ i < j \text{ and } w_i > w_j \}|
\end{equation} increases after applying $\ins p $ and $\inss q$ on $w$. Then, we have the following lemmas. 

\begin{lem}\label{l:ins inv}
    Let $w \in \sn$, and fix $p \in [1,n+1]$. Then, we have \begin{equation*}\inv (\ins p (w)) = \inv(w) + n-p+1.\end{equation*} 
\end{lem}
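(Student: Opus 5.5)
We will count inversions of $\ins p(w)$ by sorting its pairs of positions according to whether they involve the new letter $n+1$.

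Write $v=\ins p(w)=v_1\cdots v_{n+1}$, so that
\begin{equation*}
v_i=\begin{cases} w_i & \text{if } i<p,\\ n+1 & \text{if } i=p,\\ w_{i-1} & \text{if } i>p.\end{cases}
\end{equation*}
We split the pairs of positions $i<j$ in $v$ into two classes.

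\emph{Pairs not containing position $p$.} Deleting position $p$ from $v$ gives back $w$, and this deletion preserves the relative order of all remaining positions. Hence these pairs of $v$ correspond bijectively and order-preservingly to pairs of positions of $w$, with the same letters. So the number of inversions among them is exactly $\inv(w)$.

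\emph{Pairs containing position $p$.}
\begin{itemize}
\item A pair $(i,p)$ with $i<p$ has $v_i\le n<n+1=v_p$, so it is never an inversion.
\item A pair $(p,j)$ with $j>p$ has $v_p=n+1>v_j$, so it is always an inversion.
\end{itemize}
The number of positions $j$ with $p<j\le n+1$ is $n+1-p$.

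Adding the two classes gives
\begin{equation*}
\inv(\ins p(w))=\inv(w)+n-p+1.
\end{equation*}

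This is a direct computation with no real obstacle. The only point needing care is the index shift for positions after $p$: it is what guarantees that the pairs avoiding $p$ keep their relative order and so contribute exactly $\inv(w)$.
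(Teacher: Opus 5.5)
Your proof is correct and follows the paper's argument: inserting $n+1$ does not change the inversion status of any pair of letters of $w$, and the new letter at position $p$ creates exactly the $n-p+1$ inversions with the letters to its right. You are slightly more explicit than the paper in checking that pairs $(i,p)$ with $i<p$ contribute nothing, which the paper leaves implicit.
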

\begin{proof}
    Let $w = w_1\cdots w_n$. Then $\ins p w = w_1\cdots w_{p-1}(n+1)w_{p}\cdots w_n$. Therefore, $(w_i,w_j)$ counts an inversion in $w$ if and only if $(w_i,w_j)$ counts an inversion in $\ins p (w)$. Since $n+1$ is at position $p$ of $\ins p (w)$, $(n+1, w_p), (n+1, w_{p+1}) , \dotsc , (n+1, w_n)$ count inversions in $\ins p (w)$. Hence we have \begin{equation*}
        \inv (\ins p (w)) = \inv(w) + n-p+1.
    \end{equation*}
\end{proof}

\begin{lem}\label{l:inss inv}
    Let $w\in \sn$, and fix $q \in [1,n+1]$ such that $n-q+1$ is even. Then, we have 
    \begin{equation*}
        \inv (\inss{q} (w)) \geq \inv (w) + \frac{n-q+1}{2}.
    \end{equation*}
\end{lem}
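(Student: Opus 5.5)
Write $u=\inss q(w)=w_1\cdots w_{q-1}(n+1)w_{q+1}w_qw_{q+3}w_{q+2}\cdots w_nw_{n-1}$ and compare its inversions with those of $w$. We do this in two stages: first insert $n+1$ at position $q$, which by Lemma~\ref{l:ins inv} gives $\ins q(w)$ with exactly $\inv(w)+n-q+1$ inversions. Then $u$ is obtained from $\ins q(w)$ by applying the $m\defeq\frac{n-q+1}{2}$ disjoint adjacent transpositions that swap the letters $w_q,w_{q+1}$, then $w_{q+2},w_{q+3}$, and so on up to $w_{n-1},w_n$. The positions of these pairs in $\ins q(w)$ are $(q+1,q+2),\dotsc,(n,n+1)$; since $n-q+1$ is even, these pairs tile the positions after $q$ exactly.

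Next we use that swapping two adjacent letters changes the inversion number by exactly $\pm1$, and that these swaps act on disjoint adjacent pairs. The relative order of any two letters lying in different pairs is unchanged, so only the $m$ within-pair relations change. Hence
\begin{equation*}
\inv(u)=\inv(\ins q(w))+\sum_{j=1}^{m}\epsilon_j,\qquad \epsilon_j\in\{+1,-1\},
\end{equation*}
and therefore $\inv(u)\ge \inv(w)+(n-q+1)-m=\inv(w)+\frac{n-q+1}{2}$.

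The only point requiring care is the claim that inversions between letters in distinct pairs, and between those letters and the prefix $w_1\cdots w_{q-1}(n+1)$, are unaffected. This holds because each swap permutes letters only within a block of two consecutive positions. The prefix, including $n+1$, stays to the left of all swapped letters, so the inversions involving $n+1$ remain exactly $n-q+1$. I expect no real obstacle here. The main thing to get right is the bookkeeping of positions after the shift caused by inserting $n+1$, to confirm that the swapped pairs are genuinely adjacent and disjoint.
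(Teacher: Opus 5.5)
Your proposal is correct and is the same argument as the paper's. Lemma~\ref{l:ins inv} gives $n-q+1$ new inversions from inserting $n+1$, and the $\frac{n-q+1}{2}$ adjacent swaps then remove at most one inversion each. You additionally spell out why that bound holds: the swaps act on disjoint adjacent pairs, and each changes the inversion count by exactly $\pm 1$, a point the paper asserts in a single line.
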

\begin{proof}
    By Lemma~\ref{l:ins inv}, inserting $n+1$ at position $q$ creates $n-q+1$ inversions. After swapping the letters after position $q$, we lose at most $\tfrac{n-q+1}{2}$ inversions. Therefore, \begin{equation*}
        \inv (\inss{q} (w)) \geq \inv (w) + n-q+1 - \frac{n-q+1}{2} = \inv (w) + \frac{n-q+1}{2}.
    \end{equation*}\end{proof}

For example, consider $w = 126435$, and $p=q=3$. Then, we have $n=6$, and \begin{gather*}
    \ins p (w) = 2176435, \quad \inss q (w) = 2174653. 
\end{gather*}We see that 
    \begin{gather*}
        \inv (w) = 4, \\ 
        \inv (\ins 3 (w)) = 8 = \inv (w) + n -p +1, \\
        \inv (\inss 3 (w)) = 8 \geq \inv (w) + \frac{n-q+1}{2}. 
\end{gather*}

Observe that $|\mathfrak A_n|=|\tilde{\mathfrak A}_n|=\floor{\tfrac{n}{2}}!\ceil{\tfrac{n}{2}}!$. Then there exists a bijective map between $\mathfrak A_n$ and $\tilde{\mathfrak A}_n$. Let $w =  w_1 \cdots w_n \in \sn$. Define two maps $U$ and $R$ given by
\begin{equation*}
\begin{aligned}
U: \sn &\to \sn,\\
w &\mapsto w^U\defeq (n+1 -w_1)(n+1-w_2)\cdots (n+1-w_n),\\
R: \sn &\to \sn,\\
w &\mapsto w^R\defeq w_n \cdots w_1.\\
\end{aligned}
\end{equation*}

We call the first map $U$ the upside-down of $w$, and the second map $R$ the reverse of $w$. For example if $w = 216435 $, then $w^U = 561342$ and $w^R = 534612$. It is not difficult to see that the two maps are both involutions and the two maps commute. We abbreviate $U\circ R = R \circ U =RU$. This induces bijective maps between $\mathfrak A_n$ and $\tilde{\mathfrak A}_n$, and $\mathfrak B_n$ to itself.

\begin{prop}\label{p: U and R}
The map $U \circ R: \sn \rightarrow \sn$ sending $w \mapsto w^{UR}$ restricts to a bijection between
$\mathfrak A_n$, $\tilde{\mathfrak A}_n$, and to an involution on $\mathfrak B_n$.
\end{prop}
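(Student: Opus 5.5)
The plan is to compute the effect of $UR$ on positions and values in one line, and then check membership in each set directly. For $w=w_1\cdots w_n$ we have
\begin{equation*}
w^{UR} = (n+1-w_n)(n+1-w_{n-1})\cdots(n+1-w_1),
\end{equation*}
so the letter in position $i$ of $w^{UR}$ is $n+1-w_{n+1-i}$. Thus $UR$ applies the involution $x\mapsto n+1-x$ simultaneously to positions and to values. Since $U$ and $R$ are commuting involutions, $UR$ is itself an involution on $\sn$, and in particular a bijection. It therefore suffices to show that $UR$ maps $\mathfrak A_n$ into $\tilde{\mathfrak A}_n$, maps $\tilde{\mathfrak A}_n$ into $\mathfrak A_n$, and maps $\mathfrak B_n$ into $\mathfrak B_n$. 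Being an involution, it will then restrict to mutually inverse bijections $\mathfrak A_n\leftrightarrow\tilde{\mathfrak A}_n$ and to an involution of $\mathfrak B_n$.

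For $\mathfrak A_n$, let $t=\floor{\frac n2}$ and $\tilde t=\ceil{\frac n2}=n-t$. If $w\in\mathfrak A_n$, then positions $[t+1,n]$ carry exactly the values $[t+1,n]$. Under $x\mapsto n+1-x$, the interval $[t+1,n]$ is sent to $[1,n-t]=[1,\tilde t]$. Hence positions $[1,\tilde t]$ of $w^{UR}$ carry exactly the values $[1,\tilde t]$, and the complementary positions carry the complementary values. This means $w^{UR}\in\tilde{\mathfrak A}_n$. The same argument with $t$ and $\tilde t$ interchanged shows that $UR$ maps $\tilde{\mathfrak A}_n$ into $\mathfrak A_n$.

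For $\mathfrak B_n$, the condition is that $i$ and $u_i$ have the same parity for every $i$. Position $i$ of $u^{UR}$ holds $n+1-u_{n+1-i}$. Since $u_{n+1-i}\equiv n+1-i \pmod 2$, we get
\begin{equation*}
n+1-u_{n+1-i}\equiv n+1-(n+1-i)=i \pmod 2.
\end{equation*}
So $u^{UR}\in\mathfrak B_n$, independently of the parity of $n$.

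There is no real obstacle here; the only care needed is the bookkeeping identity $n-t=\tilde t$, which accounts for the swap between $\mathfrak A_n$ and $\tilde{\mathfrak A}_n$ when $n$ is odd. When $n$ is even the two sets coincide and the statement is just that $\mathfrak A_n$ is $UR$-stable.
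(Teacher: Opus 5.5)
Your proof is correct and follows essentially the same route as the paper. The paper also tracks where the block $[t+1,n]$ of an element of $\mathfrak A_n$ goes under reversal and complementation to land in $\tilde{\mathfrak A}_n$ (and symmetrically back), and it handles $\mathfrak B_n$ with the same parity computation $n+1-u_{n+1-i}\equiv i \pmod 2$ together with the involution property. Your explicit use of $n-t=\tilde t$ is in fact slightly cleaner than the paper's write-up, which writes the first block of $w^{R}$ as having $t$ entries rather than $n-t$.
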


\begin{proof}
    Consider the map $U\circ R \big|_{\mathfrak A_n}$, we claim that its inverse is $U\circ R \big|_{\tilde{\mathfrak A}_n}$. 

    To see this, consider $w\in \mathfrak A_n$, and let $t= \floor{\tfrac{n}{2}}$. Since \begin{equation*}
        \{w_{t+1},\dotsc,w_n\} = \{t+1,\dotsc , n\},
    \end{equation*} we have \begin{equation*}
        \{w^R_1,\dotsc,w^R_t\} =\{t+1,\dotsc,n\}. 
    \end{equation*}Hence, we have \begin{equation*}
        \{w^{RU}_1,\dotsc,w^{RU}_t\} = \{1,\dotsc,t\},. 
    \end{equation*}Thus, we have $w^{RU} \in \tilde{\mathfrak A}_n$. Similarly, if $\tilde{w} \in \tilde{\mathfrak A}_n$, then $\tilde{w}^{RU} \in \mathfrak A_n$. Therefore, $U\circ R$ restricts to a bijection between $\mathfrak A_n$ and $\tilde{\mathfrak A}_n$.

    Next, we show that $U\circ R \big|_{\mathfrak B_n}$ is an involution. 

    Let $u = u_1 \cdots u_n \in \mathfrak B_n$. Then, $u_i$ and $i$ have the same parity for all $1\leq i \leq n$. Applying $U\circ R$ on $u$, we have 
    \begin{equation*}
         \tilde{u} \defeq u^{RU} = (n+1 - u_n) \cdots (n+1 - u_1). 
    \end{equation*}
    Since $u_i$ and $i$ have the same parity for all $i$, \begin{equation}\label{eq:parity of sigma}
        n+1 - u_i = 1 + n-i +i -u_i \equiv 1+n-i \quad \pmod{2} .
    \end{equation}Write $\tilde{u} = \tilde{u}_1 \cdots \tilde{u}_n$, then $\tilde{u}_j = n+1 - u_{n+1-j}$ for all $1\leq j \leq n$. By~\eqref{eq:parity of sigma}, we have \begin{equation*}
        n+1 - u_{n+1-j} \equiv 1+n-(n+1-j) = j \quad \pmod{2}.
    \end{equation*} Therefore, $\tilde{u}_j$ and $j$ have the same parity. Therefore, $\tilde{u} \in \mathfrak B_n$. Now, apply $U \circ R\big|_{B-n}$ on $\tilde{u}$, we have \begin{equation*}
        \tilde{u}^{RU} = (n+1 -(n+1-u_1)) \cdots (n+1 -(n+1-u_n)) = u_1 \cdots u_n = u.
    \end{equation*} Therefore $U \circ R$ restricts to an involution on $\mathfrak B_n$.
    \end{proof}

In addition, the map $U\circ R$ preserves the Bruhat order. 

\begin{prop}\label{upside-down reverse}
    Let $v, \ w \in \sn$ with $v \leq w$ under the Bruhat order. Then, $v^{RU} \leq w^{RU} $ under the Bruhat order. 
\end{prop}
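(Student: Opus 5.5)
We need to show that $v \leq w$ implies $v^{RU}\leq w^{RU}$. The plan is to use the standard characterization of Bruhat order via rank matrices (the tableau criterion). For $w\in\sn$ and $i,j\in[n]$, set
\begin{equation*}
  r_w(i,j) \defeq \#\{p\leq i \,|\, w_p \geq j\}.
\end{equation*}
Then $v\leq w$ if and only if $r_v(i,j)\leq r_w(i,j)$ for all $i,j$. This is the form in which Bruhat order is compared with $\permmon xw$ monomials. Equivalently, we can count the points of the permutation matrix of $w$ lying in the northeast-type rectangle $[1,i]\times[j,n]$.

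The key observation is that $w\mapsto w^{RU}$ acts on the permutation matrix by rotation through $180^\circ$. Indeed, $w^{RU}_p = n+1-w_{n+1-p}$, so the point $(p,w_p)$ is sent to $(n+1-p,\, n+1-w_p)$. Under this rotation, the rectangle $[1,i]\times[j,n]$ is carried onto the rectangle $[n+1-i,n]\times[1,n+1-j]$. Hence
\begin{equation*}
  r_{w^{RU}}(i,j) = \#\{p \geq n+1-i \,|\, w_p \leq n+1-j\}.
\end{equation*}
The next step expresses this lower-left count in terms of $r_w$. Among the positions $p\geq i' \defeq n+1-i$, there are $n-i'+1$ values in total. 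Of these, $(n+1-j)-\#\{p< i' \,|\, w_p\leq n+1-j\}$ are at most $n+1-j$. The subtracted quantity equals $(i'-1) - r_w(i'-1,\, n+2-j)$.

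Putting these together gives
\begin{equation*}
  r_{w^{RU}}(i,j) = (n+1-j) - (n-i) + r_w(n-i,\, n+2-j) = i+1-j + r_w(n-i,n+2-j).
\end{equation*}
The boundary cases $n-i=0$ or $n+2-j=n+1$ are handled by interpreting $r_w$ as $0$ there. The correction term $i+1-j$ depends only on $(i,j)$ and not on $w$. Therefore the inequality $r_v\leq r_w$ holding entrywise transfers directly to $r_{v^{RU}}\leq r_{w^{RU}}$, which gives $v^{RU}\leq w^{RU}$.

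The only delicate point is the index bookkeeping in this complement computation, including the off-by-one shifts and the boundary values. I would verify it on a small example such as $w=216435$ before writing it out.

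An alternative route avoids the computation entirely. Bruhat order is generated by covers $v < v\,t_{ab}$ with $\ell$ increasing, where $t_{ab}$ swaps positions $a<b$ with $v_a<v_b$. Applying $RU$ sends this transposition to the one swapping positions $n+1-b < n+1-a$. The values $n+1-v_b < n+1-v_a$ appear there in increasing order, so the swap again goes up in Bruhat order. Either argument suffices; I would present the transposition argument as the main proof because it is shorter, since a chain of such covers connects $v$ to $w$.
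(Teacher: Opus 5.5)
Your proposal is correct, and the transposition argument you plan to present as the main proof is essentially the paper's proof. The paper writes $u^{RU}=w_0uw_0$ and cites that conjugation by $w_0$ is a Bruhat-order automorphism; your computation $(v\,t_{ab})^{RU}=v^{RU}\,t_{n+1-b,\,n+1-a}$, with the ascent preserved, verifies exactly that fact by hand. Your rank-matrix alternative is also valid, and its complement identity is the same one the paper proves later as Lemma~\ref{l:rank RU} for the southeast quadrant.
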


\begin{proof}
    For all $u\in \sn$, $u^{RU} = w_0vw_0$, where $w_0 = n(n-1)\cdots 1$. Also, we have $v \leq w$ under the Bruhat order if and only if 
    \begin{equation*}
    w_0vw_0 \leq w_0 w w_0.
    \end{equation*} (See~\cite[p.~36--38]{BBCoxeter}.)
\end{proof}

One can also see that $U\circ R$ preserves inversions. 

\begin{lem}\label{l:RU inv}
    For all $w\in \sn$, $\inv (w) = \inv (w^{RU})$. 
\end{lem}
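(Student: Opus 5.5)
The plan is to compare inversions pair by pair between $w$ and $w^{RU}$. By definition,
\begin{equation*}
w^{RU}_j = n+1-w_{n+1-j}, \qquad j=1,\dotsc,n.
\end{equation*}
I will define a map on position pairs by sending $(i,j)$ with $i<j$ to $(i',j') \defeq (n+1-j,\,n+1-i)$. Then $i'<j'$ again, and this map is an involution on the set of pairs of positions $\{(i,j) \,|\, 1\leq i<j\leq n\}$, so it is a bijection of that set.

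Next I will check that this bijection carries inversions of $w$ exactly onto inversions of $w^{RU}$. Take a pair $(i,j)$ with $i<j$. Then
\begin{equation*}
w^{RU}_{i'} = n+1-w_{j}, \qquad w^{RU}_{j'} = n+1-w_{i}.
\end{equation*}
The condition $w^{RU}_{i'} > w^{RU}_{j'}$ is therefore $n+1-w_j > n+1-w_i$, which is equivalent to $w_i > w_j$. Thus $(i,j)$ contributes an inversion of $w$ in the sense of \eqref{eq:invdef} if and only if $(i',j')$ contributes an inversion of $w^{RU}$. Since the position map is a bijection, the two inversion sets have the same cardinality, and $\inv(w)=\inv(w^{RU})$.

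As a cross-check, I could instead use the identity $w^{RU}=w_0ww_0$ from the proof of Proposition~\ref{upside-down reverse}. Conjugation by $w_0$ is the diagram automorphism of $\sn$ sending $s_i\mapsto s_{n-i}$, which preserves Coxeter length, and $\inv$ is the Coxeter length. The direct bijection above is self-contained, however, and I will present that argument.

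There is no real obstacle here. The only care needed is the index bookkeeping, in particular making sure that reversal followed by complementation restores the original order of the pair, so that the inequality direction is preserved and not flipped.
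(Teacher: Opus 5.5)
Your proof is correct and follows essentially the same route as the paper. Both arguments match the position pair $(i,j)$ in $w$ with $(n+1-j,\,n+1-i)$ in $w^{RU}$ and check that the reversal and the complementation together preserve the inversion condition; your observation that this position map is an involution gives the bijection directly, where the paper checks the two directions separately.
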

\begin{proof}
    Let $w \in \sn$ and $w^{RU} \defeq v$. Then, we have \begin{equation*}
        v = v_1 \cdots v_n = (n+1 - w_n) \cdots (n+1 -w_1). 
    \end{equation*} Let $(v_i,v_j)$ count as an inversion, then $(n+1-w_{n+1-i}, n+1-w_{n+1-j})$ counts as an inversion. Hence, $n+1-w_{n+1-i} > n+1-w_{n+1-j}$, which implies $w_{n+1-j} > w_{n+1-i}$. Since $i<j$, we have $n+1-j < n+1-i$. Then $(w_{n+1-j},w_{n+1-i})$ counts as an inversion. Similarly, assume $(w_i,w_j)$ is an inversion in $w$. Observe that $w=v^{RU}$, then $(v_{n+1-j},v_{n+1-i})$ counts as an inversion in $v = w^{RU}$. Therefore, we have\begin{equation*}
        \inv (v) = \inv (w^{RU}) = \inv (w). 
    \end{equation*}
    \end{proof}

In order to show \eqref{TNNpoly} is TNN, we aim to define a bijective map $f_n: \mathfrak{A}_n\rightarrow \mathfrak{B}_n$ such that $w \leq f_n (w)$ for all $w\in \mathfrak A_n$. Although many such bijections appear in the literature, none is suitable for our purpose. (See~\cite{KFRFPAP}.) Therefore, we define the required bijective maps inductively and illustrate them with a diagram. 

\begin{center}

\tikzset{every picture/.style={line width=0.75pt}} %set default line width to 0.75pt        

\begin{tikzpicture}[x=0.75pt,y=0.75pt,yscale=-1.1,xscale=1.1]

\draw    (300,380) -- (300,342) ;
\draw [shift={(300,340)}, rotate = 90] [color={rgb, 255:red, 0; green, 0; blue, 0 }  ][line width=0.75]    (10.93,-3.29) .. controls (6.95,-1.4) and (3.31,-0.3) .. (0,0) .. controls (3.31,0.3) and (6.95,1.4) .. (10.93,3.29)   ;
\draw    (170,380) -- (170,342) ;
\draw [shift={(170,340)}, rotate = 90] [color={rgb, 255:red, 0; green, 0; blue, 0 }  ][line width=0.75]    (10.93,-3.29) .. controls (6.95,-1.4) and (3.31,-0.3) .. (0,0) .. controls (3.31,0.3) and (6.95,1.4) .. (10.93,3.29)   ;
\draw    (300,460) -- (300,422) ;
\draw [shift={(300,420)}, rotate = 90] [color={rgb, 255:red, 0; green, 0; blue, 0 }  ][line width=0.75]    (10.93,-3.29) .. controls (6.95,-1.4) and (3.31,-0.3) .. (0,0) .. controls (3.31,0.3) and (6.95,1.4) .. (10.93,3.29)   ;
\draw    (170,460) -- (170,422) ;
\draw [shift={(170,420)}, rotate = 90] [color={rgb, 255:red, 0; green, 0; blue, 0 }  ][line width=0.75]    (10.93,-3.29) .. controls (6.95,-1.4) and (3.31,-0.3) .. (0,0) .. controls (3.31,0.3) and (6.95,1.4) .. (10.93,3.29)   ;
\draw    (170,140) -- (170,101) ;
\draw [shift={(170,99)}, rotate = 90] [color={rgb, 255:red, 0; green, 0; blue, 0 }  ][line width=0.75]    (10.93,-3.29) .. controls (6.95,-1.4) and (3.31,-0.3) .. (0,0) .. controls (3.31,0.3) and (6.95,1.4) .. (10.93,3.29)   ;
\draw    (300,140) -- (300,102) ;
\draw [shift={(300,100)}, rotate = 90] [color={rgb, 255:red, 0; green, 0; blue, 0 }  ][line width=0.75]    (10.93,-3.29) .. controls (6.95,-1.4) and (3.31,-0.3) .. (0,0) .. controls (3.31,0.3) and (6.95,1.4) .. (10.93,3.29)   ;
\draw    (200,480) -- (268,480) ;
\draw [shift={(270,480)}, rotate = 180] [color={rgb, 255:red, 0; green, 0; blue, 0 }  ][line width=0.75]    (10.93,-3.29) .. controls (6.95,-1.4) and (3.31,-0.3) .. (0,0) .. controls (3.31,0.3) and (6.95,1.4) .. (10.93,3.29)   ;
\draw    (200,400) -- (268,400) ;
\draw [shift={(270,400)}, rotate = 180] [color={rgb, 255:red, 0; green, 0; blue, 0 }  ][line width=0.75]    (10.93,-3.29) .. controls (6.95,-1.4) and (3.31,-0.3) .. (0,0) .. controls (3.31,0.3) and (6.95,1.4) .. (10.93,3.29)   ;
\draw    (200,159) -- (268,159) ;
\draw [shift={(270,159)}, rotate = 180] [color={rgb, 255:red, 0; green, 0; blue, 0 }  ][line width=0.75]    (10.93,-3.29) .. controls (6.95,-1.4) and (3.31,-0.3) .. (0,0) .. controls (3.31,0.3) and (6.95,1.4) .. (10.93,3.29)   ;
\draw    (300,220) -- (300,182) ;
\draw [shift={(300,180)}, rotate = 90] [color={rgb, 255:red, 0; green, 0; blue, 0 }  ][line width=0.75]    (10.93,-3.29) .. controls (6.95,-1.4) and (3.31,-0.3) .. (0,0) .. controls (3.31,0.3) and (6.95,1.4) .. (10.93,3.29)   ;
\draw    (170,220) -- (170,182) ;
\draw [shift={(170,180)}, rotate = 90] [color={rgb, 255:red, 0; green, 0; blue, 0 }  ][line width=0.75]    (10.93,-3.29) .. controls (6.95,-1.4) and (3.31,-0.3) .. (0,0) .. controls (3.31,0.3) and (6.95,1.4) .. (10.93,3.29)   ;
\draw    (300,300) -- (300,262) ;
\draw [shift={(300,260)}, rotate = 90] [color={rgb, 255:red, 0; green, 0; blue, 0 }  ][line width=0.75]    (10.93,-3.29) .. controls (6.95,-1.4) and (3.31,-0.3) .. (0,0) .. controls (3.31,0.3) and (6.95,1.4) .. (10.93,3.29)   ;
\draw    (170,300) -- (170,262) ;
\draw [shift={(170,260)}, rotate = 90] [color={rgb, 255:red, 0; green, 0; blue, 0 }  ][line width=0.75]    (10.93,-3.29) .. controls (6.95,-1.4) and (3.31,-0.3) .. (0,0) .. controls (3.31,0.3) and (6.95,1.4) .. (10.93,3.29)   ;
\draw    (200,320) -- (268,320) ;
\draw [shift={(270,320)}, rotate = 180] [color={rgb, 255:red, 0; green, 0; blue, 0 }  ][line width=0.75]    (10.93,-3.29) .. controls (6.95,-1.4) and (3.31,-0.3) .. (0,0) .. controls (3.31,0.3) and (6.95,1.4) .. (10.93,3.29)   ;
\draw    (200,240) -- (268,240) ;
\draw [shift={(270,240)}, rotate = 180] [color={rgb, 255:red, 0; green, 0; blue, 0 }  ][line width=0.75]    (10.93,-3.29) .. controls (6.95,-1.4) and (3.31,-0.3) .. (0,0) .. controls (3.31,0.3) and (6.95,1.4) .. (10.93,3.29)   ;

\draw (171.5,479) node    {$\mathfrak A_{4}$};
\draw (300.5,479) node    {$\mathfrak B_{4}$};
\draw (300.5,399) node    {$\mathfrak B_{5}$};
\draw (171.5,399) node    {$\mathfrak A_{5}$};
\draw (174,71) node [anchor=north west][inner sep=0.75pt]  [rotate=-90]  {$\cdots $};
\draw (304,71) node [anchor=north west][inner sep=0.75pt]  [rotate=-90]  {$\cdots $};
\draw (141,437.4) node [anchor=north west][inner sep=0.75pt]  [font=\footnotesize]  {$\ins p$};
\draw (266,437.4) node [anchor=north west][inner sep=0.75pt]  [font=\footnotesize]  {$\inss q$};
\draw (131,359.4) node [anchor=north west][inner sep=0.75pt]  [font=\scriptsize]  {$\mathnormal{{\textstyle U\circ R}}$};
\draw (261,359.4) node [anchor=north west][inner sep=0.75pt]  [font=\scriptsize]  {$\mathnormal{{\textstyle U\circ R}}$};
\draw (224,462.4) node [anchor=north west][inner sep=0.75pt]  [font=\footnotesize]  {$f_{4}$};
\draw (224,382.4) node [anchor=north west][inner sep=0.75pt]  [font=\footnotesize]  {$f_{5}$};
\draw (171.5,319) node    {$\tilde{\mathfrak A}_{5}$};
\draw (300.5,319) node    {$\mathfrak B_{5}$};
\draw (300.5,239) node    {$\mathfrak B_{6}$};
\draw (171.5,239) node    {$\mathfrak A_{6}$};
\draw (140,277.4) node [anchor=north west][inner sep=0.75pt]  [font=\footnotesize]  {$\ins p$};
\draw (266,277.4) node [anchor=north west][inner sep=0.75pt]  [font=\footnotesize]  {$\inss q$};
\draw (131,199.4) node [anchor=north west][inner sep=0.75pt]  [font=\scriptsize]  {$\mathnormal{{\textstyle U\circ R}}$};
\draw (261,199.4) node [anchor=north west][inner sep=0.75pt]  [font=\scriptsize]  {$\mathnormal{{\textstyle U\circ R}}$};
\draw (224,300) node [anchor=north west][inner sep=0.75pt]  [font=\footnotesize]  {$\tilde{f}_{5}$};
\draw (224,222.4) node [anchor=north west][inner sep=0.75pt]  [font=\footnotesize]  {$f_{6}$};
\draw (171.5,159) node    {$\tilde{\mathfrak A}_{6}$};
\draw (300.5,161) node    {$\mathfrak B_{6}$};
\draw (224,140) node [anchor=north west][inner sep=0.75pt]  [font=\footnotesize]  {$\tilde{f}_{6}$};

\end{tikzpicture}

\end{center}

To inductively define $f_n$ and $\tilde{f}_n$, we start with the base case $f_4$ defined as follows. 
\begin{equation}\label{eq:f4}
    1234 \longmapsto 1234 \quad 1243 \longmapsto 1432 \quad 2134 \longmapsto 3214 \quad 2143 \longmapsto 3412.
\end{equation}
Before we define $f_n$ with $n=5,6,\dotsc$, we need an auxiliary map $\tilde{f}_n:\tilde{\mathfrak A}_n \rightarrow \mathfrak B_n$.

Let $\tilde{w} \in \tilde{\mathfrak A}_n$. By Proposition~\ref{p: U and R}, there is a unique $w\in \mathfrak A_n$ such that $w^{RU} = \tilde{w}$. Since $|\mathfrak A_n|=|\mathfrak B_n| = |\tilde{\mathfrak A}_n|=\floor{\tfrac{n}{2}}!\ceil{\tfrac{n}{2}}!$, there exists a bijection between $\mathfrak A_n$ and $\mathfrak B_n$, and a bijection between $\tilde{\mathfrak A}_n$ and $\mathfrak B_n$. Denote $f_n: \mathfrak A_n\rightarrow \mathfrak B_n$ where when $n=4$, $f_4$ is defined as above. Further, define $\tilde{f}_n$ by \begin{equation}\label{eq:def tilde f}
\begin{aligned}
    \tilde{f}_n : \tilde{\mathfrak A}_n &\rightarrow \mathfrak B_n, \\
    \tilde{w} &\mapsto [f_n(\tilde{w}^{RU})]^{RU}. 
    \end{aligned}
\end{equation}

It is not obvious that $\tilde{f}_n$ is surjective, which we now show. 

\begin{obs}\label{o:well-define tilde f}
    Given a bijection $f_n:\mathfrak  A_n \rightarrow \mathfrak  B_n$, the map $\tilde{f}_n: \tilde{\mathfrak A}_n \rightarrow \mathfrak B_n$ is a bijection. 
\end{obs}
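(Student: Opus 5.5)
The plan is to write $\tilde f_n$ as a composition of three bijections and conclude that it is a bijection. Let $\rho$ denote the map $w \mapsto w^{RU}$. By Proposition~\ref{p: U and R}, $\rho$ restricts to a bijection $\rho_1 : \tilde{\mathfrak A}_n \rightarrow \mathfrak A_n$. Its inverse is the restriction of $\rho$ to $\mathfrak A_n$, because $\rho$ is an involution on all of $\sn$: both $U$ and $R$ are involutions and they commute. The same proposition gives that $\rho$ restricts to an involution $\rho_2 : \mathfrak B_n \rightarrow \mathfrak B_n$, which is in particular a bijection. By definition~\eqref{eq:def tilde f} we then have
\begin{equation*}
\tilde f_n = \rho_2 \circ f_n \circ \rho_1 .
\end{equation*}

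The key steps, in order, are as follows. First, I will check that $\tilde f_n$ is well defined, i.e.\ that it really lands in $\mathfrak B_n$. For $\tilde w \in \tilde{\mathfrak A}_n$ we have $\tilde w^{RU} \in \mathfrak A_n$, so $f_n(\tilde w^{RU}) \in \mathfrak B_n$. Applying $\rho$ then keeps us inside $\mathfrak B_n$ by the involution part of Proposition~\ref{p: U and R}.

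Second, I will verify injectivity. Suppose $\tilde f_n(\tilde w) = \tilde f_n(\tilde v)$. Applying $\rho$, which is its own inverse, gives $f_n(\tilde w^{RU}) = f_n(\tilde v^{RU})$. Injectivity of $f_n$ then gives $\tilde w^{RU} = \tilde v^{RU}$, and applying $\rho$ once more gives $\tilde w = \tilde v$.

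Third, I will verify surjectivity. Given $u \in \mathfrak B_n$, the permutation $u^{RU}$ also lies in $\mathfrak B_n$. So there is some $w \in \mathfrak A_n$ with $f_n(w) = u^{RU}$. Set $\tilde w = w^{RU}$, which lies in $\tilde{\mathfrak A}_n$. Then $\tilde f_n(\tilde w) = [f_n(w)]^{RU} = u$.

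Alternatively, once injectivity is known, surjectivity follows from the equality $|\tilde{\mathfrak A}_n| = |\mathfrak B_n| = \floor{\tfrac n2}!\ceil{\tfrac n2}!$. There is no real obstacle here. The only point requiring care is that $RU$ maps $\tilde{\mathfrak A}_n$ onto $\mathfrak A_n$ and not merely into it, and this is exactly what Proposition~\ref{p: U and R} provides via the involution property.
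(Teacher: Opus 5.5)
Your proposal is correct and follows essentially the same route as the paper. The paper uses Proposition~\ref{p: U and R} to note that $b^{RU}\in\mathfrak B_n$, writes down the inverse $b\mapsto [f_n^{-1}(b^{RU})]^{RU}$ (exactly the preimage your surjectivity step constructs), and leaves the two-sided inverse check to the reader; your factorization $\tilde f_n=\rho_2\circ f_n\circ\rho_1$ carries out that check explicitly.
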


\begin{proof}
    By Proposition~\ref{p: U and R}, if $b\in \mathfrak  B_n$, then $b^{RU} \in \mathfrak B_n$. Therefore, the inverse of $\tilde{f}_n$ can be given by \begin{equation*}
    \begin{aligned}
        \tilde{f}_n^{-1}:\mathfrak  B_n &\rightarrow \tilde{\mathfrak A}_n, \\
        b &\mapsto [f_n^{-1}(b^{RU})]^{RU}. 
        \end{aligned}
    \end{equation*}
    One can check that $\tilde{f}_n\circ\tilde{f}_n^{-1}=\tilde{f}_n^{-1}\circ\tilde{f}_n=\text{id}$.
\end{proof}

By our construction of $\tilde{f}_n$, many properties of $f_n$ are inherited in $\tilde{f}_n$.

\begin{lem}\label{lem:induced bijection}
    Let $f_n: \mathfrak A_n \rightarrow \mathfrak B_n$ be a bijection such that $w \leq f_n(w)$ for all $w\in \mathfrak A_n$. Then, the bijection $\tilde{f}_n$~\eqref{eq:def tilde f} satisfies $\tilde{w} \leq \tilde{f}_n(\tilde{w})$ for every $\tilde{w} \in \tilde{\mathfrak A}_n$. 
\end{lem}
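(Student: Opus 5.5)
The plan is to unwind the definition of $\tilde{f}_n$ and use the fact that $U\circ R$ is an order automorphism of the Bruhat order. Fix $\tilde{w} \in \tilde{\mathfrak A}_n$. By Proposition~\ref{p: U and R}, the element $w \defeq \tilde{w}^{RU}$ lies in $\mathfrak A_n$. Since $U$ and $R$ are commuting involutions, $(\tilde{w}^{RU})^{RU} = \tilde{w}$, so $w^{RU} = \tilde{w}$. Also, $f_n(w) \in \mathfrak B_n$, and by definition~\eqref{eq:def tilde f} we have $\tilde{f}_n(\tilde{w}) = f_n(w)^{RU}$, which lies in $\mathfrak B_n$ by the involution part of Proposition~\ref{p: U and R}.

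Next, the hypothesis gives $w \leq f_n(w)$ in Bruhat order. Applying Proposition~\ref{upside-down reverse}, which says that $u \mapsto u^{RU} = w_0 u w_0$ preserves Bruhat order, we get $w^{RU} \leq f_n(w)^{RU}$. Rewriting both sides with the identifications above, this is exactly $\tilde{w} \leq \tilde{f}_n(\tilde{w})$.

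Bijectivity of $\tilde{f}_n$ is already supplied by Observation~\ref{o:well-define tilde f}, so only the order comparison needs proof. The only point requiring care is that $RU$ is an involution, so that the element $w$ associated to $\tilde{w}$ satisfies $w^{RU} = \tilde{w}$. This follows because $U$ and $R$ commute and each is an involution. I expect no genuine obstacle: the whole content is that conjugation by $w_0$ is a Bruhat automorphism, which is quoted from \cite{BBCoxeter} in Proposition~\ref{upside-down reverse}.
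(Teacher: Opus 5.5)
Your proposal is correct and is essentially the paper's proof: set $w=\tilde{w}^{RU}\in\mathfrak A_n$, so that $\tilde{f}_n(\tilde{w})=f_n(w)^{RU}$, then apply the Bruhat-order invariance of $RU$ (Proposition~\ref{upside-down reverse}) to $w\leq f_n(w)$ to get $\tilde{w}=w^{RU}\leq f_n(w)^{RU}=\tilde{f}_n(\tilde{w})$. The extra bookkeeping you include (that $RU$ is an involution, and that $\tilde{f}_n(\tilde{w})\in\mathfrak B_n$) is harmless and only makes explicit what the paper leaves implicit.
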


\begin{proof}
    Let $w\in \mathfrak A_n$ with $w^{RU} = \tilde{w}$. By Observation~\ref{o:well-define tilde f}, $\tilde{f}_n$ is a bijection. By Proposition~\ref{upside-down reverse}, $\tilde{w} = w^{RU}  \leq f_n(w)^{RU} = \tilde{f}_n(\tilde{w})$.   
\end{proof}

In addition, if $f_n$ increases the number of inversions, so does $\tilde{f}_n$. 

\begin{lem}\label{l:inv tilde f}
    Let $f_n: \mathfrak A_n \rightarrow \mathfrak B_n$ be a bijection such that $\inv(w) \leq \inv(f_n(w))$ for all $w\in \mathfrak A_n$. Then, the bijection $\tilde{f}_n$ satisfies $\inv (\tilde{w}) \leq \inv (\tilde{f}_n(\tilde{w}))$ for every $\tilde{w} \in \tilde{\mathfrak A}_n$. 
\end{lem}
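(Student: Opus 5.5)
The argument should mirror Lemma~\ref{lem:induced bijection}, with Lemma~\ref{l:RU inv} in place of Proposition~\ref{upside-down reverse}. Fix $\tilde w \in \tilde{\mathfrak A}_n$. By Proposition~\ref{p: U and R}, $U\circ R$ restricts to a bijection $\mathfrak A_n \to \tilde{\mathfrak A}_n$ whose inverse is again $U\circ R$. So there is a unique $w \in \mathfrak A_n$ with $w^{RU} = \tilde w$, and equivalently $w = \tilde w^{RU}$, because $U$ and $R$ are commuting involutions.

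By the definition~\eqref{eq:def tilde f}, $\tilde f_n(\tilde w) = [f_n(w)]^{RU}$. We then chain the following relations:
\begin{equation*}
\inv(\tilde w) = \inv(w^{RU}) = \inv(w) \leq \inv(f_n(w)) = \inv\bigl([f_n(w)]^{RU}\bigr) = \inv(\tilde f_n(\tilde w)).
\end{equation*}
The first and last equalities are Lemma~\ref{l:RU inv}, applied to $w$ and to $f_n(w) \in \mathfrak B_n \subseteq \sn$ respectively. The middle inequality is the hypothesis on $f_n$. Bijectivity of $\tilde f_n$ is already given by Observation~\ref{o:well-define tilde f}, so nothing further is needed for that part of the claim.

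There is no real obstacle here. The only point needing care is that Lemma~\ref{l:RU inv} is a statement about arbitrary permutations in $\sn$, so it applies to $f_n(w)$ even though $f_n(w)$ lies in $\mathfrak B_n$ rather than in $\mathfrak A_n$.
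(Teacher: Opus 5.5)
Your proof is correct and is essentially the paper's argument. You set $w=\tilde w^{RU}\in\mathfrak A_n$, apply Lemma~\ref{l:RU inv} to both $w$ and $f_n(w)$, and place the hypothesis $\inv(w)\leq\inv(f_n(w))$ between these two equalities. One small labeling slip: in your chain, Lemma~\ref{l:RU inv} supplies the second and third equalities, while the first and last are just the substitution $\tilde w=w^{RU}$ and the definition~\eqref{eq:def tilde f}.
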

\begin{proof}
    Let $\tilde{w} \in \tilde{\mathfrak A}_n$ be arbitrary. Then, by Proposition~\ref{p: U and R}, $\tilde{w}^{RU} \defeq w \in \mathfrak A_n$. By Lemma~\ref{l:RU inv}, 
     \begin{equation*}
        \inv (\tilde{w}) = \inv (w) \text{ and } \inv(f_n(w)) = \inv([f_n(w)]^{RU}). 
    \end{equation*}Since $\inv(w) \leq \inv(f_n(w))$ for all $w\in \mathfrak A_n$, we have \begin{equation*}
        \inv (\tilde{w}) \leq  \inv([f_n(w)]^{RU}) = \inv([f_n(\tilde{w}^{RU})]^{RU}). 
    \end{equation*}By \eqref{eq:def tilde f}, $[f_n(\tilde{w}^{RU})]^{RU} = \tilde{f}(\tilde{w})$. Hence, $\inv (\tilde{w}) \leq \inv (\tilde{f}(\tilde{w}))$. 
\end{proof}

One can see from the proof that Lemma~\ref{l:inv tilde f} remains correct if we replace all weak inequalities by strict inequalities and restrict $f_{n}$ on non-identity elements of $\mathfrak A_n$, i.e, $\mathfrak A_n \setminus \{e\}$. 
\begin{cor}\label{c:inv tilde f}
    Let $f_n: \mathfrak A_n \rightarrow \mathfrak B_n$ be a bijection such that $\inv(w) < \inv(f_n(w))$ for all $w\in \mathfrak A_n \setminus \{e\}$. Then, the bijection $\tilde{f}_n$ satisfies $\inv (\tilde{w}) < \inv (\tilde{f}_n(\tilde{w}))$ for every $\tilde{w} \in \tilde{\mathfrak A}_n \setminus \{e\}$.
\end{cor}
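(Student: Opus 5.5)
The plan is to rerun the argument of Lemma~\ref{l:inv tilde f} with every weak inequality made strict, and to check that excluding the identity is compatible with the conjugation $U\circ R$. Fix $\tilde w \in \tilde{\mathfrak A}_n \setminus \{e\}$ and put $w \defeq \tilde w^{RU}$. By Proposition~\ref{p: U and R}, $U\circ R$ restricts to a bijection $\tilde{\mathfrak A}_n \to \mathfrak A_n$, so $w \in \mathfrak A_n$. This map is an involution, so $w^{RU} = \tilde w$.

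Next I will show that $w \neq e$. Proposition~\ref{upside-down reverse} identifies $u^{RU}$ with $w_0 u w_0$. Hence $e^{RU} = w_0 e w_0 = e$. If $w$ were $e$, then $\tilde w = w^{RU} = e$, which is excluded. Alternatively, Lemma~\ref{l:RU inv} gives $\inv(w) = \inv(\tilde w) > 0$, since $e$ is the only permutation with no inversions. Either way $w \in \mathfrak A_n \setminus \{e\}$. The hypothesis on $f_n$ then gives $\inv(w) < \inv(f_n(w))$.

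Now apply Lemma~\ref{l:RU inv} twice: once to $\tilde w$ and once to $f_n(w)$. This gives
\begin{equation*}
\inv(\tilde w) = \inv(w) < \inv(f_n(w)) = \inv\bigl([f_n(w)]^{RU}\bigr) = \inv\bigl([f_n(\tilde w^{RU})]^{RU}\bigr).
\end{equation*}
By definition~\eqref{eq:def tilde f}, the right-hand side is $\inv(\tilde f_n(\tilde w))$, which is the claimed strict inequality. Observation~\ref{o:well-define tilde f} ensures $\tilde f_n$ is a bijection.

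The only step that is not purely formal is confirming that $\tilde w \neq e$ forces $w \neq e$. This is exactly where the identity must be excluded, since $f_n(e)$ may equal $e$ and then the strict inequality fails. The fact that $U\circ R$ fixes $e$ and preserves $\inv$ settles this point, so I expect no real obstacle.
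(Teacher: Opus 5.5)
Your proposal is correct and follows the paper's own argument, which simply reruns the proof of Lemma~\ref{l:inv tilde f} with strict inequalities. Your explicit check that $\tilde w \neq e$ forces $\tilde w^{RU} \neq e$ (because $U\circ R$ fixes $e$ and preserves $\inv$) is the one detail the paper leaves implicit, and you handle it correctly.
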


When $n\leq 3$, it is easy to find a bijection $f_n: \mathfrak A_n \rightarrow \mathfrak B_n$ such that $w\leq f_n(w)$. Also, one can easily
test if a given polynomial is in the $3 \times 3$ immanant space is TNN~\cite{SkanComm}. 
With our base case set to $n=4$, we are now ready to introduce the following recursive algorithm to define $f_n$ for $n=5,6,\dotsc$.

\begin{alg}\label{keyalg} 
Let $\mathfrak A_n$, $\tilde{\mathfrak A}_n$, $\mathfrak B_n$ be defined as above. 

Base Case: Let $f_4$ be the map
\begin{equation}\label{alg base case}
    1234 \longmapsto 1234 \quad 1243 \longmapsto 1432 \quad 2134 \longmapsto 3214 \quad 2143 \longmapsto 3412.
\end{equation}

For $n=5,6,7,\dotsc$ do the following. 
\begin{enumerate}
\item If $n$ is odd, 
\begin{enumerate}
    \item[a.] Write $n-1 = 2k$.
    \item[b.] For each $w\in \mathfrak A_n$, find the unique pair (by Observation~\ref{obs:ins on An}) $(a,p) \in \mathfrak A_{n-1} \times \mathbb N$ such that $w = \ins p (a)$. 
    \item[c.] Define \begin{equation}\label{oddn+1}
    f_n(w) = \inss{2(p-k)-1}(f_{n-1}(a)).
    \end{equation} 
\end{enumerate}
\item If $n$ is even, 
\begin{enumerate}
    \item[a.] Write $n-1 = 2k+1$. 
    \item[b.] Define $\tilde{f}_{n-1} : \tilde{\mathfrak A}_n \rightarrow \mathfrak B_n$ by $f_{n-1}$ as in~\eqref{eq:def tilde f}. 
    \item[c.] For each $u \in \mathfrak A_{n}$, find the unique pair (by Observation~\ref{obs:ins on tildeAn}) $(\tilde{w},q) \in \tilde{\mathfrak A}_n \times \mathbb N$ such that $u=\ins q (\tilde{w})$. 
    \item[d.] Define \begin{equation}\label{evenn+1}
        f_n(u) = \inss{2(q-k-1)}(\tilde{f}_{n-1}(\tilde{w})).
    \end{equation}
\end{enumerate}
\end{enumerate}

\end{alg}

As an example, let us look at the function $f_5$ defined by Algorithm~\ref{keyalg}. Now, we can define $f_5$ from the base case~\eqref{alg base case}. In this case, we have $n=5$ and $k=2$. For example, to find the image of $w=21534 \in \mathfrak A_5$ under $f_5$, we have the unique pair $(a, p)=(2134, 3)$, i.e., $21534=\ins 3(2134)$. By~\eqref{oddn+1}, we have
\begin{equation*}
f_5(21534) = \inss {2(3-2)-1}(f_4(2134)) = \inss 1(3214) = 52341.\end{equation*}
Repeat the process for every element in $\mathfrak A_5$, and we get the map $f_5$ as follows. 
\begin{align*}
    12534 &\longmapsto 52143 & 12354 &\longmapsto 12543 & 12345 &\longmapsto 12345 \\ 
    12543 &\longmapsto 54123 & 12453 &\longmapsto 14523 & 12435 &\longmapsto 14325 \\
    {\color{red}21534} & \longmapsto \color{red} 52341 & 21354 &\longmapsto 32541 & 21345 &\longmapsto 32145 \\
    21543 &\longmapsto 54321 & 21453 &\longmapsto 34521 & 21435 &\longmapsto 34125 .
\end{align*}
To define $f_6$, we need to find $\tilde{f}_5$ from $f_5$. By~\eqref{eq:def tilde f}, we get $\tilde{f}_5$ as follows. 
\begin{align*}
    23145 &\longmapsto 32541 & 21345 &\longmapsto 32145 & 12345 &\longmapsto 12345 \\ 
    32145 &\longmapsto 34521 & 31245 &\longmapsto 34125 & 13245 &\longmapsto 14325 \\
    23154 &\longmapsto 52341 & 21354 &\longmapsto 52143 & 12354 &\longmapsto 12543 \\
    32154 &\longmapsto 54321 & 31254 &\longmapsto 54123 & 13254 &\longmapsto 14523. 
\end{align*}
Now, we can define $f_6$ from $\tilde{f}_5$. In this case, $n=6$ and $k=2$. For example, to find the image of $u=231564 \in \mathfrak A_6$ under $f_6$, we have the unique pair $(\tilde{w}, q)=(23154,5)$, i.e., $231564 = \ins 5(23154)$. From the above definition of $\tilde{f}_5$, we have $\tilde{f}_5(23154) = 52341$. Then, by~\eqref{evenn+1}, we have 
\begin{equation*}
    f_6(231564) = \inss {2(5-2-1)}(\tilde{f}_5(23154)) = \inss {4}(52341) = 523614.
\end{equation*}
Repeat the process for every element in $\mathfrak A_6$, and we get the map $f_6$ as follows. 
\begin{align*}
    231645 &\longmapsto 365214 & 231465 &\longmapsto 325614  & 231456 &\longmapsto 325416 \\ 
    321645 &\longmapsto 365412 & 321465 &\longmapsto 345612 & 321456 &\longmapsto 345216 \\ 
    231654 &\longmapsto 563214 & {\color{red} 231564} &\longmapsto \color{red} 523614 & 231546 &\longmapsto 523416 \\ 
    321654 &\longmapsto 563412 & 321564 &\longmapsto 543612 & 321546 &\longmapsto 543216 \\ 
    213645 &\longmapsto 361254 & 213465 &\longmapsto 321654 & 213456 &\longmapsto 321456 \\
    312645 &\longmapsto 361452 & 312465 &\longmapsto 341652 & 312456 &\longmapsto 341256 \\
    213654 &\longmapsto 561234 & 213564 &\longmapsto 521634 & 213546 &\longmapsto 521436 \\
    312654 &\longmapsto 561432 & 312564 &\longmapsto 541632 & 312546 &\longmapsto 541236 \\
    123645 &\longmapsto 163254 & 123465 &\longmapsto 123654 & 123456 &\longmapsto 123456 \\
    132645 &\longmapsto 163452 & 132465 &\longmapsto 143652 & 132456 &\longmapsto 143256 \\
    123654 &\longmapsto 165234 & 123564 &\longmapsto 125634 & 123546 &\longmapsto 125436 \\
    132654 &\longmapsto 165432 & 132564 &\longmapsto 145632 & 132546 &\longmapsto 145236. 
\end{align*}

To show Algorithm~\ref{keyalg} is well-defined, it suffices to show that $f_n$ is bijective for $n=5,6,7,\dotsc$.

\begin{lem}\label{l:even to odd}
    In Algorithm~\ref{keyalg}, if $n-1=2k$ is even and $f_{n-1}$ is a bijection, then $f_{n}$ is a bijection. 
\end{lem}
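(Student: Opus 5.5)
Since $|\mathfrak A_n| = |\mathfrak B_n| = \floor{\tfrac{n}{2}}!\ceil{\tfrac{n}{2}}!$, it suffices to show that $f_n$ is well defined as a map into $\mathfrak B_n$ and that it is injective.

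\emph{Well-definedness.} Here $n = 2k+1$ is odd. By Observation~\ref{obs:ins on An}, each $w \in \mathfrak A_n$ has a unique expression $w = \ins p(a)$ with $a \in \mathfrak A_{n-1}$ and $p \in [k+1, n]$. Two consequences follow.
\begin{itemize}
\item The index $q \defeq 2(p-k)-1$ is odd and ranges over $\{1, 3, \dotsc, n\} = [1,n] \ssm 2\mathbb Z$ as $p$ ranges over $[k+1, n]$.
\item Since $n-1$ is even and $q$ is odd, $(n-1)+1-q = n-q$ is even. Hence $\inss q$, viewed as a map $\mathfrak S_{n-1} \to \sn$, is well defined at $f_{n-1}(a) \in \mathfrak B_{n-1}$.
\end{itemize}
By Observation~\ref{obs:ins on Bn} (the case where $n-1$ is even), $\inss q(b) \in \mathfrak B_n$ for every odd $q \in [1,n]$ and every $b \in \mathfrak B_{n-1}$. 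So $f_n$ maps $\mathfrak A_n$ into $\mathfrak B_n$.

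\emph{Injectivity.} I would prove that the map
\[
\Phi \colon \mathfrak B_{n-1} \times \bigl([1,n] \ssm 2\mathbb Z\bigr) \to \mathfrak B_n, \qquad (b, q) \mapsto \inss q(b),
\]
is injective. Given $u = \inss q(b)$, the index $q$ is recovered as the position of the letter $n$ in $u$. The permutation $b$ is then recovered by deleting that letter and undoing the adjacent transpositions of the positions after it, exactly as in the reverse construction in the proof of Observation~\ref{obs:ins on Bn}.

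Now suppose $f_n(w) = f_n(w')$, with $w = \ins p(a)$ and $w' = \ins{p'}(a')$. Injectivity of $\Phi$ gives $2(p-k)-1 = 2(p'-k)-1$, so $p = p'$, and $f_{n-1}(a) = f_{n-1}(a')$. Since $f_{n-1}$ is a bijection, $a = a'$, and therefore $w = w'$.

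\emph{Surjectivity.} This follows from injectivity and the equal cardinalities. One can also see it directly from Observation~\ref{obs:ins on Bn}, since $\Phi$ is onto $\mathfrak B_n$ and $f_{n-1}$ is onto $\mathfrak B_{n-1}$.

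The only step needing care is the index bookkeeping. One must check that $p \mapsto 2(p-k)-1$ is a bijection from $[k+1, n]$ onto the odd integers of $[1,n]$; both sets have $k+1$ elements and the map is affine. One must also confirm the parity condition that makes $\inss q$ defined. Everything else is inverting the two insertion constructions.
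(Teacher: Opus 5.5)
Your proof is correct and follows essentially the same route as the paper: both factor $f_n$ through the unique decompositions $w=\ins p(a)$ and $u=\inss q(b)$ from Observations~\ref{obs:ins on An} and~\ref{obs:ins on Bn}, together with the reindexing $q=2(p-k)-1$ and the bijectivity of $f_{n-1}$. The only difference is that the paper writes down the two-sided inverse $g_n(u)=\ins{k+\frac{q+1}{2}}(f_{n-1}^{-1}(b))$ and checks both compositions, whereas you prove injectivity and conclude from $|\mathfrak A_n|=|\mathfrak B_n|$; your direct surjectivity remark amounts to the paper's check that $f_n\circ g_n=\mathrm{id}$.
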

\begin{proof}
    Suppose $n-1=2k$ is even and let $f_{n-1}$ be a bijection in Algorithm~\ref{keyalg}. Let $u\in \mathfrak B_n$. Then, by Observation~\ref{obs:ins on Bn}, there is a unique pair $(b,q) \in \mathfrak B_{n-1} \times \mathbb N$ such that $u = \inss{q}(b)$. Since $f_{n-1}$ is a bijection, we can find $f_{n-1}^{-1}(b) \in \mathfrak A_{n-1}$. 
    We define $g_n: \mathfrak B_n \rightarrow \mathfrak A_n$ by 
    \begin{equation}\label{odd inverse f}
        g_n(u) = \ins{k+\frac{q+1}{2}}(f_{n-1}^{-1}(b)). 
    \end{equation}
    Next, we claim that $g_n$ is the inverse function of $f_n$. 

    To see this, let $w \in \mathfrak A_n$ be arbitrary. By~\eqref{oddn+1}, \begin{equation*}
        f_n(w) = \inss{2(p-k)-1} (f_{n-1}(a)), 
    \end{equation*}where $(a,p) \in \mathfrak A_{n-1} \times \mathbb N$ with $\ins{p} (a) = w$. To find $g_n(f_n(w))$, we need to find the unique pair $(b,q)$ in~\eqref{odd inverse f}. We see that $(b,q) = (f_{n-1}(a) , 2(p-k)-1)$. Hence, we have \begin{equation*}
        g_n(f_n(w)) = \ins{k+\frac{2(p-k)-1 +1}{2}}(f_{n-1}^{-1} (f_{n-1}(a))) = \ins{p}(a) = w. 
    \end{equation*} 
    On the other hand, let $u\in \mathfrak B_n$ be arbitrary, then \begin{equation*}
        g_n(u) = \ins{k+\frac{q+1}{2}}(f_{n-1}^{-1}(b)), 
    \end{equation*}where $(b,q) \in \mathfrak B_{n-1} \times \mathbb N$ with $u = \inss{q}(b)$. To find $f_n(g_n(u))$, we need to find the unique pair $(a,p)$ in~\eqref{oddn+1}. We see that $(a,p) = (f_{n-1}^{-1}(b), k+ \frac{q+1}{2})$. Hence by~\eqref{oddn+1}, we have \begin{equation*}
        f_n(g_n(u)) = \inss{2(k+\frac{q+1}{2} -k)-1}(f_{n-1}(f_{n-1}^{-1}(b))) = \inss{q} (b) = u. 
    \end{equation*}Therefore, $g_n$ is the inverse function of $f_n$. 
\end{proof}

Continuing the example from earlier, now to find the image of $u=52341 \in \mathfrak B_5$ under $f_5^{-1}$, we have the unique pair $(b,q) = (3214,1)$, i.e., $52341 = \inss 1 (3214)$. By~\eqref{odd inverse f}, we have \begin{equation*}
    f_5^{-1}(52341) = \ins{2+\frac{1+1}{2}} f_{4}^{-1} (3214) = \ins 3 (2134) = 21534. 
\end{equation*}

\begin{lem}\label{l:odd to even}
    In Algorithm~\ref{keyalg}, if $n-1=2k+1$ is odd and $f_{n-1}$ is a bijection, then $f_{n}$ is a bijection. 
\end{lem}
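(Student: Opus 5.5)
The plan mirrors the proof of Lemma~\ref{l:even to odd}: we write down an explicit candidate inverse $g_n:\mathfrak B_n\to\mathfrak A_n$ and check that both composites are the identity. We will use the bijection $\tilde f_{n-1}$ in place of $f_{n-1}$, and Observation~\ref{obs:ins on tildeAn} in place of Observation~\ref{obs:ins on An}.

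First we record the needed facts. Here $n-1=2k+1$, so $n=2k+2$ is even and $n-1$ is odd. Since $f_{n-1}$ is a bijection, Observation~\ref{o:well-define tilde f} shows that $\tilde f_{n-1}:\tilde{\mathfrak A}_{n-1}\to\mathfrak B_{n-1}$ is a bijection, with inverse $b\mapsto[f_{n-1}^{-1}(b^{RU})]^{RU}$.

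By Observation~\ref{obs:ins on tildeAn}, each $u\in\mathfrak A_n$ is uniquely $\ins q(\tilde w)$ with $\tilde w\in\tilde{\mathfrak A}_{n-1}$ and $q\in[k+2,n]$. Uniqueness holds because $q$ is the position of the letter $n$ and $\tilde w$ is $u$ with $n$ deleted.

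Then $2(q-k-1)$ ranges over the even numbers in $[2,n]$. For such an index $j$, the quantity $(n-1)+1-j=n-j$ is even, so $\inss j$ is well defined on $\mathfrak S_{n-1}$. By Observation~\ref{obs:ins on Bn} (case $n-1$ odd), each element of $\mathfrak B_n$ is uniquely $\inss j(b)$ with $b\in\mathfrak B_{n-1}$ and $j\in[1,n]\cap2\mathbb Z$. Again uniqueness holds because $j$ is the position of $n$, and $b$ is recovered by deleting $n$ and undoing the adjacent swaps after position $j$. Consequently the map $q\mapsto 2(q-k-1)$ is a bijection from $[k+2,n]$ onto $[1,n]\cap2\mathbb Z$, with inverse $j\mapsto k+1+\tfrac j2$.

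Next we define
\[
g_n(\inss j(b)) \defeq \ins{k+1+\frac j2}\bigl(\tilde f_{n-1}^{-1}(b)\bigr)
\]
for $b\in\mathfrak B_{n-1}$ and even $j\in[2,n]$. By the uniqueness statement above, this is well defined on all of $\mathfrak B_n$. Since $k+1+\tfrac j2\in[k+2,n]$ and $\tilde f_{n-1}^{-1}(b)\in\tilde{\mathfrak A}_{n-1}$, Observation~\ref{obs:ins on tildeAn} places $g_n(\inss j(b))$ in $\mathfrak A_n$.

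Finally we check both composites.

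For $u=\ins q(\tilde w)\in\mathfrak A_n$, we have $f_n(u)=\inss{2(q-k-1)}(\tilde f_{n-1}(\tilde w))$. The uniqueness of the decomposition identifies the pair $(b,j)$ as $(\tilde f_{n-1}(\tilde w),\,2(q-k-1))$. Therefore
\[
g_n(f_n(u))=\ins{q}\bigl(\tilde f_{n-1}^{-1}(\tilde f_{n-1}(\tilde w))\bigr)=u .
\]

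For the other composite, take $v=\inss j(b)$. The pair attached to $g_n(v)$ by Observation~\ref{obs:ins on tildeAn} is $(\tilde f_{n-1}^{-1}(b),\,k+1+\tfrac j2)$. Hence
\[
f_n(g_n(v))=\inss{2(k+1+j/2-k-1)}(b)=\inss j(b)=v .
\]

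The only delicate point is the uniqueness of the decompositions used in these identifications. For $\mathfrak B_n$, this requires confirming that the $\inss$ swap-undoing map is injective and lands in $\mathfrak B_{n-1}$; that is exactly the content of the parity argument in Observation~\ref{obs:ins on Bn} for odd $n-1$. I would make that case explicit, since the paper only proves the even case there.
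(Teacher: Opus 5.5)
Your proposal is correct and follows the paper's proof essentially verbatim. The paper defines the same candidate inverse $g_n(u)=\ins{k+1+\frac{p}{2}}(\tilde f_{n-1}^{-1}(b))$ for $u=\inss{p}(b)$, takes uniqueness of both decompositions from Observations~\ref{obs:ins on tildeAn} and~\ref{obs:ins on Bn}, and checks both composites exactly as you do; your extra bookkeeping (the ranges $q\in[k+2,n]$ and even $j\in[2,n]$, and writing out the odd case of Observation~\ref{obs:ins on Bn}) only spells out what the paper dismisses as ``similar.''
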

\begin{proof}
Suppose $n-1=2k+1$ is odd and let $f_{n-1}$ be a bijection in Algorithm~\ref{keyalg}. By Observation~\ref{o:well-define tilde f}, $\tilde{f}_{n-1}: \tilde{\mathfrak A}_{n-1} \rightarrow \mathfrak B_{n-1}$ is a bijection. Let $u \in \mathfrak B_n$. Then, by Observation~\ref{obs:ins on Bn}, there is a unique pair $(b,p) \in \mathfrak B_{n-1} \times \mathbb N$ such that $u = \inss p (b)$. Since $\tilde{f}_{n-1}$ is a bijection, we can find $\tilde{f}_{n-1}^{-1}(b) \in \tilde{\mathfrak A}_{n-1}$. We define $g_n: \mathfrak B_n \rightarrow \mathfrak A_n$ by \begin{equation}\label{even inverse f}
    g_n(u) = \ins{k+1+\frac{p}{2}}(\tilde{f}_{n-1}^{-1}(b)).
\end{equation}Next, we claim that $g_n$ is the inverse function of $f_n$. 

To see this, let $w\in \mathfrak A_n$ be arbitrary. By~\eqref{evenn+1}, \begin{equation*}
    f_n(w) = \inss{2(q-k-1)}(\tilde{f}_{n-1}(\tilde{w})), 
\end{equation*}where $(\tilde{w},q)\in \tilde{\mathfrak A}_n \times \mathbb N$ with $w = \ins{q} (\tilde{w})$. To find $g_n(f_n(w))$, we need to find the unique pair $(b,p)$ in~\eqref{even inverse f}. We see that $(b,p) =(\tilde{f}_{n-1}(\tilde{w}),2(q-k-1))$. Hence by~\eqref{even inverse f}, we have 
\begin{equation*}
    g_n(f_n(w)) = \ins{k+1+\frac{2(q-k-1)}{2}}(\tilde{f}_{n-1}^{-1}(\tilde{f}_{n-1}(\tilde{w}))) = \ins{q} (\tilde{w}) = w. 
\end{equation*}
On the other hand, let $u \in \mathfrak B_n$ be arbitrary, then \begin{equation*}
    g_n(u) = \ins{k+1+\frac{p}{2}}(\tilde{f}_{n-1}^{-1}(b)),
\end{equation*}where $(b,p) \in \mathfrak B_{n-1} \times \mathbb N$ with $u=\inss p (b)$. To find $f_n(g_n(u))$, we need to find the unique pair $(\tilde{w},q)$ in~\eqref{evenn+1}. We see that $(\tilde{w},q) = (\tilde{f}_{n-1}^{-1}(b),k+1+\frac{p}{2})$. Hence, by~\eqref{evenn+1}, we have 
\begin{equation*}
    f_n(g_n(u)) = \inss{2(k+1+\frac{p}{2} -k -1)}(\tilde{f}_{n-1}(\tilde{f}_{n-1}^{-1}(b))) = \inss{p}(b) = u. 
\end{equation*}
Therefore, $g_n$ is the inverse function of $f_n$. 
\end{proof}

For example, to find the image of $u=523614 \in \mathfrak B_6$ under $f_6^{-1}$, we have the unique pair $(b,p) = (52341,4)$, i.e., $523614 = \inss{4}(52341)$. By~\eqref{even inverse f}, we have 
\begin{equation*}
    f_6^{-1}(523614) = \ins{2+1+\frac{4}{2}} (\tilde{f}_5^{-1}(52341)) = \ins{5}(23154) = 231564.
\end{equation*}

Then, we have the immediate corollary. 

\begin{cor}
    Algorithm~\ref{keyalg} is well-defined. 
\end{cor}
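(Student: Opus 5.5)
The argument is an induction on $n\ge 4$, with the claim that for each $n$ Algorithm~\ref{keyalg} produces a well-defined map $f_n:\mathfrak A_n\to\mathfrak B_n$ which is a bijection. The bijectivity of $f_n$ is needed at the next stage, because $\tilde f_n$ is defined from $f_n$ through \eqref{eq:def tilde f} and Observation~\ref{o:well-define tilde f}. The base case is $f_4$ in \eqref{alg base case}. Its four images $1234,1432,3214,3412$ are distinct. Each has odd letters in odd positions, so each lies in $\mathfrak B_4$. Since $|\mathfrak B_4|=2!\,2!=4$, $f_4$ is a bijection.

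For the inductive step I check two things. First, every instruction of Algorithm~\ref{keyalg} makes sense and lands in $\mathfrak B_n$. Second, the resulting map is bijective.

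\emph{Case $n$ odd, $n-1=2k$.} By Observation~\ref{obs:ins on An}, each $w\in\mathfrak A_n$ is $\ins p(a)$ for a unique pair $(a,p)$ with $a\in\mathfrak A_{n-1}$ and $p\in[k+1,n]$. Uniqueness holds because $p$ is the position of $n$ and $a$ is $w$ with $n$ deleted. The index $j=2(p-k)-1$ then ranges over the odd numbers in $[1,2k+1]=[1,n]$. Hence $n-j$ is even, so $\inss j$ is defined on $\mathfrak S_{n-1}$. By Observation~\ref{obs:ins on Bn} (the case of even $n-1$), $\inss j(f_{n-1}(a))\in\mathfrak B_n$.

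\emph{Case $n$ even, $n-1=2k+1$.} Step 2b should be read as defining $\tilde f_{n-1}:\tilde{\mathfrak A}_{n-1}\to\mathfrak B_{n-1}$. This map is a bijection by Observation~\ref{o:well-define tilde f} and the induction hypothesis. Step 2c is read with $\tilde w\in\tilde{\mathfrak A}_{n-1}$. By Observation~\ref{obs:ins on tildeAn}, each $u\in\mathfrak A_n$ is $\ins q(\tilde w)$ for a unique pair with $q\in[k+2,n]$. The index $2(q-k-1)$ then ranges over the even numbers in $[2,2k+2]=[2,n]$. So $\inss{2(q-k-1)}$ is defined, and by Observation~\ref{obs:ins on Bn} (the case of odd $n-1$) its output lies in $\mathfrak B_n$.

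Given well-definedness, bijectivity of $f_n$ follows directly from the induction hypothesis via Lemma~\ref{l:even to odd} when $n$ is odd and Lemma~\ref{l:odd to even} when $n$ is even. This closes the induction.

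The only step needing real care is the index bookkeeping in the two cases above. One has to confirm that the affine change of index, $p\mapsto 2(p-k)-1$ in the odd case and $q\mapsto 2(q-k-1)$ in the even case, maps the allowed insertion positions bijectively onto exactly the positions allowed in Observation~\ref{obs:ins on Bn}. This is what makes each $\inss{\cdot}$ defined and ensures the images exhaust $\mathfrak B_n$. Everything else is supplied by the cited lemmas and observations.
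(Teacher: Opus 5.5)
Your proof is correct and follows the same route as the paper: induction from the base case $f_4$, which is a bijection, using Lemma~\ref{l:even to odd} and Lemma~\ref{l:odd to even}, with Observation~\ref{o:well-define tilde f} supplying the bijectivity of $\tilde f_{n-1}$. Your extra index bookkeeping makes explicit what the paper leaves implicit: that $2(p-k)-1$ and $2(q-k-1)$ run exactly over the admissible odd and even positions, and that steps 2b--2c should be read with $\tilde{\mathfrak A}_{n-1}$ and $\mathfrak B_{n-1}$.
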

\begin{proof}
    As the base case, one can observe that $f_4$ is a bijection. Then inductively from Lemma~\ref{l:even to odd} and Lemma~\ref{l:odd to even}, we have the desired result. 
\end{proof}

We aim to build a bijection $\mathfrak A_n \rightarrow \mathfrak B_n$ such that $w \leq f_n(w)$ for all $w\in \mathfrak A_n$. It's easy to see that the identity element $e=123\cdots n \in \sn$ is in both $\mathfrak A_n$ and $\mathfrak B_n$. Therefore, the identity element has to be sent to the identity under $f_n$, and we have the next observation. 

\begin{obs}\label{o:identity to identity}
    The bijection $f_n$ defined in Algorithm~\ref{keyalg} sends identity to identity. 
\end{obs}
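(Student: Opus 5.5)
We will prove Observation~\ref{o:identity to identity} by induction on $n\ge 4$, following the recursion in Algorithm~\ref{keyalg}. The only facts needed are how the identity decomposes under the insertion maps and how $U\circ R$ acts on it.

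\textbf{Base case.} For $n=4$, the table~\eqref{alg base case} gives $f_4(1234)=1234$ directly.

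\textbf{Odd $n=2k+1$.} Write $e_n=12\cdots n$ for the identity of $\sn$. In $e_n$ the letter $n$ sits in position $n=2k+1$, so $e_n=\ins{2k+1}(e_{n-1})$. By the uniqueness in Observation~\ref{obs:ins on An}, the pair attached to $e_n$ in step (1b) is $(a,p)=(e_{n-1},2k+1)$. By induction $f_{n-1}(e_{n-1})=e_{n-1}$. Then~\eqref{oddn+1} gives
\begin{equation*}
f_n(e_n)=\inss{2(2k+1-k)-1}(e_{n-1})=\inss{2k+1}(e_{n-1}).
\end{equation*}
Here $q=2k+1=n$, so the number of letters after position $q$ in the word is $n-1+1-q=0$. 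No swaps occur, and $\inss{n}(e_{n-1})$ just appends $n$, giving $e_n$.

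\textbf{Even $n=2k+2$.} First, $e_{n-1}^{RU}=e_{n-1}$, because reversing and complementing the identity returns the identity. Hence~\eqref{eq:def tilde f} and the inductive hypothesis give
\begin{equation*}
\tilde f_{n-1}(e_{n-1})=[f_{n-1}(e_{n-1})]^{RU}=e_{n-1}.
\end{equation*}
Also $e_{n-1}\in\tilde{\mathfrak A}_{n-1}$, and $e_n=\ins{n}(e_{n-1})$. By the uniqueness in Observation~\ref{obs:ins on tildeAn}, the pair in step (2c) is $(\tilde w,q)=(e_{n-1},2k+2)$. Then~\eqref{evenn+1} yields
\begin{equation*}
f_n(e_n)=\inss{2(2k+2-k-1)}(e_{n-1})=\inss{2k+2}(e_{n-1}).
\end{equation*}
Again $q=n$, so $n$ is appended with no swaps, and the result is $e_n$.

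\textbf{Remaining bookkeeping.} The only point needing care is checking the indices in the odd case: that $2(p-k)-1$ and $2(q-k-1)$ both equal $n$, so that the swap portion of $\inss{}$ is empty. As an independent sanity check, the argument before the statement also suffices: the bijection property combined with $w\le f_n(w)$ forces $e\mapsto e$. Since the Bruhat comparison is proved only later, though, the direct computation above is the proof we rely on.
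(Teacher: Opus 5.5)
Your proof is correct and follows essentially the same route as the paper: induction from $f_4$, writing $e_n=\ins{n}(e_{n-1})$ in both parities, using $\tilde f_{n-1}(e_{n-1})=e_{n-1}$ in the even case, and checking that the index in \eqref{oddn+1} or \eqref{evenn+1} equals $n$, so that $\inss{n}$ just appends $n$. Your extra remarks (uniqueness of the decomposition, $e^{RU}=e$, and the empty swap portion) only make explicit details the paper leaves implicit.
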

\begin{proof}
    When $n=4$, we see that $1234 \longmapsto 1234$. We assume that $f_{n-1}$ sends identity to identity, and consider $f_n$. Let $e_n \defeq 123\cdots n$ be the identity element in $\sn$. If $n$ is odd, then $e_n = \ins{n} (e_{n-1})$. Write $n-1 = 2k$, by~\eqref{oddn+1}, we have
    \begin{equation*}
        f_{n}(e_n) = \inss{2(n-k)-1} (f_{n-1}(e_{n-1})) = \inss{n} (e_{n-1}) = e_n. 
    \end{equation*}
    If $n$ is even, then $e_n = \ins n (e_{n-1})$. Since $f_{n-1}$ sends identity to identity, by~\eqref{eq:def tilde f} the bijection $\tilde{f}_{n-1}$ sends identity to identity. Write $n-1 = 2k+1$, by~\eqref{evenn+1}, we have 
    \begin{equation}
        f_n(e_n) = \inss{2(n-k-1)}(\tilde{f}_{n-1}(e_{n-1})) = \inss{n}(e_{n-1}) = e_n.
    \end{equation}
    Therefore, $f_n$ sends identity to identity. 
\end{proof}

By the definition of $\tilde{f}_n$ in~\eqref{eq:def tilde f}, we have the immediate corollary. 

\begin{cor}\label{c:identity to identity}
    The bijection $\tilde{f}_n$, where $f_n$ is defined in Algorithm~\ref{keyalg}, sends identity to identity. 
\end{cor}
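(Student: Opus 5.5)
The statement is Corollary~\ref{c:identity to identity}: $\tilde f_n$ fixes the identity. I would argue directly from the definition~\eqref{eq:def tilde f}, $\tilde f_n(\tilde w)=[f_n(\tilde w^{RU})]^{RU}$, together with Observation~\ref{o:identity to identity}, which says $f_n(e)=e$ for the map of Algorithm~\ref{keyalg}.

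First I check that $e=12\cdots n$ lies in $\tilde{\mathfrak A}_n$, so that $\tilde f_n(e)$ is defined. This is clear: the first $\ceil{\tfrac n2}$ letters of $e$ are exactly $\{1,\dotsc,\ceil{\tfrac n2}\}$. Next I compute $e^{RU}$. Reversing gives $n(n-1)\cdots 1$. Applying $U$ sends the letter in position $j$, namely $n+1-j$, to $n+1-(n+1-j)=j$. Hence $e^{RU}=e$. Equivalently, in the notation of the proof of Proposition~\ref{upside-down reverse}, $e^{RU}=w_0ew_0=e$.

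Then $\tilde f_n(e)=[f_n(e^{RU})]^{RU}=[f_n(e)]^{RU}=e^{RU}=e$. The middle equality uses Observation~\ref{o:identity to identity}, and the last uses the computation above once more.

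There is no real obstacle; the only point needing care is the membership check $e\in\tilde{\mathfrak A}_n$. This is also implicit in Proposition~\ref{p: U and R}, since $e\in\mathfrak A_n$ and $U\circ R$ maps $\mathfrak A_n$ bijectively onto $\tilde{\mathfrak A}_n$.
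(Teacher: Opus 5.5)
Your proof is correct and follows the paper's argument, which simply invokes the definition $\tilde f_n(\tilde w)=[f_n(\tilde w^{RU})]^{RU}$ together with $f_n(e)=e$ from Observation~\ref{o:identity to identity}. You have also made explicit the two facts the paper leaves implicit, namely $e\in\tilde{\mathfrak A}_n$ and $e^{RU}=e$.
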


It is well-known that the Bruhat lattice is graded by the number of inversions~\eqref{eq:invdef}. We have seen that $f_n$ sends the identity element to the identity element. For $w \in\mathfrak A_n \setminus \{e\}$, $f_n(w)$ is located higher than $w$ in the Bruhat lattice of $\sn$.

\begin{thm}\label{t:invineq}
    In Algorithm \ref{keyalg}, for all $w\in \mathfrak A_n \setminus \{e\}$, we have $\inv (w) < \inv (f_n(w))$.
\end{thm}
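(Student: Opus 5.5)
We argue by induction on $n$, following the recursive structure of Algorithm~\ref{keyalg}. The base case $n=4$ is a direct check of~\eqref{alg base case}: for $1243,2134,2143$ the images $1432,3214,3412$ have $3,3,4$ inversions, versus $1,1,2$. The inductive step has two parts. First, Corollary~\ref{c:inv tilde f} transfers the strict inequality from $f_{n-1}$ to $\tilde f_{n-1}$. Second, we compare the inversions created by $\ins p$ and $\inss q$ using Lemmas~\ref{l:ins inv} and~\ref{l:inss inv}.

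\emph{Odd $n$, with $n-1=2k$.} Write $w=\ins p(a)$ with $a\in\mathfrak A_{n-1}$ and $p\in[k+1,n]$, and set $q=2(p-k)-1$. By Lemma~\ref{l:ins inv}, $\inv(w)=\inv(a)+n-p$. By Lemma~\ref{l:inss inv} applied in $\mathfrak S_{n-1}$,
\[
\inv(f_n(w))\ge \inv(f_{n-1}(a))+\tfrac{n-q}{2}=\inv(f_{n-1}(a))+\tfrac{n+1}{2}+k-p .
\]
Since $n=2k+1$, we have $\tfrac{n+1}{2}+k-p=2k+1-p=n-p$. So the inserted letter contributes at least as many inversions on the $\mathfrak B$ side as on the $\mathfrak A$ side, and
\[
\inv(f_n(w))-\inv(w)\ge \inv(f_{n-1}(a))-\inv(a).
\]
If $a\neq e$, induction makes the right side positive. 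If $a=e$ but $w\neq e$, then $p<n$. Here I would not use the lossy bound of Lemma~\ref{l:inss inv}. Instead I would compute directly: $f_{n-1}(e)=e$ by Observation~\ref{o:identity to identity}, and $\inss q(e)$ places $n$ at position $q$ followed by the swapped pairs. This gives exactly $(n-q)+(n-q)/2$ inversions. That equals $3(n-p)/2$ when $n-p$ is even, which exceeds $n-p$ because $n-p>0$. The parity bookkeeping should be rechecked here.

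\emph{Even $n$, with $n-1=2k+1$.} Write $u=\ins q(\tilde w)$ with $q\in[k+2,n]$, and set $q'=2(q-k-1)$. Then $\inv(u)=\inv(\tilde w)+n-q$ and
\[
\inv(f_n(u))\ge \inv(\tilde f_{n-1}(\tilde w))+\tfrac{n-q'}{2}=\inv(\tilde f_{n-1}(\tilde w))+\tfrac{n}{2}+k+1-q .
\]
With $n=2k+2$, we get $\tfrac{n}{2}+k+1-q=2k+2-q=n-q$. The same comparison therefore reduces the claim to $\tilde f_{n-1}$, which Corollary~\ref{c:inv tilde f} handles when $\tilde w\neq e$. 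Corollary~\ref{c:identity to identity} and a direct count handle the case $\tilde w=e$, $q<n$.

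\emph{Main obstacle.} The crux is the edge case $a=e$ (or $\tilde w=e$). There the swaps may, in general, cost inversions, so Lemma~\ref{l:inss inv} gives only equality at best. The fix is that when the underlying permutation is the identity, the letters after the inserted $n$ are increasing. Each adjacent swap then \emph{adds} an inversion rather than removing one, which yields the required strict gain. I would verify this carefully, including the identification of $\inss$'s index range from Observation~\ref{obs:ins on Bn}.
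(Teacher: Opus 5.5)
Your proposal is correct and follows the same route as the paper. Both argue by induction along Algorithm~\ref{keyalg}, use Lemmas~\ref{l:ins inv} and~\ref{l:inss inv} to show the insertion-swapping step adds at least as many inversions as the insertion step, use Corollary~\ref{c:inv tilde f} for the even step, and get strictness from the identity case, which you justify more explicitly than the paper does. One small arithmetic correction in that identity case: since $q=2(p-k)-1$ gives $n-q=2(n-p)$, the count $(n-q)+\tfrac{n-q}{2}$ equals $3(n-p)$, not $3(n-p)/2$, and no parity condition is needed; it still exceeds $\inv(w)=n-p$ whenever $p<n$, so your conclusion stands.
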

\begin{proof}

    Let $w \in \mathfrak A_n$ not equal to identity. One can check that $f_4$ satisfies $\inv (w) < \inv (f_4(w))$ for all non-identity elements in $\mathfrak A_4$.
    
    Now, we are going to use induction. We assume that $\inv (w) < \inv (f_{n-1}(w))$ for all non-identity $v \in \mathfrak A_{n-1}$ as our induction hypothesis, and consider the map $f_{n}$. 

    Case 1: Fix $n-1 = 2k$ and let $v \in \mathfrak A_n$ not equal to identity. There exists a unique pair $(a,p) \in \mathfrak A_{n-1} \times \mathbb N$ such that $v = \ins{p} (a)$. By~\eqref{oddn+1}, $f_n(v)=\inss{2(p-k)-1}(f_{n-1}(a))$. By Lemma~\ref{l:inss inv}, we have $\inv (f_n(v)) = \inv (\inss{2(p-k)-1} (f_{n-1}(a)))$ is at least 
    \begin{equation*}
         \inv (f_{n-1}(a)) + \frac{n-1 -(2(p-k-1)-1)+1}{2} = \inv (f_{n-1}(a)) + n-p,   
    \end{equation*} with strict inequality when $f_{n-1}(a)$ equals identity, i.e., $a$ equals identity by Observation~\ref{o:identity to identity}.  
    If $a$ is not identity, by our induction hypothesis, we have \begin{equation*}
        \inv (f_n(v)) \geq \inv(f_{n-1}(a)) +n -p > \inv (a) +n-1-p+1. 
    \end{equation*} If $a$ is identity, we have \begin{equation*}
        \inv (f_n(v)) > \inv(f_{n-1}(a)) +n -p = \inv (a) +n-1-p+1. 
        \end{equation*}
    By Lemma~\ref{l:ins inv}, we see that \begin{equation*}
        \inv (a) +n-1-p+1 = \inv(\ins{p}(a)) = \inv (v). 
    \end{equation*}Therefore, $\inv (f_n(v)) > \inv (v)$.

    Case 2: Fix $n-1 = 2k+1$ and let $u \in \mathfrak A_n$ not equal to identity. There exists a unique pair $(\tilde{u},q) \in \tilde{\mathfrak A}_{n-1} \times \mathbb N$ such that $u = \ins q (\tilde{u})$. By~\eqref{evenn+1}, $f_n(u) = \inss{2(q-k-1)}(\tilde{f}_{n-1}(\tilde{u}))$. By Lemma~\ref{l:inss inv}, $\inv (f_n(u))$ is at least 
    \begin{equation*}
        \inv (\tilde{f}_{n-1}(\tilde{u}))+ \frac{n-1 - 2(q-k-1) +1}{2} = \inv (\tilde{f}_{n-1}(\tilde{u})) +n-q, 
    \end{equation*}with strict inequality when $\tilde{f}_{n-1}(\tilde{u})$ equals identity, i.e.,$\tilde{u}$ equals identity by Corollary~\ref{c:identity to identity}. If $\tilde{u}$ is not identity, by Corollary~\ref{c:inv tilde f} and our induction hypothesis, we have
    \begin{equation*}
        \inv (\tilde{f}_{n-1} (\tilde{u})) > \inv (\tilde{u}). 
    \end{equation*}Hence, we have \begin{equation*}
        \inv (f_n(u)) \geq \inv (\tilde{f}_{n-1}(\tilde{u})) +n-q > \inv (\tilde{u}) +n-q.
    \end{equation*}
    If $\tilde{u}$ is identity, we have 
    \begin{equation*}
        \inv (f_n(u)) > \inv (\tilde{f}_{n-1}(\tilde{u})) +n-q = \inv (\tilde{u}) +n-1-q+1.
    \end{equation*} 
    By Lemma~\ref{l:ins inv}, we have \begin{equation*}
         \inv (\tilde{u}) +n-1-q+1 = \inv (\ins q (\tilde{u})) = \inv (u). 
    \end{equation*}Therefore, $\inv (f_n(u)) > \inv (u)$. 
\end{proof}

We know that $\inv (v) < \inv (w)$ is a necessary condition of $v < w$ in the Bruhat order. Hence,we have the immediate corollary. 

\begin{cor}
    In Algorithm~\ref{keyalg}, for all $w\in \mathfrak A_n \setminus\{e\}$, we have $w \ngeq f_n(w)$ under the Bruhat order. 
\end{cor}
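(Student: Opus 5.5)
The corollary follows directly from Theorem~\ref{t:invineq} together with the fact that Bruhat order is graded by the inversion number. We argue by contradiction. Fix $w \in \mathfrak A_n \setminus \{e\}$ and suppose that $w \geq f_n(w)$ in Bruhat order.

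There are two cases. If $w = f_n(w)$, then $\inv(w) = \inv(f_n(w))$. If $f_n(w) < w$ strictly, then $\inv(f_n(w)) < \inv(w)$, because the length function $\ell(u) = \inv(u)$ is strictly monotone along Bruhat order (\cite{BBCoxeter}). In either case we get $\inv(f_n(w)) \leq \inv(w)$.

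This contradicts Theorem~\ref{t:invineq}, which gives the strict inequality $\inv(w) < \inv(f_n(w))$ for every $w \neq e$. Hence $w \ngeq f_n(w)$.

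Two points need care. First, the strict monotonicity of length in Bruhat order is a standard fact, which we cite rather than reprove. Second, we must separately exclude the equality case $f_n(w) = w$; the strict inequality of Theorem~\ref{t:invineq} already does this.

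The real content lies in Theorem~\ref{t:invineq} itself, which is already established. The corollary is only a necessary condition toward the desired relation $w \leq f_n(w)$, and proving that stronger comparison is the actual obstacle, addressed later.
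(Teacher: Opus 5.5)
Your argument is correct and is the same as the paper's: the corollary follows at once from Theorem~\ref{t:invineq}, because $f_n(w) \le w$ in Bruhat order would force $\inv(f_n(w)) \le \inv(w)$. Your separate handling of the case $f_n(w)=w$ just spells out what the paper's one-line justification leaves implicit.
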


The next theorem is due to Ehresmann \cite{Ehresmann}, which is a way to compare two elements in $\sn$ under the Bruhat order. It is also called the tableau criterion. 

\begin{thm}[Tableau Criterion]\label{TabCriterion}
    Let $w,u \in \sn$. Define $w_{i,k}$ to be the $i$-th element in the increasing rearrangement of $w_1,w_2,\dotsc, w_k$ and similarly define $u_{i,k}$. Then $w \leq u$ under the Bruhat order if and only if $w_{i,k} \leq u_{i,k}$ as an integer for all $k\in [1,n]$ and $1\leq i \leq k$. 
\end{thm}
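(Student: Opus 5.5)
The plan is to pass through the equivalent ``rank function'' formulation of the criterion. For $w\in\sn$ and $k,j\in[n]$ set
\begin{equation*}
r_w(k,j) \defeq \#\{a\le k \,|\, w_a\ge j\}.
\end{equation*}
\textbf{Step 1.} Show that the tableau conditions $w_{i,k}\le u_{i,k}$ for all $i\le k$ are equivalent to $r_w(k,j)\le r_u(k,j)$ for all $k,j$. For fixed $k$, the sorted sequences $(w_{1,k},\dots,w_{k,k})$ and $(u_{1,k},\dots,u_{k,k})$ are componentwise comparable if and only if, for every threshold $j$, the sequence for $u$ has at least as many entries $\ge j$ as the sequence for $w$. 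This is an elementary fact about sorted lists.

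\textbf{Step 2 (Bruhat implies tableau).} Bruhat order is the transitive closure of the relations $w< wt_{ab}$, where $a<b$, $w_a<w_b$, and $t_{ab}$ swaps positions $a$ and $b$. So it suffices to treat one such swap. Let $w'=wt_{ab}$. Then $r_{w'}(k,j)$ differs from $r_w(k,j)$ only when $a\le k<b$ and $w_a<j\le w_b$, and in that case it is larger by exactly $1$. Hence $r_w\le r_{w'}$ entrywise, and transitivity finishes this direction.

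\textbf{Step 3 (tableau implies Bruhat).} Suppose $r_w\le r_u$ entrywise and $w\ne u$. I will produce a transposition $t$ with $w<wt$ in Bruhat order and $r_{wt}\le r_u$. Since $\inv(wt)>\inv(w)$, and Step 2 gives $\inv(w)\le\inv(u)$ whenever $r_w\le r_u$, induction on $\inv(u)-\inv(w)$ then yields $w\le u$.

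To build $t$, let $i$ be the first position with $w_i\ne u_i$. The condition at $k=i$, $j=u_i$, together with $w_a=u_a$ for $a<i$, forces $w_i<u_i$. The value $u_i$ occurs in $w$ at some position $>i$, so the set of positions $b>i$ with $w_i<w_b\le u_i$ is nonempty. Among these I choose $b$ with $w_b$ minimal; I expect to need the tie-breaking convention, and the alternative to try is the smallest such position $b$. Set $t=t_{ib}$.

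\textbf{Main obstacle.} The hard step is checking $r_{wt}(k,j)\le r_u(k,j)$ for the cells that increased, namely $i\le k<b$ and $w_i<j\le w_b$. For these I must show the strict inequality $r_w(k,j)<r_u(k,j)$. Suppose instead that equality holds at such a cell $(k,j)$. By the minimality of $w_b$, no position $c$ with $i<c\le k$ has $w_c\in(w_i,w_b]$, since such $c$ would carry a smaller admissible value. Therefore
\begin{equation*}
r_w(k,j)=r_w(k,w_b+1)+\#\{a\le k \,|\, j\le w_a\le w_b\},
\end{equation*}
and no $w_a$ with $a\le k$ lies in $[j,w_b]$ except possibly at positions $a<i$. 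The same decomposition holds for $u$: positions $a<i$ agree with $w$, and position $i$ contributes $u_i\ge w_b\ge j$. Comparing the two decompositions, equality at $(k,j)$ would force $r_u(k,w_b+1)<r_w(k,w_b+1)$, contradicting the hypothesis $r_w\le r_u$. This bookkeeping, with the case $u_i=w_b$ treated separately, is the delicate part of the argument.
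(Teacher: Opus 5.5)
\textbf{There is a genuine gap in Step 3.} The paper gives no proof of this statement; it only quotes it from Ehresmann (and separately quotes the rank-matrix criterion from Bj\"orner--Brenti). Your Step 1 and Step 2 are correct. The failure is in Step 3: the rule ``choose $b$ with $w_b$ minimal'' does not work.

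Your argument for that rule breaks in a case you did not flag, namely $u_i>w_b$. Take $w=132$ and $u=312=w\,t_{12}$, so $r_w\le r_u$ (your upper-tail $r_w$). Here $i=1$, and the admissible positions are $b=2$ (value $3$) and $b=3$ (value $2$). Your rule picks $b=3$, giving $wt_{13}=231$. But $r_{231}(2,2)=2>1=r_{312}(2,2)$, so the invariant $r_{wt}\le r_u$ fails. Indeed $231\not\le 312$, since both have two inversions and they are distinct.

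At the cell $(k,j)=(2,2)$ you have $r_w(2,2)=r_u(2,2)=1$, and your contradiction does not appear. Because $u_i=3>w_b$, position $i$ of $u$ is counted in $r_u(k,w_b+1)$, not in the second term of your decomposition. Meanwhile position $c=2$, with $w_c=3>w_b$, is counted in $r_w(k,w_b+1)$. So $r_w(2,3)=r_u(2,3)=1$. Your bookkeeping is valid only when $u_i=w_b$.

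\textbf{The fix.} Your fallback rule is the right one: let $b$ be the \emph{smallest position} $b>i$ with $w_i<w_b\le u_i$. Then every $c$ with $i<c<b$ has $w_c<w_i$ or $w_c>u_i$, and you should compare at the threshold $u_i+1$, not $w_b+1$.

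For $i\le k<b$ and $w_i<j\le w_b$, write $P=\#\{a<i \,|\, w_a\ge j\}$. Then $r_w(k,j)=P+\#\{i<c\le k \,|\, w_c>u_i\}$. Also $r_u(k,j)\ge P+1+\#\{i<c\le k \,|\, u_c>u_i\}$, since position $i$ of $u$ contributes because $u_i\ge w_b\ge j$. The hypothesis at $(k,u_i+1)$, using $u_a=w_a$ for $a<i$ and $w_i<u_i$, gives $\#\{i<c\le k \,|\, w_c>u_i\}\le\#\{i<c\le k \,|\, u_c>u_i\}$; this is trivial if $u_i=n$. Hence $r_w(k,j)<r_u(k,j)$ on every cell that increases, so $r_{wt}\le r_u$.

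\textbf{Two smaller slips.}
\begin{enumerate}
\item The inequality that forces $w_i<u_i$ is the one at $j=w_i$; at $j=u_i$ it is vacuous.
\item Step 2 does not give $\inv(w)\le\inv(u)$ from $r_w\le r_u$; that is the converse of what it proves. You do not need it: $\inv$ strictly increases along your chain and is at most $\binom n2$, so the chain must stop, and by construction it can only stop at $u$.
\end{enumerate}
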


For example, we can use Theorem \ref{TabCriterion} to compare $2143 $ and $3241$ in the Bruhat order. When $k=1,2,3,4$, we have the following rearrangements of $2143$ and $3241$. 
\begin{center}
\begin{tabularx}{0.8\textwidth} { 
  | >{\centering\arraybackslash}X 
  | >{\centering\arraybackslash}X 
  | >{\centering\arraybackslash}X | }
 \hline
  & $2143$ & $3241$ \\
 \hline
 $k=1$  & $2$  & $2$  \\ 
\hline
$k=2$  & $1,2$  & $2,3$  \\ 
\hline
$k=3$  & $1,2,4$  & $2,3,4$  \\ 
\hline
$k=4$  & $1,2,3,4$  & $1,2,3,4$  \\ 
\hline
 
\end{tabularx}
\end{center}
One can see that for each $k$, the $i$-th letter of the rearrangement of $2143$ is less than or equal to the $i$-th letter of the rearrangement of $3241$. Therefore, $2143 \leq 3241$ in the Bruhat order.

By the Tableau Criterion, we can compare every $w\in \mathfrak A_n$ and its image $f_n(w) \in\mathfrak B_n$ under Algorithm~\ref{keyalg}. Then, we have the following theorem.  

\begin{thm}\label{t:special n}
    In Algorithm~\ref{keyalg}, for $n\leq 13$, we have $w\leq f_n(w)$ under the Bruhat order for all $w\in \mathfrak A_n$. 
\end{thm}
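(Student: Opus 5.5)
The statement is finite: for each $n\le 13$ it concerns the finitely many pairs $(w,f_n(w))$ with $w\in\mathfrak A_n$, and $|\mathfrak A_n|=\floor{\tfrac n2}!\ceil{\tfrac n2}!$. I would prove it by exhaustive verification. For $n\le 3$ the claim is either vacuous or settled by the remark preceding Algorithm~\ref{keyalg}. For $n=4$ one checks the four pairs in \eqref{alg base case} directly with Theorem~\ref{TabCriterion}.

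For $5\le n\le 13$ I would implement Algorithm~\ref{keyalg} literally, using the maps $\ins p$, $\inss q$ and the conjugation $w\mapsto w^{RU}$. Each $f_n$ is computed from $f_{n-1}$ by one of two rules:
\begin{itemize}
\item for odd $n$, decompose $w=\ins p(a)$ as in Observation~\ref{obs:ins on An} and apply \eqref{oddn+1};
\item for even $n$, first form $\tilde f_{n-1}$ via \eqref{eq:def tilde f}, decompose $u=\ins q(\tilde w)$ as in Observation~\ref{obs:ins on tildeAn}, and apply \eqref{evenn+1}.
\end{itemize}
At each level I would also confirm that $f_n$ is a bijection onto $\mathfrak B_n$. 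This is already guaranteed by Lemmas~\ref{l:even to odd} and~\ref{l:odd to even}, but checking it catches implementation errors. The tables for $f_5$ and $f_6$ given above serve as regression tests.

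For each pair I would test $w\le f_n(w)$ with the tableau criterion, Theorem~\ref{TabCriterion}. The standard efficient form of this test compares the prefix counts
\[
\#\{i\le k: w_i\ge j\}\le \#\{i\le k: f_n(w)_i\ge j\}\qquad\text{for all } k,j.
\]
That is $O(n^2)$ work per pair after computing cumulative counts. Theorem~\ref{t:invineq} gives the necessary condition $\inv(w)<\inv(f_n(w))$ as a quick sanity check, but it is not sufficient and is no substitute for the criterion.

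The main obstacle is size and reliability rather than mathematics. For $n=13$ there are $6!\,7!\approx 3.6\times10^6$ pairs, and $n=12$ has about $5.2\times10^5$. This is comfortably feasible, but $\tilde f_{n-1}$ must be stored as a hash map keyed by permutations, not recomputed recursively. A Bruhat comparison along a bijection gives no inductive shortcut in this form, which is why I would fall back on brute force here. The bound $13$ simply marks where the computation was carried out; a general proof would need a structural argument, such as showing that $\inss q$ dominates $\ins p$ in the tableau criterion under the index relation $q=2(p-k)-1$. Since this only covers finitely many $n$, I would state it as a computer-verified result and include the code's logic in the paper.
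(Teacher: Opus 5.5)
Your proposal is correct and follows the paper's own proof: the paper likewise settles this case by exhaustive computer verification, checking $w\le f_n(w)$ for every $w\in\mathfrak A_n$ and every $n\le 13$ with MATLAB code referenced in the appendix. The paper later removes the bound in Theorem~\ref{t:Bruhat balance case} (Section~\ref{s:proof of balance case}), but not through the insertion-dominance argument you suggest. Instead it works with the $F_k$ parametrization and the rank-matrix criterion.
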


\begin{proof}
    This is done by brute force, with MATLAB code in the appendix. 
\end{proof}

We are going to extend Theorem~\ref{t:special n} to all $n$ in the next section.

\section{Bruhat Order Comparison for the Bijection $f_n$}\label{s:proof of balance case}

In this section, we are going to prove that $w \leq f_n (w)$ under the Bruhat order for all $w \in \mathfrak A_n$, where $f_n$ is defined in Algorithm~\ref{keyalg}. 

Now, we consider the case when $n=2k$. In this case, both families of
permutations under consideration can be naturally parametrized by pairs
of permutations in $\mathfrak S_k$. More precisely, an element of the Young
subgroup preserving the two blocks
\begin{equation*}
[1,k]
\qquad\text{and}\qquad
[k+1,\ldots,2k]
\end{equation*}
is determined by the order of the entries within each block. Thus, for
$\sigma,\tau\in \mathfrak S_k$, we write
\begin{equation}
A_k(\sigma,\tau) = \sigma_1\cdots \sigma_k\,(k+\tau_1)\cdots(k+\tau_k) \in \mathfrak A_{2k}.
\end{equation}

Similarly, a parity-alternating permutation is determined by the
relative order of its odd entries and its even entries. Accordingly, for
$\alpha,\beta\in \mathfrak S_k$, we write
\begin{equation}
B_k(\alpha,\beta) = (2\alpha_1-1)(2\beta_1)\cdots(2\alpha_k-1)(2\beta_k) \in \mathfrak B_{2k}.
\end{equation}
These parametrizations will allow us to describe the map between the two
families in terms of permutations in $\mathfrak S_k$. 

The parametrizations $A_k$ and $B_k$ also behave naturally under the
$RU$-operation. In both cases, applying $RU$ reverses the roles of the
two component permutations and applies $RU$ to each of them. We record
these identities in the following lemma.

\begin{lem}\label{l:RU on A B}
Let $\sigma,\tau,\alpha,\beta\in\mathfrak S_k$. Then we have
\begin{equation}
A_k(\sigma,\tau)^{RU}
=
A_k(\tau^{RU},\sigma^{RU}),
\label{eq:RU on A}
\end{equation}
and
\begin{equation}
B_k(\alpha,\beta)^{RU}
=
B_k(\beta^{RU},\alpha^{RU}).
\label{eq:RU on B}
\end{equation}
\end{lem}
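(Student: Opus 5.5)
The plan is a direct computation from the definitions, writing $n=2k$ throughout. Recall that for $w\in\mathfrak S_n$ we have $w^{RU}_j = n+1-w_{n+1-j}$. Both identities will follow by computing each position of the left-hand side and checking that it agrees with the corresponding entry of the right-hand side. Here $\sigma^{RU}\in\mathfrak S_k$ means the operation taken inside $\mathfrak S_k$, so $\sigma^{RU}_j = k+1-\sigma_{k+1-j}$.

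For \eqref{eq:RU on A}, set $w=A_k(\sigma,\tau)$, so that $w_j=\sigma_j$ and $w_{k+j}=k+\tau_j$ for $j\in[k]$.
\begin{itemize}
\item For a position $j\in[k]$, the mirrored position $2k+1-j=k+(k+1-j)$ lies in the second block. Hence
\[
w^{RU}_j = 2k+1-(k+\tau_{k+1-j}) = k+1-\tau_{k+1-j} = \tau^{RU}_j .
\]
\item For a position $k+j$, the mirrored position $2k+1-(k+j)=k+1-j$ lies in the first block. Hence
\[
w^{RU}_{k+j} = 2k+1-\sigma_{k+1-j} = k+\bigl(k+1-\sigma_{k+1-j}\bigr) = k+\sigma^{RU}_j .
\]
\end{itemize}
This is exactly $A_k(\tau^{RU},\sigma^{RU})$.

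For \eqref{eq:RU on B}, set $u=B_k(\alpha,\beta)$, so that $u_{2j-1}=2\alpha_j-1$ and $u_{2j}=2\beta_j$.
\begin{itemize}
\item The mirror of the odd position $2j-1$ is $2k+2-2j=2(k+1-j)$, which is even. Hence
\[
u^{RU}_{2j-1} = 2k+1-2\beta_{k+1-j} = 2\bigl(k+1-\beta_{k+1-j}\bigr)-1 = 2\beta^{RU}_j-1 .
\]
\item The mirror of the even position $2j$ is $2k+1-2j=2(k+1-j)-1$, which is odd. Hence
\[
u^{RU}_{2j} = 2k+1-\bigl(2\alpha_{k+1-j}-1\bigr) = 2\bigl(k+1-\alpha_{k+1-j}\bigr) = 2\alpha^{RU}_j .
\]
\end{itemize}
This is exactly $B_k(\beta^{RU},\alpha^{RU})$.

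No real obstacle is expected. The only care needed is the index bookkeeping: the mirrored position must land in the other block (respectively, the other parity class), and the $RU$ inside $\mathfrak S_k$ must use $k+1$ rather than $2k+1$. Proposition~\ref{p: U and R} already guarantees that both sides lie in the right sets, so membership needs no separate check.
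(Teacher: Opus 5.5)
Your proposal is correct and takes the same route as the paper: both unwind the definitions of $RU$, $A_k$ and $B_k$ directly. The paper writes out the full one-line word of $A_k(\sigma,\tau)^{RU}$ and $B_k(\alpha,\beta)^{RU}$, while you check the same identities entry by entry via $w^{RU}_j = 2k+1-w_{2k+1-j}$, and your index bookkeeping is accurate. The appeal to Proposition~\ref{p: U and R} is harmless but unnecessary, since the entrywise equality already identifies the two sides.
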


\begin{proof}
By definition,
\begin{equation*}
A_k(\sigma,\tau)
=
\sigma_1\cdots\sigma_k
(k+\tau_1)\cdots(k+\tau_k).
\end{equation*}
Applying $RU$, we first reverse the order of the entries and then
replace each entry $x$ by $2k+1-x$. Thus,
\begin{equation*}
A_k(\sigma,\tau)^{RU}
={}
(k+1-\tau_k)\cdots(k+1-\tau_1)
(2k+1-\sigma_k)\cdots(2k+1-\sigma_1).
\end{equation*}
Since
$\tau^{RU}
=
(k+1-\tau_k)\cdots(k+1-\tau_1)$
and
$\sigma^{RU}
=
(k+1-\sigma_k)\cdots(k+1-\sigma_1)$,
we have
\begin{equation*}
2k+1-\sigma_i
=
k+(k+1-\sigma_i).
\end{equation*}
Therefore,
\begin{equation*}
A_k(\sigma,\tau)^{RU}
=
A_k(\tau^{RU},\sigma^{RU}),
\end{equation*}
which proves \eqref{eq:RU on A}.

Similarly,
$B_k(\alpha,\beta)
=
(2\alpha_1-1)(2\beta_1)\cdots
(2\alpha_k-1)(2\beta_k)$.
Applying $RU$ gives
\begin{equation*}
B_k(\alpha,\beta)^{RU}
={}
\bigl(2(k+1-\beta_k)-1\bigr)
\bigl(2(k+1-\alpha_k)\bigr)\cdots
\bigl(2(k+1-\beta_1)-1\bigr)
\bigl(2(k+1-\alpha_1)\bigr).
\end{equation*}
Since
$\beta^{RU}
=
(k+1-\beta_k)\cdots(k+1-\beta_1)$
and
$\alpha^{RU}
=
(k+1-\alpha_k)\cdots(k+1-\alpha_1)$,
we have 
\begin{equation*}
B_k(\alpha,\beta)^{RU} = B_k(\beta^{RU},\alpha^{RU}).
\end{equation*}
\end{proof}

For example, let $k=3$, $\sigma=123$, and $\tau=231$. Then
\begin{equation*}
A_3(123,231)=123564.
\end{equation*}
Applying $RU$, we obtain
\begin{equation*}
123564^{RU}=312456.
\end{equation*}
On the other hand,
\begin{equation*}
\tau^{RU}=312,
\qquad
\sigma^{RU}=123,
\end{equation*}
and hence
\begin{equation*}
A_3(\tau^{RU},\sigma^{RU})
=
A_3(312,123)
=
312456.
\end{equation*}

Similarly, let $\alpha=231$ and $\beta=132$. Then
\begin{equation*}
B_3(231,132)=325614,
\end{equation*}
and
\begin{equation*}
325614^{RU}=361254.
\end{equation*}
Since
\begin{equation*}
\beta^{RU}=213,
\qquad
\alpha^{RU}=312,
\end{equation*}
we also have
\begin{equation*}
B_3(\beta^{RU},\alpha^{RU})
=
B_3(213,312)
=
361254.
\end{equation*}
Notice that in both identities above, the two component permutations
are interchanged after applying $RU$.

For $d\in[k+1]$, define a minimum-insertion map
\begin{equation*}
\begin{aligned}
I_d^-:\mathfrak{S}_k &\to \mathfrak{S}_{k+1},\\
w=w_1\cdots w_k
&\mapsto
(w_1+1)\cdots(w_{d-1}+1)\,1\,
(w_d+1)\cdots(w_k+1).
\end{aligned}
\end{equation*}
Thus, $I_d^-$ inserts the new minimum $1$ in position $d$, after
increasing each original entry by $1$.

For convenience, we also write
\begin{equation*}
I_h^+ := \ins {h},
\end{equation*}
where $\operatorname{ins}_h$ is the insertion map defined in Section~\ref{s:A gen on all}.

The insertion maps interact with the $RU$-operation in a simple way.
We record the following two identities for later use.

\begin{lem}\label{l:ins RU identity}
For $w\in\mathfrak{S}_k$, and $d,h\in[k+1]$, we have 
\begin{gather*}
\bigl(I_d^-(w)\bigr)^{RU}
=
I_{k+2-d}^+\bigl(w^{RU}\bigr),\ \text{and} \ \bigl(I_h^+(w)\bigr)^{RU}
=
I_{k+2-h}^-\bigl(w^{RU}\bigr).
\end{gather*}
\end{lem}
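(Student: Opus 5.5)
The plan is a direct computation from the definitions: write out the one-line notation of each side entrywise and compare. We use $w^{RU}_j = k+1-w_{k+1-j}$ for $w\in\mathfrak S_k$, and $v^{RU}_j = k+2-v_{k+2-j}$ for $v\in\mathfrak S_{k+1}$. Since $RU$ is an involution on $\mathfrak S_k$ and on $\mathfrak S_{k+1}$, the second identity follows from the first. Indeed, apply the first identity to $w^{RU}$ with $d=k+2-h$; this gives $(I^-_{k+2-h}(w^{RU}))^{RU}=I^+_h(w)$. Applying $RU$ to both sides yields the second identity. So the main work is the first identity.

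For the first identity, set $v=I_d^-(w)$. Then
\begin{equation*}
v_j=\begin{cases} w_j+1 & \text{if } j<d,\\ 1 & \text{if } j=d,\\ w_{j-1}+1 & \text{if } j>d.\end{cases}
\end{equation*}
Compute $v^{RU}_j=k+2-v_{k+2-j}$ and split into the three cases according to whether $k+2-j$ is less than, equal to, or greater than $d$, that is, whether $j>k+2-d$, $j=k+2-d$, or $j<k+2-d$.

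In the middle case, $v^{RU}_{k+2-d}=k+1$. When $j<k+2-d$, the entry is $k+2-(w_{k+1-j}+1)=k+1-w_{k+1-j}=w^{RU}_j$. When $j>k+2-d$, the entry is $k+2-(w_{k+2-j-1}+1)=k+1-w_{k+1-(j-1)}=w^{RU}_{j-1}$.

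This is exactly the one-line notation of $\ins{k+2-d}(w^{RU})$: the letters of $w^{RU}$ stay in order, with the new maximum $k+1$ inserted at position $k+2-d$. So the first identity holds.

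The only potential obstacle is index bookkeeping. One has to check that the range $d\in[k+1]$ corresponds to $k+2-d\in[k+1]$, and that the boundary cases $d=1$ and $d=k+1$, where one of the outer ranges is empty, are covered by the same formulas. Both are immediate.
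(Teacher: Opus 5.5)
Your proof is correct and follows the paper's argument (reversal moves the inserted entry from position $d$ to position $k+2-d$, and complementation turns the new minimum $1$ into the new maximum $k+1$), written out entrywise; obtaining the second identity from the first via the involutivity of $RU$ is a clean replacement for the paper's ``similarly.'' One small index slip: in the case $j>k+2-d$ you have $k+2-j<d$, so the middle expression should read $k+2-(w_{k+2-j}+1)$ rather than $k+2-(w_{k+2-j-1}+1)$, although your final value $w^{RU}_{j-1}$ is correct.
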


\begin{proof}
The first identity follows directly from the definition of $RU$: reversing the
positions sends the entry inserted in position $d$ to position
$k+2-d$, while complementing the entries sends the new minimum $1$
to the new maximum $k+1$. The proof of the second identity is similar. 
\end{proof}

Using the parametrizations $A_k$ and $B_k$ above, we may equivalently
encode the map $f_{2k}$ defined in Algorithm~\ref{keyalg} as a map
\begin{equation*}
F_k:S_k\times S_k\longrightarrow S_k\times S_k.
\end{equation*}
More precisely, for $\sigma,\tau,\alpha,\beta\in S_k$, we define
\begin{equation}\label{def:F_k}
F_k(\sigma,\tau)=(\alpha,\beta)
\quad\Longleftrightarrow\quad
f_{2k}\bigl(A_k(\sigma,\tau)\bigr)=B_k(\alpha,\beta).
\end{equation}
Thus, $F_k$ records the action of $f_{2k}$ in terms of the two
$S_k$-coordinates determined by the parametrizations $A_k$ and $B_k$.

For example, let $k=3$ and take $\sigma=\tau=231$. Then
\begin{equation*}
A_3(231,231)=231564.
\end{equation*}
From the definition of $f_6$, we have
\begin{equation*}
f_6(231564)=523614.
\end{equation*}
On the other hand,
\begin{equation*}
523614=B_3(321,132).
\end{equation*}
Therefore,
\begin{equation*}
F_3(231,231)=(321,132).
\end{equation*}
Indeed, the odd entries of $523614$ occur in the order $5,3,1$,
corresponding to $321$, while the even entries occur in the order
$2,6,4$, corresponding to $132$. 

Although our main focus is the case $n=2k$, the recursive definition
of $f_{2k}$ naturally involves permutations in $\mathfrak{S}_{2k-1}$.
We therefore introduce a corresponding parametrization for
parity-alternating permutations of odd size. For
$\alpha\in \mathfrak{S}_k$ and $\beta\in \mathfrak{S}_{k-1}$, define
\begin{equation}\label{def:B circ}
B_k^\circ(\alpha,\beta)
=
(2\alpha_1-1)(2\beta_1)
\cdots
(2\alpha_{k-1}-1)(2\beta_{k-1})
(2\alpha_k-1).
\end{equation}
Thus, $B_k^\circ(\alpha,\beta)\in \mathfrak B_{2k-1}$ is the
parity-alternating permutation whose odd entries are ordered according
to $\alpha$ and whose even entries are ordered according to $\beta$. 

There are some properties involving $B_k^{\circ}$ and maps we defined earlier.

\begin{lem}\label{l:B circ with inss}
The insertion-swapping maps satisfy the following identities.

\begin{enumerate}[(i)]
    \item Let $w,v\in\mathfrak S_{k-1}$ and $s\in[k]$. Then
    \begin{equation*}
        \inss{2s-1}\bigl(B_{k-1}(w,v)\bigr)
        =
        B_k^\circ\bigl(I_s^+(w),v\bigr).
    \end{equation*}

    \item Let $w\in\mathfrak S_k$, $v\in\mathfrak S_{k-1}$, and
    $h\in[k]$. Then
    \begin{equation*}
        \inss{2h}\bigl(B_k^\circ(w,v)\bigr)
        =
        B_k\bigl(w,I_h^+(v)\bigr).
    \end{equation*}
\end{enumerate}
\end{lem}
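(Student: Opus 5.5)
Both identities are statements about one-line notation, so I will prove them by writing out both sides letter by letter. Two facts carry the argument. First, $\inss q$ inserts $n+1$ at position $q$ and then swaps the letters in each consecutive pair of positions after it. Second, $B_{k-1}$, $B_k^\circ$ and $B_k$ interleave odd letters $2\alpha_i-1$ with even letters $2\beta_i$.

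For (i), take $b=B_{k-1}(w,v)\in\mathfrak B_{2k-2}$, so $n=2k-2$ and $n+1=2k-1$. The parity condition on $\inss q$ requires $n+1-q=2k-1-(2s-1)=2k-2s$ to be even, which holds. Positions $1,\dots,2s-2$ of $b$ are kept, namely $(2w_1-1)(2v_1)\cdots(2w_{s-1}-1)(2v_{s-1})$. Then $2k-1=2(k)-1$ is placed at position $2s-1$. The remaining letters $b_{2s-1}b_{2s}\cdots b_{2k-2}=(2w_s-1)(2v_s)\cdots(2w_{k-1}-1)(2v_{k-1})$ are swapped in pairs, giving $(2v_s)(2w_s-1)\cdots(2v_{k-1})(2w_{k-1}-1)$. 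Reading the result, the odd letters sit at odd positions in the order $w_1,\dots,w_{s-1},k,w_s,\dots,w_{k-1}$, which is exactly $I_s^+(w)=\ins s(w)$. The even letters sit at even positions in the order $v_1,\dots,v_{k-1}$. By \eqref{def:B circ}, the result is therefore $B_k^\circ(I_s^+(w),v)$.

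For (ii), take $b=B_k^\circ(w,v)\in\mathfrak B_{2k-1}$, so $n+1=2k$, and the parity condition $2k-2h$ even holds. The first $2h-1$ letters $(2w_1-1)(2v_1)\cdots(2v_{h-1})(2w_h-1)$ are kept. Then $2k=2\cdot k$ is placed at position $2h$. The tail $b_{2h}\cdots b_{2k-1}=(2v_h)(2w_{h+1}-1)\cdots(2v_{k-1})(2w_k-1)$ is swapped pairwise into $(2w_{h+1}-1)(2v_h)\cdots(2w_k-1)(2v_{k-1})$. In the result, the odd letters follow $w$ unchanged, and the even letters follow $v_1,\dots,v_{h-1},k,v_h,\dots,v_{k-1}=I_h^+(v)$. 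So the result is $B_k(w,I_h^+(v))$.

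There is no genuine obstacle here. The only delicate point is index bookkeeping: checking that the boundary position $2s-1$ (respectively $2h$) lands at the start of a swapped pair, so that every pair after the inserted maximum consists of one odd and one even letter in the right order. The edge cases $s=k$ and $h=k$, where the tail is empty, also need checking and follow immediately.
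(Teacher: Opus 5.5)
Your proposal is correct and matches the paper's proof: both write out the one-line notation of $B_{k-1}(w,v)$ (resp.\ $B_k^\circ(w,v)$), apply the insertion and pairwise swap of $\inss{2s-1}$ (resp.\ $\inss{2h}$), and read off the odd and even subsequences as $(I_s^+(w),v)$ (resp.\ $(w,I_h^+(v))$). You also check the parity condition for $\inss{}$ and the empty-tail cases $s=k$ and $h=k$ explicitly, which the paper leaves implicit.
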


\begin{proof}
For part~(i), we have
\begin{equation*}
B_{k-1}(w,v)
=
(2w_1-1)(2v_1)\cdots
(2w_{s-1}-1)(2v_{s-1})
(2w_s-1)(2v_s)\cdots
(2w_{k-1}-1)(2v_{k-1}).
\end{equation*}
By the definition of $\inss{2s-1}$,
\begin{align*}
\inss{2s-1}\bigl(B_{k-1}(w,v)\bigr)
={}&
(2w_1-1)(2v_1)\cdots
(2w_{s-1}-1)(2v_{s-1})(2k-1)\\
&\quad
(2v_s)(2w_s-1)
(2v_{s+1})(2w_{s+1}-1)\cdots
(2v_{k-1})(2w_{k-1}-1).
\end{align*}
Since
\begin{equation*}
I_s^+(w)
=
w_1\cdots w_{s-1}\,k\,w_s\cdots w_{k-1},
\end{equation*}
the right-hand side is precisely
\begin{equation*}
B_k^\circ\bigl(I_s^+(w),v\bigr).
\end{equation*}

For part~(ii), we have
\begin{equation*}
B_k^\circ(w,v)
=
(2w_1-1)(2v_1)\cdots
(2w_{h-1}-1)(2v_{h-1})
(2w_h-1)(2v_h)\cdots
(2w_k-1).
\end{equation*}
Applying $\inss{2h}$ gives
\begin{align*}
\inss{2h}\bigl(B_k^\circ(w,v)\bigr)
={}&
(2w_1-1)(2v_1)\cdots
(2w_{h-1}-1)(2v_{h-1})(2w_h-1)(2k)\\
&\quad
(2w_{h+1}-1)(2v_h)\cdots
(2w_k-1)(2v_{k-1}).
\end{align*}
Since
\begin{equation*}
I_h^+(v)
=
v_1\cdots v_{h-1}\,k\,v_h\cdots v_{k-1},
\end{equation*}
this is exactly
\begin{equation*}
B_k\bigl(w,I_h^+(v)\bigr).
\end{equation*}
\end{proof}

For example, let $k=3$, $w=21$, and $v=12$. For part~(i), take
$s=1$. Then
\begin{equation*}
B_2(21,12)=3214
\end{equation*}
and
\begin{equation*}
I_1^+(21)=321.
\end{equation*}
Hence
\begin{equation*}
\inss{1}\bigl(B_2(21,12)\bigr)
=
\inss{1}(3214)
=
52341
=
B_3^\circ(321,12).
\end{equation*}

For part~(ii), take $w=321$, $v=12$, and $h=2$. We have
\begin{equation*}
B_3^\circ(321,12)=52341
\end{equation*}
and
\begin{equation*}
I_2^+(12)=132.
\end{equation*}
Therefore,
\begin{equation*}
\inss{4}\bigl(B_3^\circ(321,12)\bigr)
=
\inss{4}(52341)
=
523614
=
B_3(321,132).
\end{equation*}

\begin{lem}\label{l:B circ with RU}
Let $w\in\mathfrak{S}_k$ and $v\in\mathfrak{S}_{k-1}$. Then
\begin{equation*}
\bigl(B_k^\circ(w,v)\bigr)^{RU}
=
B_k^\circ\bigl(w^{RU},v^{RU}\bigr).
\end{equation*}
\end{lem}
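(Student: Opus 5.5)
This is a direct computation in the style of Lemma~\ref{l:RU on A B}: write out $B_k^\circ(w,v)$ letter by letter, apply $R$ and then $U$ with $n=2k-1$, and read off the odd and even subsequences of the result.

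Concretely, $B_k^\circ(w,v)$ is the word of length $2k-1$ whose odd positions $1,3,\dotsc,2k-1$ carry $2w_1-1,\dotsc,2w_k-1$. Its even positions $2,4,\dotsc,2k-2$ carry $2v_1,\dotsc,2v_{k-1}$.

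\textbf{Applying $R$.} Reversal sends position $i$ to $2k-i$, which preserves parity because $2k$ is even. The reversed word is therefore
\begin{equation*}
(2w_k-1)(2v_{k-1})(2w_{k-1}-1)\cdots(2v_1)(2w_1-1).
\end{equation*}
Odd entries still sit in odd positions, in the order $w_k,\dotsc,w_1$. Even entries still sit in even positions, in the order $v_{k-1},\dotsc,v_1$. This is the key difference from the even case in Lemma~\ref{l:RU on A B}, where reversal swaps parities and hence interchanges the two component permutations.

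\textbf{Applying $U$.} Complementation replaces each entry $x$ by $2k-x$. For an odd entry,
\begin{equation*}
2k-(2w_j-1)=2(k+1-w_j)-1 .
\end{equation*}
For an even entry,
\begin{equation*}
2k-2v_j=2\bigl((k-1)+1-v_j\bigr).
\end{equation*}

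\textbf{Reading off the result.} The odd positions of the final word encode
\begin{equation*}
(k+1-w_k)\cdots(k+1-w_1)=w^{RU}\in\mathfrak S_k .
\end{equation*}
The even positions encode
\begin{equation*}
(k-v_{k-1})\cdots(k-v_1)=v^{RU}\in\mathfrak S_{k-1},
\end{equation*}
where $RU$ is taken in $\mathfrak S_{k-1}$, so the complement constant is $k$. By the definition \eqref{def:B circ}, the final word is exactly $B_k^\circ(w^{RU},v^{RU})$.

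The only care needed is the index bookkeeping: confirming that reversal preserves parity of positions when the length $2k-1$ is odd, and using the correct complement constant for each of $\mathfrak S_k$ and $\mathfrak S_{k-1}$. No real obstacle is expected.
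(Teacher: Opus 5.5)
Your proposal is correct and follows the same route as the paper: write out $B_k^\circ(w,v)$ letter by letter, reverse (which preserves position parity because the length $2k-1$ is odd), complement via $x\mapsto 2k-x$, and read off the odd and even subsequences as $w^{RU}\in\mathfrak S_k$ and $v^{RU}\in\mathfrak S_{k-1}$. Your bookkeeping of the complement constants ($k+1$ for $\mathfrak S_k$, $k$ for $\mathfrak S_{k-1}$) matches the paper's computation.
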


\begin{proof}

By definition, we have
\begin{align*}
B_k^\circ(w,v)
&=
(2w_1-1)(2v_1)\cdots
(2w_{k-1}-1)(2v_{k-1})(2w_k-1), \\
w^{RU}
&=
(k+1-w_k)\cdots(k+1-w_1), \\
v^{RU}
&=
(k-v_{k-1})\cdots(k-v_1).
\end{align*}

Applying $RU$ on $\bigl(B_k^\circ(w,v)\bigr)$, we have 
\begin{align*}
\bigl(B_k^\circ(w,v)\bigr)^{RU}
={}&
\bigl(2(k+1-w_k)-1\bigr)
\bigl(2(k-v_{k-1})\bigr)\cdots\\
&\qquad
\bigl(2(k+1-w_2)-1\bigr)
\bigl(2(k-v_1)\bigr)
\bigl(2(k+1-w_1)-1\bigr),
\end{align*}
which equals 
$B_k^\circ\bigl(w^{RU},v^{RU}\bigr)$ by definition of $B_k^\circ$.

\end{proof}

For example, let $k=3$, $w=231$, and $v=21$. Then
\begin{equation*}
B_3^\circ(231,21)=34521.
\end{equation*}
Applying $RU$ gives
\begin{equation*}
34521^{RU}=54123.
\end{equation*}
On the other hand,
\begin{equation*}
231^{RU}=312,
\qquad
21^{RU}=21,
\end{equation*}
and hence
\begin{equation*}
B_3^\circ(231^{RU},21^{RU})
=
B_3^\circ(312,21)
=
54123.
\end{equation*}
Thus,
\begin{equation*}
\bigl(B_3^\circ(231,21)\bigr)^{RU}
=
B_3^\circ(231^{RU},21^{RU}),
\end{equation*}
as stated in the lemma. In contrast to \eqref{eq:RU on B}, the two components are not
interchanged here. 

To analyze the maps $F_k$ recursively, we first describe how they
interact with the two insertion operations introduced above. This
recursive relation is naturally coupled with a symmetry property of
$F_k$, and the two statements will be proved simultaneously by
induction. 

\begin{prop}\label{p:Ek Dk}
The maps $F_k$ satisfy the following two properties.

\emph{$(E_k)$} Let $k\geq 3$, and suppose that
\begin{equation*}
F_{k-1}(\sigma,\tau)=(\alpha,\beta),
\end{equation*}
where $\sigma,\tau,\alpha,\beta\in S_{k-1}$. Then, for $d,h\in[k]$,
\begin{equation*}
F_k\bigl(I_d^-(\sigma),I_h^+(\tau)\bigr)
=
\bigl(I_d^-(\beta),I_h^+(\alpha)\bigr).
\end{equation*}

\emph{$(D_k)$} Let $k\geq 2$, and suppose that
\begin{equation*}
F_k(\sigma,\tau)=(\alpha,\beta).
\end{equation*}
Then
\begin{equation*}
F_k\bigl(\tau^{RU},\sigma^{RU}\bigr)
=
\bigl(\beta^{RU},\alpha^{RU}\bigr).
\end{equation*}
\end{prop}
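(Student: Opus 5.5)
The plan is a simultaneous induction: first verify $(D_2)$ directly, then show $(D_{k-1})\Rightarrow(E_k)$, and finally $(E_k)\wedge(D_{k-1})\Rightarrow(D_k)$ for every $k\geq 3$. For the base case I read off $F_2$ from \eqref{alg base case}. We have $1234=A_2(12,12)\mapsto B_2(12,12)$, $1243=A_2(12,21)\mapsto 1432=B_2(12,21)$, $2134=A_2(21,12)\mapsto 3214=B_2(21,12)$ and $2143=A_2(21,21)\mapsto 3412=B_2(21,21)$. So $F_2$ is the identity on $S_2\times S_2$, and $(D_2)$ holds trivially.

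For $(E_k)$, I will unwind Algorithm~\ref{keyalg} on $u=A_k(I_d^-(\sigma),I_h^+(\tau))\in\mathfrak A_{2k}$.

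\emph{Step 1.} The letter $2k$ sits in position $q=k+h$, so $u=\ins{k+h}(\tilde w)$. Here $\tilde w\in\tilde{\mathfrak A}_{2k-1}$ consists of $I_d^-(\sigma)$ followed by $(k+\tau_1)\cdots(k+\tau_{k-1})$. With $n=2k$ and $n-1=2(k-1)+1$, \eqref{evenn+1} gives $f_{2k}(u)=\inss{2h}\bigl(\tilde f_{2k-1}(\tilde w)\bigr)$.

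\emph{Step 2.} Compute $\tilde w^{RU}$ as in the proof of Lemma~\ref{l:RU on A B}. Reversing and complementing turns the $\tau$-block into $\tau^{RU}$ with values in $[1,k-1]$. It turns $I_d^-(\sigma)$ into a block with values in $[k,2k-1]$ equal to $(k-1)+I^+_{k+1-d}(\sigma^{RU})$, using Lemma~\ref{l:ins RU identity}. The letter $1$ becomes $2k-1$, in position $2k-d$. Hence $\tilde w^{RU}=\ins{2k-d}\bigl(A_{k-1}(\tau^{RU},\sigma^{RU})\bigr)$.

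\emph{Step 3.} Then \eqref{oddn+1} gives $f_{2k-1}(\tilde w^{RU})=\inss{2(k+1-d)-1}\bigl(B_{k-1}(F_{k-1}(\tau^{RU},\sigma^{RU}))\bigr)$. By $(D_{k-1})$ this inner argument is $B_{k-1}(\beta^{RU},\alpha^{RU})$. Lemma~\ref{l:B circ with inss}(i) then turns $f_{2k-1}(\tilde w^{RU})$ into $B_k^\circ\bigl(I^+_{k+1-d}(\beta^{RU}),\alpha^{RU}\bigr)$.

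\emph{Step 4.} Apply $RU$ using Lemma~\ref{l:B circ with RU}, together with $(I^+_{k+1-d}(\beta^{RU}))^{RU}=I_d^-(\beta)$ from Lemma~\ref{l:ins RU identity} in $\mathfrak S_{k-1}$. This gives $\tilde f_{2k-1}(\tilde w)=B_k^\circ(I_d^-(\beta),\alpha)$. Finally, Lemma~\ref{l:B circ with inss}(ii) gives $\inss{2h}$ of this equal to $B_k(I_d^-(\beta),I_h^+(\alpha))$, which is $(E_k)$.

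For $(D_k)$, every pair in $S_k\times S_k$ has the form $(I_d^-(\sigma'),I_h^+(\tau'))$ for unique $d,h\in[k]$ and $\sigma',\tau'\in S_{k-1}$. Write $F_{k-1}(\sigma',\tau')=(\alpha',\beta')$. By $(E_k)$, $F_k(\sigma,\tau)=(\alpha,\beta)$ with $\alpha=I_d^-(\beta')$ and $\beta=I_h^+(\alpha')$. By Lemma~\ref{l:ins RU identity}, $(\tau^{RU},\sigma^{RU})=\bigl(I^-_{k+1-h}(\tau'^{RU}),I^+_{k+1-d}(\sigma'^{RU})\bigr)$. Applying $(E_k)$ again, with $F_{k-1}(\tau'^{RU},\sigma'^{RU})=(\beta'^{RU},\alpha'^{RU})$ from $(D_{k-1})$, gives
\[F_k(\tau^{RU},\sigma^{RU})=\bigl(I^-_{k+1-h}(\alpha'^{RU}),\,I^+_{k+1-d}(\beta'^{RU})\bigr).\]
Lemma~\ref{l:ins RU identity} identifies the right-hand side with $\bigl((I_h^+\alpha')^{RU},(I_d^-\beta')^{RU}\bigr)=(\beta^{RU},\alpha^{RU})$, as required.

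The main obstacle is the index bookkeeping in Steps 1--3. One has to match the positions $q=k+h$ and $p=2k-d$ with the subscripts $2(q-k)$ and $2(p-(k-1))-1$ of Algorithm~\ref{keyalg}. One also has to track that Lemma~\ref{l:ins RU identity} is applied in $\mathfrak S_{k-1}$, so the complement index is $k+1-\cdot$ rather than $k+2-\cdot$. I will check these carefully against the worked example $F_3(231,231)=(321,132)$ from the paper, where $F_2$ is the identity.
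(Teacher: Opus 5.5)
Your proposal is correct and matches the paper's proof essentially step for step. It uses the same induction scheme $(D_2)\Rightarrow(E_3)\Rightarrow(D_3)\Rightarrow\cdots$, and the same unwinding of the even and odd steps of Algorithm~\ref{keyalg}: $2k$ sits in position $k+h$, $\widetilde w^{RU}=\ins{2k-d}\bigl(A_{k-1}(\tau^{RU},\sigma^{RU})\bigr)$, and $s=k+1-d$, followed by Lemmas~\ref{l:B circ with inss}, \ref{l:B circ with RU} and~\ref{l:ins RU identity}. It derives $(D_k)$ in the same way, applying $(E_k)$ twice together with $(D_{k-1})$, and your index bookkeeping, including the shift to $k+1-\cdot$ in $\mathfrak S_{k-1}$, agrees with the paper's.
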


\begin{proof}
We prove $(E_k)$ and $(D_k)$ by simultaneous induction. The induction proceeds according to the following scheme:
\begin{equation*}
D_2
\;\Longrightarrow\;
E_3
\;\Longrightarrow\;
D_3
\;\Longrightarrow\;
E_4
\;\Longrightarrow\;
D_4
\;\Longrightarrow\;
\cdots .
\end{equation*}
More precisely,
we first verify $(D_2)$. Then, for each $k\geq 3$, assuming
$(D_{k-1})$, we first prove $(E_k)$ and then use $(E_k)$ together with
$(D_{k-1})$ to prove $(D_k)$.

It follows directly from the definition of $f_4$ that $F_2$ is the
identity map on $\mathfrak{S}_2\times\mathfrak{S}_2$. Hence $(D_2)$
holds.

We now assume $(D_{k-1})$ and prove $(E_k)$. Suppose that
\begin{equation*}
F_{k-1}(\sigma,\tau)=(\alpha,\beta),
\end{equation*}
and let $d,h\in[k]$. Set
\begin{equation*}
W=A_k\bigl(I_d^-(\sigma),I_h^+(\tau)\bigr).
\end{equation*}
The entry $2k$ occurs in position $k+h$ of $W$. Removing this entry
gives a permutation $\widetilde W\in\widetilde{\mathfrak A}_{2k-1}$
such that
$W=\ins{k+h}(\widetilde W).$
Therefore, by the definition of $f_{2k}$,
\begin{equation*}
f_{2k}(W)
=
\inss{2(k+h-(k-1)-1)}\bigl(\tilde f_{2k-1}(\widetilde W)\bigr) = \inss{2h}\bigl(\tilde f_{2k-1}(\widetilde W)\bigr).
\end{equation*}

Since $W=A_k\bigl(I_d^-(\sigma),I_h^+(\tau)\bigr)$, removing the entry $2k$ gives
\begin{equation*}
\widetilde W
=
(\sigma_1+1)\cdots(\sigma_{d-1}+1)\,1\,
(\sigma_d+1)\cdots(\sigma_{k-1}+1)
(k+\tau_1)\cdots(k+\tau_{k-1}).
\end{equation*}
Applying $RU$ to $\widetilde W$, the entry $1$ becomes the new maximum
$2k-1$ and moves from position $d$ to position $2k-d$. Moreover,
\begin{gather*}
\tau^{RU}
=
(k-\tau_{k-1})\cdots(k-\tau_1), \ \text{and} \ \sigma^{RU}
=
(k-\sigma_{k-1})\cdots(k-\sigma_1).
\end{gather*}
Hence
\begin{equation*}
\widetilde W^{RU}
=
\ins{2k-d}
\bigl(A_{k-1}(\tau^{RU},\sigma^{RU})\bigr).
\end{equation*}

By the induction hypothesis $(D_{k-1})$,
\begin{equation*}
F_{k-1}(\tau^{RU},\sigma^{RU})
=
(\beta^{RU},\alpha^{RU}),
\end{equation*}
and by the definition of $F_{k-1}$, we have
\begin{equation*}
f_{2k-2}
\bigl(A_{k-1}(\tau^{RU},\sigma^{RU})\bigr)
=
B_{k-1}(\beta^{RU},\alpha^{RU}).
\end{equation*}

Set $s=k+1-d$. Applying the odd step in the definition of
$f_{2k-1}$, we obtain
\begin{equation*}
f_{2k-1}(\widetilde W^{RU})
=
\inss{2(2k-d-(k-1))-1}
\bigl(B_{k-1}(\beta^{RU},\alpha^{RU})\bigr)
=
\inss{2s-1}
\bigl(B_{k-1}(\beta^{RU},\alpha^{RU})\bigr),
\end{equation*}
which equals
\begin{equation*}
B_k^\circ
\bigl(I_s^+(\beta^{RU}),\alpha^{RU}\bigr)
\end{equation*} by Lemma~\ref{l:B circ with inss}. 

By Lemma~\ref{l:ins RU identity}, we have $I_s^+(\beta^{RU}) =
\bigl(I_d^-(\beta)\bigr)^{RU}$. Therefore, 
\begin{equation*}
f_{2k-1}(\widetilde W^{RU})
=
B_k^\circ
\bigl((I_d^-(\beta))^{RU},\alpha^{RU}\bigr).
\end{equation*}
By the definition of $\tilde{f}_{2k-1}$ and Lemma~\ref{l:B circ with RU}, we have
\begin{equation*}
\tilde {f}_{2k-1}(\widetilde W) = \bigr(f_{2k-1}(\widetilde W^{RU})\bigr)^{RU} = \left[
B_k^\circ
\bigl((I_d^-(\beta))^{RU},\alpha^{RU}\bigr)
\right]^{RU} =
B_k^\circ
\bigl(I_d^-(\beta),\alpha\bigr).
\end{equation*}
Consequently, by Lemma~\ref{l:B circ with inss}, we have
\begin{equation*}
f_{2k}(W)=
\inss{2h}
\left(
B_k^\circ\bigl(I_d^-(\beta),\alpha\bigr)
\right)=
B_k
\bigl(I_d^-(\beta),I_h^+(\alpha)\bigr).
\end{equation*}
Therefore, by the definition of $F_k$, we conclude that
\begin{equation*}
F_k
\bigl(I_d^-(\sigma),I_h^+(\tau)\bigr)
=
\bigl(I_d^-(\beta),I_h^+(\alpha)\bigr),
\end{equation*}
which proves $(E_k)$.

We next assume $(E_{k})$ together with $(D_{k-1})$
to prove $(D_k)$. Suppose that
\begin{equation*}
F_k(\sigma,\tau)=(\alpha,\beta).
\end{equation*}
Let $d$ be the position of $1$ in $\sigma$, and let $h$ be the position
of $k$ in $\tau$. There are unique
$\sigma',\tau'\in\mathfrak{S}_{k-1}$ such that
\begin{equation*}
\sigma=I_d^-(\sigma'),
\qquad
\tau=I_h^+(\tau').
\end{equation*}
Write
\begin{equation*}
F_{k-1}(\sigma',\tau')=(\alpha',\beta').
\end{equation*}
Applying $(E_k)$, we have
\begin{equation*}
\alpha=I_d^-(\beta'),
\qquad
\beta=I_h^+(\alpha').
\end{equation*}

By $(D_{k-1})$,
\begin{equation*}
F_{k-1}
\bigl((\tau')^{RU},(\sigma')^{RU}\bigr)
=
\bigl((\beta')^{RU},(\alpha')^{RU}\bigr).
\end{equation*}
Lemma~\ref{l:ins RU identity} give
\begin{equation*}
\tau^{RU}
=
I_{k+1-h}^-\bigl((\tau')^{RU}\bigr),
\qquad
\sigma^{RU}
=
I_{k+1-d}^+\bigl((\sigma')^{RU}\bigr).
\end{equation*}
Applying $(E_k)$ again, $F_k(\tau^{RU},\sigma^{RU})$ equals
\begin{equation*}
F_k\left(
I_{k+1-h}^-\bigl((\tau')^{RU}\bigr),
I_{k+1-d}^+\bigl((\sigma')^{RU}\bigr)
\right)
=
\left(
I_{k+1-h}^-\bigl((\alpha')^{RU}\bigr),
I_{k+1-d}^+\bigl((\beta')^{RU}\bigr)
\right).
\end{equation*}
Using Lemma~\ref{l:ins RU identity} once more,
\begin{equation*}
I_{k+1-h}^-\bigl((\alpha')^{RU}\bigr)
=\bigr(I_h^+(\alpha')\bigr)^{RU}
=\beta^{RU},
\qquad
I_{k+1-d}^+\bigl((\beta')^{RU}\bigr)
= \bigr(I_d^-(\beta')\bigr)^{RU}
=\alpha^{RU}.
\end{equation*}

Therefore,
\begin{equation*}
F_k(\tau^{RU},\sigma^{RU})
=
(\beta^{RU},\alpha^{RU}),
\end{equation*}
which proves $(D_k)$ and completes the induction.
\end{proof}

We next recall the rank-matrix criterion for the Bruhat order. For
$w=w_1\cdots w_n\in\mathfrak{S}_n$ and $a,b\in[n]$, define
\begin{equation*}
r_w(a,b)
=
\left|\{\,i\in[a]\mid w_i\leq b\,\}\right|.
\end{equation*}
For $a=0$ or $b=0$, we use the convention
\begin{equation*}
r_w(0,b)=r_w(a,0)=0.
\end{equation*}
The rank matrix of $w$ is
\begin{equation*}
R_w
=
\bigl(r_w(a,b)\bigr)_{1\leq a,b\leq n}.
\end{equation*}

Thus, $r_w(a,b)$ counts the number of entries among the first $a$
positions of $w$ whose values are at most $b$. 

The following well-known rank-matrix criterion~\cite[Theorem~2.1.5]{BBCoxeter} characterizes the
Bruhat order.

\begin{thm}[Rank-matrix criterion for the Bruhat order]\label{t:rank-matrix}
Let $u,w\in\mathfrak{S}_n$. Then
\begin{equation*}
u\leq w
\quad\Longleftrightarrow\quad
r_u(a,b)\geq r_w(a,b)
\end{equation*}
for all $a,b\in[n]$.
\end{thm}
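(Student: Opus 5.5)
The plan is to derive the rank-matrix criterion directly from the Tableau Criterion (Theorem~\ref{TabCriterion}), which we may assume. The two criteria are both statements about the sets of the first $a$ letters of the permutations, one phrased via sorted entries and one via counting functions. So it suffices to show, for each fixed $a\in[n]$, that the two row-by-row conditions are equivalent.

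Fix $a$ and write $u_{1,a}<\cdots<u_{a,a}$ for the increasing rearrangement of $u_1,\dotsc,u_a$, and similarly for $w$. The key observation is that
\begin{equation*}
r_u(a,b)=\#\{\,i\in[a] \mid u_{i,a}\leq b\,\},
\end{equation*}
since sorting does not change which values occur among the first $a$ positions. Because the sorted list is increasing, $r_u(a,b)\geq i$ holds if and only if $u_{i,a}\leq b$.

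Next we prove that $u_{i,a}\leq w_{i,a}$ for all $i\in[a]$ if and only if $r_u(a,b)\geq r_w(a,b)$ for all $b\in[n]$.

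For the forward direction, fix $b$ and let $j=r_w(a,b)$. If $j=0$ there is nothing to show. Otherwise $w_{j,a}\leq b$, hence $u_{j,a}\leq w_{j,a}\leq b$. By the observation above this gives $r_u(a,b)\geq j$.

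For the converse, suppose $u_{i,a}>w_{i,a}$ for some $i$, and take $b=w_{i,a}$. Then $r_w(a,b)\geq i$. On the other hand $u_{i,a}>b$, so $r_u(a,b)\leq i-1$, which contradicts the rank inequality.

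Taking the conjunction over all $a\in[n]$ and combining with Theorem~\ref{TabCriterion} (applied with the roles of the two permutations matching $u\leq w$) yields the stated criterion. The boundary conventions $r(0,b)=r(a,0)=0$ play no role, since only $a,b\in[n]$ appear.

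There is no real obstacle here. The only point requiring care is the orientation of the inequalities: smaller permutations in Bruhat order have smaller sorted prefixes, hence \emph{larger} rank counts. I would double-check this against the example $2143\leq 3241$ computed after Theorem~\ref{TabCriterion}. Alternatively, one could simply cite \cite[Theorem~2.1.5]{BBCoxeter}.
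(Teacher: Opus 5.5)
Your proof is correct. It does something the paper does not: the paper gives no argument for Theorem~\ref{t:rank-matrix}. It simply quotes the result from \cite[Theorem~2.1.5]{BBCoxeter}, and it separately quotes Ehresmann's tableau criterion (Theorem~\ref{TabCriterion}) for use elsewhere.

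You instead deduce the former from the latter through the row-wise identity $r_u(a,b)\ge i \iff u_{i,a}\le b$. This identity holds because the sorted prefix is strictly increasing. Both directions of your argument are sound:
\begin{itemize}
\item In the forward direction, choosing $j=r_w(a,b)$ gives $u_{j,a}\le w_{j,a}\le b$, hence $r_u(a,b)\ge j$.
\item In the converse, choosing $b=w_{i,a}\in[n]$ produces the contradiction $r_u(a,b)\le i-1<i\le r_w(a,b)$.
\end{itemize}

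Your route buys two things. It shows that the paper's two imported criteria are really the same fact. It also fixes the orientation: smaller in Bruhat order means an entrywise larger rank matrix, which agrees with the paper's $2143\le 3241$ example.

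What it does not buy is any independent Bruhat-order content. Everything connecting these combinatorial conditions to the actual definition of Bruhat order sits inside the tableau criterion you assume. So your argument shows the two criteria are interchangeable rather than proving either one from first principles. In this paper both criteria are taken as known, so that is fully adequate.
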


For example, consider $u=2143$ and $w=3241$. Their rank matrices are
\begin{equation*}
R_u=
\begin{bmatrix}
0&1&1&1\\
1&2&2&2\\
1&2&2&3\\
1&2&3&4
\end{bmatrix},
\qquad
R_w=
\begin{bmatrix}
0&0&1&1\\
0&1&2&2\\
0&1&2&3\\
1&2&3&4
\end{bmatrix}.
\end{equation*}
Since every entry of $R_u$ is greater than or equal to the
corresponding entry of $R_w$, the rank-matrix criterion gives
$2143\leq 3241$. 

Let $w\in\mathfrak{A}_{2k}$. Write
\begin{equation*}
w=A_k(\sigma,\tau)
\end{equation*}
for some $\sigma,\tau\in\mathfrak{S}_k$. Suppose that
\begin{equation*}
f_{2k}(w)=B_k(\alpha,\beta),
\end{equation*}
where $\alpha,\beta\in\mathfrak{S}_k$. Equivalently, by the definition
of $F_k$,
\begin{equation*}
F_k(\sigma,\tau)=(\alpha,\beta).
\end{equation*}
Thus, to prove
\begin{equation*}
w\leq f_{2k}(w),
\end{equation*}
it is enough to prove
\begin{equation*}
A_k(\sigma,\tau)\leq B_k(\alpha,\beta).
\end{equation*}
By Theorem~\ref{t:rank-matrix}, it therefore suffices to
show that
\begin{equation}\label{eq:rank-matrix}
r_{A_k(\sigma,\tau)}(a,b)
\geq
r_{B_k(\alpha,\beta)}(a,b)
\end{equation}
for all $a,b\in[2k]$. 
We will verify \eqref{eq:rank-matrix} by considering the four regions determined
by whether $a$ and $b$ are at most or greater than $k$. 

To compare the rank matrices of $A_k(\sigma,\tau)$ and
$B_k(\alpha,\beta)$, it will be useful to express the rank function of
$B_k(\alpha,\beta)$ directly in terms of the rank functions of
$\alpha$ and $\beta$. The following lemma gives this decomposition.

\begin{lem}\label{l:rank B}
Let $\alpha,\beta\in\mathfrak S_k$ and
\begin{equation*}
u=B_k(\alpha,\beta)\in\mathfrak B_{2k}.
\end{equation*}
Then, for all $a,b\in[2k]$,
\begin{equation*}
r_u(a,b)
=
r_\alpha\left(
\ceil[\Big]{\frac{a}{2}},
\ceil[\Big]{\frac{b}{2}}
\right)
+
r_\beta\left(
\floor[\Big]{\frac{a}{2}},
\floor[\Big]{\frac{b}{2}}
\right).
\end{equation*}
\end{lem}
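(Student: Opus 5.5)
We prove this by direct counting, splitting the set $\{i\in[a]\mid u_i\leq b\}$ according to the parity of the position $i$. Since $u=B_k(\alpha,\beta)$ lies in $\mathfrak B_{2k}$, odd positions carry odd values and even positions carry even values. Hence
\begin{equation*}
r_u(a,b)=\#\{i\in[a],\ i \text{ odd}\mid u_i\leq b\}+\#\{i\in[a],\ i \text{ even}\mid u_i\leq b\},
\end{equation*}
and we will identify the two summands separately with the two terms of the claimed formula.

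For the odd positions, we write $i=2j-1$. Then $u_{2j-1}=2\alpha_j-1$, and $i\leq a$ if and only if $j\leq\ceil{a/2}$. The condition $2\alpha_j-1\leq b$ says $\alpha_j\leq (b+1)/2$, and since $\alpha_j$ is an integer this is equivalent to $\alpha_j\leq\floor{(b+1)/2}=\ceil{b/2}$. So the first summand is $\#\{j\leq\ceil{a/2}\mid \alpha_j\leq\ceil{b/2}\}=r_\alpha(\ceil{a/2},\ceil{b/2})$.

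For the even positions, we write $i=2j$. Then $u_{2j}=2\beta_j$, and $i\leq a$ if and only if $j\leq\floor{a/2}$. The condition $2\beta_j\leq b$ is equivalent to $\beta_j\leq\floor{b/2}$. So the second summand is $r_\beta(\floor{a/2},\floor{b/2})$.

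The only points needing care are the edge cases $a=1$ or $b=1$, where $\floor{a/2}=0$ or $\floor{b/2}=0$. There the convention $r_\beta(0,\cdot)=r_\beta(\cdot,0)=0$ agrees with the empty count, since no even position is at most $1$ and no even value is at most $1$. The ranges also stay valid: $\ceil{a/2},\ceil{b/2}\leq k$ because $a,b\leq 2k$. With these checks the lemma follows by adding the two summands. No step is a real obstacle; the only slip to avoid is the floor/ceiling conversion $2x-1\leq b\iff x\leq\ceil{b/2}$.
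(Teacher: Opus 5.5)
Your proposal is correct and follows essentially the same route as the paper. Like the paper, you split the first $a$ positions of $u$ by parity, then use the floor/ceiling conversions $2\alpha_j-1\leq b\iff\alpha_j\leq\ceil{b/2}$ and $2\beta_j\leq b\iff\beta_j\leq\floor{b/2}$ to identify the two counts with $r_\alpha\bigl(\ceil{a/2},\ceil{b/2}\bigr)$ and $r_\beta\bigl(\floor{a/2},\floor{b/2}\bigr)$.
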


\begin{proof}
Among the first $a$ positions of $u=B_k(\alpha,\beta)$, there are
$\ceil{\tfrac{a}{2}}$ odd positions and
$\floor{\tfrac{a}{2}}$ even positions.

The entries in the odd positions are
\begin{equation*}
2\alpha_1-1,\ldots,
2\alpha_{\ceil{\tfrac{a}{2}}}-1.
\end{equation*}
For each $i$, we know that
\begin{equation*}
2\alpha_i-1\leq b
\quad\Longleftrightarrow\quad
\alpha_i\leq \ceil[\Big]{\frac{b}{2}}.
\end{equation*}
Hence the contribution from the odd positions is $r_\alpha\left(
\ceil{\tfrac{a}{2}},
\ceil{\tfrac{b}{2}}
\right)$.

Similarly, the entries in the even positions are
\begin{equation*}
2\beta_1,\ldots,
2\beta_{\floor{\tfrac{a}{2}}}.
\end{equation*}
For each $j$, we know that
\begin{equation*}
2\beta_j\leq b
\quad\Longleftrightarrow\quad
\beta_j\leq \floor[\Big]{\frac{b}{2}}.
\end{equation*}
Hence the contribution from the even positions is
$r_\beta\left(
\floor{\tfrac{a}{2}},
\floor{\tfrac{b}{2}}
\right)$.

Adding the two contributions gives
\begin{equation*}
r_u(a,b)
=
r_\alpha\left(
\ceil[\Big]{\frac{a}{2}},
\ceil[\Big]{\frac{b}{2}}
\right)
+
r_\beta\left(
\floor[\Big]{\frac{a}{2}},
\floor[\Big]{\frac{b}{2}}
\right).
\end{equation*}
\end{proof}

By Theorem~\ref{t:rank-matrix}, it remains to compare the
rank functions of $A_k(\sigma,\tau)$ and $B_k(\alpha,\beta)$ at every
pair $(a,b)\in[2k]\times[2k]$. We divide this comparison into four
regions according to whether $a$ and $b$ are at most $k$ or greater
than $k$. Before we begin with the northwest quadrant, where $a,b\in[k]$. We need some auxiliary lemmas. 

\begin{lem}\label{l:rank insertion}
Let $w\in\mathfrak S_{k-1}$.

\begin{enumerate}[(i)]
    \item For $d\in[k]$ and $a,b\in[k]$,
    \begin{equation}
    r_{I_d^-(w)}(a,b)
    =
    \begin{cases}
    r_w(a,b-1), & d>a,\\[4pt]
    1+r_w(a-1,b-1), & d\leq a.
    \end{cases}
    \label{eq:rank minimum insertion}
    \end{equation}

    \item For $h\in[k]$, $a\in[k]$, and $b\in[k-1]$,
    \begin{equation}
    r_{I_h^+(w)}(a,b)
    =
    \begin{cases}
    r_w(a,b), & h>a,\\[4pt]
    r_w(a-1,b), & h\leq a.
    \end{cases}
    \label{eq:rank maximum insertion}
    \end{equation}
\end{enumerate}
Here we use the convention $r_w(a,0)=0$.
\end{lem}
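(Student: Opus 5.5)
The plan is to compute $r_{I_d^-(w)}(a,b)$ and $r_{I_h^+(w)}(a,b)$ directly from the definition of the rank function. We count, among the first $a$ positions of the new permutation, the entries whose values are at most $b$. The two insertion maps change both the positions and the values in a controlled way. In each case we translate the count back to positions and values in $w$ by an explicit reindexing.

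For part (i), write $v=I_d^-(w)$. Every original entry $w_i$ becomes $w_i+1$, and it sits at position $i$ if $i<d$ and at position $i+1$ if $i\geq d$. The new entry $1$ sits at position $d$.

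If $d>a$, the first $a$ positions of $v$ are exactly $w_1+1,\dots,w_a+1$. The condition $w_i+1\leq b$ is equivalent to $w_i\leq b-1$. This gives $r_v(a,b)=r_w(a,b-1)$, with the convention $r_w(a,0)=0$ handling $b=1$.

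If $d\leq a$, the first $a$ positions of $v$ consist of the entry $1$ and the shifted entries $w_1+1,\dots,w_{a-1}+1$. The entry $1$ always satisfies $1\leq b$, since $b\geq 1$, so it contributes $1$. The remaining entries contribute $r_w(a-1,b-1)$. One small check is that when $a=k$ the index $a-1=k-1$ is still within the range of $w\in\mathfrak S_{k-1}$.

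For part (ii), write $v=I_h^+(w)$. The entries of $w$ keep their values, and the new entry $k$ is inserted at position $h$. Since $b\leq k-1$, the entry $k$ is never counted.

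If $h>a$, the first $a$ positions of $v$ are $w_1,\dots,w_a$, so $r_v(a,b)=r_w(a,b)$. Here $a\leq h-1\leq k-1$, so this is well defined.

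If $h\leq a$, the first $a$ positions of $v$ are $w_1,\dots,w_{a-1}$ together with $k$. This gives $r_v(a,b)=r_w(a-1,b)$.

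None of these steps is a real obstacle. The only care needed is bookkeeping at the boundary cases $a=0$, $b=1$ and $a=k$, where the conventions for $r_w$ must be applied consistently.
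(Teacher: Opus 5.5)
Your proposal is correct and follows essentially the same route as the paper's proof: both read off the first $a$ entries of $I_d^-(w)$ or $I_h^+(w)$ according to whether the inserted entry lies among them, and note that the inserted $1$ is always counted while the inserted $k$ never is, since $b\leq k-1$. Your extra checks that the indices $a$ and $a-1$ stay within $[0,k-1]$ are a harmless refinement of the same argument.
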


\begin{proof}
For \eqref{eq:rank minimum insertion}, recall that $I_d^-(w)$ is
obtained by increasing every entry of $w$ by $1$ and inserting $1$ in
position $d$. If $d>a$, the inserted $1$ does not occur among the first
$a$ positions, and an entry of $I_d^-(w)$ is at most $b$ precisely when
the corresponding entry of $w$ is at most $b-1$. Hence
\begin{equation*}
r_{I_d^-(w)}(a,b)=r_w(a,b-1).
\end{equation*}
If $d\leq a$, the inserted $1$ contributes one to the rank, while the
remaining $a-1$ entries contribute $r_w(a-1,b-1)$. Thus
\begin{equation*}
r_{I_d^-(w)}(a,b)
=
1+r_w(a-1,b-1).
\end{equation*}

For \eqref{eq:rank maximum insertion}, $I_h^+(w)$ is obtained by
inserting the new maximum $k$ in position $h$. Since $b\leq k-1$, the
inserted entry $k$ does not contribute to the rank. If $h>a$, the first
$a$ entries are unchanged, while if $h\leq a$, only $a-1$ entries of
$w$ occur among the first $a$ positions. Therefore,
\begin{equation*}
r_{I_h^+(w)}(a,b)
=
\begin{cases}
r_w(a,b), & h>a,\\[4pt]
r_w(a-1,b), & h\leq a.
\end{cases}
\end{equation*}
\end{proof}

\begin{lem}\label{l:Rk}
Let $\sigma,\tau,\alpha,\beta\in\mathfrak S_k$, and suppose that
\begin{equation*}
F_k(\sigma,\tau)=(\alpha,\beta).
\end{equation*}
Then, for all $a,b\in[k]$ and $0\leq p\leq a$,
\begin{equation}
r_\sigma(a,b)
\geq
r_\alpha\left(
p,
\ceil[\Big]{\frac{b}{2}}
\right)
+
r_\beta\left(
a-p,
\floor[\Big]{\frac{b}{2}}
\right).
\tag{$R_k$}
\end{equation}
\end{lem}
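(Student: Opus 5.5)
We will prove $(R_k)$ by induction on $k$, using the recursive description of $F_k$ given by Proposition~\ref{p:Ek Dk}$(E_k)$ together with the rank formulas of Lemma~\ref{l:rank insertion}.

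\textbf{Base case.} For $k=2$, $F_2$ is the identity, so $\alpha=\sigma$ and $\beta=\tau$. Here $(R_2)$ is a finite check over $a,b\in[2]$ and $0\le p\le a$, using only that rank functions satisfy $0\le r_w(a,b)\le\min(a,b)$ and are monotone.

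\textbf{Inductive setup.} For $k\ge3$, write $\sigma=I_d^-(\sigma')$, where $d$ is the position of $1$ in $\sigma$, and $\tau=I_h^+(\tau')$, where $h$ is the position of $k$ in $\tau$. Let $F_{k-1}(\sigma',\tau')=(\alpha',\beta')$. Then $(E_k)$ gives
\[
\alpha=I_d^-(\beta'),\qquad \beta=I_h^+(\alpha').
\]
We expand both sides of $(R_k)$ with Lemma~\ref{l:rank insertion}.
\begin{itemize}
\item The left side becomes $r_{\sigma'}(a,b-1)$ if $d>a$, and $1+r_{\sigma'}(a-1,b-1)$ if $d\le a$.
\item The term $r_\alpha(p,\lceil b/2\rceil)$ splits on whether $d\le p$, giving either $r_{\beta'}(p,\lceil b/2\rceil-1)$ or $1+r_{\beta'}(p-1,\lceil b/2\rceil-1)$.
\item The term $r_\beta(a-p,\lfloor b/2\rfloor)$ splits on whether $h\le a-p$. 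When $\lfloor b/2\rfloor\le k-1$ it equals $r_{\alpha'}(a-p,\lfloor b/2\rfloor)$ or $r_{\alpha'}(a-p-1,\lfloor b/2\rfloor)$. When $b=k$ with $k$ even, the inserted $k$ may count, so that boundary case needs a separate treatment.
\end{itemize}

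\textbf{The mismatch and the fix.} Observe that $\sigma'$ must now be compared against the pair $(\beta',\alpha')$, in swapped order, with $b$ shifted to $b-1$. The inductive hypothesis $(R_{k-1})$ compares $\sigma'$ with $r_{\alpha'}(\cdot,\lceil b'/2\rceil)+r_{\beta'}(\cdot,\lfloor b'/2\rfloor)$. The ceiling/floor roles swap exactly under $b\mapsto b-1$ when $b$ is even: $\lceil b/2\rceil-1=\lfloor (b-1)/2\rfloor$ and $\lfloor b/2\rfloor=\lceil (b-1)/2\rceil$. So with $b'=b-1$ and $p'=a-p$ (or the corresponding shifted value), the right side after expansion becomes
\[
r_{\alpha'}\bigl(\cdot,\lceil b'/2\rceil\bigr)+r_{\beta'}\bigl(\cdot,\lfloor b'/2\rfloor\bigr)+\epsilon,
\]
where $\epsilon\in\{0,1\}$ records whether the inserted $1$ fell within the first $p$ positions. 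Since the hypothesis allows every split $0\le p'\le a'$, we may choose $p'$ freely to match the expansion.

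\textbf{Bookkeeping of the extra $1$.} If $d\le p\le a$, both sides gain a $1$ and positions shift by one, so we apply $(R_{k-1})$ at $a-1$ with $p'=a-p$. If $d>a$, neither side gains the $1$. If $p<d\le a$, the left side gains the $1$ and the right side does not; here we apply $(R_{k-1})$ at $a-1$ and use monotonicity, namely $r(x,y)\le r(x+1,y)\le r(x,y)+1$, to absorb the one-position discrepancy. The $h$-shift in the $\alpha'$ term only decreases the right side, which helps.

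\textbf{The odd-$b$ obstacle.} When $b$ is odd, $\lceil b/2\rceil-1=\lfloor b/2\rfloor=\lceil (b-1)/2\rceil$ and $\lfloor b/2\rfloor=\lfloor (b-1)/2\rfloor$, so the parities do not swap cleanly. We expect this case, together with the boundary cases $b=1$ and $b=k$, to be the main obstacle. First, we will try to bound $r_{\beta'}(p,\lfloor b'/2\rfloor)\le r_{\beta'}(p,\lceil b'/2\rceil)$ and $r_{\alpha'}(\cdot,\lceil b'/2\rceil)$ from above. If that pairing fails, the fallback is to invoke $(D_{k-1})$ or $(D_k)$ to transfer the odd case to the even case via the $RU$ symmetry. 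Under $RU$ the pair $(\sigma,\tau)$ becomes $(\tau^{RU},\sigma^{RU})$, and ranks transform by $r_{w^{RU}}(a,b)=b-a+r_w(k-a,k-b)$ in the complementary region. This should convert the bounds into the other parity class. Finally, $b=k$ will be handled directly using $r_\sigma(a,k)=a$, which makes the inequality trivial since the right side is at most $p+(a-p)$.
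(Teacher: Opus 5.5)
Your inductive skeleton is essentially the paper's argument: you peel off $1$ from $\sigma$ and $k$ from $\tau$, transport through $(E_k)$, and expand with Lemma~\ref{l:rank insertion}. You then apply $(R_{k-1})$ to $\sigma'$ against the swapped pair at second parameter $b-1$, split into the three cases $d\le p$, $p<d\le a$, $d>a$, and absorb discrepancies with $r(x,y)\le r(x+1,y)\le r(x,y)+1$.

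The gap is that you declare odd $b$ (and $b=1$) an unresolved ``main obstacle''. As written, your proposal therefore only establishes $(R_k)$ for even $b$. That obstacle does not exist. The two identities the induction uses, $\lceil\frac{b-1}{2}\rceil=\lfloor\frac{b}{2}\rfloor$ and $\lfloor\frac{b-1}{2}\rfloor=\lceil\frac{b}{2}\rceil-1$, hold for every integer $b$. For $b=2m+1$ all four quantities equal $m$, which is exactly what your own line computes. So your three-case argument goes through verbatim for all $b\in[k]$, with $b=1$ covered by the convention $r_w(a,0)=0$; this is what the paper does. Your worry about $b=k$ with $k$ even is also unfounded. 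Since $\lfloor b/2\rfloor\le k/2<k$, the inserted $k$ never counts, and Lemma~\ref{l:rank insertion}(ii) applies for all $b\in[k]$.

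The fallback you sketch would not have repaired a genuine odd-$b$ problem. Applying $(R_k)$ to $F_k(\tau^{RU},\sigma^{RU})=(\beta^{RU},\alpha^{RU})$ from $(D_k)$ bounds ranks of $\tau^{RU}$, i.e.\ of $\tau$ in the complementary region, not ranks of $\sigma$. That is how the paper uses $(D_k)$, for the southeast quadrant. Also, the complement formula is $r_{w^{RU}}(a,b)=a+b-k+r_w(k-a,k-b)$, not $b-a+r_w(k-a,k-b)$.

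One detail to make explicit when you write out the case $p<d\le a$:
\begin{itemize}
\item Apply $(R_{k-1})$ with parameters $(a-1,b-1,a-1-p)$, which is legal because $p\le a-1$.
\item Then use $1+r_{\alpha'}(a-1-p,\cdot)\ge r_{\alpha'}(a-p-\varepsilon,\cdot)$, where $\varepsilon=1$ if $h\le a-p$ and $\varepsilon=0$ otherwise.
\item When $\varepsilon=0$ you have $a-p<h\le k$, so $r_{\alpha'}(a-p,\cdot)$ is defined.
\end{itemize}
With this choice of split, the boundary situation $d\le a$, $p=0$, $h>a$, which the paper treats separately, is handled by the same case.
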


\begin{proof}
Before we start, we call $a,b,p$ the first, second, third parameter of $(R_k)$. Now, we proceed by induction on $k$. We use the convention
$r_w(0,b)=r_w(a,0)=0$. With this convention, the statement is also
trivial when one of the parameters occurring in the induction
hypothesis is zero. 

For $k=2$, by Algorithm~\ref{keyalg}, the map $F_2$ is the identity map on
$\mathfrak S_2\times\mathfrak S_2$. Thus $\alpha=\sigma$ and
$\beta=\tau$. If $b=1$, then
\begin{equation*}
r_\alpha(p,1)+r_\beta(a-p,0)
=
r_\sigma(p,1)
\leq
r_\sigma(a,1).
\end{equation*}
If $b=2$, then
\begin{equation*}
r_\alpha(p,1)+r_\beta(a-p,1)
\leq
p+(a-p)
=
a
=
r_\sigma(a,2).
\end{equation*}
Hence $(R_2)$ holds.

Now let $k\geq 3$ and assume $(R_{k-1})$. Suppose that
\begin{equation*}
F_k(\sigma,\tau)=(\alpha,\beta).
\end{equation*}
Let $d$ be the position of $1$ in $\sigma$, and let $h$ be the position
of $k$ in $\tau$. Then there exist
$\sigma',\tau'\in\mathfrak S_{k-1}$ such that
\begin{equation*}
\sigma=I_d^-(\sigma'),
\qquad
\tau=I_h^+(\tau').
\end{equation*}
Write
\begin{equation*}
F_{k-1}(\sigma',\tau')=(\alpha',\beta').
\end{equation*}
By $(E_k)$ in Proposition~\ref{p:Ek Dk},
\begin{equation*}
\alpha=I_d^-(\beta'),
\qquad
\beta=I_h^+(\alpha').
\end{equation*}

Fix $a,b\in[k]$ and $0\leq p\leq a$. Define
\begin{equation*}
\delta=\mathbf{1}_{d\leq a},
\qquad
\eta=\mathbf{1}_{d\leq p},
\qquad
\varepsilon=\mathbf{1}_{h\leq a-p}.
\end{equation*}
Since $p\leq a$, we always have $\eta\leq\delta$.

By \eqref{eq:rank minimum insertion} and
\eqref{eq:rank maximum insertion}, we have

\begin{gather}
r_\sigma(a,b)
=
\delta+r_{\sigma'}(a-\delta,b-1),\label{eq:rank function with delta}\\
r_\alpha\left(p,\ceil[\Big]{\frac{b}{2}}\right)
=
\eta+
r_{\beta'}\left(
p-\eta,
\ceil[\Big]{\frac{b}{2}}-1
\right),\label{eq:rank function alpha}\\
r_\beta\left(a-p,\floor[\Big]{\frac{b}{2}}\right)
=
r_{\alpha'}\left(
a-p-\varepsilon,
\floor[\Big]{\frac{b}{2}}
\right).\label{eq:rank function beta}
\end{gather}

Now, set $q=a-p-\varepsilon$.

We first consider the case $(\delta,p,\varepsilon)\neq(1,0,0)$.
Then, we have $0\leq q\leq a-\delta$.
Indeed, $q\geq0$ by the definition of $\varepsilon$, while
$q\leq a-\delta$ is equivalent to $p+\varepsilon\geq\delta$, which
fails only when $(\delta,p,\varepsilon)=(1,0,0)$.

Note that $(a-\delta)-q = (a-\delta)-(a-p-\varepsilon) =p+\varepsilon -\delta$. By the definition of $\delta$, we have \begin{equation*}
a-\delta \leq
\begin{cases}
k - \delta = k -1, &\text{when} \ a = k,\\[4pt]
k-1-\delta \leq k-1, &\text{when} \ a < k. 
\end{cases}
\end{equation*}
We also note that $b-1 \leq k-1$. 
Applying $(R_{k-1})$ with parameters $a-\delta,b-1,q$ and using
\begin{equation*}
\ceil[\Big]{\frac{b-1}{2}}
=
\floor[\Big]{\frac{b}{2}},
\qquad
\floor[\Big]{\frac{b-1}{2}}
=
\ceil[\Big]{\frac{b}{2}}-1,
\end{equation*}
we obtain
\begin{equation*}
r_{\sigma'}(a-\delta,b-1)
\geq{}
r_{\alpha'}\left(
q,
\floor[\Big]{\frac{b}{2}}
\right) +
r_{\beta'}\left(
p+\varepsilon-\delta,
\ceil[\Big]{\frac{b}{2}}-1
\right).
\end{equation*}
Comparing this inequality with \eqref{eq:rank function with delta}, we have
\begin{equation}\label{eq:NW first ineq}
r_\sigma(a,b)
\geq{}
r_{\alpha'}\left(
q,
\floor[\Big]{\frac{b}{2}}
\right)+
\delta+
r_{\beta'}\left(
p+\varepsilon-\delta,
\ceil[\Big]{\frac{b}{2}}-1
\right).
\end{equation}

Next, we claim that
\begin{equation}\label{eq:NW prop claim}
\delta+
r_{\beta'}\left(
p+\varepsilon-\delta,
\ceil[\Big]{\frac{b}{2}}-1
\right)
\geq
\eta+
r_{\beta'}\left(
p-\eta,
\ceil[\Big]{\frac{b}{2}}-1
\right).
\end{equation}
To see this, we first note that $\delta \geq \eta$ by above definition. Then, it remains to compare the other two terms. 
If $p-\eta\leq p+\varepsilon-\delta$, \eqref{eq:NW prop claim} follows from the monotonicity of the rank function in its first argument. Otherwise, we have $p-\eta > p+\varepsilon-\delta$, i.e., $\eta < \delta -\varepsilon$. Hence 
\begin{equation*}
\delta=1,
\qquad
\eta=\varepsilon=0,
\end{equation*}
and the claim~\eqref{eq:NW prop claim} follows from $r_{\beta'}(x+1,y)\leq r_{\beta'}(x,y)+1$ for all (possible) $x$ and $y$. 

Comparing the right-hand-side of \eqref{eq:NW prop claim} with \eqref{eq:rank function alpha} and by \eqref{eq:rank function beta}, we have 
\begin{equation*}
r_\sigma(a,b)
\geq{}
r_{\alpha'}\left(
q,
\floor[\Big]{\frac{b}{2}}
\right)
+
\eta+
r_{\beta'}\left(
p-\eta,
\ceil[\Big]{\frac{b}{2}}-1
\right)
={}
r_\beta\left(
a-p,\floor[\Big]{\frac{b}{2}}
\right)
+
r_\alpha\left(
p,\ceil[\Big]{\frac{b}{2}}
\right).
\end{equation*}

It remains to consider the boundary case $(\delta,p,\varepsilon)=(1,0,0)$.
Since $p=0$, by the definition of $\eta$, we have $\eta=0$. Since $\varepsilon=0$ and $p=0$, we have $h>a$. Therefore, \eqref{eq:rank function with delta} and \eqref{eq:rank function beta} become
\begin{equation*}
r_\sigma(a,b)
=
1+r_{\sigma'}(a-1,b-1)
\end{equation*}
and
\begin{equation*}
r_\beta\left(a,\floor[\Big]{\frac{b}{2}}\right)
=
r_{\alpha'}\left(a,\floor[\Big]{\frac{b}{2}}\right).
\end{equation*}
Indeed, $r_{\alpha'}\left(a,\floor*{\tfrac{b}{2}}\right)$ is well-defined since $h > a$ implies $a \leq k-1$. 
Applying $(R_{k-1})$ 
such that the third parameter equals $a-1$,
we obtain
\begin{equation*}
r_{\sigma'}(a-1,b-1)
\geq
r_{\alpha'}\left(
a-1,\floor[\Big]{\frac{b}{2}}
\right) + r_{\beta'}\left(
0,
\ceil[\Big]{\frac{b}{2}}-1
\right) = r_{\alpha'}\left(
a-1,\floor[\Big]{\frac{b}{2}}
\right).
\end{equation*}
Since increasing the first argument of a rank function by one increases
its value by at most one, we have
\begin{equation*}
1+
r_{\alpha'}\left(
a-1,\floor[\Big]{\frac{b}{2}}
\right)
\geq
r_{\alpha'}\left(
a,\floor[\Big]{\frac{b}{2}}
\right).
\end{equation*}
Hence, by \eqref{eq:rank function with delta}
\begin{equation*}
r_\sigma(a,b)
\geq
r_{\alpha'}\left(
a,\floor[\Big]{\frac{b}{2}}
\right).
\end{equation*}
Since $\beta = I_h^+(\alpha')$ and $h>a$, inserting the maximum entry at position $h$ does not change the rank function. We have 
\begin{equation*}
   r_{\alpha'}\left(
a,\floor[\Big]{\frac{b}{2}}
\right) = r_{I_h^+(\alpha')}\left(
a,\floor[\Big]{\frac{b}{2}}
\right) = r_{\beta}\left(
a,\floor[\Big]{\frac{b}{2}}
\right).
\end{equation*}
Hence, we have 
\begin{equation*}
    r_\sigma(a,b) \geq r_{\beta}\left(
a,\floor[\Big]{\frac{b}{2}}
\right)
\end{equation*}
Since
$r_\alpha\left(
0,\ceil{\tfrac{b}{2}}
\right)=0$, this is precisely $(R_k)$ when $p=0$.

Thus $(R_k)$ holds, completing the induction.
\end{proof}

The estimate $(R_k)$ in Lemma~\ref{l:Rk} is stronger than what is needed for the northwest
quadrant. Indeed, the latter is obtained by choosing
$p=\ceil{\tfrac{a}{2}}$ in $(R_k)$. This immediately gives the following corollary.

\begin{cor}[Northwest quadrant]\label{c:NW quadrant}
Let $\sigma,\tau,\alpha,\beta\in\mathfrak S_k$, and suppose that
\begin{equation*}
F_k(\sigma,\tau)=(\alpha,\beta).
\end{equation*}
Then, for all $a,b\in[k]$,
\begin{equation*}
r_{A_k(\sigma,\tau)}(a,b)
\geq
r_{B_k(\alpha,\beta)}(a,b).
\end{equation*}
\end{cor}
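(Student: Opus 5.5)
The plan is to reduce both sides to rank functions of $\sigma,\alpha,\beta$ and then read off the inequality as a single instance of $(R_k)$ from Lemma~\ref{l:Rk}.

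First, the left-hand side. For $a,b\in[k]$, the first $a$ positions of $A_k(\sigma,\tau)=\sigma_1\cdots\sigma_k(k+\tau_1)\cdots(k+\tau_k)$ are occupied by $\sigma_1,\dotsc,\sigma_a$. Hence no entry of $\tau$ is involved, and
\begin{equation*}
r_{A_k(\sigma,\tau)}(a,b)=r_\sigma(a,b).
\end{equation*}

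Second, the right-hand side. Lemma~\ref{l:rank B} gives
\begin{equation*}
r_{B_k(\alpha,\beta)}(a,b)=r_\alpha\left(\ceil[\Big]{\frac{a}{2}},\ceil[\Big]{\frac{b}{2}}\right)+r_\beta\left(\floor[\Big]{\frac{a}{2}},\floor[\Big]{\frac{b}{2}}\right).
\end{equation*}
All arguments lie in $[0,k]$ because $a,b\le k$, and the convention $r_w(0,\cdot)=r_w(\cdot,0)=0$ covers the zero cases.

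Third, choose the parameter $p=\ceil{a/2}$ in $(R_k)$. This is legitimate since $0\le p\le a$, and it gives $a-p=\floor{a/2}$. With this choice the right-hand side of $(R_k)$ is exactly the expression in the second step. Combining it with the first step yields $r_{A_k(\sigma,\tau)}(a,b)\ge r_{B_k(\alpha,\beta)}(a,b)$ for all $a,b\in[k]$, which is the claim.

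All the real work sits in Lemma~\ref{l:Rk}, which is already established. The only point needing care here is confirming that the hypothesis $F_k(\sigma,\tau)=(\alpha,\beta)$ matches the hypothesis of that lemma, and that no index falls outside the domain of the rank functions. I do not expect any step of this corollary to be an obstacle.
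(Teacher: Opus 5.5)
Your proposal is correct and follows the paper's proof exactly: you identify $r_{A_k(\sigma,\tau)}(a,b)=r_\sigma(a,b)$ for $a,b\in[k]$, specialize $(R_k)$ from Lemma~\ref{l:Rk} at $p=\ceil{a/2}$ (so that $a-p=\floor{a/2}$), and recognize the resulting right-hand side as $r_{B_k(\alpha,\beta)}(a,b)$ via Lemma~\ref{l:rank B}.
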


\begin{proof}
Since $a,b\in[k]$, by the definition of $A_k$,
\begin{equation*}
r_{A_k(\sigma,\tau)}(a,b)=r_\sigma(a,b).
\end{equation*}
Applying $(R_k)$ in Lemma~\ref{l:Rk} with
$p=\ceil{\tfrac{a}{2}}$,
we obtain
\begin{equation*}
r_\sigma(a,b)
\geq{}
r_\alpha\left(
\ceil[\Big]{\frac{a}{2}},
\ceil[\Big]{\frac{b}{2}}
\right)
+
r_\beta\left(
\floor[\Big]{\frac{a}{2}},
\floor[\Big]{\frac{b}{2}}
\right),
\end{equation*}
where we have used
\begin{equation*}
a-\ceil[\Big]{\frac{a}{2}}
=
\floor[\Big]{\frac{a}{2}}.
\end{equation*}
By Lemma~\ref{l:rank B}, the right-hand side is precisely
\begin{equation*}
r_{B_k(\alpha,\beta)}(a,b).
\end{equation*}
Hence
\begin{equation*}
r_{A_k(\sigma,\tau)}(a,b)
\geq
r_{B_k(\alpha,\beta)}(a,b),
\end{equation*}
as desired.
\end{proof}

The northwest quadrant has now been settled. The two mixed quadrants are
considerably simpler: in these regions, the desired rank inequalities
follow directly from the block structure of $A_k(\sigma,\tau)$. We record
these two cases together in the following lemma.

\begin{lem}[Mixed quadrants]\label{l:mixed quadrants}
Let $\sigma,\tau,\alpha,\beta\in\mathfrak S_k$, and suppose that
\begin{equation*}
F_k(\sigma,\tau)=(\alpha,\beta).
\end{equation*}
Then
\begin{equation*}
r_{A_k(\sigma,\tau)}(a,b)
\geq
r_{B_k(\alpha,\beta)}(a,b)
\end{equation*}
whenever either $a\leq k<b$ or $b\leq k<a$.
\end{lem}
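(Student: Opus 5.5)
The plan is to compute $r_{A_k(\sigma,\tau)}(a,b)$ exactly in both mixed quadrants from the block structure of $A_k(\sigma,\tau)$. In each case the value turns out to be the largest value a rank function can possibly take at $(a,b)$. The inequality then holds automatically, for any $\alpha,\beta$, and no property of $F_k$ (in particular neither $(E_k)$ nor $(D_k)$ from Proposition~\ref{p:Ek Dk}) is needed.

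In the first case, suppose $a\leq k<b$. The first $a$ positions of $A_k(\sigma,\tau)$ hold the entries $\sigma_1,\dotsc,\sigma_a$. Each of these lies in $[k]$ and so is at most $k<b$. Hence $r_{A_k(\sigma,\tau)}(a,b)=a$. On the other hand, for any $u\in\mathfrak S_{2k}$ we have $r_u(a,b)\leq a$, since $r_u(a,b)$ counts a subset of the first $a$ positions. Applying this to $u=B_k(\alpha,\beta)$ gives the claim. (Alternatively, Lemma~\ref{l:rank B} bounds $r_{B_k(\alpha,\beta)}(a,b)$ by $\lceil a/2\rceil+\lfloor a/2\rfloor=a$.)

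In the second case, suppose $b\leq k<a$. Since $a>k$, the first $a$ positions of $A_k(\sigma,\tau)$ contain all of $\sigma_1,\dotsc,\sigma_k$, that is, every value in $[k]$. They also contain some entries $k+\tau_j>k\geq b$, which contribute nothing. So every value in $[b]$ appears among the first $a$ positions, and $r_{A_k(\sigma,\tau)}(a,b)=b$. For any permutation $u$, the set $\{i\in[a]\mid u_i\leq b\}$ has size at most $|[b]|=b$, because $u$ is injective. Hence $r_{B_k(\alpha,\beta)}(a,b)\leq b$, which gives the claim.

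Neither case presents a real obstacle. The only points to check carefully are the exact boundaries: $a\leq k$ guarantees that only $\sigma$-entries are seen, and $a>k$ guarantees that all of $\sigma$ is seen.
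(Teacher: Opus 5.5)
Your proof is correct and follows the paper's argument essentially verbatim. In each mixed quadrant you compute $r_{A_k(\sigma,\tau)}(a,b)$ exactly from the block structure (it equals $a$, respectively $b$) and then use the trivial bounds $r_u(a,b)\leq a$ and $r_u(a,b)\leq b$, which hold for every permutation $u$, so, as in the paper, no property of $F_k$ is needed.
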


\begin{proof}
First suppose that $a\leq k<b$. Since the first $a$ entries of
$A_k(\sigma,\tau)$ all belong to $[k]$, they are all at most $b$.
Therefore, $r_{A_k(\sigma,\tau)}(a,b)=a$.

On the other hand, for any permutation $w$ we have
$r_w(a,b)\leq a$. Hence
\begin{equation*}
r_{A_k(\sigma,\tau)}(a,b)
=
a
\geq
r_{B_k(\alpha,\beta)}(a,b).
\end{equation*}

Now suppose that $b\leq k<a$. Among the first $k$ entries of
$A_k(\sigma,\tau)$, each element of $[k]$ occurs exactly once, while
all entries after position $k$ are greater than $k$. Thus exactly $b$
of the first $a$ entries are at most $b$, and hence
$r_{A_k(\sigma,\tau)}(a,b)=b$.
Since $r_w(a,b)\leq b$ for every permutation $w$, we obtain
\begin{equation*}
r_{A_k(\sigma,\tau)}(a,b)
=
b
\geq
r_{B_k(\alpha,\beta)}(a,b).
\end{equation*}
\end{proof}

The northwest and the two mixed quadrants have now been settled. To
handle the remaining southeast quadrant, we will relate the rank
function in the southeast region to the rank function of the
$RU$-transform in the northwest region. We first record the following
general identity.

\begin{lem}\label{l:rank RU}
Let $w\in\mathfrak S_{2k}$. Then, for all $a,b\in[2k]$,
\begin{equation}
r_w(a,b)
=
a+b-2k
+
r_{w^{RU}}(2k-a,2k-b).
\label{eq:rank RU}
\end{equation}
Here we use the convention
$r_w(0,b)=r_w(a,0)=0$.
\end{lem}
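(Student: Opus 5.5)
We prove the identity by counting directly. Write $n=2k$ and $v=w^{RU}$, so that $v_j=n+1-w_{n+1-j}$ for every $j\in[n]$. The idea is to decompose the set $\{\,i\in[n]\mid w_i\le b\,\}$ according to whether the position $i$ lies in $[a]$ or in $[a+1,n]$. The second part will then be identified with a rank count for $v$.

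First we handle the entries of value at most $b$. Since $w$ is a permutation, exactly $b$ positions $i\in[n]$ satisfy $w_i\le b$. Hence
\begin{equation*}
r_w(a,b)=b-\bigl|\{\,i\in[a+1,n]\mid w_i\le b\,\}\bigr|.
\end{equation*}
Among the $n-a$ positions in $[a+1,n]$, some have $w_i\le b$ and the rest have $w_i>b$. It follows that
\begin{equation*}
\bigl|\{\,i\in[a+1,n]\mid w_i\le b\,\}\bigr|=(n-a)-\bigl|\{\,i\in[a+1,n]\mid w_i> b\,\}\bigr|.
\end{equation*}

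Next we transport the last set to $v$. Substitute $j=n+1-i$. As $i$ ranges over $[a+1,n]$, the index $j$ ranges over $[1,n-a]$. Moreover, $w_i>b$ holds if and only if $w_i\ge b+1$, which holds if and only if $v_j=n+1-w_i\le n-b$. Therefore
\begin{equation*}
\bigl|\{\,i\in[a+1,n]\mid w_i> b\,\}\bigr|=r_v(n-a,n-b).
\end{equation*}

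Combining the three displays gives
\begin{equation*}
r_w(a,b)=b-(n-a)+r_v(n-a,n-b)=a+b-2k+r_{w^{RU}}(2k-a,2k-b),
\end{equation*}
which is \eqref{eq:rank RU}.

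It remains to check the boundary values. When $a=n$ or $b=n$, an argument of $r_v$ becomes $0$, and the convention $r(0,\cdot)=r(\cdot,0)=0$ applies. The identity then reads $r_w(n,b)=b$ and $r_w(a,n)=a$, both of which are true. The same computation covers these cases, since the sets involved are simply empty.

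The only point requiring care is the change of index $j=n+1-i$ and its effect on the inequality between values. There is no substantive obstacle.
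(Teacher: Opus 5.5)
Your proof is correct and is essentially the paper's argument. Both rest on the bijection $i\mapsto 2k+1-i$ between positions $i>a$ with $w_i>b$ and positions $j\le 2k-a$ of $w^{RU}$ with value at most $2k-b$; the only cosmetic difference is that the paper writes $r_w(a,b)=a-N$, with $N$ the number of entries exceeding $b$ among the first $a$ positions, whereas you complement first in values and then in positions, which is the same two-by-two counting.
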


\begin{proof}
Among the $2k$ entries of $w$, exactly $2k-b$ are greater than $b$.
Let $N$ denote the number of such entries occurring among the first
$a$ positions. Then
\begin{equation*}
r_w(a,b)=a-N.
\end{equation*}

Write
$w^{RU}=v_1\cdots v_{2k}$. 
By the definition of $RU$,
$v_j=2k+1-w_{2k+1-j}$
for every $j\in[2k]$.

Now consider an entry $w_i$ of $w$ with $i>a$ and $w_i>b$.
Under $RU$, this entry corresponds to the entry
\begin{equation*}
v_{2k+1-i}=2k+1-w_i
\end{equation*}
of $w^{RU}$. Since $i>a$, we have
\begin{equation*}
2k+1-i\leq 2k-a,
\end{equation*}
and since $w_i>b$, we have
\begin{equation*}
2k+1-w_i\leq 2k-b.
\end{equation*}
Thus, each entry of $w$ greater than $b$ and occurring after position
$a$ corresponds to an entry of $w^{RU}$ among its first $2k-a$
positions whose value is at most $2k-b$.

Conversely, suppose that $v_j$ is among the first $2k-a$ positions of
$w^{RU}$ with $v_j\leq 2k-b$. The corresponding entry of $w$
is
\begin{equation*}
w_{2k+1-j}=2k+1-v_j.
\end{equation*}
Since $j\leq 2k-a$, we have
\begin{equation*}
2k+1-j>a,
\end{equation*}
and since $v_j\leq 2k-b$, we have
\begin{equation*}
2k+1-v_j>b.
\end{equation*}
Therefore, this correspondence is a bijection between the entries of
$w$ greater than $b$ occurring after position $a$ and the entries of
$w^{RU}$ among its first $2k-a$ positions whose values are at most
$2k-b$.

Hence the number of entries of $w$ greater than $b$ occurring after
position $a$ is
\begin{equation*}
r_{w^{RU}}(2k-a,2k-b).
\end{equation*}
Since the total number of entries of $w$ greater than $b$ is $2k-b$,
we obtain
\begin{equation*}
N
=
2k-b-r_{w^{RU}}(2k-a,2k-b).
\end{equation*}
Substituting this into $r_w(a,b)=a-N$ gives
\begin{equation*}
r_w(a,b)=
a+b-2k+r_{w^{RU}}(2k-a,2k-b).
\end{equation*}
\end{proof}

We now return to the southeast quadrant. Lemma~\ref{l:rank RU} allows us to transfer the desired inequality to the northwest
quadrant, while the symmetry $(D_k)$ ensures that the corresponding
$RU$-transformed permutations are still related by $F_k$. We can
therefore complete the final quadrant comparison.

\begin{lem}[Southeast quadrant]\label{l:SE quadrant}
Let $\sigma,\tau,\alpha,\beta\in\mathfrak S_k$, and suppose that
\begin{equation*}
F_k(\sigma,\tau)=(\alpha,\beta).
\end{equation*}
Then, for all $a,b\in[k+1,2k]$, we have 
\begin{equation*}
r_{A_k(\sigma,\tau)}(a,b)
\geq
r_{B_k(\alpha,\beta)}(a,b).
\end{equation*}
\end{lem}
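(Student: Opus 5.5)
The plan is to transfer the southeast inequality to the northwest quadrant using Lemma~\ref{l:rank RU}, and then apply Corollary~\ref{c:NW quadrant} to an $RU$-transformed pair that is still related by $F_k$ thanks to $(D_k)$.

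Fix $a,b\in[k+1,2k]$ and set $a'=2k-a$ and $b'=2k-b$, so that $a',b'\in[0,k-1]$. Apply Lemma~\ref{l:rank RU} to both $w=A_k(\sigma,\tau)$ and $u=B_k(\alpha,\beta)$. The additive term $a+b-2k$ is the same on both sides, so the desired inequality reduces to
\begin{equation*}
r_{A_k(\sigma,\tau)^{RU}}(a',b')\;\geq\; r_{B_k(\alpha,\beta)^{RU}}(a',b').
\end{equation*}
If $a'=0$ or $b'=0$, both sides vanish by convention and there is nothing to prove. Otherwise $a',b'\in[k]$, and we are in the northwest quadrant.

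Next, rewrite both $RU$-transforms using Lemma~\ref{l:RU on A B}:
\begin{equation*}
A_k(\sigma,\tau)^{RU}=A_k(\tau^{RU},\sigma^{RU}),\qquad B_k(\alpha,\beta)^{RU}=B_k(\beta^{RU},\alpha^{RU}).
\end{equation*}
The key input is the symmetry $(D_k)$ of Proposition~\ref{p:Ek Dk}. Since $F_k(\sigma,\tau)=(\alpha,\beta)$, it gives
\begin{equation*}
F_k(\tau^{RU},\sigma^{RU})=(\beta^{RU},\alpha^{RU}).
\end{equation*}
This requires $k\ge 2$, which is the setting throughout. For $k=1$ the claim is trivial, since $A_1$ and $B_1$ are both the identity.

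Finally, apply Corollary~\ref{c:NW quadrant} with $(\sigma,\tau,\alpha,\beta)$ replaced by $(\tau^{RU},\sigma^{RU},\beta^{RU},\alpha^{RU})$ at the point $(a',b')\in[k]^2$. This gives exactly the reduced inequality, and hence the lemma.

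The only delicate point is checking that $(D_k)$ is stated in precisely the orientation produced by Lemma~\ref{l:RU on A B}: both components are swapped and both are $RU$-transformed. This is the main thing to verify. Since the paper proved $(D_k)$ for exactly this purpose, I expect it to match directly. Beyond that, the argument is bookkeeping of the boundary values $a'=0$ or $b'=0$.
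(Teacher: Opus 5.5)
Your proposal is correct and matches the paper's proof step for step: reduce via Lemma~\ref{l:rank RU} to the point $(2k-a,2k-b)$, rewrite both $RU$-transforms with Lemma~\ref{l:RU on A B}, use $(D_k)$ to see that the transformed pair is still related by $F_k$, and conclude with Corollary~\ref{c:NW quadrant}, handling a zero coordinate separately. The orientation of $(D_k)$ you flagged does match exactly what Lemma~\ref{l:RU on A B} produces (components swapped and each $RU$-transformed), and the paper uses it in the same way.
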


\begin{proof}
Fix $a,b\in[k+1,2k]$, and set
\begin{equation*}
\bar a=2k-a,
\qquad
\bar b=2k-b.
\end{equation*}
Then we have $0\leq \bar a,\bar b\leq k-1$.

By Lemma~\ref{l:RU on A B}, we have
\begin{equation*}
A_k(\sigma,\tau)^{RU}
=
A_k(\tau^{RU},\sigma^{RU}), \ \text{and} \
B_k(\alpha,\beta)^{RU}
=
B_k(\beta^{RU},\alpha^{RU}).
\end{equation*}
Moreover, by $(D_k)$ in Proposition~\ref{p:Ek Dk},
\begin{equation*}
F_k(\tau^{RU},\sigma^{RU})
=
(\beta^{RU},\alpha^{RU}).
\end{equation*}

If $\bar a,\bar b\geq 1$, Corollary~\ref{c:NW quadrant} applied to
\begin{equation*}
F_k(\tau^{RU},\sigma^{RU})
=
(\beta^{RU},\alpha^{RU})
\end{equation*}
gives
\begin{equation*}
r_{A_k(\tau^{RU},\sigma^{RU})}(\bar a,\bar b)
\geq
r_{B_k(\beta^{RU},\alpha^{RU})}(\bar a,\bar b).
\end{equation*}

If either $\bar a=0$ or $\bar b=0$, both sides are zero.
Thus, in all cases,
\begin{equation}\label{eq:SE quadrant RU}
r_{A_k(\sigma,\tau)^{RU}}(\bar a,\bar b)
\geq
r_{B_k(\alpha,\beta)^{RU}}(\bar a,\bar b).
\end{equation}

Applying \eqref{eq:rank RU} to $A_k(\sigma,\tau)$, we obtain
\begin{equation*}
r_{A_k(\sigma,\tau)}(a,b)
=
a+b-2k
+
r_{A_k(\sigma,\tau)^{RU}}(\bar a,\bar b).
\end{equation*}
Similarly,
\begin{equation*}
r_{B_k(\alpha,\beta)}(a,b)
=
a+b-2k
+
r_{B_k(\alpha,\beta)^{RU}}(\bar a,\bar b).
\end{equation*}
Therefore, by \eqref{eq:SE quadrant RU}, we have 
\begin{align*}
r_{A_k(\sigma,\tau)}(a,b)
&=
a+b-2k
+
r_{A_k(\sigma,\tau)^{RU}}(\bar a,\bar b)\\
&\geq
a+b-2k
+
r_{B_k(\alpha,\beta)^{RU}}(\bar a,\bar b)\\
&=
r_{B_k(\alpha,\beta)}(a,b).
\end{align*}
This proves the southeast quadrant.
\end{proof}

Combining the northwest, mixed, and southeast quadrant comparisons, we
immediately obtain the following corollary. 

\begin{cor}\label{c:rank comparison even}
Let $\sigma,\tau,\alpha,\beta\in\mathfrak S_k$, and suppose that
\begin{equation*}
F_k(\sigma,\tau)=(\alpha,\beta).
\end{equation*}
Then, for all $a,b\in[2k]$, we have
\begin{equation*}
r_{A_k(\sigma,\tau)}(a,b)
\geq
r_{B_k(\alpha,\beta)}(a,b).
\end{equation*}
\end{cor}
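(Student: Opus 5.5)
This corollary is a matter of covering the index set. The plan is to partition $[2k]\times[2k]$ into the four regions determined by whether $a\le k$ or $a>k$, and whether $b\le k$ or $b>k$, and then to cite the result already proved for each region. The case split is as follows.

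\emph{Northwest region} ($a,b\in[k]$). This is exactly Corollary~\ref{c:NW quadrant}, which follows from $(R_k)$ in Lemma~\ref{l:Rk} with $p=\ceil{\tfrac{a}{2}}$.

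\emph{Mixed regions} ($a\le k<b$ or $b\le k<a$). These are covered by Lemma~\ref{l:mixed quadrants}. There $r_{A_k(\sigma,\tau)}(a,b)$ equals $\min(a,b)$, which is the maximum possible value of any rank function at $(a,b)$.

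\emph{Southeast region} ($a,b\in[k+1,2k]$). This is Lemma~\ref{l:SE quadrant}. It reduces to the northwest region through the identity of Lemma~\ref{l:rank RU}, the $RU$-compatibility in Lemma~\ref{l:RU on A B}, and the symmetry $(D_k)$ of Proposition~\ref{p:Ek Dk}.

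The only point to check is that every pair $(a,b)\in[2k]^2$ falls into one of these cases. This is immediate: $a\le k$ or $a\ge k+1$, and likewise for $b$, which gives exactly the four listed configurations. The hypothesis $F_k(\sigma,\tau)=(\alpha,\beta)$ is the same in all three cited results, so they can be applied without modification. No real obstacle remains. All the substantive work, namely the induction establishing $(R_k)$ and the simultaneous induction on $(E_k)$ and $(D_k)$, has already been done. With Theorem~\ref{t:rank-matrix}, the corollary then gives $w\le f_{2k}(w)$ for every $w\in\mathfrak A_{2k}$.
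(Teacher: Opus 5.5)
Your proposal is correct and is exactly the paper's argument: the paper obtains this corollary by combining Corollary~\ref{c:NW quadrant}, Lemma~\ref{l:mixed quadrants}, and Lemma~\ref{l:SE quadrant}, whose regions together cover $[2k]\times[2k]$. Your remark that $r_{A_k(\sigma,\tau)}(a,b)=\min(a,b)$ in the mixed regions, which is the largest value any rank function can take there, is also exactly how the paper handles those cases.
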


The preceding rank comparisons establish the required inequality at
every entry of the rank matrix. We can therefore apply the rank-matrix
criterion to obtain the even case of the main result.

\begin{thm}\label{thm:even case}
Let $n=2k$ ($k\geq 2$). Then, for every $w\in\mathfrak A_{2k}$,
\begin{equation*}
w\leq f_{2k}(w)
\end{equation*}
under the Bruhat order.
\end{thm}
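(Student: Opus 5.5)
The theorem follows almost immediately from Corollary~\ref{c:rank comparison even} combined with the rank-matrix criterion, Theorem~\ref{t:rank-matrix}.

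Fix $w\in\mathfrak A_{2k}$ with $k\geq 2$. By the definition of $\mathfrak A_{2k}$, the first $k$ letters of $w$ permute $[k]$ and the last $k$ letters permute $[k+1,2k]$. Hence there are unique $\sigma,\tau\in\mathfrak S_k$ with $w=A_k(\sigma,\tau)$.

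Since $f_{2k}$ is a well-defined bijection $\mathfrak A_{2k}\to\mathfrak B_{2k}$ (by the corollary following Lemmas~\ref{l:even to odd} and~\ref{l:odd to even}), the image $f_{2k}(w)$ is a parity-alternating permutation. Its odd entries, read left to right, determine a unique $\alpha\in\mathfrak S_k$, and its even entries determine a unique $\beta\in\mathfrak S_k$. Thus $f_{2k}(w)=B_k(\alpha,\beta)$, and by \eqref{def:F_k} this says exactly that $F_k(\sigma,\tau)=(\alpha,\beta)$.

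Now apply Corollary~\ref{c:rank comparison even} to this quadruple $(\sigma,\tau,\alpha,\beta)$. It gives
\begin{equation*}
r_{A_k(\sigma,\tau)}(a,b)\;\geq\; r_{B_k(\alpha,\beta)}(a,b)\qquad\text{for all } a,b\in[2k].
\end{equation*}
By Theorem~\ref{t:rank-matrix}, this entrywise rank-matrix inequality is equivalent to $A_k(\sigma,\tau)\leq B_k(\alpha,\beta)$ in Bruhat order, that is, $w\leq f_{2k}(w)$.

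The corollary itself was assembled from the four quadrant comparisons:
\begin{itemize}
\item the northwest quadrant, Corollary~\ref{c:NW quadrant}, which rests on the induction in Lemma~\ref{l:Rk};
\item the two mixed quadrants, Lemma~\ref{l:mixed quadrants};
\item the southeast quadrant, Lemma~\ref{l:SE quadrant}, which reduces to the northwest case using the symmetry $(D_k)$ and Lemma~\ref{l:rank RU}.
\end{itemize}
Those ingredients carry all the real difficulty: the interlocking induction $(E_k)$, $(D_k)$ of Proposition~\ref{p:Ek Dk} and the strengthened estimate $(R_k)$. For the present statement, the only point needing care is that $F_k$ is defined on all of $\mathfrak S_k\times\mathfrak S_k$, which holds because $A_k$ and $B_k$ are bijections onto $\mathfrak A_{2k}$ and $\mathfrak B_{2k}$. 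Given that, there is no remaining obstacle.
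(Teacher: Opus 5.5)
Your proposal is correct and is exactly the paper's argument: the paper proves Theorem~\ref{thm:even case} in one line by combining Corollary~\ref{c:rank comparison even} with the rank-matrix criterion, Theorem~\ref{t:rank-matrix}. Your added remark, that every $w\in\mathfrak A_{2k}$ and every $f_{2k}(w)\in\mathfrak B_{2k}$ have unique $A_k$ and $B_k$ parametrizations so that $F_k$ is defined on all of $\mathfrak S_k\times\mathfrak S_k$, just makes explicit what the paper leaves implicit.
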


\begin{proof}
This follows immediately from Corollary~\ref{c:rank comparison even}
and Theorem~\ref{t:rank-matrix}.
\end{proof}

The odd case follows directly from the even case. More precisely, the
result for $n=2k$ implies the corresponding result for $n=2k-1$. 

\begin{cor}\label{c:odd case}
Let $n=2k-1$ ($k\geq 3$). Then, for every $w\in\mathfrak A_{2k-1}$,
\begin{equation*}
w\leq f_{2k-1}(w)
\end{equation*}
under the Bruhat order.
\end{cor}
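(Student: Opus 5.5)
The plan is to deduce the odd case from Theorem~\ref{thm:even case} by passing through the even step of Algorithm~\ref{keyalg}. Fix $w\in\mathfrak A_{2k-1}$, so $n-1=2k-1$ in step (2) of the algorithm for $n=2k$. Set $\tilde w=w^{RU}\in\tilde{\mathfrak A}_{2k-1}$ by Proposition~\ref{p: U and R}. By the definition~\eqref{eq:def tilde f}, we have $\tilde f_{2k-1}(\tilde w)=f_{2k-1}(w)^{RU}$. By Proposition~\ref{upside-down reverse}, the claim $w\le f_{2k-1}(w)$ is therefore equivalent to $\tilde w\le \tilde f_{2k-1}(\tilde w)$. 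So it suffices to prove the latter for every $\tilde w\in\tilde{\mathfrak A}_{2k-1}$.

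To obtain it, I will embed $\tilde w$ into $\mathfrak A_{2k}$ by choosing the insertion position $q=2k$, i.e.\ appending the maximum at the end. By Observation~\ref{obs:ins on tildeAn}, $u\defeq\ins{2k}(\tilde w)\in\mathfrak A_{2k}$, and then~\eqref{evenn+1} gives
\[
f_{2k}(u)=\inss{2(2k-k-1)}\bigl(\tilde f_{2k-1}(\tilde w)\bigr)=\inss{2k-2}\bigl(\tilde f_{2k-1}(\tilde w)\bigr).
\]
Here $\inss{2k-2}$ acting on $\mathfrak S_{2k-1}$ places $2k$ at position $2k-2$. It then swaps the last two letters $v_{2k-2},v_{2k-1}$ of $v=\tilde f_{2k-1}(\tilde w)$, so $f_{2k}(u)=v_1\cdots v_{2k-3}\,(2k)\,v_{2k-1}\,v_{2k-2}$. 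Theorem~\ref{thm:even case} yields $\ins{2k}(\tilde w)\le f_{2k}(u)$.

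The remaining step, which I expect to be the main obstacle, is to strip off the letter $2k$ on both sides and recover $\tilde w\le v$. On the left, $2k$ sits in the last position. I will use the rank-matrix criterion, Theorem~\ref{t:rank-matrix}, restricting to $a,b\le 2k-1$. For $u$ we have $r_u(a,b)=r_{\tilde w}(a,b)$. For $x\defeq f_{2k}(u)$ and $b\le 2k-1$, the letter $2k$ never counts. Hence $r_x(a,b)=r_v(a,b)$ for $a\le 2k-3$, and $r_x(2k-2,b)=r_v(2k-3,b)$. For $a=2k-1$, the first $2k-1$ letters of $x$ omit only $v_{2k-2}$, so $r_x(2k-1,b)=r_v(2k-1,b)-[v_{2k-2}\le b]$.

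The troublesome rows are therefore $a=2k-2$ and $a=2k-1$, where we only learn $r_{\tilde w}(a,b)\ge r_v(\cdot)$ with a possible deficit of one. For $a=2k-1$ there is no problem, since $r_{\tilde w}(2k-1,b)=b=r_v(2k-1,b)$ trivially. For $a=2k-2$ I plan to use the complementary identity $r(2k-2,b)=b-[\text{last letter}\le b]$. This reduces the needed inequality to $[v_{2k-1}\le b]\ge[\tilde w_{2k-1}\le b]$, i.e.\ $v_{2k-1}\le \tilde w_{2k-1}$.

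To get that last-letter comparison I would fall back on the known Bruhat relation $\tilde w\le v$ restricted to last letters. Alternatively, if circularity threatens, I would read it off from Theorem~\ref{thm:even case} applied instead to $u'=\ins{2k-1}(\tilde w)$. There~\eqref{evenn+1} gives position $2(2k-1-k-1)=2k-4$, and the resulting rank comparisons in rows $2k-2,2k-1$ constrain the final letters. I would try this second choice, or a combination of $q=2k$ and $q=2k-1$, first to close the gap.
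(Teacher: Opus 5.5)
Your route is the same as the paper's: pass to $\tilde w=w^{RU}$, embed via $u=\ins{2k}(\tilde w)\in\mathfrak A_{2k}$, apply Theorem~\ref{thm:even case}, strip the letter $2k$, and undo $RU$. However, your computation of $f_{2k}(u)$ is wrong, and this creates the obstacle you then cannot close.

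In step (2) of Algorithm~\ref{keyalg}, the letter $k$ is a local variable defined by $n-1=2k+1$. For $n=2k$ in your notation it equals $k-1$, not $k$. Hence \eqref{evenn+1} gives the index $2\bigl(q-(k-1)-1\bigr)=2(q-k)$, which for $q=2k$ is $2k$, not $2k-2$. Applied to a word of length $2k-1$, $\inss{2k}$ puts $2k$ in the last position and has nothing after it to swap. So $f_{2k}(u)=\ins{2k}(v)$ with $v=\tilde f_{2k-1}(\tilde w)$. The table of $f_6$ confirms this: $f_6(123456)=123456$, whereas your formula would give $123654$, and $f_6(231546)=523416=\ins{6}(\tilde f_5(23154))$.

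With the correct formula, both $u$ and $f_{2k}(u)$ end in $2k$. So for $a,b\in[2k-1]$ you have $r_u(a,b)=r_{\tilde w}(a,b)$ and $r_{f_{2k}(u)}(a,b)=r_v(a,b)$. Theorem~\ref{t:rank-matrix} then turns $u\le f_{2k}(u)$ directly into $\tilde w\le v$, with no separate analysis of rows $2k-2$ and $2k-1$. This is exactly the paper's argument; the paper strips the common last entry using the tableau criterion.

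As written, though, your proposal does not close. The fallback ``use $\tilde w\le v$ restricted to last letters'' assumes what you are trying to prove. The alternative with $q=2k-1$ is left unexecuted, and its index is off by the same shift: it should be $2k-2$, not $2k-4$.
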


\begin{proof}
Let $w\in\mathfrak A_{2k-1}$. Since
$w^{RU}\in\widetilde{\mathfrak A}_{2k-1}$, we have
\begin{equation*}
\ins{2k}(w^{RU})\in\mathfrak A_{2k}.
\end{equation*}
By the definition of $\widetilde f_{2k-1}$,
\begin{equation*}
\widetilde f_{2k-1}(w^{RU})
=
\left[f_{2k-1}\bigl((w^{RU})^{RU}\bigr)\right]^{RU}
=
f_{2k-1}(w)^{RU}.
\end{equation*}
Since $\inss{2k}$ inserts $2k$ at the last position, no swapping occurs,
and hence $\inss{2k}=\ins{2k}$. Therefore, by the definition of
$f_{2k}$,
\begin{equation*}
f_{2k}\bigl(\ins{2k}(w^{RU})\bigr)
=
\ins{2k}\bigl(f_{2k-1}(w)^{RU}\bigr).
\end{equation*}

By Theorem~\ref{thm:even case},
\begin{equation*}
\ins{2k}(w^{RU})
\leq
\ins{2k}\bigl(f_{2k-1}(w)^{RU}\bigr).
\end{equation*}
Both permutations have the common last entry $2k$. By the tableau criterion (Theorem~\ref{TabCriterion}), deleting this common last entry gives
\begin{equation*}
w^{RU}\leq f_{2k-1}(w)^{RU}.
\end{equation*}
Since $RU$ preserves the Bruhat order (Proposition~\ref{upside-down reverse}) and is an involution, we conclude
that
\begin{equation*}
w\leq f_{2k-1}(w).
\end{equation*}
\end{proof}

Combining the even and odd cases, we obtain the main result of this section.

\begin{thm}\label{t:Bruhat balance case}
Let $f_n$ be the map defined in Algorithm~\ref{keyalg}. For every $n\geq 4$ and every $w\in\mathfrak A_n$, we have
\begin{equation*}
w\leq f_n(w)
\end{equation*}
under the Bruhat order.
\end{thm}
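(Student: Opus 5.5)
The plan is to split according to the parity of $n$. Both cases are already in place, so the theorem is essentially a bookkeeping combination of Theorem~\ref{thm:even case} and Corollary~\ref{c:odd case}.

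\textbf{Even case.} If $n=2k$ with $k\geq 2$ (that is, $n\geq 4$ even), the claim $w\leq f_n(w)$ for all $w\in\mathfrak A_n$ is exactly Theorem~\ref{thm:even case}. That theorem was obtained as follows:
\begin{enumerate}
\item Parametrize $w=A_k(\sigma,\tau)$ and $f_{2k}(w)=B_k(\alpha,\beta)$, so that $F_k(\sigma,\tau)=(\alpha,\beta)$.
\item Apply the rank-matrix criterion (Theorem~\ref{t:rank-matrix}), checking $r_{A_k(\sigma,\tau)}(a,b)\geq r_{B_k(\alpha,\beta)}(a,b)$ quadrant by quadrant.
\item The northwest quadrant is Corollary~\ref{c:NW quadrant}, which uses $(R_k)$ and hence the recursion $(E_k)$.
\item The mixed quadrants are Lemma~\ref{l:mixed quadrants}.
\item The southeast quadrant is reduced to the northwest quadrant via Lemma~\ref{l:rank RU} and the symmetry $(D_k)$; this is Lemma~\ref{l:SE quadrant}.
\end{enumerate}
So nothing new is needed in this case.

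\textbf{Odd case.} If $n=2k-1\geq 5$, i.e.\ $k\geq 3$, I invoke Corollary~\ref{c:odd case}. Its argument embeds $w\in\mathfrak A_{2k-1}$ into $\mathfrak A_{2k}$ as $\ins{2k}(w^{RU})$. The even step of Algorithm~\ref{keyalg} with $q=2k$ gives
\begin{equation*}
f_{2k}\bigl(\ins{2k}(w^{RU})\bigr)=\ins{2k}\bigl(f_{2k-1}(w)^{RU}\bigr),
\end{equation*}
using $\inss{2k}=\ins{2k}$ (no swaps occur) and the definition of $\tilde f_{2k-1}$. Theorem~\ref{thm:even case} then compares these two permutations. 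Deleting the common final entry $2k$ via the tableau criterion (Theorem~\ref{TabCriterion}) gives $w^{RU}\leq f_{2k-1}(w)^{RU}$. Finally, Proposition~\ref{upside-down reverse}, together with the fact that $RU$ is an involution, yields $w\leq f_{2k-1}(w)$.

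\textbf{Range of $n$.} The even case covers $n=4,6,8,\dots$ and the odd case covers $n=5,7,9,\dots$, so together they give every $n\geq 4$. As a sanity check, Theorem~\ref{t:special n} independently confirms the small cases $n\leq 13$.

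\textbf{Main obstacle.} The substantive difficulty lies entirely in the northwest-quadrant estimate $(R_k)$ of Lemma~\ref{l:Rk}, in particular its boundary case $(\delta,p,\varepsilon)=(1,0,0)$, and in the simultaneous induction establishing $(E_k)$ and $(D_k)$. Since these are proved earlier, the theorem itself follows directly. The only point to watch is that Corollary~\ref{c:odd case} requires $k\geq 3$, while Theorem~\ref{thm:even case} is used at size $2k\geq 6$ there; the latter is covered since $k\geq 2$ is allowed.
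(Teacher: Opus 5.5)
Your proposal is correct and follows the paper's proof exactly: the even case $n=2k$ is Theorem~\ref{thm:even case} and the odd case $n=2k-1\geq 5$ is Corollary~\ref{c:odd case}, which together cover every $n\geq 4$. Your summary of the ingredients behind those two results, including the check that $\inss{2k}=\ins{2k}$ and the range condition $k\geq 3$, also matches the paper.
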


\begin{proof}
    For even \(n\), the conclusion follows from Theorem~\ref{thm:even case}. For odd \(n\ge5\), it follows from Corollary~\ref{c:odd case}.
\end{proof}

By Theorem~\ref{pairing test} and Theorem~\ref{t:Bruhat balance case}, we immediately get the balanced case of~\eqref{eq:intromain}. 

\begin{thm}
    Let $A$ be an $n \times n$ TNN matrix. Then the inequality 
    \begin{equation*}
        \perm(A_{[1,h],[1,h]}) \perm(A_{[h+1,n],[h+1,n]}) \;\geq\;
    \perm(A_{[1,n] \cap 2\mathbb Z, [1,n] \cap 2\mathbb Z})
    \perm(A_{[1,n] \ssm 2\mathbb Z, [1,n] \ssm 2\mathbb Z})
    \end{equation*}
    holds when $h = \floor*{\tfrac{n}{2}}$ or $\ceil*{\tfrac{n}{2}}$. 
\end{thm}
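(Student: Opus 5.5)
The plan is to reduce the determinant-free permanental inequality to a statement about Bruhat order and then invoke Theorem~\ref{pairing test}. Expanding both sides, the left side $\perm(A_{[1,h],[1,h]})\perm(A_{[h+1,n],[h+1,n]})$ is $\sum_{w} a_{1,w_1}\cdots a_{n,w_n}$ over $w$ in the Young subgroup $\mathfrak S_{1,\dotsc,h}\times\mathfrak S_{h+1,\dotsc,n}$. The right side is the same kind of sum over $w\in\mathfrak B_n$. For $h=\floor{\tfrac n2}$ the index set on the left is $\mathfrak A_n$. For $h=\ceil{\tfrac n2}$ it is $\tilde{\mathfrak A}_n$. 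Both sets have cardinality $\floor{\tfrac n2}!\ceil{\tfrac n2}!=|\mathfrak B_n|$. So the difference of the two sides is a sum of differences $x_{1,w_1}\cdots x_{n,w_n}-x_{1,u_1}\cdots x_{n,u_n}$ indexed by any bijection between the two index sets. By Theorem~\ref{pairing test}, each such difference is TNN as soon as $w\le u$ in Bruhat order, and a sum of TNN polynomials is TNN.

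For $h=\floor{\tfrac n2}$ and $n\ge4$, I would take the bijection $f_n:\mathfrak A_n\to\mathfrak B_n$ of Algorithm~\ref{keyalg}, which is well defined by Lemmas~\ref{l:even to odd} and~\ref{l:odd to even}. Theorem~\ref{t:Bruhat balance case} gives $w\le f_n(w)$ for every $w\in\mathfrak A_n$. Pairing each monomial of $\mathfrak A_n$ with that of $f_n(w)$ then writes the polynomial~\eqref{TNNpoly} as a sum of TNN differences, which proves the claim.

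For $h=\ceil{\tfrac n2}$ the two cases coincide when $n$ is even. When $n$ is odd, I would use $\tilde f_n:\tilde{\mathfrak A}_n\to\mathfrak B_n$ from~\eqref{eq:def tilde f}. It is a bijection by Observation~\ref{o:well-define tilde f}, and it satisfies $\tilde w\le\tilde f_n(\tilde w)$ by Lemma~\ref{lem:induced bijection} together with Theorem~\ref{t:Bruhat balance case}. The same pairing argument then finishes this case.

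The only step needing care is small $n$, since $f_n$ is only defined for $n\ge4$. For $n=1$ the sets are empty or trivial. For $n=2$ both sides equal $a_{11}a_{22}$. For $n=3$ I expect to pair permutations by hand: with $h=1$, pair $123\mapsto123$ and $132\mapsto321$; with $h=2$, use the $RU$-image of this pairing via Proposition~\ref{upside-down reverse}. The main obstacle, the Bruhat comparison $w\le f_n(w)$, is already resolved by Theorem~\ref{t:Bruhat balance case}, so I expect nothing else to be difficult.
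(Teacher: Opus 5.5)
Your proposal is correct and follows the paper's argument: the paper's proof is just Theorem~\ref{pairing test} combined with the Bruhat comparison $w\le f_n(w)$ of Theorem~\ref{t:Bruhat balance case}, summed over the bijection $f_n:\mathfrak A_n\to\mathfrak B_n$. You also treat $h=\ceil{\tfrac n2}$ for odd $n$ via $\tilde f_n$ and Lemma~\ref{lem:induced bijection}, and the cases $n\le 3$ by hand; both fill in details the paper leaves implicit, and both are sound.
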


\section{Permanental Inequalities for Arbitrary $h$}\label{s:all h}

In Sections~\ref{s:A gen on all} and \ref{s:proof of balance case}, we proved the permanental inequality~\eqref{eq:intromain} for arbitrary totally nonnegative matrices in the balanced case
$h=\floor*{\frac{n}{2}}$
or
$h=\ceil*{\frac{n}{2}}$.
The proof relied on the bijection $f_n:\mathfrak A_n\to\mathfrak B_n$ and the result that
\begin{equation*}
w\leq f_n(w)
\end{equation*}
under the Bruhat order for every $w\in\mathfrak A_n$ (Theorem~\ref{t:Bruhat balance case}).

In this section, we extend this result to every $h\in[1,n-1]$. When $h$ is not balanced, the two families of permutation monomials occurring in \eqref{eq:evenoddintro} no longer have the same cardinality, so a bijection analogous to $f_n$ cannot exist. Instead, we will construct an injection from $\mathfrak B_n$ into the Young subgroup corresponding to the partition $[1,h]\mid[h+1,n]$. The main idea is to embed the unbalanced problem into a larger balanced one by adjoining a fixed initial segment, apply the inverse of the bijection in the balanced case, and then return to the original symmetric group. A stability property of the maps $F_k$ from Section~\ref{s:proof of balance case} will ensure that the added entries remain fixed. 

For $h\in[n-1]$, define a Young subgroup
\begin{equation}\label{def:gen Young subgroup}
\mathfrak A_{n,h}
:=
\mathfrak S_{1,\ldots,h}\times \mathfrak S_{h+1,\ldots,n}.
\end{equation}
Thus, the permutation monomials appearing in
\begin{equation*}
\operatorname{per}(X_{[1,h],[1,h]}) \cdot
\operatorname{per}(X_{[h+1,n],[h+1,n]})
\end{equation*}
are indexed by $\mathfrak A_{n,h}$. When $h=\floor*{n/2}$, this agrees with the set $\mathfrak A_n$ defined in Section~\ref{s:A gen on all}. 

To carry out this embedding argument, we need to understand how the maps $F_k$ (defined in~\eqref{def:F_k}) behave when some initial entries of the component permutations are fixed. We therefore introduce the following notation for permutations with a prescribed fixed initial segment.

For $0\leq r\leq k$, define 
\begin{equation}\label{def:fix initial symm group}
\mathfrak S_k^{[r]}
:=
\{w\in\mathfrak S_k\mid w_i=i\text{ for all }i\in[r]\}.
\end{equation}
Thus, $\mathfrak S_k^{[r]}$ consists of the permutations in $\mathfrak S_k$ that fix the initial segment $[r]$. In particular,
\begin{equation*}
\mathfrak S_k^{[0]}=\mathfrak S_k,
\qquad
\mathfrak S_k^{[k]}=\{12\cdots k\}=e.
\end{equation*}
We will refer to $\mathfrak S_k^{[r]}$ as the fixed-prefix subset of $\mathfrak S_k$ of length $r$.

The insertion relations in Proposition~\eqref{p:Ek Dk} impose a useful restriction on the behavior of $F_k$ when its image has a fixed initial segment. The following proposition records this fixed-prefix property, which will be the key to embedding an unbalanced problem into a balanced one.

\begin{prop}[Fixed-prefix property]\label{p:fixed prefix}
Let $k\geq 2$ and $0\leq m\leq k-1$. Let
$\sigma,\tau,\alpha,\beta\in\mathfrak S_k$, such that
\begin{equation*}
F_k(\sigma,\tau)=(\alpha,\beta).
\end{equation*}
If
$\alpha\in\mathfrak S_k^{[\ceil*{\tfrac{m}{2}}]}$
and
$\beta\in\mathfrak S_k^{[\floor*{\tfrac{m}{2}}]}$,
then we have
\begin{equation*}
\sigma\in\mathfrak S_k^{[m]}.
\end{equation*}
\end{prop}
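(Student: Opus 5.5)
We argue by induction on $k$, using the recursive structure $(E_k)$ of Proposition~\ref{p:Ek Dk}. Write $\sigma=I_d^-(\sigma')$ and $\tau=I_h^+(\tau')$, where $d$ is the position of $1$ in $\sigma$ and $h$ is the position of $k$ in $\tau$. Set $F_{k-1}(\sigma',\tau')=(\alpha',\beta')$. Then $(E_k)$ gives
\begin{equation*}
\alpha=I_d^-(\beta'),\qquad \beta=I_h^+(\alpha').
\end{equation*}
The case $m=0$ is vacuous, since $\mathfrak S_k^{[0]}=\mathfrak S_k$. So we assume $m\geq 1$.

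\textbf{Step 1: locating $1$ in $\sigma$.} Since $\lceil m/2\rceil\geq 1$, the hypothesis gives $\alpha_1=1$. The entry $1$ of $\alpha=I_d^-(\beta')$ sits in position $d$, so $d=1$. Hence $\sigma_1=1$, and $\sigma=I_1^-(\sigma')$ means $\sigma_{i+1}=\sigma'_i+1$. Moreover $\alpha=I_1^-(\beta')$, so $\alpha\in\mathfrak S_k^{[c]}$ is equivalent to $\beta'\in\mathfrak S_{k-1}^{[c-1]}$.

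\textbf{Step 2: transferring the prefix condition from $\beta$ to $\alpha'$.} We have $\beta=I_h^+(\alpha')$ with $\beta\in\mathfrak S_k^{[\lfloor m/2\rfloor]}$. The entry $k$ cannot occur among the first $\lfloor m/2\rfloor$ positions of $\beta$, because those entries are $1,\dots,\lfloor m/2\rfloor<k$ (this uses $m\leq k-1$). So $h>\lfloor m/2\rfloor$, and the first $\lfloor m/2\rfloor$ entries of $\beta$ coincide with those of $\alpha'$. It follows that $\alpha'\in\mathfrak S_{k-1}^{[\lfloor m/2\rfloor]}=\mathfrak S_{k-1}^{[\lceil (m-1)/2\rceil]}$. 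By Step 1, $\beta'\in\mathfrak S_{k-1}^{[\lceil m/2\rceil-1]}=\mathfrak S_{k-1}^{[\lfloor (m-1)/2\rfloor]}$. These two index identities are the parity bookkeeping that makes the roles of $\alpha$ and $\beta$ swap correctly under $(E_k)$.

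\textbf{Step 3: applying induction.} Apply the statement for $k-1$ with $m-1$ in place of $m$. This requires $0\leq m-1\leq k-2$, which holds. We obtain $\sigma'\in\mathfrak S_{k-1}^{[m-1]}$. Since $\sigma=I_1^-(\sigma')$, this gives $\sigma_{i+1}=i+1$ for $i\leq m-1$, together with $\sigma_1=1$. Hence $\sigma\in\mathfrak S_k^{[m]}$.

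\textbf{Base case.} For $k=2$ the map $F_2$ is the identity, so $\sigma=\alpha$. With $m=1$ the hypothesis gives $\alpha_1=1$, so $\sigma\in\mathfrak S_2^{[1]}$. The induction step itself only needs $k\geq 3$, which is exactly where $(E_k)$ is available.

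\textbf{Main obstacle.} The delicate point is Step 2: we must ensure that $h>\lfloor m/2\rfloor$ so that the prefix of $\beta$ passes unchanged to $\alpha'$, and we must verify that the floor/ceiling indices shift so the induction hypothesis applies with the components interchanged. Everything else is direct bookkeeping with $I^\pm$.
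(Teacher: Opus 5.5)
Your proposal is correct and follows essentially the same argument as the paper. Both proceed by induction on $k$ via $(E_k)$, force $d=1$ from $\alpha_1=1$, and show $h>\lfloor m/2\rfloor$ so that the prefix passes from $\beta$ to $\alpha'$. Both then invoke the hypothesis for $k-1$ with $m-1$, using the identities $\lfloor m/2\rfloor=\lceil (m-1)/2\rceil$ and $\lceil m/2\rceil-1=\lfloor (m-1)/2\rfloor$.
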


\begin{proof}
We proceed by induction on $k$. When $k=2$, the case $m=0$ is immediate. For $m=1$, the conclusion follows from the fact that $F_2$ is the identity map on $\mathfrak S_2\times\mathfrak S_2$.

Now let $k\geq 3$, and assume that the proposition holds for $k-1$. Fix $0\leq m\leq k-1$. The case $m=0$ is immediate, so suppose that $m\geq 1$.

Let $d$ be the position of $1$ in $\sigma$, and let $h$ be the position of $k$ in $\tau$. There exist unique $\sigma',\tau'\in\mathfrak S_{k-1}$ such that
\begin{equation*}
\sigma=I_d^-(\sigma'),
\qquad
\tau=I_h^+(\tau').
\end{equation*}
Write
\begin{equation*}
F_{k-1}(\sigma',\tau')=(\alpha',\beta').
\end{equation*}
By $(E_k)$ in Proposition~\ref{p:Ek Dk},
\begin{equation*}
\alpha=I_d^-(\beta'),
\qquad
\beta=I_h^+(\alpha').
\end{equation*}

Since $m\geq 1$, we have $\ceil*{\tfrac{m}{2}}\geq 1$. Thus, $\alpha_1=1$. Since $I_d^-$ inserts the new minimum $1$ at position $d$, it follows that $d=1$. Therefore,
\begin{equation*}
\sigma=I_1^-(\sigma'),
\qquad
\alpha=I_1^-(\beta').
\end{equation*}
Since
\begin{equation*}
\alpha\in\mathfrak S_k^{[\ceil*{\tfrac{m}{2}}]},
\end{equation*}
we obtain
\begin{equation*}
\beta'\in
\mathfrak S_{k-1}^{[\ceil*{\tfrac{m}{2}}-1]}
=
\mathfrak S_{k-1}^{[\floor*{\tfrac{m-1}{2}}]}.
\end{equation*}

We next claim that
\begin{equation*}
h>\floor*{\frac{m}{2}}.
\end{equation*}
To see this, suppose that $h\leq\floor*{\tfrac{m}{2}}$. Since $I_h^+$ inserts the new maximum $k$ in position $h$, we have
$\beta_h=k$.
On the other hand,
$\beta\in\mathfrak S_k^{[\floor*{\tfrac{m}{2}}]}$
implies that $\beta_h=h$. This is impossible, since
\begin{equation*}
h\leq\floor*{\frac{m}{2}}\leq m\leq k-1.
\end{equation*}
Hence we have 
$h>\floor*{\tfrac{m}{2}}$.

It follows that the insertion of $k$ in $I_h^+(\alpha')$ occurs after the first $\floor*{\tfrac{m}{2}}$ positions. Since
\begin{equation*}
\beta=I_h^+(\alpha') \
\text{and} \
\beta\in\mathfrak S_k^{[\floor*{\tfrac{m}{2}}]},
\end{equation*}
we obtain
\begin{equation*}
\alpha'\in
\mathfrak S_{k-1}^{[\floor*{\tfrac{m}{2}}]}
=
\mathfrak S_{k-1}^{[\ceil*{\tfrac{m-1}{2}}]}.
\end{equation*}

Therefore, the induction hypothesis, applied to
\begin{equation*}
F_{k-1}(\sigma',\tau')=(\alpha',\beta')
\end{equation*}
with $m-1$ in place of $m$, gives
\begin{equation*}
\sigma'\in\mathfrak S_{k-1}^{[m-1]}.
\end{equation*}
Finally, since
$\sigma=I_1^-(\sigma')$,
we conclude that
$\sigma\in\mathfrak S_k^{[m]}$.
\end{proof}

We now use the fixed-prefix property of $F_k$ to embed an unbalanced problem into a larger balanced one. We first define the embedding that adjoins a fixed initial segment.

Fix $h\in[n-1]$ such that
$h\leq \floor*{\tfrac{n}{2}}$,
and set
\begin{equation*}
k=n-h,
\qquad
\ell=n-2h.
\end{equation*}
Then we have
\begin{equation*}
n=2k-\ell,
\qquad
h=k-\ell,
\end{equation*}
and $0\leq \ell\leq k-1$.

Define the map
\begin{equation}\label{def:embedding map}
\begin{aligned}
\iota_\ell:\mathfrak S_{2k-\ell}
&\to
\mathfrak S_{2k},\\
w=w_1\cdots w_{2k-\ell}
&\mapsto
12\cdots \ell
(w_1+\ell)(w_2+\ell)\cdots(w_{2k-\ell}+\ell).
\end{aligned}
\end{equation}
Thus, $\iota_\ell$ is obtained by adjoining the fixed initial segment $12\cdots\ell$ and increasing every original entry by $\ell$. 

\begin{obs}\label{o:iota bijection}
The map $\iota_\ell$ restricts to a bijection
\begin{equation*}
\iota_\ell:
\mathfrak S_{2k-\ell}
\longrightarrow
\mathfrak S_{2k}^{[\ell]}.
\end{equation*}
\end{obs}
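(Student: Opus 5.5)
The statement is elementary, so I plan a direct verification in three steps: show that $\iota_\ell$ lands in $\mathfrak S_{2k}^{[\ell]}$, show that it is injective, and show that it is surjective by writing down an explicit inverse.

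\emph{Well-definedness.} Take $w\in\mathfrak S_{2k-\ell}$. The word $\iota_\ell(w)$ has length $\ell+(2k-\ell)=2k$. Its first $\ell$ letters are $1,\dots,\ell$. Its remaining letters $w_i+\ell$ run over $\{\ell+1,\dots,2k\}$, each value occurring exactly once because $w$ is a permutation of $[2k-\ell]$. Hence $\iota_\ell(w)$ is a permutation of $[2k]$. Its $i$th letter equals $i$ for every $i\in[\ell]$, so $\iota_\ell(w)\in\mathfrak S_{2k}^{[\ell]}$ by definition~\eqref{def:fix initial symm group}.

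\emph{Inverse map.} Take $v\in\mathfrak S_{2k}^{[\ell]}$. Since $v_i=i$ for $i\le \ell$, the letters $1,\dots,\ell$ all appear in the first $\ell$ positions. Therefore the suffix $v_{\ell+1}\cdots v_{2k}$ is a permutation of $\{\ell+1,\dots,2k\}$. Define
\[
\pi(v)=(v_{\ell+1}-\ell)(v_{\ell+2}-\ell)\cdots(v_{2k}-\ell).
\]
This is a permutation of $[2k-\ell]$. Directly from the formulas, $\pi(\iota_\ell(w))=w$, which gives injectivity. Likewise $\iota_\ell(\pi(v))=v$, because the prefix of $v$ is forced to be $12\cdots\ell$; this gives surjectivity.

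Every step is a routine check and I expect no real obstacle. The only point needing a moment's care is the observation that the fixed prefix forces the suffix of $v$ to consist exactly of the values $\ell+1,\dots,2k$. Without that, $\pi(v)$ would not be a permutation. As a sanity check, the cardinalities agree: $|\mathfrak S_{2k}^{[\ell]}|=(2k-\ell)!=|\mathfrak S_{2k-\ell}|$.
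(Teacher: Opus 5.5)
Your proof is correct and follows the same route as the paper: check that the image of $\iota_\ell$ lies in $\mathfrak S_{2k}^{[\ell]}$, then recover the preimage by deleting the prefix $12\cdots\ell$ and subtracting $\ell$ from the remaining entries. You add two details the paper leaves implicit: the fixed prefix forces the suffix to be a permutation of $\{\ell+1,\dots,2k\}$, and $\pi\circ\iota_\ell=\mathrm{id}$ gives injectivity.
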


\begin{proof}
By definition, every permutation in the image of $\iota_\ell$ fixes the first $\ell$ entries, so
\begin{equation*}
\iota_\ell(\mathfrak S_{2k-\ell})
\subseteq
\mathfrak S_{2k}^{[\ell]}.
\end{equation*}
Conversely, let $v\in\mathfrak S_{2k}^{[\ell]}$. Deleting the initial segment $12\cdots\ell$ and subtracting $\ell$ from each remaining entry gives a unique permutation
$w\in\mathfrak S_{2k-\ell}$
such that
$\iota_\ell(w)=v$.
Hence $\iota_\ell$ is a bijection from $\mathfrak S_{2k-\ell}$ to $\mathfrak S_{2k}^{[\ell]}$.
\end{proof}

For example, let $n=6$ and $h=2$. Then $k=4$ and $\ell=2$, so
\begin{equation*}
\iota_2:\mathfrak S_6\to\mathfrak S_8.
\end{equation*}
For instance,
$\iota_2(143256)=12365478$.
Here, the entries of $143256$ are increased by $2$, and the new entries $1$ and $2$ form a fixed initial segment. Also, we have $\iota_2^{-1}(12365478) = 143256$. 

The embedding $\iota_\ell$ is also compatible with the Bruhat order. 

\begin{lem}\label{l:iota Bruhat}
Let $v,w\in\mathfrak S_{2k-\ell}$. Then
$v\leq w$
if and only if
$\iota_\ell(v)\leq\iota_\ell(w)$.
\end{lem}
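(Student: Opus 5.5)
I would prove the lemma with the rank-matrix criterion (Theorem~\ref{t:rank-matrix}), reducing it to an explicit formula for the rank function of $\iota_\ell(w)$ in terms of that of $w$. Set $N=2k-\ell$ and $u=\iota_\ell(w)$. The first step is the identity
\begin{equation*}
r_{u}(a,b)=
\begin{cases}
\min(a,b), & a\leq \ell \text{ or } b\leq \ell,\\
\ell + r_w(a-\ell,\,b-\ell), & a,b>\ell,
\end{cases}
\end{equation*}
for all $a,b\in[2k]$.

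To check the first case, note that if $a\leq\ell$, the first $a$ entries of $u$ are $1,\dotsc,a$, and exactly $\min(a,b)$ of them are at most $b$. If instead $b\leq\ell$, the values $1,\dotsc,b$ occupy positions $1,\dotsc,b$ of $u$, so again the count is $\min(a,b)$. For the second case, the prefix $1,\dotsc,\ell$ contributes $\ell$. Each remaining entry $w_i+\ell$ in positions $\ell+1,\dotsc,a$ is at most $b$ exactly when $w_i\leq b-\ell$. Together these give $\ell+r_w(a-\ell,b-\ell)$.

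With the formula in hand, both directions follow from Theorem~\ref{t:rank-matrix}. Suppose $v\leq w$, so that $r_v\geq r_w$ entrywise on $[N]\times[N]$. Then $r_{\iota_\ell(v)}$ and $r_{\iota_\ell(w)}$ agree in the region where $a\leq\ell$ or $b\leq\ell$. In the region $a,b>\ell$ they differ by $\ell$ plus entries of $r_v$ and $r_w$, so there $r_{\iota_\ell(v)}\geq r_{\iota_\ell(w)}$. Conversely, suppose $\iota_\ell(v)\leq\iota_\ell(w)$. For any $a',b'\in[N]$, take $a=a'+\ell$ and $b=b'+\ell$; the inequality at $(a,b)$ gives $r_v(a',b')\geq r_w(a',b')$, hence $v\leq w$.

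The only step that needs care is the boundary bookkeeping in the rank formula, in particular the case $b\leq\ell<a$, where one must use that the small values are all confined to the prefix. An alternative route is to observe that $\iota_\ell$ is the embedding of $\mathfrak S_{N}$ as the parabolic subgroup on the last $N$ letters, and parabolic subgroups are Bruhat-convex. The rank computation above is more self-contained, so I would use that.
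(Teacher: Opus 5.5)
Your proof is correct. The paper runs the same argument with the tableau criterion (Theorem~\ref{TabCriterion}) in place of the rank-matrix criterion.

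\begin{itemize}
\item \textbf{The paper's version.} It notes that the increasing rearrangement of the first $\ell+r$ entries of $\iota_\ell(v)$ is $1,\dotsc,\ell,v_{1,r}+\ell,\dotsc,v_{r,r}+\ell$. So the tableau comparison of $\iota_\ell(v)$ and $\iota_\ell(w)$ at prefix length $\ell+r$ is exactly the comparison of $v$ and $w$ at prefix length $r$. Prefixes of length at most $\ell$ give identical rows $1,\dotsc,a$, which the paper leaves implicit. This version is slightly leaner: it is indexed only by prefix length, so it has no analogue of your mixed boundary case $b\le\ell<a$.
\item \textbf{Your version.} It makes that bookkeeping explicit and gives the exact formula $r_{\iota_\ell(w)}(a,b)=\ell+r_w(a-\ell,b-\ell)$ on the block $a,b>\ell$. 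That formula makes both implications fully transparent.
\item \textbf{Your parabolic-subgroup alternative.} This also works, but convexity is not quite the fact needed. What you need is that the Bruhat order of $\mathfrak S_{2k}$, restricted to the standard parabolic subgroup generated by $s_{\ell+1},\dotsc,s_{2k-1}$, coincides with that subgroup's own Bruhat order. This follows, for instance, from the subword property. Then $\iota_\ell$ is the corresponding Coxeter-group isomorphism onto that subgroup.
\end{itemize}
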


\begin{proof}
For $r\in[2k-\ell]$, let $v_{i,r}$ and $w_{i,r}$ denote the $i$-th entries in the increasing rearrangements of the first $r$ entries of $v$ and $w$, respectively. By the definition of $\iota_\ell$, the increasing rearrangements of the first $\ell+r$ entries of $\iota_\ell(v)$ and $\iota_\ell(w)$ are
\begin{equation*}
1,\ldots,\ell,
v_{1,r}+\ell,\ldots,v_{r,r}+\ell
\end{equation*}
and
\begin{equation*}
1,\ldots,\ell,
w_{1,r}+\ell,\ldots,w_{r,r}+\ell,
\end{equation*}
respectively. Therefore, by the tableau criterion (Theorem~\ref{TabCriterion}),
$v\leq w$
if and only if
$\iota_\ell(v)\leq\iota_\ell(w)$.
\end{proof}

We next describe how the embedding $\iota_\ell$ acts on $\mathfrak A_{n,h}$ and $\mathfrak B_n$. We begin with $\mathfrak A_{n,h}$.

\begin{lem}\label{l:iota A}
Let $h\in[n-1]$ with
$h\leq\floor*{\tfrac{n}{2}}$,
and set $k=n-h$, $\ell=n-2h$.
Then 
\begin{equation*}
\iota_\ell\left(\mathfrak A_{n,h}\right)
=
\left\{
A_k(\sigma,\tau)
\mid
\sigma\in\mathfrak S_k^{[\ell]},
\ \tau\in\mathfrak S_k
\right\}.
\end{equation*}
\end{lem}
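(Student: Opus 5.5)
We prove the set equality by a direct double inclusion, using only the definitions of $\iota_\ell$, $A_k$, $\mathfrak A_{n,h}$ and $\mathfrak S_k^{[\ell]}$. Throughout, recall that $n=2k-\ell$ and $h=k-\ell$, so that $\ell+h=k$ and $\ell+n=2k$. The idea is that after shifting by $\ell$ and prepending $12\cdots\ell$, the first block $[1,h]$ of positions and values becomes the block $[\ell+1,k]$, which together with the new fixed prefix fills exactly $[1,k]$. Likewise, the second block $[h+1,n]$ becomes $[k+1,2k]$.

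For the inclusion $\subseteq$, take $w\in\mathfrak A_{n,h}$. Its first $h$ entries permute $[1,h]$ and its last $n-h=k$ entries permute $[h+1,n]$. Then $v=\iota_\ell(w)$ has first $\ell+h=k$ entries
\begin{equation*}
1,\dots,\ell,\ w_1+\ell,\dots,w_h+\ell,
\end{equation*}
which permute $[1,k]$ and fix $[\ell]$ pointwise. The remaining $k$ entries are $w_{h+1}+\ell,\dots,w_n+\ell$, and these permute $[h+1+\ell,n+\ell]=[k+1,2k]$. We therefore define $\sigma=v_1\cdots v_k\in\mathfrak S_k^{[\ell]}$ and $\tau_i=v_{k+i}-k$, which gives $v=A_k(\sigma,\tau)$ with $\tau\in\mathfrak S_k$.

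For the inclusion $\supseteq$, take $\sigma\in\mathfrak S_k^{[\ell]}$ and $\tau\in\mathfrak S_k$. The permutation $A_k(\sigma,\tau)$ fixes $[\ell]$, so by Observation~\ref{o:iota bijection} it equals $\iota_\ell(w)$ for a unique $w\in\mathfrak S_n$, namely $w_i=A_k(\sigma,\tau)_{\ell+i}-\ell$. For $i\le h$ we have $w_i=\sigma_{\ell+i}-\ell$. Since $\sigma$ fixes $[\ell]$, the entries $\sigma_{\ell+1},\dots,\sigma_k$ permute $[\ell+1,k]$, and hence $w_1\cdots w_h$ permutes $[1,h]$. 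For $i>h$ we have $w_i=k+\tau_{i-h}-\ell=h+\tau_{i-h}$, which ranges over $[h+1,n]$. It follows that $w\in\mathfrak A_{n,h}$.

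There is no genuine obstacle in this argument. The only care needed is the index bookkeeping: checking that $\ell+h=k$ makes the block boundary of $\iota_\ell(w)$ fall exactly at position $k$, and handling the edge case $\ell=0$, which is the balanced case where $\iota_0$ is the identity.
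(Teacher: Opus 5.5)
Your proposal is correct and follows essentially the same route as the paper's proof: a direct double inclusion that uses $\ell+h=k$ and $n+\ell=2k$ to see that the shifted first block together with the prefix $12\cdots\ell$ fills $[1,k]$, and that the shifted second block fills $[k+1,2k]$. Your index bookkeeping in the reverse inclusion ($w_i=\sigma_{\ell+i}-\ell$ for $i\le h$ and $w_i=h+\tau_{i-h}$ for $i>h$) is slightly more explicit than the paper's, but the argument is the same.
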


\begin{proof}
Let $w\in\mathfrak A_{n,h}$. Since $h=k-\ell$ and $n-h=k$, the first $k-\ell$ entries of $w$ permute $[1,k-\ell]$, while the remaining $k$ entries permute $[k-\ell+1,n]$. Hence, in
\begin{equation*}
\iota_\ell(w)
=
12\cdots\ell\,
(w_1+\ell)\cdots(w_n+\ell),
\end{equation*}
the entries
$(w_1+\ell)\cdots(w_h+\ell)$
permute $[\ell+1,\ell +h] = [\ell+1,k]$. Therefore, the first $k$ entries of $\iota_\ell(w)$ permute $[k]$ and its first $\ell$ entries are $12\cdots\ell$. The last $k$ entries permute $[k+1,2k]$. Therefore, there exist
\begin{equation*}
\sigma\in\mathfrak S_k^{[\ell]},
\qquad
\tau\in\mathfrak S_k
\end{equation*}
such that
\begin{equation*}
\iota_\ell(w)=A_k(\sigma,\tau).
\end{equation*}

Conversely, let $A_k(\sigma,\tau)$ satisfy
\begin{equation*}
\sigma\in\mathfrak S_k^{[\ell]},
\qquad
\tau\in\mathfrak S_k.
\end{equation*}
Since $\sigma$ fixes the first $\ell$ entries, deleting the initial segment $12\cdots\ell$ from $A_k(\sigma,\tau)$ and subtracting $\ell$ from each remaining entry produces a permutation whose first $k-\ell=h$ entries permute $[h]$, while the remaining $k=n-h$ entries permute $[h+1,n]$. Therefore, the resulting permutation lies in $\mathfrak A_{n,h}$.
Hence
$A_k(\sigma,\tau)
\in
\iota_\ell\left(\mathfrak A_{n,h}\right)$,
which proves the result.
\end{proof}

The preceding lemma describes the image of $\mathfrak A_{n,h}$ under $\iota_\ell$. We now turn to $\mathfrak B_n$. 

\begin{lem}\label{l:iota B}
Let $h\in[n-1]$ with
$h\leq\floor*{\tfrac{n}{2}}$,
and set $k=n-h$, $\ell=n-2h$.
Then
\begin{equation*}
\iota_\ell(\mathfrak B_n)
=
\left\{
B_k(\alpha,\beta)
\mid
\alpha\in\mathfrak S_k^{[\ceil*{\frac{\ell}{2}}]},
\ \beta\in\mathfrak S_k^{[\floor*{\frac{\ell}{2}}]}
\right\}.
\end{equation*}
\end{lem}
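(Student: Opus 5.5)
The plan is to prove both inclusions directly, mirroring the proof of Lemma~\ref{l:iota A}. The key observation is that $\ell = n-2h$ has the same parity as $n$. Adding $\ell$ to each entry and shifting positions by $\ell$ therefore preserves the relation ``position and value have the same parity.'' In $\iota_\ell(u)$, the entry $u_i+\ell$ sits in position $i+\ell$, and $u_i+\ell \equiv i+\ell \pmod 2$ exactly when $u_i\equiv i$. The prefix $12\cdots\ell$ also has values equal to positions. Hence $\iota_\ell(u)$ is parity-alternating in $\mathfrak S_{2k}$ whenever $u\in\mathfrak B_n$, so $\iota_\ell(\mathfrak B_n)\subseteq \mathfrak B_{2k}$. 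Every element of $\mathfrak B_{2k}$ has the form $B_k(\alpha,\beta)$ for unique $\alpha,\beta\in\mathfrak S_k$.

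For the forward inclusion, I will identify the fixed prefixes. The first $\ell$ positions of $B_k(\alpha,\beta)$ contain $\ceil*{\ell/2}$ odd positions, namely $1,3,\dots$, which carry the entries $2\alpha_i-1$. They also contain $\floor*{\ell/2}$ even positions, which carry the entries $2\beta_i$. The condition that these positions hold $1,2,\dots,\ell$ means $2\alpha_i-1=2i-1$ for $i\le\ceil*{\ell/2}$ and $2\beta_i=2i$ for $i\le\floor*{\ell/2}$. This says exactly that $\alpha\in\mathfrak S_k^{[\ceil*{\ell/2}]}$ and $\beta\in\mathfrak S_k^{[\floor*{\ell/2}]}$.

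For the reverse inclusion, I will run the same computation backwards. Take $\alpha,\beta$ with these fixed prefixes. Then $B_k(\alpha,\beta)$ begins with $12\cdots\ell$, so it lies in $\mathfrak S_{2k}^{[\ell]}$. By Observation~\ref{o:iota bijection} it equals $\iota_\ell(u)$ for a unique $u\in\mathfrak S_n$. The parity computation above, read in reverse, shows $u_i\equiv i\pmod 2$ for all $i$, so $u\in\mathfrak B_n$.

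I expect no real obstacle. The only point that needs care is the parity bookkeeping, specifically that $\ell\equiv n\pmod 2$, so the shift by $\ell$ respects parity in both positions and values. The counts of odd and even positions among the first $\ell$ must also be exactly $\ceil*{\ell/2}$ and $\floor*{\ell/2}$. The degenerate case $\ell=0$ is trivial, since then $\iota_0$ is the identity.
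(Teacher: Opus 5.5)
Your proposal is correct and follows essentially the same route as the paper's proof. Both arguments show $\iota_\ell(u)\in\mathfrak B_{2k}$ via the parity shift, read off $\alpha\in\mathfrak S_k^{[\ceil*{\ell/2}]}$ and $\beta\in\mathfrak S_k^{[\floor*{\ell/2}]}$ from the first $\ell$ positions, and for the converse delete the prefix $12\cdots\ell$ and observe that parities still agree. One small point: the remark that $\ell\equiv n\pmod{2}$ is not needed for the parity step. The equivalence $u_i+\ell\equiv i+\ell$ if and only if $u_i\equiv i$ holds for any $\ell$; the congruence only reflects that $n+\ell=2k$ is even.
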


\begin{proof}
Let $u=u_1\cdots u_n\in\mathfrak B_n$. Since $u_i$ and $i$ have the same parity for every $i\in[n]$, the integers $u_i+\ell$ and $i+\ell$ also have the same parity. Together with the fixed initial segment $12\cdots\ell$, this shows that
\begin{equation*}
\iota_\ell(u)\in\mathfrak B_{2k}.
\end{equation*}
Thus, there exist $\alpha,\beta\in\mathfrak S_k$ such that
$\iota_\ell(u)=B_k(\alpha,\beta)$.

Among the first $\ell$ entries of $\iota_\ell(u)$, the odd entries are
\begin{equation*}
1,3,\ldots,2\ceil*{\tfrac{\ell}{2}}-1,
\end{equation*}
while the even entries are
\begin{equation*}
2,4,\ldots,2\floor*{\tfrac{\ell}{2}}.
\end{equation*}
By the definition of $B_k(\alpha,\beta)$, it follows that
\begin{equation*}
\alpha\in\mathfrak S_k^{[\ceil*{\frac{\ell}{2}}]},
\qquad
\beta\in\mathfrak S_k^{[\floor*{\frac{\ell}{2}}]}.
\end{equation*}
Hence
\begin{equation*}
\iota_\ell(\mathfrak B_n)
\subseteq
\left\{
B_k(\alpha,\beta)
\mid
\alpha\in\mathfrak S_k^{[\ceil*{\ell/2}]},
\ \beta\in\mathfrak S_k^{[\floor*{\ell/2}]}
\right\}.
\end{equation*}

Conversely, let
$B_k(\alpha,\beta)$
satisfy
\begin{equation*}
\alpha\in\mathfrak S_k^{[\ceil*{\frac{\ell}{2}}]},
\qquad
\beta\in\mathfrak S_k^{[\floor*{\frac{\ell}{2}}]}.
\end{equation*}
Then the first $\ell$ entries of $B_k(\alpha,\beta)$ are $12\cdots\ell$. Deleting these entries and subtracting $\ell$ from every remaining entry produces a permutation $u\in\mathfrak S_n$. Since both the positions and the corresponding values are shifted by $\ell$, their parities continue to agree. Therefore,
\begin{equation*}
u\in\mathfrak B_n.
\end{equation*}
By construction,
\begin{equation*}
\iota_\ell(u)=B_k(\alpha,\beta),
\end{equation*}
which proves the reverse inclusion.
\end{proof}

The preceding two lemmas, together with Proposition~\ref{p:fixed prefix}, show that the inverse of the bijection in the balanced case preserves the embedded families in the following sense.

\begin{cor}\label{c:embedded inverse}
Let $h\in[n-1]$ with
$h\leq\floor*{\frac{n}{2}}$,
and set $k=n-h$, $\ell=n-2h$.
Then
\begin{equation*}
f_{2k}^{-1}\left(\iota_\ell(\mathfrak B_n)\right)
\subseteq
\iota_\ell\left(\mathfrak A_{n,h}\right).
\end{equation*}
\end{cor}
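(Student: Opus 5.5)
The argument is a direct assembly of Lemma~\ref{l:iota B}, the bijectivity of $f_{2k}$ through the parametrization $F_k$, Proposition~\ref{p:fixed prefix}, and Lemma~\ref{l:iota A}. The plan is to start from an element of $\iota_\ell(\mathfrak B_n)$, pass to the $S_k\times S_k$ coordinates, apply the fixed-prefix property with $m=\ell$, and read the result back through $\iota_\ell$.

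First, fix $u\in\mathfrak B_n$. By Lemma~\ref{l:iota B} we can write
\begin{equation*}
\iota_\ell(u)=B_k(\alpha,\beta),
\qquad
\alpha\in\mathfrak S_k^{[\ceil*{\frac{\ell}{2}}]},\quad
\beta\in\mathfrak S_k^{[\floor*{\frac{\ell}{2}}]}.
\end{equation*}
Since $f_{2k}:\mathfrak A_{2k}\to\mathfrak B_{2k}$ is a bijection (Lemmas~\ref{l:even to odd} and~\ref{l:odd to even}, and $f_4$ at the base), there is a unique $w\in\mathfrak A_{2k}$ with $f_{2k}(w)=B_k(\alpha,\beta)$. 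We write $w=A_k(\sigma,\tau)$ with $\sigma,\tau\in\mathfrak S_k$, so that by the definition~\eqref{def:F_k} we have $F_k(\sigma,\tau)=(\alpha,\beta)$.

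Second, we apply Proposition~\ref{p:fixed prefix} with $m=\ell$. Its hypotheses hold here: $k=n-h\geq 2$ because $h\leq\floor*{\tfrac n2}$ and $n\geq 2$ (we use $k\geq2$ throughout, consistent with Section~\ref{s:proof of balance case}), and $0\leq\ell\leq k-1$ was noted above. The proposition gives $\sigma\in\mathfrak S_k^{[\ell]}$.

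Third, Lemma~\ref{l:iota A} says that $A_k(\sigma,\tau)$ with $\sigma\in\mathfrak S_k^{[\ell]}$ lies in $\iota_\ell(\mathfrak A_{n,h})$. Hence $f_{2k}^{-1}(\iota_\ell(u))\in\iota_\ell(\mathfrak A_{n,h})$, and since $u$ was arbitrary, the inclusion follows.

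There is no real obstacle; all the substance is in the fixed-prefix proposition. The only care needed is in the index bookkeeping: checking that $m=\ell$ lies in the allowed range $[0,k-1]$, and that the prefix lengths $\ceil*{\ell/2}$ and $\floor*{\ell/2}$ from Lemma~\ref{l:iota B} match those in the hypotheses of Proposition~\ref{p:fixed prefix}.
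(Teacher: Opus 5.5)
Your proposal is correct and matches the paper's proof: Lemma~\ref{l:iota B} gives $\iota_\ell(u)=B_k(\alpha,\beta)$ with the fixed prefixes, the definition of $F_k$ gives $F_k(\sigma,\tau)=(\alpha,\beta)$, Proposition~\ref{p:fixed prefix} with $m=\ell$ gives $\sigma\in\mathfrak S_k^{[\ell]}$, and Lemma~\ref{l:iota A} finishes. One small slip: $n\geq 2$ only gives $k\geq\lceil n/2\rceil\geq 1$ (e.g.\ $n=2$, $h=1$ gives $k=1$), so $k\geq 2$ needs $n\geq 3$, which is implicit anyway since $f_{2k}$ is only defined for $2k\geq 4$.
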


\begin{proof}
Let $u\in\mathfrak B_n$. By Lemma~\ref{l:iota B}, there exist
\begin{equation*}
\alpha\in\mathfrak S_k^{[\ceil*{\frac{\ell}{2}}]},
\qquad
\beta\in\mathfrak S_k^{[\floor*{\frac{\ell}{2}}]}
\end{equation*}
such that
$\iota_\ell(u)=B_k(\alpha,\beta)$.
Write
\begin{equation*}
f_{2k}^{-1}\left(\iota_\ell(u)\right)
=
A_k(\sigma,\tau).
\end{equation*}
By the definition of $F_k$, we have
\begin{equation*}
F_k(\sigma,\tau)=(\alpha,\beta).
\end{equation*}
Therefore, Proposition~\ref{p:fixed prefix} gives
\begin{equation*}
\sigma\in\mathfrak S_k^{[\ell]}.
\end{equation*}
By Lemma~\ref{l:iota A}, it follows that
\begin{equation*}
A_k(\sigma,\tau)
\in
\iota_\ell\left(\mathfrak A_{n,h}\right).
\end{equation*}
Hence
\begin{equation*}
f_{2k}^{-1}\left(\iota_\ell(u)\right)
\in
\iota_\ell\left(\mathfrak A_{n,h}\right),
\end{equation*}
which proves the result.
\end{proof}

Corollary~\ref{c:embedded inverse} shows that the inverse of the bijection in the balanced case maps $\iota_\ell(\mathfrak B_n)$ into $\iota_\ell(\mathfrak A_{n,h})$. Therefore, we may return to the original symmetric group by applying $\iota_\ell^{-1}$. %This allows us to define the map that will be used to compare the permutation monomials in the general case.

For $h\leq\floor*{\tfrac{n}{2}}$, set
\begin{equation*}
k=n-h,
\qquad
\ell=n-2h.
\end{equation*}
Define
\begin{equation*}
\begin{aligned}
g_{n,h}:\mathfrak B_n
&\to
\mathfrak A_{n,h},\\
u
&\mapsto
\iota_\ell^{-1}\left(
f_{2k}^{-1}\left(\iota_\ell(u)\right)
\right).
\end{aligned}
\end{equation*}
By Corollary~\ref{c:embedded inverse}, the map $g_{n,h}$ is well-defined. 

For example, let $n=5$ and $h=2$. Then $k=3$ and $\ell=1$. Consider
\begin{equation*}
u=52341\in\mathfrak B_5.
\end{equation*}
Applying the embedding $\iota_1$, we obtain
\begin{equation*}
\iota_1(u)=163452.
\end{equation*}
From the table of $f_6$ in Section~\ref{s:A gen on all}, we have
\begin{equation*}
f_6(132645)=163452.
\end{equation*}
Therefore,
\begin{equation*}
f_6^{-1}\left(\iota_1(u)\right)=132645.
\end{equation*}
Applying $\iota_1^{-1}$ gives
\begin{equation*}
g_{5,2}(52341)
=
\iota_1^{-1}(132645)
=
21534.
\end{equation*}
Thus,
\begin{equation*}
52341
\xmapsto{\iota_1}
163452
\xmapsto{f_6^{-1}}
132645
\xmapsto{\iota_1^{-1}}
21534.
\end{equation*}
In particular, $21534\in\mathfrak A_{5,2}$. 

The preceding example illustrates the construction of $g_{n,h}$. We now establish its basic properties, beginning with injectivity.

\begin{lem}\label{l:g injective}
Let $h\in[n-1]$ with
$h\leq\floor*{\frac{n}{2}}$.
Then the map
\begin{equation*}
g_{n,h}:\mathfrak B_n\to\mathfrak A_{n,h}
\end{equation*}
is injective.
\end{lem}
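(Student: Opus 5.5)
The plan is to show that $g_{n,h}$ is a composition of injective maps on the relevant domains. By definition,
\begin{equation*}
g_{n,h}=\iota_\ell^{-1}\circ f_{2k}^{-1}\circ \iota_\ell\big|_{\mathfrak B_n},
\end{equation*}
with $k=n-h$ and $\ell=n-2h$. We treat the three factors one at a time.

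First, $\iota_\ell:\mathfrak S_{2k-\ell}\to\mathfrak S_{2k}$ is injective. This is immediate from Observation~\ref{o:iota bijection}, which shows that $\iota_\ell$ is a bijection onto $\mathfrak S_{2k}^{[\ell]}$. Hence its restriction to $\mathfrak B_n$ is injective, and by Lemma~\ref{l:iota B} its image is contained in $\mathfrak B_{2k}$.

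Second, $f_{2k}:\mathfrak A_{2k}\to\mathfrak B_{2k}$ is a bijection, since Algorithm~\ref{keyalg} is well-defined by Lemmas~\ref{l:even to odd} and~\ref{l:odd to even}. Therefore $f_{2k}^{-1}$ is injective on $\mathfrak B_{2k}$, and in particular on $\iota_\ell(\mathfrak B_n)$. By Corollary~\ref{c:embedded inverse}, the image $f_{2k}^{-1}(\iota_\ell(\mathfrak B_n))$ lies in $\iota_\ell(\mathfrak A_{n,h})$, which is contained in $\iota_\ell(\mathfrak S_n)=\mathfrak S_{2k}^{[\ell]}$.

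Third, on $\mathfrak S_{2k}^{[\ell]}$ the map $\iota_\ell^{-1}$ is a genuine inverse bijection, again by Observation~\ref{o:iota bijection}, so it is injective there.

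Putting these together: if $g_{n,h}(u)=g_{n,h}(u')$, then applying $\iota_\ell$ to both sides gives $f_{2k}^{-1}(\iota_\ell(u))=f_{2k}^{-1}(\iota_\ell(u'))$. Applying $f_{2k}$ then gives $\iota_\ell(u)=\iota_\ell(u')$, and injectivity of $\iota_\ell$ gives $u=u'$.

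The only point requiring care is that $\iota_\ell^{-1}$ is applied only to elements in the image of $\iota_\ell$. This is exactly what Corollary~\ref{c:embedded inverse} guarantees, so I expect no real obstacle.
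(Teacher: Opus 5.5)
Your proposal is correct and follows essentially the same argument as the paper. The paper also assumes $g_{n,h}(u)=g_{n,h}(v)$ and cancels $\iota_\ell^{-1}$ (using that $\iota_\ell$ is a bijection onto $\mathfrak S_{2k}^{[\ell]}$), then $f_{2k}^{-1}$, then $\iota_\ell$, in that order. Your explicit appeal to Corollary~\ref{c:embedded inverse} to justify the first cancellation is a point the paper leaves implicit in the well-definedness of $g_{n,h}$.
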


\begin{proof}
We prove it by the definition of an injection. 
Let $u,v\in\mathfrak B_n$, and suppose that
\begin{equation*}
g_{n,h}(u)=g_{n,h}(v).
\end{equation*}
By the definition of $g_{n,h}$,
\begin{equation*}
\iota_\ell^{-1}
\left(
f_{2k}^{-1}\left(\iota_\ell(u)\right)
\right)
=
\iota_\ell^{-1}
\left(
f_{2k}^{-1}\left(\iota_\ell(v)\right)
\right).
\end{equation*}
Since $\iota_\ell$ is a bijection onto $\mathfrak S_{2k}^{[\ell]}$, it follows that
\begin{equation*}
f_{2k}^{-1}\left(\iota_\ell(u)\right)
=
f_{2k}^{-1}\left(\iota_\ell(v)\right).
\end{equation*}
Since $f_{2k}$ is a bijection, we obtain
\begin{equation*}
\iota_\ell(u)=\iota_\ell(v).
\end{equation*}
Finally, since $\iota_\ell$ is injective, we conclude that
\begin{equation*}
u=v.
\end{equation*}
Hence $g_{n,h}$ is injective.
\end{proof}

Injectivity alone is not sufficient for our purpose. To compare the corresponding permutation monomials using Theorem~\ref{pairing test}, we also need to compare each $u\in\mathfrak B_n$ with its image under $g_{n,h}$ in the Bruhat order. The following lemma establishes the required comparison.

\begin{lem}\label{l:g Bruhat}
Let $h\in[n-1]$ with
$h\leq\floor*{\frac{n}{2}}$.
Then, for every $u\in\mathfrak B_n$, we have
\begin{equation*}
g_{n,h}(u)\leq u
\end{equation*}
under the Bruhat order.
\end{lem}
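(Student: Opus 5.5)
The comparison should be transported to the balanced setting in $\mathfrak S_{2k}$, where Theorem~\ref{t:Bruhat balance case} applies, and then pulled back via Lemma~\ref{l:iota Bruhat}. Fix $u\in\mathfrak B_n$ and set $k=n-h$, $\ell=n-2h$ as in the definition of $g_{n,h}$. Put $U=\iota_\ell(u)\in\mathfrak S_{2k}$ and $W=f_{2k}^{-1}(U)$. By Lemma~\ref{l:iota B}, $U\in\mathfrak B_{2k}$, so $W\in\mathfrak A_{2k}$ is well defined. By Corollary~\ref{c:embedded inverse}, $W\in\iota_\ell(\mathfrak A_{n,h})$, and so $g_{n,h}(u)=\iota_\ell^{-1}(W)$ is meaningful.

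The key step is to apply Theorem~\ref{t:Bruhat balance case} with $n$ replaced by $2k$ to the element $W\in\mathfrak A_{2k}$. This gives $W\leq f_{2k}(W)=U$ in the Bruhat order, that is,
\begin{equation*}
\iota_\ell\bigl(g_{n,h}(u)\bigr)=W\leq U=\iota_\ell(u).
\end{equation*}
Both sides lie in $\mathfrak S_{2k}^{[\ell]}$, the image of $\iota_\ell$ described in Observation~\ref{o:iota bijection}. Lemma~\ref{l:iota Bruhat} says that $\iota_\ell$ reflects the Bruhat order as well as preserving it, so we conclude $g_{n,h}(u)\leq u$ in $\mathfrak S_n$.

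The only point needing care is that Theorem~\ref{t:Bruhat balance case} is stated for $2k\geq 4$, i.e.\ $k\geq 2$. Since $k=n-h\geq\ceil{n/2}$, this excludes only very small $n$. When $n=2$ we have $h=1$ and $\mathfrak B_2=\{12\}$, which is trivial. When $n=3$ we have $h=1$, $k=2$, $\ell=1$, and $2k=4$ is covered by the base case $f_4$, whose Bruhat comparisons can be read off directly from \eqref{eq:f4}. I expect no real obstacle: the work is already in Corollary~\ref{c:embedded inverse} and the balanced theorem, and this lemma is a formal consequence of them.
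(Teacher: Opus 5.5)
Your proposal is correct and follows the paper's own proof: set $W=f_{2k}^{-1}(\iota_\ell(u))=\iota_\ell(g_{n,h}(u))$, apply Theorem~\ref{t:Bruhat balance case} to get $W\leq f_{2k}(W)=\iota_\ell(u)$, and pull back along $\iota_\ell$ using Lemma~\ref{l:iota Bruhat}. Your check of the small cases ($k\geq 2$ once $n\geq 3$, and the trivial $n=2$ case where $\mathfrak B_2=\{12\}$) is a harmless addition that the paper does not spell out.
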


\begin{proof}
Set
\begin{equation*}
k=n-h,
\qquad
\ell=n-2h.
\end{equation*}
Let $u\in\mathfrak B_n$, and write
\begin{equation*}
v=f_{2k}^{-1}\left(\iota_\ell(u)\right).
\end{equation*}
By the definition of $g_{n,h}$,
\begin{equation*}
v=\iota_\ell\left(g_{n,h}(u)\right).
\end{equation*}
Since $v\in\mathfrak A_{2k}$, Theorem~\ref{t:Bruhat balance case} gives
\begin{equation*}
v\leq f_{2k}(v)=\iota_\ell(u).
\end{equation*}
Hence
\begin{equation*}
\iota_\ell\left(g_{n,h}(u)\right)
\leq
\iota_\ell(u).
\end{equation*}
Since $n = 2k-\ell$, applying Lemma~\ref{l:iota Bruhat}, we immediately obtain 
\begin{equation*}
    g_{n,h}(u)
\leq
u,
\end{equation*}
as desired. 
\end{proof}

We have established a Bruhat-order property of $g_{n,h}$ when
$h\leq \floor*{\frac{n}{2}}$. 
To treat the remaining values of $h$, we use the $RU$-symmetry introduced in Section~\ref{s:A gen on all}. We first record how $RU$ acts on the sets $\mathfrak A_{n,h}$.

\begin{lem}\label{l:RU A general}
Let $h\in[n-1]$. Then the map $RU$ restricts to a bijection
\begin{equation*}
\mathfrak A_{n,h}
\longrightarrow
\mathfrak A_{n,n-h}.
\end{equation*}
\end{lem}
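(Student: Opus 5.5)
The plan is to imitate the proof of Proposition~\ref{p: U and R}, replacing $\floor{\tfrac n2}$ by an arbitrary $h$. Recall that $w^{RU}_j = n+1-w_{n+1-j}$ for $j\in[n]$. Since $RU$ is an involution on $\mathfrak S_n$, it is injective. It therefore suffices to check two inclusions: $RU$ maps $\mathfrak A_{n,h}$ into $\mathfrak A_{n,n-h}$, and $RU$ maps $\mathfrak A_{n,n-h}$ into $\mathfrak A_{n,h}$. The second inclusion is the first one with $h$ replaced by $n-h\in[n-1]$. Once both hold, the restriction of $RU$ to $\mathfrak A_{n,n-h}$ is a two-sided inverse of the restriction to $\mathfrak A_{n,h}$, because $RU\circ RU=\mathrm{id}$.

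For the main inclusion, take $w\in\mathfrak A_{n,h}$. Then $\{w_{h+1},\dotsc,w_n\}=[h+1,n]$.

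\begin{enumerate}
\item For $j\in[1,n-h]$ we have $n+1-j\in[h+1,n]$, so the first $n-h$ entries of $w^{RU}$ are $n+1-w_{n+1-j}$ with $w_{n+1-j}$ ranging over $[h+1,n]$.
\item The map $x\mapsto n+1-x$ sends $[h+1,n]$ onto $[1,n-h]$. Hence $\{w^{RU}_1,\dotsc,w^{RU}_{n-h}\}=[1,n-h]$.
\item Because $w^{RU}$ is a permutation, its remaining $h$ entries must form the complement $[n-h+1,n]$.
\end{enumerate}

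Together these give $w^{RU}\in\mathfrak A_{n,n-h}$.

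The only point needing care is the index bookkeeping: reversal sends position $j$ to position $n+1-j$, and complementation sends value $x$ to $n+1-x$. Each of these swaps the block $[1,n-h]$ with $[h+1,n]$. There is no genuine obstacle. As an alternative, one could instead observe that $\mathfrak A_{n,n-h}=w_0\,\mathfrak A_{n,h}\,w_0$ and use $w^{RU}=w_0ww_0$, which is conjugation by the longest element.
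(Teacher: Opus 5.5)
Your proposal is correct and follows essentially the same route as the paper: show $w^{RU}\in\mathfrak A_{n,n-h}$ by tracking how reversal and complementation move the block $\{w_{h+1},\dotsc,w_n\}=[h+1,n]$ to the first $n-h$ positions, then get the reverse inclusion and bijectivity from $RU$ being an involution. Your version is just slightly more explicit, deriving the last $h$ entries by complementation where the paper says ``similarly.''
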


\begin{proof}
Let $w=w_1\cdots w_n\in\mathfrak A_{n,h}$. Then we have 
\begin{equation*}
\{w_1,\ldots,w_h\}=[h], 
\ \text{and} \
\{w_{h+1},\ldots,w_n\}=[h+1,n].
\end{equation*}
Write
\begin{equation*}
w^{RU}
=
(n+1-w_n)\cdots(n+1-w_1).
\end{equation*}
The first $n-h$ entries of $w^{RU}$ are obtained from
\begin{equation*}
w_{h+1},\ldots,w_n
\end{equation*}
by reversing their order and replacing each entry $x$ by $n+1-x$. Since
\begin{equation*}
\{w_{h+1},\ldots,w_n\}=[h+1,n],
\end{equation*}
these entries permute $[n-h]$. Similarly, the last $h$ entries of $w^{RU}$ permute
$[n-h+1,n]$.
Therefore,
\begin{equation*}
w^{RU}\in\mathfrak A_{n,n-h}.
\end{equation*}

Since $RU$ is an involution, the reverse inclusion follows in the same way. Hence $RU$ restricts to a bijection between $\mathfrak A_{n,h}$ and $\mathfrak A_{n,n-h}$.
\end{proof}

We now extend the definition of $g_{n,h}$ to the remaining values of $h$. Suppose that
\begin{equation*}
h>\floor*{\frac{n}{2}}.
\end{equation*}
Then
\begin{equation*}
n-h\leq\floor*{\frac{n}{2}},
\end{equation*}
so the map
\begin{equation*}
g_{n,n-h}:\mathfrak B_n\to\mathfrak A_{n,n-h}
\end{equation*}
has already been defined. Since $RU$ restricts to an involution on $\mathfrak B_n$ and, by Lemma~\ref{l:RU A general}, gives a bijection from $\mathfrak A_{n,n-h}$ to $\mathfrak A_{n,h}$, we define
\begin{equation*}
\begin{aligned}
g_{n,h}:\mathfrak B_n
&\to
\mathfrak A_{n,h},\\
u
&\mapsto
\left(g_{n,n-h}\left(u^{RU}\right)\right)^{RU}.
\end{aligned}
\end{equation*}
Thus, the maps $g_{n,h}$ are now defined for every $h\in[n-1]$. 

The construction above gives the following result for arbitrary $h\in[n-1]$.

\begin{prop}\label{p:g general}
Let $h\in[n-1]$. Then the map
\begin{equation*}
g_{n,h}:\mathfrak B_n\to\mathfrak A_{n,h}
\end{equation*}
is injective. Moreover, for every $u\in\mathfrak B_n$, we have
\begin{equation*}
g_{n,h}(u)\leq u
\end{equation*}
under the Bruhat order.
\end{prop}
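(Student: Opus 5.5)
The proof splits into two cases according to whether $h\leq\floor*{\frac{n}{2}}$ or $h>\floor*{\frac{n}{2}}$.

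\emph{Case $h\leq\floor*{\frac{n}{2}}$.} Here both claims are already established: injectivity is Lemma~\ref{l:g injective}, and the Bruhat comparison $g_{n,h}(u)\leq u$ is Lemma~\ref{l:g Bruhat}. Nothing further is needed.

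\emph{Case $h>\floor*{\frac{n}{2}}$.} Here $n-h\leq\floor*{\frac{n}{2}}$, so the first case applies to $g_{n,n-h}$. By definition,
\begin{equation*}
g_{n,h}(u)=\bigl(g_{n,n-h}(u^{RU})\bigr)^{RU},
\end{equation*}
which is a composition of three maps:
\begin{enumerate}[(i)]
\item $RU$ restricted to $\mathfrak B_n$, which is an involution (hence a bijection) by Proposition~\ref{p: U and R};
\item $g_{n,n-h}$, which is injective by the first case;
\item $RU$ restricted to $\mathfrak A_{n,n-h}$, which is a bijection onto $\mathfrak A_{n,h}$ by Lemma~\ref{l:RU A general}.
\end{enumerate}
A composition of injective maps is injective, so $g_{n,h}$ is injective. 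The same description also confirms that $g_{n,h}$ is well defined with image in $\mathfrak A_{n,h}$.

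For the Bruhat comparison, fix $u\in\mathfrak B_n$. Since $u^{RU}\in\mathfrak B_n$, the first case gives
\begin{equation*}
g_{n,n-h}(u^{RU})\leq u^{RU}.
\end{equation*}
Proposition~\ref{upside-down reverse} says $RU$ preserves Bruhat order, so applying it to both sides yields
\begin{equation*}
\bigl(g_{n,n-h}(u^{RU})\bigr)^{RU}\leq (u^{RU})^{RU}=u,
\end{equation*}
using that $RU$ is an involution. The left-hand side is $g_{n,h}(u)$, which is the desired inequality $g_{n,h}(u)\leq u$.

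There is no real obstacle; the substantive work is in Lemma~\ref{l:g Bruhat} and the balanced-case results. The only detail to check is that $RU$ is applied on the correct sets, namely $\mathfrak B_n\to\mathfrak B_n$ and $\mathfrak A_{n,n-h}\to\mathfrak A_{n,h}$, which the two cited results guarantee.
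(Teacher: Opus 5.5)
Your proposal is correct and matches the paper's proof essentially step for step. You dispatch the case $h\leq\floor*{\frac{n}{2}}$ with Lemmas~\ref{l:g injective} and~\ref{l:g Bruhat}. For $h>\floor*{\frac{n}{2}}$ you write $g_{n,h}$ as the $RU$-conjugate of $g_{n,n-h}$, then use that $RU$ is a bijection on the relevant sets and preserves Bruhat order. Your explicit check that $RU$ maps $\mathfrak A_{n,n-h}$ onto $\mathfrak A_{n,h}$ via Lemma~\ref{l:RU A general} is slightly more careful than the paper, which only invokes $RU$ being an involution on $\mathfrak B_n$ and $\mathfrak S_n$.
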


\begin{proof}
If
$h\leq\floor*{\frac{n}{2}}$,
the result follows from Lemmas~\ref{l:g injective} and~\ref{l:g Bruhat}.

Now suppose that
$h>\floor*{\frac{n}{2}}$.
Then $n-h\leq\floor*{\frac{n}{2}}$, so $g_{n,n-h}$ is injective. Since $RU$ is an involution on $\mathfrak B_n$ and on $\mathfrak S_n$, the definition
\begin{equation*}
g_{n,h}(u)
=
\left(g_{n,n-h}\left(u^{RU}\right)\right)^{RU}
\end{equation*}
immediately implies that $g_{n,h}$ is injective.

Furthermore, by Lemma~\ref{l:g Bruhat},
\begin{equation*}
g_{n,n-h}\left(u^{RU}\right)
\leq
u^{RU}.
\end{equation*}
Since $RU$ preserves the Bruhat order, applying $RU$ to both sides gives
\begin{equation*}
\left(g_{n,n-h}\left(u^{RU}\right)\right)^{RU}
\leq
u.
\end{equation*}
By the definition of $g_{n,h}$, this is
$g_{n,h}(u)\leq u$.
Hence the result holds for all $h\in[n-1]$.
\end{proof}

The preceding proposition allows us to prove the permanental inequality for arbitrary $h$.

\begin{thm}\label{t:general h TNN}
Let $A$ be an $n\times n$ TNN matrix. Then, for every $h\in[n-1]$,
\begin{equation*}
\operatorname{per}(A_{[1,h],[1,h]})
\operatorname{per}(A_{[h+1,n],[h+1,n]})
\geq
\operatorname{per}(A_{[1,n]\cap 2\mathbb Z,[1,n]\cap 2\mathbb Z})
\operatorname{per}(A_{[1,n]\setminus 2\mathbb Z,[1,n]\setminus 2\mathbb Z}).
\end{equation*}
\end{thm}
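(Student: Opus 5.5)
The plan is to reduce the permanental inequality to a sum of monomial inequalities, each of which is TNN by the Drake--Gerrish--Skandera criterion (Theorem~\ref{pairing test}). Expanding both sides as in Section~\ref{s:A gen on all}, we have
\begin{equation*}
\perm(A_{[1,h],[1,h]})\perm(A_{[h+1,n],[h+1,n]})=\sum_{w\in\mathfrak A_{n,h}} a_{1,w_1}\cdots a_{n,w_n},
\end{equation*}
since a monomial $a_{1,w_1}\cdots a_{n,w_n}$ arises in this product exactly when $w$ permutes $[h]$ in its first $h$ positions and $[h+1,n]$ in the rest. Likewise
\begin{equation*}
\perm(A_{[n]\cap 2\mathbb Z,[n]\cap 2\mathbb Z})\perm(A_{[n]\ssm 2\mathbb Z,[n]\ssm 2\mathbb Z})=\sum_{u\in\mathfrak B_n} a_{1,u_1}\cdots a_{n,u_n}.
\end{equation*}
In both cases each permutation contributes its monomial with coefficient one.

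Next I would apply Proposition~\ref{p:g general}, which gives an injection $g_{n,h}:\mathfrak B_n\to\mathfrak A_{n,h}$ with $g_{n,h}(u)\leq u$ in Bruhat order. This lets me split the difference of the two sides as
\begin{equation*}
\sum_{u\in\mathfrak B_n}\Bigl(a_{1,g_{n,h}(u)_1}\cdots a_{n,g_{n,h}(u)_n}-a_{1,u_1}\cdots a_{n,u_n}\Bigr)\;+\sum_{w\in\mathfrak A_{n,h}\ssm g_{n,h}(\mathfrak B_n)} a_{1,w_1}\cdots a_{n,w_n}.
\end{equation*}
Injectivity is what makes this legitimate: it ensures that each monomial indexed by $\mathfrak A_{n,h}$ is used at most once in the first sum. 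The leftover monomials in the second sum are nonnegative, because every entry of a TNN matrix is nonnegative, being a $1\times 1$ minor. Each bracketed difference is nonnegative at $A$ by Theorem~\ref{pairing test}, since $g_{n,h}(u)\leq u$. Adding these up yields the inequality.

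I do not expect a real obstacle here, since all the work is contained in Proposition~\ref{p:g general}. The only point needing care is the bookkeeping in the decomposition: confirming that $\mathfrak A_{n,h}$ indexes exactly the monomials of the left-hand product, and that injectivity together with nonnegativity of the unmatched terms closes the argument.
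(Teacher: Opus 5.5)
Your proposal is correct and is essentially the paper's own proof. You expand both products over $\mathfrak A_{n,h}$ and $\mathfrak B_n$, use the injection $g_{n,h}$ of Proposition~\ref{p:g general} to pair each $u$ with $g_{n,h}(u)\leq u$, apply Theorem~\ref{pairing test} to each paired difference, and note that the unmatched monomials are nonnegative on TNN matrices.
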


\begin{proof}
For $w\in\mathfrak S_n$, write
\begin{equation*}
x_w=x_{1,w_1}\cdots x_{n,w_n}.
\end{equation*}
Since $g_{n,h}:\mathfrak B_n\to\mathfrak A_{n,h}$ is injective, we have
\begin{equation*}
\sum_{w\in\mathfrak A_{n,h}}x_w
-
\sum_{u\in\mathfrak B_n}x_u
=
\sum_{u\in\mathfrak B_n}
\left(x_{g_{n,h}(u)}-x_u\right)
\quad+
\sum_{w\in\mathfrak A_{n,h}\setminus g_{n,h}(\mathfrak B_n)}
x_w.
\end{equation*}
By Proposition~\ref{p:g general}, we have
$g_{n,h}(u)\leq u$
for every $u\in\mathfrak B_n$. Hence, by Theorem~\ref{pairing test}, each polynomial
$x_{g_{n,h}(u)}-x_u$
is TNN. The remaining monomials are nonnegative on every TNN matrix. Therefore, the entire difference is nonnegative on every TNN matrix, which proves the result.
\end{proof}

\section{Open Problems}\label{s:open}

The results of the preceding sections suggest several directions for further investigation.

One possible direction is to seek higher-order analogs of
Theorem~\ref{t:main}. 
It would be interesting to determine whether a
generalization of the merge function defined in Section~\ref{s:merge}
can be used to prove the following conjecture.
\begin{conj}
    Let $A$ be the antiadjacency matrix of an $n$-element unit interval order. Define sets 
    \begin{equation*}
    I= [1,n]_0^3, \quad J = [1,n]_1^3, \quad K = [1,n]_2^3. 
    \end{equation*} 
    Then, for all $1\leq h_1 < h_2 \leq n$, we have 
    \begin{equation*}
        \perm(A_{[1,h_1],[1,h_1]}) \perm(A_{[h_1,h_2],[h_1,h_2]}) \perm(A_{[h_2,n],[h_2,n]}) \geq \perm(A_{I,I}) \perm(A_{J,J}) \perm(A_{K,K}). 
    \end{equation*}
\end{conj}

Theorem~\ref{t:maintwo} cannot, in its stated form, be extended to all
totally nonnegative matrices.
It would nevertheless be interesting to characterize the totally
nonnegative matrices for which its inequality continues to hold.

\begin{prob}
    What kind of TNN matrices satisfy the inequality in Theorem~\ref{t:maintwo}? 
\end{prob}

The results of Section~\ref{s:A gen on J} suggest asking whether there is
an analogous higher-order inequality for arbitrary totally nonnegative
matrices.

\begin{prob}
Determine whether the inequalities in Section~\ref{s:A gen on J},
involving products of permanents indexed by residue classes modulo
$s$, admit analogs for arbitrary totally nonnegative matrices.
\end{prob}

More generally, motivated by the determinant--permanent correspondence
discussed in the introduction, it would be interesting to characterize
the pairs of subsets that give rise to inequalities whose directions
are reversed when determinants are replaced by permanents.
\begin{prob}
Decide which sets $L,K \subseteq [n]$ satisfy 

\begin{equation*}
\perm(A_{L,L})\perm(A_{\overline{L},\overline{L}}) \geq \perm(A_{K,K})\perm(A_{\overline{K},\overline{K}})
    \end{equation*} and \begin{equation*}
\det(A_{L,L})\det(A_{\overline{L},\overline{L}}) \leq \det(A_{K,K})\det(A_{\overline{K},\overline{K}})
    \end{equation*} for all TNN matrices. 
\end{prob}

The maps $f_n$ and $g_{n,h}$ introduced in Sections~\ref{s:A gen on all}
and~\ref{s:all h} may also be of independent combinatorial interest.
Although they are constructed here in order to prove permanental
inequalities, their defining properties suggest several natural
questions about their interaction with Bruhat order.

\begin{prob}
Investigate the order-theoretic structure of the maps $f_n$ and
$g_{n,h}$. In particular, determine whether they admit a uniform
description, whether they preserve Bruhat order, and describe the
Bruhat intervals determined by $w$ and its image under these maps.
\end{prob}

\section*{Acknowledgements}
The authors would like to thank Tommy Parisi for providing several useful examples and Nut Suebthawinkul for testing examples via MATLAB. The authors are especially grateful to Jules Falcone for a helpful and illuminating email exchange. We also thank Jonathan Bloom, Matt Jiang, Nicole Newfield, Hanqing Qi, Daniel Soskin, and Fangzheng Yu from Skandera's research course for their careful reading and helpful suggestions.

\section{Appendix}

Here is a MATLAB code to generate examples from Theorem~\ref{t:Bruhat balance case}. 
\url{https://github.com/SihongPan/General-TNN-Matrices-Per-Top-Bottom--Per-Odd-Even-Test/blob/main/General_TNN_Matrices_Per_Ineq.m}

\bibliography{skan}
\end{document}